\documentclass{article}
\usepackage[table]{xcolor}
\usepackage[margin=1in]{geometry}
\usepackage{amsthm}
\usepackage{booktabs}
\usepackage{amsmath}
\usepackage{graphicx}
\usepackage{float}
\usepackage{subcaption}
\usepackage{animate}
\usepackage{minted}
\usepackage{empheq}
\usepackage{pdfpages}
\usepackage{tikz}
\usepackage{makecell}
\usepackage{enumerate}
\usepackage{enumitem}
\usepackage{amssymb}
\usepackage{algorithm}
\usepackage{algpseudocode}
\usepackage{MnSymbol}
\usepackage{multirow}
\usepackage{appendix}
\usepackage{MnSymbol}
\usepackage{titlesec}
\usepackage{fancyhdr}
\usepackage{extarrows}

\titleformat{\part}[display]
  {\normalfont\Large\bfseries}{\partname~\thepart}{1em}{}
\titlespacing*{\part}{0pt}{0pt}{20pt}

\newtheorem{lemma}{Lemma}[section]
\newtheorem{remark}{Remark}[section]

\newtheorem{thom}{Theorem}[section]

\newcommand\msc[1]{\textbf{Mathematical Subject Classifications}: #1}
\makeatletter

\newcommand{\Rmnum}[1]{\expandafter\@slowromancap\romannumeral #1@}
\makeatother
\usepackage{pifont}
\usepackage[colorlinks,
          linkcolor = cyan,		
          anchorcolor = blue,	
          urlcolor = blue,		
          citecolor = teal,
           ]{hyperref}
\usepackage{footnote}
\makesavenoteenv{table}   
\makesavenoteenv{tabular} 

\definecolor{redd}{RGB}{179,30,0}

\definecolor{navy}{RGB}{0,0,230}

\usepackage[framemethod=TikZ]{mdframed}

\mdfdefinestyle{myframe}{
linecolor=navy,
linewidth=1.5pt,
roundcorner=5pt,
backgroundcolor=blue!2,
innertopmargin=5pt,
innerbottommargin=5pt,
innerleftmargin=10pt,
innerrightmargin=10pt,
}
\tikzset{every picture/.style={line width=0.75pt}} 

\title{UGM: A Unified Framework and New Perspectives for Accelerated Gradient Methods in Smooth and Strongly Convex Optimization}
\author{Danqing Zhou\footnotemark[1],\; Shiqian Ma\footnotemark[2],\; Junfeng Yang\footnotemark[3]}

\date{\today}

\begin{document}

\footnotetext[1]{\, Email: {dqkyo@foxmail.com}.}
\footnotetext[2]{\, Data Science and AI Institute, and Department of Applied Mathematics and Statistics,  Johns Hopkins University, Baltimore, MD 21218. (Email: {sqma@jhu.edu.})}
\footnotetext[3]{\, School of Mathematics, Nanjing University, Nanjing, Jiangsu 210093, People's Republic of China. (Email: {jfyang@nju.edu.cn}).}

\maketitle

\begin{abstract}
In this paper, we propose a unified framework for accelerated gradient methods, dubbed UGM, which subsumes a wide range of accelerated and conventional gradient-type methods designed for minimizing  $L$-smooth and $\mu$-strongly convex functions. We demonstrate that the iteration update of the proposed framework can be intrinsically interpreted as a hybrid combination of the heavy-ball method and vanilla gradient descent. This interpretation reveals that classical accelerated gradient methods essentially integrate a conservative gradient descent step into the fast yet unstable heavy-ball dynamics, which achieves a favorable trade-off between acceleration and stability. We further establish a unified convergence analysis using Lyapunov functions. Guided by our analysis, we develop a family of enhanced accelerated gradient algorithms that leverage the inner product relationship between gradient information and iterative variables to optimize iterative updates. Extensive numerical experiments on unconstrained quadratic optimization and logistic regression validate that the proposed algorithms achieve superior performance compared with existing baseline methods under typical structural conditions.
\end{abstract}
\noindent \msc 15A18, 15A69, 65F15, 90C33

\section{Introduction}
Let $\mu$ and $L$ be constants satisfying $0<\mu < L < \infty$, and let 
$\mathcal{F}_{\mu,L}$ be the set of  $L$-smooth and $\mu$-strongly convex  functions on $\mathbb{R}^n$. 
In this paper, we consider the following unconstrained minimization problem
\begin{equation} \label{pro}
\min_{x\in \mathbb{R}^n} f(x), \quad f\in \mathcal{F}_{\mu,L}.
\end{equation}
Throughout this paper, we denote $\kappa :=L/\mu$ as the condition number, $x^*$ as the unique minimizer of $f$, and  $f_* := f(x^*)$.  
Gradient descent (GD) method is a standard algorithm for solving \eqref{pro}. With fixed step size $\alpha>0$ and initial point $x^0\in\mathbb{R}^n$, the GD method iterates as 
\begin{equation} \label{gd-alpha} 
x^{k+1} = x^{k} - \alpha \nabla f(x^k), \quad k \geq 0.
\end{equation}
A typical choice of the step size that is optimal for maximizing the per-iteration decrease in objective value
is $\alpha=1/L$, leading to the update
\begin{equation}  \label{gd} 
x^{k+1} = x^{k} - \frac{1}{L} \nabla f(x^{k}), \quad k \geq 0, 
\end{equation}
whereas $\alpha=2/(L+\mu)$ optimizes the decay of the squared optimality gap $\|x^{k}-x^*\|^2$ \cite{N87,MM20}; 
these two step sizes yield convergence rates of $\mathcal{O}((1-{1}/{\kappa})^k)$ and $\mathcal{O}( (\kappa-1)/(\kappa+1) )^k)$, respectively. 
Moreover, momentum-based accelerated algorithms have been extensively developed to solve the problem \eqref{pro}. The key principle of such methods lies in incorporating historical iterative information alongside the instantaneous gradient to construct the update direction, which enables improved convergence speed.
Representative algorithms within this category contain Polyak’s classic Heavy-Ball (HB) accelerated scheme \cite{P64}, Nesterov’s Fast Gradient Method (FGM) \cite{N83,N87}, as well as numerous subsequent variants proposed in \cite{AM09,DT14,KF16,VFL17,PPR23,WC25,TD23,CL19}.
While the body of related work is extensive, we focus, throughout this paper, our discussion on the FGM \cite{N83,N87}, the Strongly Convex Optimized Gradient Method (SC-OGM) \cite{PPR23}, the Triple Momentum (TM) method  \cite{VFL17}, and the Accelerated Over-Relaxation Heavy Ball method (AOR-HB) \cite{WC25}.

Given $x^0\in\mathbb{R}^n$, $\upsilon, \varsigma > 0$, and $x^{-1}=x^0$, the general form of Polyak's HB method iterates via 
\begin{equation} \label{hb-general} \tag{gHB}
    x^{k+1} = x^{k} -\upsilon\nabla f(x^{k}) + \varsigma(x^{k}-x^{k-1}), \quad k\geq 0.
\end{equation}
Setting $\upsilon=4/{(\sqrt{L}+\sqrt{\mu})^2}$ and $\varsigma={(\sqrt{L}-\sqrt{\mu})^2}/{(\sqrt{L}+\sqrt{\mu})^2}$ yields
\begin{equation} \label{hb-method} \tag{HB} 
    x^{k+1} = x^{k} -\frac{4}{(\sqrt{L}+\sqrt{\mu})^2}\nabla f(x^{k}) + \frac{(\sqrt{L}-\sqrt{\mu})^2}{(\sqrt{L}+\sqrt{\mu})^2}(x^{k}-x^{k-1}),
\end{equation}
which is theoretically proven optimal for convex quadratic minimization with the convergence rate $\frac{\sqrt{\kappa}-1}{\sqrt{\kappa}+1}$, since its convergence complexity attains the known lower complexity bounds \cite{N95}. 
However, \eqref{hb-method} is not guaranteed to converge for $f\in \mathcal{F}_{\mu,L}$; see counterexamples given in \cite{LRP16, BAA25}. 
Moreover, it is shown in \cite{BAA25} that for any $\kappa, \upsilon, \varsigma > 0$, the general HB method \eqref{hb-general} either fails to attain
an accelerated convergence rate  (i.e., slower than $1-\mathcal{O}(1/\kappa)$) over strongly convex quadratic functions, or admits some
$f\in \mathcal{F}_{\mu,L}$  together with an initialization for which the iteration diverges. 
Compared with the GD method \eqref{gd-alpha}, FGM achieves the faster convergence rates $\mathcal{O}(1/k^2)$ on $\mathcal{F}_{0,L}$ and 
$\mathcal{O}((1-1/\sqrt{\kappa})^{k})$ on $\mathcal{F}_{\mu,L}$ for $\mu>0$; by contrast, convergence of \eqref{hb-method} is guaranteed only for strongly convex quadratic functions. 
The convergence rate of FGM over $\mathcal{F}_{\mu,L}$ is further improved to $\mathcal{O}((1-2/\sqrt{\kappa})^{k})$ in \cite{VFL17, TD23}, which matches the exact lower complexity bound in \cite{DT22}.  Moreover, the idea of accelerating gradient methods via momentum was extended to the proximal point method in \cite{AA01} and to composite optimization problems in \cite{AM09, N13}. 
Furthermore, the fundamental question ``\textbf{Why can FGM achieve accelerated convergence?}" has attracted substantial research attention, and its acceleration mechanism has been interpreted from diverse viewpoints; we review several representative interpretations in what follows.
\begin{itemize}
    \item \textbf{ODE-based interpretation.} Su et al. \cite{SBV16} established an approximate equivalence between FGM and a second-order ordinary differential equation (ODE). Based on this ODE model, they found that oscillations in late iterations degrade the algorithm's convergence. Accordingly,  they proposed an adaptive restart strategy for cases where the gradient direction conflicts with the momentum direction. A large body of follow-up research has adopted the ODE perspective to analyze and design accelerated algorithms, including \cite{AC18, MM19, CL21, LC22, SD22, WC25}.
    
    \item \textbf{Linear-coupling-based interpretation.} Zhu et al. \cite{AZ17} attributed the acceleration of FGM to the linear coupling of two basic algorithms: GD \eqref{gd} and mirror descent. The combination integrates their respective strengths to attain accelerated convergence. Later, this linear-coupling framework was extended to design faster and more concise algorithms for packing and covering linear programs \cite{AO15, WR15}, semidefinite programming solvers \cite{ALO16}, and coordinate descent methods \cite{AQRY16}.
    
    \item \textbf{Dissipativity theory from the control perspective.} Lessard et al. \cite{LRP16} modeled FGM as a dynamical system and adopted the integral quadratic constraints (IQC) framework from robust control theory to rigorously analyze its accelerated convergence rate for strongly convex problems. Their work established a unified paradigm for automated algorithm design and performance evaluation. Later, this framework was extended to general convex settings via non-quadratic Lyapunov functions \cite{FRMP18}.  
\end{itemize}
 
\subsection{Our proposed framework and contributions}
Given $x^0\in\mathbb{R}^n$,  $q \in (0,1)$ and $\theta\neq -1$, we initialize $z^0=x^0$.
In this paper, we propose the following unified framework, dubbed UGM, for both non-accelerated and accelerated gradient methods
\begin{subequations}  \label{ugm}
\begin{empheq}[left={\text{UGM:}\qquad \empheqlbrace}]{align}
\quad y^{k+1} &= x^{k}-\frac{1}{L}\nabla f(x^{k}), \label{ugm-y}\\
\quad w^{k+1} &= x^{k} -\frac{1}{\mu} \nabla f(x^{k}), \label{ugm-w}\\
\quad z^{k+1} &= qz^{k}+(1-q)w^{k+1}, \label{ugm-z}\\
\quad x^{k+1} &= \frac{\theta}{1+\theta}y^{k+1} + \frac{1}{1+\theta}z^{k+1}. \label{ugm-x}
\end{empheq}
\end{subequations}
A more compact yet equivalent formulation of the UGM update \eqref{ugm} is given in \eqref{ugm-2}, which matches the general form of the optimized gradient method presented in \cite{KF16, KF18}, \cite[Eq. (2)]{TD23}.
Our main contributions are summarized below.
\begin{enumerate}
    \item[(i)] We show that UGM can be interpreted as a combination of \eqref{hb-method} and the GD method \eqref{gd}. Moreover, a broad class of popular accelerated and non-accelerated methods is unified within the UGM framework. Hence, accelerated methods such as FGM can be interpreted as integrating a conservative GD step \eqref{gd} into the fast yet unstable HB method \eqref{hb-method} via an appropriate combination of coefficients, thereby balancing acceleration and convergence.
    Furthermore, we elucidate the roles of the parameters $q$ and $\theta$: $q$ mainly regulates the convergence rate, while $\theta$ controls how closely the method resembles \eqref{gd}. These findings provide useful insights for constructing Lyapunov functions and are discussed in detail in Section \ref{sec:why_ugm}.
     
    \item[(ii)] In Section \ref{sec:uni-ana}, we conduct a unified convergence analysis for a broad class of popular accelerated methods with fixed parameters $q$ and $\theta$ under the UGM framework, using intuitively structured Lyapunov functions.
    The main convergence results are summarized in the fixed-parameter portion of Table~\ref{tab:convergence-overall}. Under our analysis framework, we develop new $Q$-linear convergence guarantees for FGM, TM, and AOR-HB methods.
    
    \item[(iii)] Since our analysis indicates that the choice of $\theta$ is, to some extent, independent of $q$, we propose new accelerated UGM algorithms with varying parameters. We also develop adaptive variants, UGM-II and UGM-III, which select $\theta$ based on the inner product $\langle \nabla f(x^{k+1}),x^{k+1}\!-z^{k+1} \rangle$ to improve the iteration updates. To the best of our knowledge, these adaptive selection mechanisms are new to the literature.
    Full details are given in Section \ref{sec:new_algs}, and the main convergence results are summarized in the varying parameters portion of Table \ref{tab:convergence-overall}.
    
\end{enumerate}

\begin{table}[!htbp]
    \centering
\begin{tabular}{cccccc}
\toprule
Param. & \multicolumn{1}{c}{Method}     & Lyapunov function\footnote{In this column, we only present the Lyapunov functions $V_k$ for Q-linear convergence, which satisfy $V_{k+1}\leq q V_k$ for some $q\in(0,1)$.} & Rate & \multicolumn{1}{l}{Type} & Ref. \\ \hline
\multirow{8}{*}{fixed}            
& \multirow{2}{*}{FGM}        &   ---     &  
\multirow{2}{*}{$1-1/\sqrt{\kappa}$}    & R\footnote{In \cite{N87}, convergence is established via the estimate sequence $\{\phi_k^*\}$, which is   lower bounded by $\{f(x^k)\}$, leading to the result 
$f(x^k) - f_* \leq \phi_k^* - f_* \leq \left( 1 - 1/\sqrt{\kappa} \right)^k \left( f(x^0) - f_* + \frac{\mu}{2} \|x^0 - x^*\|^2 \right)$. }    &\cite{N87}   \\
&   &   \eqref{ly-2} with $a=1$    &   & Q   & this work     \\ \cline{2-6} 
& \multirow{2}{*}{SC-OGM}     &   \multirow{2}{*}{\eqref{ly-2} with $a=1$}       &   \multirow{2}{*}{$1-\sqrt{2}/\sqrt{\kappa}$}
  & \multirow{2}{*}{Q}   &\cite{PPR23}   \\ 
&   &       &     &     & this work  \\ \cline{2-6} 
& \multirow{2}{*}{TM method}   &   ---    &  $1-2/\sqrt{\kappa}$  & R  & \cite{VFL17}  \\
&   &  \eqref{ly-2} with $a =2\sqrt{\frac{\sqrt{\kappa}(\kappa+1)}{(\kappa-1)(\sqrt{\kappa}-1)}}$   &  $1-1/(2\sqrt{\kappa})$ 
& Q  & this work  \smallskip \\ \cline{2-6} 
&  \multirow{2}{*}{AOR-HB}   &  see footnote\footnote{Here, the Lyapunov function is
$f(x^k)\!-f_*\!+{\mu\over 2}\|p^k\!-\! x^*\|^2 +\sqrt{\kappa} \, \langle \nabla f(x^k), p^k\!-\!x^*\rangle$, where $p^k = x^{k+1}+(x^{k+1}-x^{k})/\sqrt{\kappa}$. The authors also provide $1-1/\sqrt{\kappa}$ convergence analysis, but with a more complex Lyapunov function. In contrast, our analysis achieves the same rate using a simple Lyapunov function.}
   &  $1-1/(2\sqrt{\kappa})$ &  Q   &\cite{WC25} \\
&   &   \makecell{\eqref{ly-1} with $a=1\!+\!\frac{4\sqrt{\kappa} \!+ 1 \!+\! (2\sqrt{\kappa} \!+ \!1)\sqrt{4\sqrt{\kappa} \!+\! 5}}{2\kappa}$} &  $1-1/\sqrt{\kappa}$  & Q  &this work        \\ 
\midrule
\multirow{4}{*}{varying}  & ITEM    &   --- &      $1-2/\sqrt{\kappa}$   & R    &\cite{TD23}         \\ \cline{2-6} 
& UGM-I      &   \multirow{3}{*}{\makecell{\eqref{ada-ly-ana}}} 
&   \multirow{3}{*}{\makecell{$\big[\frac{\sqrt{1+8\kappa}-3}{\sqrt{1+8\kappa}+1}, \frac{\kappa-1}{\kappa+1}\big)$ }}    
& \multirow{3}{*}{\makecell{Q}}   & \multirow{3}{*}{\makecell{this work}}   \\
&  UGM-II     &   
                 &   

      &    &     \\
& UGM-III     &     &   &   &   \\ \bottomrule
\end{tabular}
    \caption{Convergence results for the main algorithms discussed in this paper. Parameters (Param. in the table) are classified as fixed or varying.  
    The column ``Type'' denotes the type of linear convergence: R-linear (R) or Q-linear (Q).}
    \label{tab:convergence-overall}
\end{table}

\subsection{Preliminaries and notation} \label{sec:pre}
Throughout the paper, we adopt the following notation. 
Let $\mathbb{R}^n$ be the $n$-dimensional Euclidean space, with inner product and induced norm denoted by $\langle\cdot, \cdot\rangle$ and $\|\cdot\| =\sqrt{\langle\cdot,\cdot\rangle}$, respectively. 
A function $f:\mathbb{R}^n \rightarrow \mathbb{R}$ is said to be $L$-smooth if it is differentiable and satisfies 
$\|\nabla f(x) - \nabla f(y)\|\leq L\|x-y\|$ for all $x,y\in \mathbb{R}^n$. Following \cite[Thm. 2.1.5]{N87}, for convex $f$, $L$-smoothness is equivalent to either of the following statements:
\begin{align}
    f(x)-f(y)-\langle \nabla f(y), x-y\rangle & \geq \tfrac{1}{2L}\|\nabla f(x)-\nabla f(y)\|^2,  \quad \forall x,y \in  \mathbb{R}^n; \label{jy-0706a}\\
    \langle \nabla f(x)-\nabla f(y), x-y \rangle & \geq {\tfrac{1}{L}}\|\nabla f(x)-\nabla f(y)\|^2,  \quad \forall x,y \in \mathbb{R}^n. \label{jy-0706b}
\end{align}
A differentiable function $f:\mathbb{R}^n \rightarrow \mathbb{R}$ is said to be $\mu$-strongly convex if
\[
f(x)-f(y)-\langle \nabla f(y), x-y\rangle \geq \frac{\mu}{2}\|x-y\|^2, \quad \forall x,y \in \mathbb{R}^n.    
\]
For any $f \in \mathcal{F}_{\mu,L}$, there holds
\begin{equation} \label{f-mu-L}
\langle \nabla f(x)-\nabla f(y),x-y\rangle  \geq \frac{\mu L}{\mu +L}\|x-y\|^2 + \frac{1}{\mu +L} \|\nabla f(x)-\nabla f(y)\|^2,  \quad \forall x,y \in \mathbb{R}^n.
\end{equation}
For any $u,v\in\mathbb{R}^n$ and $\alpha\in\mathbb{R}$, the following identity holds:
\begin{equation} \label{eq:convex-id}
\|\alpha u+(1-\alpha)v\|^2 = \alpha \|u\|^2+(1-\alpha)\|v\|^2-\alpha(1-\alpha)\|u-v\|^2.
\end{equation}

All algebraic identities and equivalent reformulations in this paper have been verified by manual calculation and symbolic computation using SymPy. The verification notebook is available at \url{https://github.com/dq-kyo/UGM-a-Unified-Framework-for-Gradient-Method}. Statements whose proofs rely on these code-verified algebraic identities are marked with footnotes in the text.

\section{UGM unifies diverse gradient methods} \label{sec:why_ugm}

We now explain the motivation behind UGM \eqref{ugm} from two perspectives.
\paragraph{Perspective 1: Embedding the GD step \eqref{gd} into \eqref{hb-method}.}
First, the UGM iteration can be viewed as a combination of \eqref{hb-method} and the GD method \eqref{gd}.
To see this, we can rewrite \eqref{hb-method} as
    \begin{subequations}  \label{hb}
    \begin{empheq}[left={\text{}\quad \empheqlbrace}]{align}
    \quad w^{k+1} &= x^{k} - \frac{1}{\mu} \nabla f\left(x^{k}\right), \label{hb-w}\\ 
    \quad z^{k+1} &= q z^{k} + (1-q) w^{k+1}, \label{hb-z}\\ 
    \quad x^{k+1} &= \frac{\theta}{1+\theta} x^{k} + \frac{1}{1+\theta} z^{k+1}. \label{hb-x}
    \end{empheq}
    \end{subequations}
    Here $z^0 = x^0$, $q = \frac{\sqrt{\kappa}-1}{\sqrt{\kappa}+1}$ and $\theta = \frac{\sqrt{\kappa}-1}{2}$.    
    This formulation shows that \eqref{hb-method} consists of a large GD step with step size $1/\mu$, followed by two moving-average updates; the proof of equivalence between  \eqref{hb-method} and \eqref{hb}
    is provided in Appendix \ref{proof-of-hb}.
    As shown in \cite{LRP16, BAA25}, \eqref{hb-method} fails to converge for general strongly convex and smooth functions. 
    To address this issue, we introduce the standard GD update \eqref{gd}, i.e.,
    $y^{k+1} = x^{k} - \frac{1}{L} \nabla f(x^{k})$,
    into the second moving-average step \eqref{hb-x}, which yields the UGM iteration \eqref{ugm}. 
    By properly tuning the combination coefficients for \eqref{gd} and \eqref{hb-method}, existing accelerated methods (e.g., FGM) can be naturally embedded into the UGM framework. Hence, these methods essentially integrate the stable GD step \eqref{gd} into the fast but possibly divergent scheme \eqref{hb-method}, achieving a favorable trade-off between acceleration and convergence guarantee.

\paragraph{Perspective 2: Unifying many existing gradient methods.} 
Second, the UGM framework unifies a large family of classic accelerated and non-accelerated gradient methods. This unified perspective helps us clarify the roles of parameters $q$ and $\theta$. Such understanding further guides the construction of Lyapunov functions and the design of new algorithms. Below, we list some representative algorithms encompassed by the proposed UGM framework. 
    \begin{enumerate}
        \item GD with step size $\alpha = 1/L$, namely, \eqref{gd}: UGM reduces to \eqref{gd} when setting $q=1-\frac{1}{\kappa}$ and any $\theta\neq 1$;
        see Lemma \ref{gd-1l} for the equivalence proof. Here we also have $y^{k}=x^k=z^k$ for all $k$;
        
        \item GD with step size $\alpha = 2/(L+\mu)$: UGM reduces to this scheme when setting $q=\frac{\kappa-1}{\kappa+1}$ and $\theta=0$;
        see Lemma \ref{gd-2l} for the equivalence proof. Here we also have $x^k=z^k$ for all $k$;
        
        \item FGM \cite{N87}: This method iterates as
        \begin{equation} \label{fgm}
        y^{k+1} = x^{k}-\frac{1}{L}\nabla f(x^{k}),  \;\; 
         x^{k+1} = y^{k+1} +\frac{\sqrt{\kappa}-1}{\sqrt{\kappa}+1}(y^{k+1} -y^{k}),    \quad k\geq 0,
        \end{equation}
        with $y^0=x^0$. UGM reduces to this method when setting $q=1-\frac{1}{\sqrt{\kappa}}$ and $\theta=\sqrt{\kappa}$;
        see Lemma \ref{sec-b2} for the equivalence proof.

        \item TM method \cite{VFL17}:  This method iterates as
    \begin{subequations}  \label{tm-method}
    \begin{empheq}{align}
    \quad \xi^{k+1} &= (1+\lambda)\xi^{k}-\lambda\xi^{k-1}-\nu\nabla f(x^k), \label{tm-xi}\\
    \quad x^{k} &= (1+\gamma)\xi^{k}-\gamma \xi^{k-1}, \label{tm-x}\\
    \quad z^{k} &= (1+\delta)\xi^{k}-\delta \xi^{k-1}, \label{tm-z}
    \end{empheq}
    \end{subequations}
     where $(\nu, \lambda, \gamma, \delta)=(\frac{1+\rho}{L}, \frac{\rho^2}{2-\rho}, \frac{\rho^2}{(1+\rho)(2-\rho)}, \frac{\rho^2}{1-\rho^2})$ and $\rho=1-\frac{1}{\sqrt{\kappa}}$. UGM reduces to this method when setting $q = \rho=1-\frac{1}{\sqrt{\kappa}}$ and $\theta = \frac{\rho}{2(1-\rho)}$;
     see Lemma \ref{sec-b3} for the equivalence proof. 
     
        \item SC-OGM \cite{PPR23}: This method iterates as
        \begin{equation}\label{sc-ogm}
        y^{k+1} = x^{k}-\frac{1}{L}\nabla f(x^{k}), \;\; 
         x^{k+1} = y^{k+1} +\frac{1}{2\gamma+1}(y^{k+1}-y^{k})+\frac{1}{2\gamma+1}(y^{k+1}-x^k),    \quad k\geq 0,
        \end{equation}
        with $y^0=x^0$ and $\gamma=\frac{\sqrt{8\kappa+1}+3}{2\kappa-2}$.
        UGM reduces to this method when setting $q=\frac{1}{\gamma+1}$ and $\theta=\frac{\gamma+1}{\gamma}$; see Lemma \ref{sec-b4} for the equivalence proof.
        
        \item AOR-HB \cite{WC25}: This method iterates as
        \begin{equation} \label{aor-hb}
         x^{k+1}=x^{k}-\frac{1}{(\sqrt{L}+\sqrt{\mu})^2}(2\nabla f(x^{k})-\nabla f(x^{k-1}))+\frac{L}{(\sqrt{L}+\sqrt{\mu})^2}(x^{k}-x^{k-1}),  \quad k\geq 1.
        \end{equation}
     UGM reduces to this method when setting $q = \frac{2\kappa + 2\sqrt{\kappa} - 1 + \sqrt{4\sqrt{\kappa} + 5}}{2(\sqrt{\kappa} + 1)^2}$ and 
     $\theta = \frac{\kappa}{2} \cdot \frac{2\sqrt{\kappa} + 3 - \sqrt{4\sqrt{\kappa} + 5}}{ \kappa + \sqrt{4\sqrt{\kappa} + 5} - 2}$;
     see Lemma \ref{sec-b5} for the equivalence proof. 
    \end{enumerate}

\begin{remark}[The role of $q$ and $\theta$]
Note that for the HB method \eqref{hb}, GD method \eqref{gd-alpha} with $\alpha=1/L$ or $2/(L+\mu)$, and FGM, the associated $q$ exactly matches their respective convergence rates. On the other hand, the convergence rates of SC-OGM, AOR-HB, and TM methods approximate their corresponding $q$ values.
From this observation, we conclude that $q$ primarily characterizes the convergence rate. On the other hand, the parameter $\theta$ governs how similar the iteration behaves to the GD method \eqref{gd}. In fact, as can be seen from \eqref{ugm-x}, taking
$\theta$ sufficiently large forces $x^{k+1}$ to mimic the GD iteration \eqref{gd}.
\end{remark}

\section{Unified analysis via Lyapunov functions} \label{sec:uni-ana}

In this section, we derive the accelerated convergence rates for all the aforementioned accelerated algorithms via an intuitive unified Lyapunov function framework.

When conducting convergence analysis for gradient-type methods over the class $\mathcal{F}_{\mu,L}$ with $\mu>0$,  it is standard to study two quantities: the objective gap $f(x^{k})-f_*$ and the squared distance to the optimal solution $\|x^k-x^*\|^2$. Standard proofs for Nesterov’s FGM require rather intricate technical arguments to balance these two quantities.

In our UGM analysis, we instead track the objective gap $f(x^{k})-f_*$ and the squared distance $\|z^k-x^*\|^2$ separately, and then balance these two terms.
Our choice to employ $\|z^k-x^*\|^2$ draws inspiration from \eqref{gd-alpha} with step sizes $\alpha = 1/L$ and $\alpha=2/(L+\mu)$, for which $z^k=x^k$ holds for all $k$ when regarded as special instances of UGM.
We further derive an inductive relation for $f(x^{k})-f_*-\frac{1}{2L}\|\nabla f(x^k)\|^2$, motivated by \cite{PPR23} and the inductive relation for $f(x^k)-f_*$ in \eqref{ef}.

\begin{lemma}[Inductive relation for $f(x^{k})-f_*$ and $f(x^{k})-f_*-\frac{1}{2L}\|\nabla f(x^k)\|^2$]
Let $\varrho\in (0,1)$ and $\theta\neq 0$. For all $k\geq 0$, the update $x^{k+1}$ defined in \eqref{ugm} satisfies
\begin{equation}\label{ef}
\begin{aligned}
   f(x^{k})-f_* & - \frac{1}{\varrho}\big(f(x^{k+1})-f_*\big) \geq \frac{1}{2L}\|\nabla f(x^{k})\|^2 + \frac{1}{2L}\|\nabla f(x^{k+1})\|^2 \\
 & +\frac{1}{\theta}\langle \nabla f(x^{k+1}),x^{k+1}-z^{k+1} \rangle  -\frac{1-\varrho}{\varrho}\left[
\langle \nabla f(x^{k+1}),x^{k+1}-x^{*}\rangle -\frac{\mu}{2}\|x^{k+1}-x^{*}\|^2
\right].
\end{aligned}
\end{equation}
Moreover, we also have 
\begin{equation}\label{ef-2}
\begin{aligned}
   f(x^{k})-f_* &-\frac{1}{2L}\|\nabla f(x^k)\|^2  - \frac{1}{\varrho}\left[f(x^{k+1})-f_*-\frac{1}{2L}\|\nabla f(x^{k+1})\|^2 \right] \geq  \frac{\varrho +1}{2L\varrho}\|\nabla f(x^{k+1})\|^2 \\
 & +\frac{1}{\theta}\langle \nabla f(x^{k+1}),x^{k+1}-z^{k+1} \rangle  -\frac{1-\varrho}{\varrho}\left[
\langle \nabla f(x^{k+1}),x^{k+1}-x^{*}\rangle -\frac{\mu}{2}\|x^{k+1}-x^{*}\|^2
\right].
\end{aligned}
\end{equation}
\end{lemma}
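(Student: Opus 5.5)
The plan is to combine one interpolation inequality between $x^{k}$ and $x^{k+1}$ with strong convexity at $x^{k+1}$. The only structural fact needed from \eqref{ugm} is how $x^k-x^{k+1}$ is expressed through $\nabla f(x^k)$ and $x^{k+1}-z^{k+1}$. From \eqref{ugm-x} we have $(1+\theta)x^{k+1}=\theta y^{k+1}+z^{k+1}$. Since $\theta\neq 0$, this gives
\[
y^{k+1}=x^{k+1}+\tfrac{1}{\theta}\big(x^{k+1}-z^{k+1}\big).
\]
Combining with \eqref{ugm-y}, written as $x^k=y^{k+1}+\frac1L\nabla f(x^k)$, we get
\[
x^k-x^{k+1}=\tfrac1\theta\big(x^{k+1}-z^{k+1}\big)+\tfrac1L\nabla f(x^k).
\]
This is the only place the hypothesis $\theta\neq0$ enters.

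Next, apply \eqref{jy-0706a} with $x=x^k$, $y=x^{k+1}$, and abbreviate $g_0=\nabla f(x^k)$, $g_1=\nabla f(x^{k+1})$:
\[
f(x^k)-f(x^{k+1})\ \ge\ \langle g_1,x^k-x^{k+1}\rangle+\tfrac{1}{2L}\|g_0-g_1\|^2 .
\]
Substituting the expression for $x^k-x^{k+1}$, the inner product splits into $\frac1\theta\langle g_1,x^{k+1}-z^{k+1}\rangle+\frac1L\langle g_0,g_1\rangle$. The cross term $\frac1L\langle g_0,g_1\rangle$ cancels against the cross term in $\frac1{2L}\|g_0-g_1\|^2$. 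What remains is
\[
f(x^k)-f(x^{k+1})\ \ge\ \tfrac{1}{2L}\|g_0\|^2+\tfrac{1}{2L}\|g_1\|^2+\tfrac1\theta\langle g_1,x^{k+1}-z^{k+1}\rangle .
\]
This cancellation is the one step I would double-check, since everything else is bookkeeping.

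To obtain \eqref{ef}, write
\[
f(x^k)-f_*-\tfrac1\varrho\big(f(x^{k+1})-f_*\big)=\big[f(x^k)-f(x^{k+1})\big]-\tfrac{1-\varrho}{\varrho}\big(f(x^{k+1})-f_*\big).
\]
Since $(1-\varrho)/\varrho>0$, it suffices to upper bound $f(x^{k+1})-f_*$. The $\mu$-strong convexity inequality with $x=x^*$, $y=x^{k+1}$ gives
\[
f(x^{k+1})-f_*\le\langle g_1,x^{k+1}-x^*\rangle-\tfrac\mu2\|x^{k+1}-x^*\|^2 .
\]
Inserting this bound and the previous display yields \eqref{ef} exactly.

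For \eqref{ef-2}, subtract $\frac{1}{2L}\|g_0\|^2$ from both sides of \eqref{ef} and add $\frac{1}{2L\varrho}\|g_1\|^2$ to both sides. On the left this produces the bracketed expression of \eqref{ef-2}. On the right the $\|g_0\|^2$ term disappears, and the coefficient of $\|g_1\|^2$ becomes $\frac1{2L}+\frac1{2L\varrho}=\frac{\varrho+1}{2L\varrho}$, which is the claimed inequality. No step seems genuinely hard. The main care is in the sign conventions when rewriting $x^k-x^{k+1}$ through \eqref{ugm-y} and \eqref{ugm-x}, and in noting that $\varrho\in(0,1)$ is needed so that the strong convexity bound can be applied with a positive multiplier.
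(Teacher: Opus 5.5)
Your proof is correct and follows essentially the same route as the paper. It combines the interpolation inequality \eqref{jy-0706a} between $x^k$ and $x^{k+1}$ with the strong-convexity upper bound on $f(x^{k+1})-f_*$, then uses the same shift by $\frac{1}{2L}\|\nabla f(x^k)\|^2$ and $\frac{1}{2L\varrho}\|\nabla f(x^{k+1})\|^2$ to obtain \eqref{ef-2}; you also write out the identity $x^k-x^{k+1}=\frac{1}{\theta}(x^{k+1}-z^{k+1})+\frac{1}{L}\nabla f(x^k)$ and the cross-term cancellation, which the paper compresses into a single step labeled by \eqref{ugm-x}.
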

\begin{proof}
Since $f$ is $L$-smooth and $\mu$-strongly convex, there hold
\begin{equation}\label{jy-a}
\begin{aligned}
f(x^{k})-f(x^{k+1}) &\geq  \langle \nabla f(x^{k+1}),x^{k}-x^{k+1} \rangle +  \frac{1}{2L}\|\nabla f(x^{k})-\nabla f(x^{k+1})\|^2, \\
f(x^{k+1})-f_* & \leq \langle \nabla f(x^{k+1}),x^{k+1}-x^{*}\rangle -\frac{\mu}{2}\|x^{k+1}-x^{*}\|^2.
\end{aligned}
\end{equation}
As a result, we have 
\begin{align*}
&f(x^{k})-f_* - \frac{1}{\varrho}\big(f(x^{k+1})-f_*\big) 
= f(x^{k})-f(x^{k+1}) - \frac{1-\varrho}{\varrho}\big(f(x^{k+1})-f_*\big)\\
\stackrel{\eqref{jy-a}}{\geq} &\frac{1}{2L}\|\nabla f(x^{k})-\nabla f(x^{k+1})\|^2
+\langle \nabla f(x^{k+1}),x^{k}-x^{k+1} \rangle  -\frac{1-\varrho}{\varrho}\left[
\langle \nabla f(x^{k+1}),x^{k+1}-x^{*}\rangle -\frac{\mu}{2}\|x^{k+1}-x^{*}\|^2
\right] \\
\stackrel{\eqref{ugm-x}}{=}& \text{right-hand side of~}\eqref{ef}.
\end{align*}
By subtracting $\frac{1}{2L}\|\nabla f(x^k)\|^2$ and adding $\frac{1}{2L\varrho}\|\nabla f(x^{k+1})\|^2$ 
from both sides of \eqref{ef},  we obtain \eqref{ef-2}.
\end{proof}

\begin{lemma}[Inductive relation for $\|z^{k+1}-x^{*}\|^2$]
For all $k\geq 0$,  the update $z^{k+1}$ defined in \eqref{ugm} satisfies
\begin{equation} \label{ez}
\begin{aligned}
q\|z^{k}-x^{*}\|^2& - \|z^{k+1}-x^{*}\|^2 =\frac{2(1-q)}{\mu}\left[\langle \nabla f(x^{k}),x^{k}-x^{*}\rangle-\frac{\mu}{2}\|x^{k}-x^{*}\|^2 \right]  \\
&+q(1-q)\|x^{k}-z^{k}\|^2- \frac{(1-q)^2}{\mu^2}\|\nabla f(x^{k})\|^2 - \frac{2q(1-q)}{\mu}\langle \nabla f(x^{k}), x^{k}-z^{k}\rangle.
\end{aligned}
\end{equation}
\end{lemma}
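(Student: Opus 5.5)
This is a pure algebraic identity, so I would prove it by a direct expansion. No inequality from $\mathcal{F}_{\mu,L}$ is used. The starting point is \eqref{ugm-z} together with \eqref{ugm-w}. I would write
\[
z^{k+1}-x^* = q(z^k-x^*) + (1-q)(w^{k+1}-x^*).
\]
Applying the identity \eqref{eq:convex-id} with $\alpha=q$, $u=z^k-x^*$, $v=w^{k+1}-x^*$ gives
\[
\|z^{k+1}-x^*\|^2 = q\|z^k-x^*\|^2 + (1-q)\|w^{k+1}-x^*\|^2 - q(1-q)\|z^k-w^{k+1}\|^2 .
\]
Rearranging yields
\[
q\|z^k-x^*\|^2-\|z^{k+1}-x^*\|^2 = -(1-q)\|w^{k+1}-x^*\|^2 + q(1-q)\|z^k-w^{k+1}\|^2 .
\]
It then remains to express the two terms on the right in terms of $x^k$, $z^k$ and $g:=\nabla f(x^k)$, using $w^{k+1}=x^k-g/\mu$.

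For the first term I expand
\[
\|w^{k+1}-x^*\|^2=\|x^k-x^*\|^2-\tfrac{2}{\mu}\langle g,x^k-x^*\rangle+\tfrac{1}{\mu^2}\|g\|^2 .
\]
For the second, $z^k-w^{k+1}=(z^k-x^k)+g/\mu$, so
\[
\|z^k-w^{k+1}\|^2=\|x^k-z^k\|^2-\tfrac{2}{\mu}\langle g,x^k-z^k\rangle+\tfrac{1}{\mu^2}\|g\|^2 .
\]
Multiplying these by $-(1-q)$ and $q(1-q)$ respectively and adding, the terms group as follows:
\begin{itemize}
\item the $\langle g,x^k-x^*\rangle$ and $\|x^k-x^*\|^2$ terms combine into $\frac{2(1-q)}{\mu}\big[\langle g,x^k-x^*\rangle-\frac{\mu}{2}\|x^k-x^*\|^2\big]$;
\item the $\|x^k-z^k\|^2$ and $\langle g,x^k-z^k\rangle$ terms match the statement directly;
\item the gradient-norm coefficient is $\frac{1}{\mu^2}\big(-(1-q)+q(1-q)\big)=-\frac{(1-q)^2}{\mu^2}$.
\end{itemize}
Together these give exactly the right-hand side of \eqref{ez}.

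There is no real obstacle here. The only care needed is sign bookkeeping in the cross term $\langle g,x^k-z^k\rangle$: the sign of $z^k-x^k$ flips relative to $x^k-z^k$, which must produce the stated $-\frac{2q(1-q)}{\mu}$. I would double-check that step, and the collection of the $\|g\|^2$ coefficients into $-(1-q)^2$.
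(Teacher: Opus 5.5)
Your proof is correct and is essentially the paper's argument: a direct algebraic expansion built on the identity \eqref{eq:convex-id}, with no use of the properties of $f$. The only difference is bookkeeping. The paper first splits $\|(1-q)x^k+qz^k-x^*-\frac{1-q}{\mu}\nabla f(x^k)\|^2$ into a squared norm, a gradient-norm term and a cross term, and then applies \eqref{eq:convex-id} to the pair $(x^k,z^k)$; you instead apply it to the pair $(z^k,w^{k+1})$ and then expand two squared norms. Your sign check on $\langle \nabla f(x^k),x^k-z^k\rangle$ and the coefficient $-(1-q)+q(1-q)=-(1-q)^2$ are both right.
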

\begin{proof}
It follows from the UGM framework \eqref{ugm} and the identity \eqref{eq:convex-id} that
\begin{align*}
\|z^{k+1}&-x^{*}\|^2 \stackrel{\eqref{ugm-w},\eqref{ugm-z}}{=}  \big\|(1-q)x^{k}+qz^{k}-x^{*}-\frac{1-q}{\mu}\nabla f(x^{k})\big\|^2   \\
= &\|(1-q)x^{k}+qz^{k}-x^{*}\|^2 + \frac{(1-q)^2}{\mu^2}\|\nabla f(x^{k})\|^2-\frac{2(1-q)}{\mu}\langle  (1-q)x^{k}+qz^{k}-x^{*},\nabla f(x^{k}) \rangle   \\
 \stackrel{\eqref{eq:convex-id}}{=}& (1-q)\|x^{k}-x^{*}\|^2 + q\|z^{k}-x^{*}\|^2- q(1-q)\|x^{k}-z^{k}\|^2+ \frac{(1-q)^2}{\mu^2}\|\nabla f(x^{k})\|^2 \\
&- \frac{2(1-q)}{\mu}\langle x^{k}-x^{*},\nabla f(x^{k})\rangle + \frac{2q(1-q)}{\mu}\langle x^{k}-z^{k},\nabla f(x^{k})\rangle.
\end{align*}
Rearranging the terms in the above equality, we obtain \eqref{ez}.
\end{proof}

Next, we introduce the following Lyapunov functions, which play a central role in our linear convergence analysis:
\begin{align}
E_{k}(a) &:= f(x^{k})-f_*+\frac{a\mu}{2}\|z^{k+1}-x^{*}\|^2, \label{ly-1}\\
F_{k}(a) &:= f(x^{k})-f_*-\frac{1}{2L}\|\nabla f(x^{k})\|^2+\frac{a\mu}{2}\|z^{k+1}-x^{*}\|^2. \label{ly-2}    
\end{align}
Here $a>0$ denotes an undetermined parameter serving as a balancing weight between the squared distance term $\|z^{k+1}-x^{*}\|^2$ and
either the objective gap $f(x^{k})-f_*$ or $f(x^{k})-f_*-\frac{1}{2L}\|\nabla f(x^{k})\|^2$.  
For $a=1$, we  write $E_k=E_k(1)$ and $F_k=F_k(1)$ for brevity. 
By combining the inductive bounds for $f(x^{k})-f_*$ from \eqref{ef}, for $f(x^{k})-f_*-\frac{1}{2L}\|\nabla f(x^{k})\|^2$ from \eqref{ef-2}, and for 
$\|z^{k+1}-x^{*}\|^2$ from \eqref{ez}, we arrive at the following theorem.

\begin{thom}
For the iterates defined in \eqref{ugm}, suppose $0 < q \leq \varrho < 1$ and $a>0$. Let $E_k(a)$ and $F_k(a)$ be defined in \eqref{ly-1} and \eqref{ly-2}, respectively. 
Then, for all $k\geq 0$, we have
\begin{align}
&E_{k}(a)-\frac{1}{\varrho}E_{k+1}(a) \geq \frac{1}{2L}\|\nabla f(x^{k})\|^2 +  \left(\frac{1-q}{q}a-\frac{1-\varrho}{\varrho}\right)\left[
\langle \nabla f(x^{k+1}),x^{k+1}-x^{*}\rangle -\frac{\mu}{2}\|x^{k+1}-x^{*}\|^2 \right] \label{ly-ana-1}\\
 & +\left(\frac{1}{2L}-\frac{a(1-q)^2}{2q\mu}\right)\|\nabla f(x^{k+1})\|^2  +\left(\frac{1}{\theta}-(1-q)a\right) \langle \nabla f(x^{k+1}),x^{k+1}-z^{k+1} \rangle  +\frac{\mu(1-q)a}{2}\|x^{k+1}-z^{k+1}\|^2, \nonumber
\end{align}
and 
\begin{align}
&F_{k}(a)-\frac{1}{\varrho}F_{k+1}(a) \geq \left(\frac{1-q}{q}a-\frac{1-\varrho}{\varrho}\right)\left[
\langle \nabla f(x^{k+1}),x^{k+1}-x^{*}\rangle -\frac{\mu}{2}\|x^{k+1}-x^{*}\|^2 \right] \label{ly-ana-2}\\
 & +\left(\frac{\varrho +1}{2L\varrho}-\frac{a(1-q)^2}{2q\mu}\right)\|\nabla f(x^{k+1})\|^2  +\left(\frac{1}{\theta}-(1-q)a\right) \langle \nabla f(x^{k+1}),x^{k+1}-z^{k+1} \rangle  +\frac{\mu(1-q)a}{2}\|x^{k+1}-z^{k+1}\|^2. \nonumber
\end{align}
\end{thom}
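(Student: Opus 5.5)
The plan is to split each Lyapunov difference into an objective part and a distance part, bound the objective part by the inequalities \eqref{ef} or \eqref{ef-2}, and bound the distance part by the identity \eqref{ez} shifted by one index. I treat $E_k(a)$ first. By \eqref{ly-1},
\begin{align*}
E_k(a)-\frac{1}{\varrho}E_{k+1}(a) ={}& \Big[f(x^k)-f_*-\frac{1}{\varrho}\big(f(x^{k+1})-f_*\big)\Big] \\
&+\frac{a\mu}{2}\Big[\|z^{k+1}-x^*\|^2-\frac{1}{\varrho}\|z^{k+2}-x^*\|^2\Big].
\end{align*}
The first bracket is bounded below directly by the right-hand side of \eqref{ef}. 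This step uses $\theta\neq 0$, which is implicit since $1/\theta$ appears in the statement.

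For the second bracket I use the hypothesis $q\le\varrho$. It gives $1/\varrho\le 1/q$, so $-\frac{1}{\varrho}\|z^{k+2}-x^*\|^2\ge -\frac{1}{q}\|z^{k+2}-x^*\|^2$, and the bracket is at least $\frac{1}{q}\big(q\|z^{k+1}-x^*\|^2-\|z^{k+2}-x^*\|^2\big)$. I then apply \eqref{ez} with $k$ replaced by $k+1$. Multiplying by $\frac{a\mu}{2q}$, this lower bound for the distance part becomes
\begin{align*}
&\frac{(1-q)a}{q}\Big[\langle \nabla f(x^{k+1}),x^{k+1}-x^*\rangle-\frac{\mu}{2}\|x^{k+1}-x^*\|^2\Big]+\frac{\mu(1-q)a}{2}\|x^{k+1}-z^{k+1}\|^2\\
&\quad-\frac{a(1-q)^2}{2q\mu}\|\nabla f(x^{k+1})\|^2-(1-q)a\langle \nabla f(x^{k+1}),x^{k+1}-z^{k+1}\rangle .
\end{align*}

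Adding this to the right-hand side of \eqref{ef} and collecting like terms should give \eqref{ly-ana-1} exactly. The $\frac{1}{2L}\|\nabla f(x^k)\|^2$ term carries over unchanged. The bracketed strong-convexity term gets coefficient $\frac{1-q}{q}a-\frac{1-\varrho}{\varrho}$. The $\|\nabla f(x^{k+1})\|^2$ term gets coefficient $\frac{1}{2L}-\frac{a(1-q)^2}{2q\mu}$. The inner product $\langle\nabla f(x^{k+1}),x^{k+1}-z^{k+1}\rangle$ gets coefficient $\frac1\theta-(1-q)a$.

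The bound \eqref{ly-ana-2} for $F_k(a)$ follows by the same argument with \eqref{ef-2} in place of \eqref{ef}. The distance part of $F_k(a)$ is identical to that of $E_k(a)$. The only change is that the gradient coefficient becomes $\frac{\varrho+1}{2L\varrho}-\frac{a(1-q)^2}{2q\mu}$ and the $\|\nabla f(x^k)\|^2$ term is absent.

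There is no real obstacle beyond careful bookkeeping. The one non-identity step is replacing $1/\varrho$ by $1/q$, which is valid only because the discarded quantity $\big(\frac1q-\frac1\varrho\big)\frac{a\mu}{2}\|z^{k+2}-x^*\|^2$ is nonnegative. I would also double-check the sign conventions when matching the shifted \eqref{ez} against the terms of \eqref{ef}.
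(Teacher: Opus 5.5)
Your proof is correct and uses the same decomposition as the paper. You bound the objective part by \eqref{ef} (resp.\ \eqref{ef-2}) and the distance part by \eqref{ez} at index $k+1$ after invoking $q\le\varrho$.

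The one difference is where you discard the slack from $q\le\varrho$, and your choice is the right one. The paper's \eqref{jy-b} drops $\frac{a\mu}{2}\bigl(1-\frac{q}{\varrho}\bigr)\|z^{k+1}-x^*\|^2$ and keeps the prefactor $\frac{a\mu}{2\varrho}$. After substituting \eqref{ez}, that gives the coefficients $\frac{(1-q)a}{\varrho}$, $\frac{q\mu(1-q)a}{2\varrho}$, $-\frac{a(1-q)^2}{2\varrho\mu}$ and $-\frac{q(1-q)a}{\varrho}$, which agree with \eqref{ly-ana-1} only when $\varrho=q$. You instead drop $\frac{a\mu}{2}\bigl(\frac1q-\frac1\varrho\bigr)\|z^{k+2}-x^*\|^2$ and use the prefactor $\frac{a\mu}{2q}$. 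This yields exactly the stated coefficients, which is what the later TM analysis (where $\varrho_*$ may exceed $q$) actually relies on.
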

\begin{proof}
Let $k\geq 0$. 
It follows from the definition of $E_k(a)$ in \eqref{ly-1} and the condition $\varrho \geq q$ that
\begin{equation}\label{jy-b}
E_{k}(a)-\frac{1}{\varrho}E_{k+1}(a)   \geq  \Big(  f(x^{k})-f_*  - \frac{1}{\varrho}\big(f(x^{k+1})-f_*\big) \Big)
+ \frac{a\mu}{2\varrho}\Big(q\|z^{k+1}-x^{*}\|^2 - \|z^{k+2}-x^{*}\|^2 \Big).
\end{equation}
Using \eqref{ef} to bound the first term on the right-hand side of \eqref{jy-b}, substituting  \eqref{ez} (with $k$ replaced by $(k+1)$) into the second term, and rearranging all terms immediately yields the desired result \eqref{ly-ana-1}.
Similarly,  \eqref{ly-ana-2} can be derived using \eqref{ly-2}, \eqref{ef-2} and \eqref{ez}.
\end{proof}

\begin{remark}
Note that the only non-trivial term appearing in both \eqref{ly-ana-1} and \eqref{ly-ana-2} is $\langle x^{k+1} - z^{k+1}, \nabla f(x^{k+1}) \rangle$, whose sign is indeterminate. In particular,  this term vanishes when $x^{k+1} = z^{k+1}$. In the subsequent analysis, we either select appropriate parameters to eliminate this term or bound it via the weighted AM-GM inequality.
\end{remark}

\begin{remark}
Equations~\eqref{ly-1} and \eqref{ly-2} each define a Lyapunov function by combining a function-value-based error term with the squared distance $\|z^k-x^*\|^2$. Specifically, \eqref{ly-1} adopts the standard objective gap $f(x^k)-f_*$, whereas \eqref{ly-2} employs the modified residual term $f(x^k)-f_*-\frac{1}{2L}\|\nabla f(x^k)\|^2$.
Note that the nonnegativity of $f(x^k)-f_*-\frac{1}{2L}\|\nabla f(x^k)\|^2$ follows directly from the $L$-smoothness of $f$.
Alternatively, one may replace the function-value error term with its strongly convex counterpart $f(x^{k})-f_*-\frac{\mu}{2}\|x^k-x^*\|^2\geq 0$, or adopt the tighter inequality $f(x^k)-f_*-\frac{1}{2L}\|\nabla f(x^k)\|^2-\frac{\mu}{2(1-\mu/L)}\|x^k-\tfrac{1}{L}\nabla f(x^k)-x^*\|^2\geq 0$ as studied in \cite{WC25,TD23}. 
Nonnegativity of these two expressions hinges on the $\mu$-strong convexity of $f$. Moreover, both expressions explicitly couple the objective gap and squared distance, which conflicts with our aim to decouple these two quantities.
\end{remark}

Based on \eqref{ly-ana-1} and \eqref{ly-ana-2}, we subsequently present new 
convergence analysis for FGM, SC-OGM, TM method and AOR-HB, using the unified Lyapunov functions.
\begin{enumerate}[leftmargin=*]
    \item[(i)] FGM \cite{N83,N87}: As established in the analysis of Section \ref{sec:why_ugm}, FGM is a special case of UGM with 
    $\theta=\sqrt{\kappa}$ and $q=1-\frac{1}{\sqrt{\kappa}}$. 
    Under this parameter setup, the identities $\frac{(1-q)^2}{2q\mu}=\frac{1}{2Lq}$ and $\frac{1}{\theta}=1-q$ hold; these equalities have been verified both analytically and via the symbolic computation script \texttt{check\_proof.ipynb}. 
    Set $a=1$ and $\varrho=q$. Substituting these into \eqref{ly-ana-2}, we obtain
    \[
    F_{k}-\frac{1}{q}F_{k+1}\geq \frac{1}{2L}\|\nabla f(x^{k+1})\|^2 + \frac{\mu(1-q)}{2}\|x^{k+1}-z^{k+1}\|^2 \geq 0.
    \]
    From this inequality, we conclude that Nesterov’s FGM attains linear convergence with convergence rate $1-1/\sqrt{\kappa}$.

    \item[(ii)] SC-OGM \cite{PPR23}:  SC-OGM is a special case of UGM with $\gamma=\frac{\sqrt{8\kappa+1}+3}{2\kappa-2}$, $q=\frac{1}{\gamma+1}$ and $\theta=\frac{\gamma+1}{\gamma}$. With these parameter choices, the  equalities
    $\frac{\gamma+2}{L}=\frac{\gamma^2}{(\gamma+1)\mu}$, $\frac{q +1}{2Lq}=\frac{(1-q)^2}{2q\mu}$ and $\frac{1}{\theta}=1-q$ hold.
    Again, these identities have been verified both analytically and using the symbolic computation script \texttt{check\_proof.ipynb}. 
    Set $a=1$ and $\varrho=q$. Substituting these into  \eqref{ly-ana-2}, we obtain
    \[
    F_{k}-\frac{1}{q}F_{k+1}\geq  \frac{\mu(1-q)}{2}\|x^{k+1}-z^{k+1}\|^2 \geq 0.
    \]
    Hence, SC-OGM achieves linear convergence with rate $\frac{1}{1+\gamma}\sim 1-\sqrt{2/\kappa}$.
    
    \item[(iii)]TM method \cite{VFL17}: The TM method is a special instance of UGM with $q = \rho=1-\frac{1}{\sqrt{\kappa}}$ and {$\theta = \frac{\rho}{2(1-\rho)}$}.
    Then, using $\frac{1}{\theta} = \frac{2(1-q)}{q}$ and applying the weighted AM-GM inequality, we obtain
    \[
    \left(\frac{1}{\theta}-(1-q)a\right) \langle \nabla f(x^{k+1}),x^{k+1}-z^{k+1} \rangle 
    \geq -\frac{\mu(1-q)a}{2}\|x^{k+1}-z^{k+1}\|^2 -  \frac{1-q}{2\mu a} \left( \frac{2}{q} - a \right)^2\|\nabla f(x^{k+1})\|^2.
    \]
   Furthermore, invoking \eqref{f-mu-L} with $x=x^{k+1}$ and $y = x^*$, and using 
   $\|\nabla f(x^{k+1})\|\leq L\|x^{k+1}-x^{*}\|$,  
   we derive
    \[
    \langle \nabla f(x^{k+1}),x^{k+1}-x^{*}\rangle -\frac{\mu}{2}\|x^{k+1}-x^{*}\|^2  \geq  \left(\frac{1}{\mu +L}+ \frac{\mu(L - \mu)}{2(\mu + L)L^2}\right) \|\nabla f(x^{k+1})\|^2 \geq \frac{1}{\mu +L}\|\nabla f(x^{k+1})\|^2.
    \]
     For convenience, we define
    \begin{align}
    A(a, \varrho) & := \frac{\varrho + 1}{2L\varrho} - \frac{a(1-q)^2}{2q\mu} - \frac{1-q}{2\mu a} \left( \frac{2}{q} - a \right)^2 + \frac{1}{\mu + L} \left( \frac{(1-q)a}{q} - \frac{1-\varrho}{\varrho} \right) \label{Aarho}\\
    & = \left(\frac{1}{2L} + \frac{1}{\mu + L} + \frac{2(1 - q)}{\mu q}\right) + \left(\frac{1}{2L}-\frac{1}{\mu+L}\right)\frac{1}{\varrho}-\left[\frac{(L - \mu)(1 - q)}{2q\mu(\mu + L)}a+\frac{2(1 - q)}{\mu q^2}\frac{1}{a}\right]. \label{Aarho2}
    \end{align}
    If inequality
    \begin{equation} \label{con-eq-tm}
    \frac{1-q}{q}a-\frac{1-\varrho}{\varrho}\geq 0,    
    \end{equation}
    holds, then from \eqref{ly-ana-2} we obtain 
    \begin{equation} \label{fka}
    F_{k}(a)-\frac{1}{\varrho}F_{k+1}(a) \geq A(a, \varrho) \|\nabla f(x^{k+1})\|^2,
    \end{equation}
    where $A(a, \varrho)$ is defined in \eqref{Aarho}.
    Next, we show that there exists a pair $(a,\varrho)$ satisfying \eqref{con-eq-tm} and such that $A(a,\varrho)\geq 0$. 
    We focus on the nontrivial case $L>\mu$. First, we set  $a_*=2\sqrt{\frac{L+\mu}{L-\mu}\frac{1}{q}}$ to minimize the expression $\frac{(L - \mu)(1 - q)}{2q\mu(\mu + L)}a + \frac{2(1 - q)}{\mu q^2}\frac{1}{a}$.
    Since $q=1-\frac{1}{\sqrt{\kappa}}<\frac{L-\mu}{L+\mu}$, it follows that $a_*\geq \frac{2(L+\mu)}{L-\mu}>2$. 
    Substituting the variable transformation $t=1/\sqrt{\kappa}$
    into \eqref{Aarho2}, along with the relations $q = 1-t$ and $a_*=2\sqrt{\frac{1+t^2}{(1+t)(1-t)^2}}$, 
    and performing elementary algebraic computations, we obtain
    \begin{equation}\label{Aarho3}
     A(a_*, \varrho) 
     = 
     \frac{1}{\mu}\left(\frac{t^2}{2} + \frac{t^2}{1+t^2} + \frac{2t}{1-t} - \frac{2t\sqrt{1+t}}{(1-t)\sqrt{1+t^2}}- \frac{t^2(1-t^2)}{2(1+t^2)}\frac{1}{\varrho}\right)
      = \frac{t^2}{\mu}\Big(P(t)  - \frac{1}{\varrho}Q(t)\Big).
    \end{equation}
    where $P(t) := \frac{1}{2} + \frac{1}{1 + t^2} - \frac{2}{\sqrt{1 + t^2}(\sqrt{1 + t} + \sqrt{1 + t^2})}$ and $Q(t) := \frac{1-t^2}{2(1+t^2)}$. 
    Next, we choose $\varrho_*$ such that $A(a_*, \varrho_*)=0$, i.e., $\varrho_*=Q(t)\big/P(t)$. 
    From Lemma \ref{appendix-ineq-a}, we have $\frac{1}{2}\leq P(t) \leq \frac{1+t}{2(1+t^2)}$. Hence, via elementary algebraic calculations, we derive
    $q = 1-t \leq    \varrho_*  \leq \frac{1-t^2}{1+t^2}<1$.
    Since $a_*>1$ and $\varrho_*\geq q$, we get $\frac{1-q}{q}a_*-\frac{1-\varrho_*}{\varrho_*} \geq 0$, which verifies that 
    the condition \eqref{con-eq-tm} holds for $(a,\varrho) = (a_*, \varrho_*)$. 
    Substituting  $(a,\varrho) = (a_*, \varrho_*)$ into   \eqref{fka} and noting that $A(a_*, \varrho_*)=0$, we derive the linear convergence result
    $F_{k+1}(a_*) \leq \varrho_* F_{k}(a_*)$.
    Finally, we have
    \begin{equation}\label{Aarho4}
    \varrho_*  = \frac{Q(t)}{P(t)} =  1-\frac{2(t^2+1)\sqrt{1+t} + 2(t^2-1)\sqrt{1+t^2}}{(t^2+3)\sqrt{1+t} + (t^2-1)\sqrt{1+t^2}}
    \sim 1- \frac{t+\frac{11}{4}t^2+\mathcal{O}(t^3)}{2+\frac{3}{2}t+\frac{9}{8}t^2+\mathcal{O}(t^3)}\sim 1- \frac{t}{2} =1-\frac{1}{2\sqrt{\kappa}},
    \end{equation}
    where the Taylor expansion $\sqrt{1+v}=1 + \frac{v}{2} - \frac{v^2}{8}  + \mathcal{O}(v^3)$ is adopted for the asymptotic approximation.  
    Accordingly, we establish a Q-linear convergence rate of $1-{1}/{(2\sqrt{\kappa})}$, while the original result in \cite{VFL17}  achieves an R-linear convergence rate of $1-{2}/{\sqrt{\kappa}}$. 
    Note that the accelerated convergence behavior is retained.
    Finally, we note that all algebraic identities: the expression for $A(a,\varrho)$ in \eqref{Aarho}-\eqref{Aarho2}, 
    the form of $A(a_*,\varrho)$ in \eqref{Aarho3}, and the alternative representation of $\varrho_*$ in \eqref{Aarho4} 
    have been formally verified in the script \texttt{check\_proof.ipynb}.

    \item[(iv)] AOR-HB \cite{WC25}: From the discussion in Section \ref{sec:why_ugm}, AOR-HB is a  special instance of UGM with parameters $q = \frac{2\kappa + 2\sqrt{\kappa} - 1 + \sqrt{4\sqrt{\kappa} + 5}}{2(\sqrt{\kappa} + 1)^2}$ and $\theta = \frac{\kappa}{2} \cdot \frac{2\sqrt{\kappa} + 3 - \sqrt{4\sqrt{\kappa} + 5}}{ \kappa + \sqrt{4\sqrt{\kappa} + 5} - 2}$. Lemma \ref{sec-b5} guarantees  $q \in [\frac{3}{4},1)$ and $\theta>0$. 
    Let $a= \frac{2q-1}{\kappa(1-q)^2}$ and $\varrho=q$. One can manually verify the relation $\frac{1}{\theta} = \frac{2q-1}{\kappa(1 - q)} = a(1 - q)$.
    Alternatively, these equalities are also validated via symbolic computation in \texttt{check\_proof.ipynb}.
    Hence, we have
\begin{equation}\label{jy-0619a}
\frac{1}{L} - \frac{a(1-q)^2}{q\mu}  = \frac{1}{L} - \frac{2q-1}{q\kappa\mu} = \frac{1}{L} \Big( 1 - \frac{2q-1}{q} \Big) = \frac{1-q}{qL} 
\text{~~and~~}
a\mu(1-q)  = \frac{\mu(2q-1)}{\kappa(1-q)} = \frac{\mu}{\theta}.
\end{equation}
    For the given $q$, the following equivalence holds:
    \begin{equation*}
    a = \frac{2q-1}{\kappa(1-q)^2} > 1    \text{~~if and only if~~} 4\kappa^{7/2} + 21\kappa^3 + 48\kappa^{5/2} + 63\kappa^2 + 52\kappa^{3/2} + 27\kappa + 8\sqrt{\kappa} + 1 > 0.
    \end{equation*}
    This equivalence is verified via symbolic computation in \texttt{check\_proof.ipynb}.  
    Given $\kappa > 0$, the polynomial on the right-hand side is obviously strictly positive, which directly yields $a>1$.
    Recall our choice $\varrho=q$. We thus have $\frac{1-q}{q}a-\frac{1-\varrho}{\varrho} = \frac{1-q}{q} (a-1)>0$.
    Meanwhile, $\mu$-strongly convexity of $f$ delivers the bound
    $\langle \nabla f(x^{k+1}),x^{k+1}-x^{*}\rangle \geq \mu\|x^{k+1}-x^{*}\|^2$.
    This observation guarantees nonnegativity of the second term appearing on the right-hand side of \eqref{ly-ana-1}.
    Combining the relations \eqref{ly-ana-1}, \eqref{jy-0619a}, alongside the identity  $\frac{1}{\theta} = a(1 - q)$, we deduce that
    \[
    E_{k}(a)-\frac{1}{q}E_{k+1}(a) \geq \frac{1}{2L}\|\nabla f(x^{k})\|^2  +\frac{1-q}{2qL}\|\nabla f(x^{k+1})\|^2 +\frac{\mu}{2\theta}\|x^{k+1}-z^{k+1}\|^2\geq 0.
    \]
    Hence, we have established the linear convergence result $E_{k+1}(a) \leq q E_{k}(a)$.
    Furthermore,  one may check that our selected $q$ satisfies $q\sim  1- 1/\sqrt{\kappa}$. 
    In comparison, the original analysis of the AOR-HB algorithm in \cite{WC25}\footnote{The authors also mention a rate $1- 1/ \sqrt{\kappa}$ in their slides, yet the corresponding Lyapunov function is far more intricate than the one constructed here.} establishes a linear convergence rate of $1-1/(2\sqrt{\kappa})$.
\end{enumerate}

\section{UGM with varying parameters \texorpdfstring{$q_k$}{} and \texorpdfstring{$\theta_k$}{}} \label{sec:new_algs}
Let $x^0=z^0\in\mathbb{R}^n$. This section investigates the UGM framework \eqref{ugm} equipped with iteration-dependent parameters $q_k$ and $\theta_k$:
\begin{equation}
\label{ugm-ada}
\left\{ \quad 
\begin{aligned}
 y^{k+1} &= x^{k}-\frac{1}{L}\nabla f(x^{k}), \\
 w^{k+1} & = x^{k} -\frac{1}{\mu} \nabla f(x^{k}), \\
 z^{k+1} &= q_kz^{k}+(1-q_k)w^{k+1}, \\
 x^{k+1} &= \frac{\theta_k}{1+\theta_k}y^{k+1} + \frac{1}{1+\theta_k}z^{k+1}.
\end{aligned}
\right.
\end{equation}
In particular, Subsection~\ref{sec:ugm-I} presents simple iteration-varying parameter choices, while Subsections~\ref{sec:ugm-II}--\ref{sec:ugm-III} further develop adaptive, problem-dependent parameter variants.  First, using analogous arguments based on \eqref{ef-2} and \eqref{ez}, we derive the following iterate estimates for the updates in \eqref{ugm-ada}.

\begin{lemma}
For iterates generated by \eqref{ugm-ada}, the following two relations hold for all $k\geq 0$:
\begin{equation}\label{ef-2-2}
\begin{aligned}
   f(x^{k})-f_* &-\frac{1}{2L}\|\nabla f(x^k)\|^2  - \frac{1}{\varrho}\left[f(x^{k+1})-f_*-\frac{1}{2L}\|\nabla f(x^{k+1})\|^2 \right] \geq  \frac{\varrho +1}{2L\varrho}\|\nabla f(x^{k+1})\|^2 \\
 & +\frac{1}{\theta_k}\langle \nabla f(x^{k+1}),x^{k+1}-z^{k+1} \rangle  -\frac{1-\varrho}{\varrho}\left[
\langle \nabla f(x^{k+1}),x^{k+1}-x^{*}\rangle -\frac{\mu}{2}\|x^{k+1}-x^{*}\|^2
\right],
\end{aligned}
\end{equation}
and 
\begin{equation} \label{ez-2}
\begin{aligned}
\|z^{k}-x^{*}\|^2&-\frac{1}{q_k}\|z^{k+1}-x^{*}\|^2 =\frac{2(1-q_k)}{\mu q_k}\langle x^{k}-x^{*},\nabla f(x^{k})\rangle - \frac{1-q_k}{q_k}\|x^{k}-x^{*}\|^2    \\
&+(1-q_k)\|x^{k}-z^{k}\|^2- \frac{(1-q_k)^2}{q_k\mu^2}\|\nabla f(x^{k})\|^2 - \frac{2(1-q_k)}{\mu}\langle \nabla f(x^{k}), x^{k}-z^{k}\rangle.
\end{aligned}
\end{equation}
\end{lemma}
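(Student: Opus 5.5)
The plan is to repeat the arguments behind \eqref{ef-2} and \eqref{ez} with the fixed parameters replaced by the iteration-dependent pair $(q_k,\theta_k)$. Both earlier proofs used only the update rules at a single step $k\to k+1$, never the fact that the parameters stay the same across iterations. So I expect the proof to be bookkeeping rather than a new estimate.

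For \eqref{ef-2-2}, I start from the two inequalities in \eqref{jy-a}. These come only from $L$-smoothness and $\mu$-strong convexity of $f$, so they hold for any iterates. Combining them as in the proof of \eqref{ef} gives
\[
f(x^{k})-f_* - \tfrac{1}{\varrho}\big(f(x^{k+1})-f_*\big)\geq \tfrac{1}{2L}\|\nabla f(x^{k})-\nabla f(x^{k+1})\|^2+\langle \nabla f(x^{k+1}),x^{k}-x^{k+1}\rangle - \tfrac{1-\varrho}{\varrho}\big[\cdots\big].
\]
The key identity is $x^{k}-x^{k+1}=\frac{1}{L}\nabla f(x^k)+\frac{1}{\theta_k}(x^{k+1}-z^{k+1})$. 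To get it, rewrite the last line of \eqref{ugm-ada} as $(1+\theta_k)x^{k+1}=\theta_k y^{k+1}+z^{k+1}$ and substitute $y^{k+1}=x^k-\frac1L\nabla f(x^k)$; this uses $\theta_k\neq 0$. Plugging it in, the cross term $-\frac{1}{L}\langle\nabla f(x^k),\nabla f(x^{k+1})\rangle$ from the expanded square cancels against $\frac1L\langle\nabla f(x^{k+1}),\nabla f(x^k)\rangle$. This yields the analogue of \eqref{ef} with $1/\theta_k$ in place of $1/\theta$. Subtracting $\frac{1}{2L}\|\nabla f(x^k)\|^2$ and adding $\frac{1}{2L\varrho}\|\nabla f(x^{k+1})\|^2$ on both sides then gives \eqref{ef-2-2}.

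For \eqref{ez-2}, the argument is the computation in the proof of \eqref{ez} with $q$ replaced by $q_k$. Write $z^{k+1}-x^*=(1-q_k)x^k+q_kz^k-x^*-\frac{1-q_k}{\mu}\nabla f(x^k)$, expand the square, and apply \eqref{eq:convex-id} with $\alpha=1-q_k$. This gives the $q_k$-version of \eqref{ez}:
\[
q_k\|z^k-x^*\|^2-\|z^{k+1}-x^*\|^2=\dots
\]
Dividing by $q_k>0$ produces exactly \eqref{ez-2}, after splitting $\frac{2(1-q_k)}{\mu q_k}[\langle\nabla f(x^k),x^k-x^*\rangle-\frac\mu2\|x^k-x^*\|^2]$ into its two pieces and simplifying $q_k(1-q_k)/q_k=1-q_k$ and $2q_k(1-q_k)/(\mu q_k)=2(1-q_k)/\mu$.

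The only real care point is matching coefficients after the division by $q_k$ in \eqref{ez-2}, and checking that $\theta_k\neq0$ and $q_k\in(0,1)$ are implicitly assumed. Neither is a genuine obstacle, since no inequality beyond \eqref{jy-a} is needed.
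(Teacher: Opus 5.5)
Your proposal is correct and follows the paper's route. The paper gives no separate proof, saying only that \eqref{ef-2-2} and \eqref{ez-2} follow from ``analogous arguments based on \eqref{ef-2} and \eqref{ez}'' with $(q,\theta)$ replaced by $(q_k,\theta_k)$, and your identity $x^k-x^{k+1}=\frac{1}{L}\nabla f(x^k)+\frac{1}{\theta_k}(x^{k+1}-z^{k+1})$, plus dividing the $q_k$-version of \eqref{ez} by $q_k$, is exactly that computation. Besides $\theta_k\neq 0$ and $q_k\neq 0$, which you flag, the argument also tacitly needs $\varrho\in(0,1)$, so that $\frac{1-\varrho}{\varrho}\geq 0$ when you use the upper bound on $f(x^{k+1})-f_*$; this is inherited from the setting of \eqref{ef}.
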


\begin{thom}
Let $0<\varrho<1$ and $a_k>0$ satisfy $a_{k+1}\leq \frac{\varrho a_k}{q_{k+1}}$ for all $k\geq 0$.
Define the Lyapunov function $G_{k}(a_k)$ by
\begin{equation} \label{ada-ly-ana}
G_{k}(a_k) := f(x^{k})-f_*-\frac{1}{2L}\|\nabla f(x^{k})\|^2+\frac{a_k\mu}{2}\|z^{k+1}-x^{*}\|^2.    
\end{equation}
Then, for the iterates generated by \eqref{ugm-ada}, it holds for all $k\geq 0$ that
\begin{equation} \label{ly-ana-2-2}
\begin{aligned}
G_{k}(a_k)-&\frac{1}{\varrho}G_{k+1}(a_{k+1}) \geq    \left(\frac{1-q_{k+1}}{q_{k+1}}a_k-\frac{1-\varrho}{\varrho}\right)\left[
\langle \nabla f(x^{k+1}),x^{k+1}-x^{*}\rangle -\frac{\mu}{2}\|x^{k+1}-x^{*}\|^2 \right]  \\
& +   \left(\frac{\varrho +1}{2L\varrho}\!-\!\frac{a_k(1\!-\!q_{k+1})^2}{2q_{k+1}\mu}\right)\|\nabla f(x^{k+1})\|^2  +\left(\frac{1}{\theta_k}\!-\!(1\!-\!q_{k+1})a_k\right) \langle \nabla f(x^{k+1}),x^{k+1}\!-\!z^{k+1} \rangle   \\ 
& +\frac{\mu(1\!-\!q_{k+1})a_k}{2}\|x^{k+1}\!-\!z^{k+1}\|^2.
\end{aligned}
\end{equation}
\end{thom}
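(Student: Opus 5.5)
The plan is to mirror the proof of the fixed-parameter theorem, now tracking the varying weights $a_k$ and $q_{k+1}$. I will expand $G_k(a_k)-\frac{1}{\varrho}G_{k+1}(a_{k+1})$ into a function-value part and a distance part:
\[
\Big(f(x^k)-f_*-\tfrac{1}{2L}\|\nabla f(x^k)\|^2-\tfrac{1}{\varrho}\big[f(x^{k+1})-f_*-\tfrac{1}{2L}\|\nabla f(x^{k+1})\|^2\big]\Big)
+\frac{\mu}{2}\Big(a_k\|z^{k+1}-x^*\|^2-\frac{a_{k+1}}{\varrho}\|z^{k+2}-x^*\|^2\Big).
\]
The first bracket is handled directly by \eqref{ef-2-2}. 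That relation depends only on the $x$-update of \eqref{ugm-ada} through $\theta_k$, and its proof is verbatim that of \eqref{ef-2}: apply \eqref{jy-a} and then use $x^k-x^{k+1}=\frac{1}{\theta_k}(x^{k+1}-z^{k+1})+\frac{1}{L}\nabla f(x^k)$, which follows from the last two lines of \eqref{ugm-ada}. Before using it I would double-check that this step really produces the stated $\frac{1}{\theta_k}$ coefficient.

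For the distance part, the hypothesis $a_{k+1}\le \varrho a_k/q_{k+1}$ gives $\frac{a_{k+1}}{\varrho}\le \frac{a_k}{q_{k+1}}$. Since $\|z^{k+2}-x^*\|^2\ge 0$, this yields
\[
\frac{\mu}{2}\Big(a_k\|z^{k+1}-x^*\|^2-\frac{a_{k+1}}{\varrho}\|z^{k+2}-x^*\|^2\Big)\ \ge\ \frac{a_k\mu}{2}\Big(\|z^{k+1}-x^*\|^2-\frac{1}{q_{k+1}}\|z^{k+2}-x^*\|^2\Big).
\]
This replaces the condition $\varrho\ge q$ used in \eqref{jy-b}. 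I would then substitute \eqref{ez-2} with $k$ replaced by $k+1$. That identity itself follows from \eqref{eq:convex-id} exactly as \eqref{ez}, after dividing by $q_k$.

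Multiplying \eqref{ez-2} by $a_k\mu/2$ produces five terms:
\begin{itemize}
\item $\frac{(1-q_{k+1})a_k}{q_{k+1}}\big[\langle\nabla f(x^{k+1}),x^{k+1}-x^*\rangle-\frac{\mu}{2}\|x^{k+1}-x^*\|^2\big]$;
\item $\frac{\mu(1-q_{k+1})a_k}{2}\|x^{k+1}-z^{k+1}\|^2$;
\item $-\frac{a_k(1-q_{k+1})^2}{2q_{k+1}\mu}\|\nabla f(x^{k+1})\|^2$;
\item $-(1-q_{k+1})a_k\langle\nabla f(x^{k+1}),x^{k+1}-z^{k+1}\rangle$;
\item a leftover $\frac{a_k\mu}{2}\|x^{k+1}-x^*\|^2$ from the two terms involving $\|x^{k+1}-x^*\|^2$.
\end{itemize}
The leftover needs care. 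The quantity $\frac{2(1-q)}{\mu q}\langle\cdot\rangle-\frac{1-q}{q}\|\cdot\|^2$ equals $\frac{2(1-q)}{\mu q}\big[\langle\cdot\rangle-\frac{\mu}{2}\|\cdot\|^2\big]$ exactly, so there is in fact no remainder once the grouping is done correctly. Verifying this cancellation is the main bookkeeping check.

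Finally, I would add the two lower bounds and collect coefficients:
\begin{itemize}
\item the bracketed strong-convexity term gets $\frac{1-q_{k+1}}{q_{k+1}}a_k-\frac{1-\varrho}{\varrho}$;
\item $\|\nabla f(x^{k+1})\|^2$ gets $\frac{\varrho+1}{2L\varrho}-\frac{a_k(1-q_{k+1})^2}{2q_{k+1}\mu}$;
\item the inner product gets $\frac{1}{\theta_k}-(1-q_{k+1})a_k$;
\item $\|x^{k+1}-z^{k+1}\|^2$ gets $\frac{\mu(1-q_{k+1})a_k}{2}$.
\end{itemize}
Together these give \eqref{ly-ana-2-2}.

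The main obstacle I anticipate is the index alignment. The Lyapunov function pairs $x^k$ with $z^{k+1}$, and the $z$-recursion at step $k+1$ uses $q_{k+1}$ while the $x$-update uses $\theta_k$. I must make sure the weight inequality is applied in the right direction, which requires $\|z^{k+2}-x^*\|^2\ge0$ and $\varrho>0$, and that no stray $\theta_{k+1}$ appears.
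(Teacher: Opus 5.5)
Your proposal is correct and follows essentially the same route as the paper: you split $G_k(a_k)-\frac{1}{\varrho}G_{k+1}(a_{k+1})$ into the function-value part, bounded by \eqref{ef-2-2}, and the distance part, use $a_{k+1}\le \varrho a_k/q_{k+1}$ to replace $a_{k+1}/\varrho$ by $a_k/q_{k+1}$, and substitute \eqref{ez-2} at index $k+1$. As you already note yourself, the fifth ``leftover'' bullet is not actually present, because the $\|x^{k+1}-x^*\|^2$ term is absorbed exactly into the bracket $\langle\nabla f(x^{k+1}),x^{k+1}-x^*\rangle-\frac{\mu}{2}\|x^{k+1}-x^*\|^2$, so that bullet can simply be dropped.
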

\begin{proof}
Let $k\geq 0$.
Since $a_{k+1}\leq \frac{\varrho a_k}{q_{k+1}}$, we have
\begin{equation}\label{jy-0705-add}
\frac{a_k\mu}{2}\|z^{k+1}-x^{*}\|^2  -\frac{a_{k+1}\mu}{2\varrho}\|z^{k+2}-x^{*}\|^2  
\geq \frac{a_k\mu}{2}\left[\|z^{k+1}-x^{*}\|^2-\frac{1}{q_{k+1}}\|z^{k+2}-x^{*}\|^2\right] 
\end{equation}
Substituting \eqref{ez-2} (with $k$ replaced by $k+1$) into the right-hand side of \eqref{jy-0705-add}, summing both sides of the derived inequality with those of \eqref{ef-2-2}, and invoking the definition of $G_{k}(a_k)$ from \eqref{ada-ly-ana}, we arrive at the target estimate \eqref{ly-ana-2-2}.
\end{proof}

\subsection{UGM-I: Parameter rule \texorpdfstring{$(1-q_{k+1})a_k=1/\theta_k$}{}} \label{sec:ugm-I}
Consider the four terms on the right-hand side of \eqref{ly-ana-2-2}.
Among them and aside from their coefficients, the only term with an undetermined sign is ``$\langle \nabla f(x^{k+1}),x^{k+1}\!-\!z^{k+1} \rangle$", whose coefficient is 
$\left(1/\theta_k\!-\!(1\!-\!q_{k+1})a_k\right)$. 
We therefore set the coefficient preceding this term to zero, yielding $(1-q_{k+1})a_k=1/\theta_k$, and select the parameters involved in the iterative scheme \eqref{ugm-ada} and Lyapunov function \eqref{ada-ly-ana} as follows:
\begin{mdframed}[style=myframe]
\textbf{UGM-I.} Choose $\varrho \in (0,1)$ and any sequence $\{a_k, q_{k+1}, \theta_k\}_{k\geq 0}$ such that $a_k>0$, $\theta_k \in (0,1)$, and $q_{k+1}\in (0,1)$  for all $k\geq 0$, and 
\begin{subequations}  \label{scheme1}
\begin{empheq}[left=\empheqlbrace]{align}
\quad & a_{k+1}\leq \frac{\varrho a_k}{q_{k+1}} \label{con-1},\\
\quad & \frac{1-q_{k+1}}{q_{k+1}}a_k-\frac{1-\varrho}{\varrho} \geq 0 \label{con-2},\\
\quad & \frac{\varrho +1}{2L\varrho}-\frac{a_k(1\!-\!q_{k+1})^2}{2q_{k+1}\mu}+\left(\frac{1-q_{k+1}}{q_{k+1}}a_k-\frac{1-\varrho}{\varrho} \right)\frac{1}{\mu + L}\geq 0, \label{con-3}\\
\quad & 1/\theta_k = (1-q_{k+1})a_k. \label{con-4}
\end{empheq}
\end{subequations}
\end{mdframed}
\begin{thom}
For the iterates generated by \eqref{ugm-ada} with parameters satisfying \eqref{scheme1}, it holds for all $k\geq 0$ that $G_{k+1}\leq \varrho G_k$.
\end{thom}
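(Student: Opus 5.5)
The plan is to start from the one-step estimate \eqref{ly-ana-2-2} of the preceding theorem and show that, under the UGM-I parameter rules \eqref{scheme1}, its right-hand side is nonnegative. That theorem applies directly: its only hypothesis is $a_{k+1}\leq \varrho a_k/q_{k+1}$, which is exactly \eqref{con-1}. Nonnegativity of the right-hand side gives $G_k(a_k)-\frac{1}{\varrho}G_{k+1}(a_{k+1})\geq 0$, i.e. $G_{k+1}\leq \varrho G_k$.

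I will then treat the four terms on the right-hand side of \eqref{ly-ana-2-2} one at a time.
\begin{itemize}
\item The inner-product term $\langle \nabla f(x^{k+1}),x^{k+1}-z^{k+1}\rangle$ has coefficient $1/\theta_k-(1-q_{k+1})a_k$. By \eqref{con-4} this coefficient is zero, so the term drops out.
\item The last term, $\frac{\mu(1-q_{k+1})a_k}{2}\|x^{k+1}-z^{k+1}\|^2$, is nonnegative because $a_k>0$ and $q_{k+1}<1$.
\item The first term is the product of $c_k:=\frac{1-q_{k+1}}{q_{k+1}}a_k-\frac{1-\varrho}{\varrho}$ and the bracket $B:=\langle \nabla f(x^{k+1}),x^{k+1}-x^*\rangle-\frac{\mu}{2}\|x^{k+1}-x^*\|^2$. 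The factor $c_k$ is nonnegative by \eqref{con-2}.
\end{itemize}

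To bound $B$ from below by a multiple of the squared gradient, I will apply \eqref{f-mu-L} with $x=x^{k+1}$, $y=x^*$ and $\nabla f(x^*)=0$, exactly as in the TM analysis. This gives
\[
B\geq \frac{\mu L}{\mu+L}\|x^{k+1}-x^*\|^2-\frac{\mu}{2}\|x^{k+1}-x^*\|^2+\frac{1}{\mu+L}\|\nabla f(x^{k+1})\|^2 .
\]
Since $L\geq\mu$, we have $\frac{\mu L}{\mu+L}\geq\frac{\mu}{2}$, so $B\geq \frac{1}{\mu+L}\|\nabla f(x^{k+1})\|^2$. Multiplying by $c_k\geq 0$, the first term is at least $\frac{c_k}{\mu+L}\|\nabla f(x^{k+1})\|^2$.

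Adding this to the explicit $\|\nabla f(x^{k+1})\|^2$ term in \eqref{ly-ana-2-2}, the total coefficient of $\|\nabla f(x^{k+1})\|^2$ is precisely the left-hand side of \eqref{con-3}, which is nonnegative. The whole right-hand side of \eqref{ly-ana-2-2} is therefore nonnegative, and the claim follows.

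The only step needing care is the lower bound on $B$. Multiplying it by its coefficient is valid only because \eqref{con-2} makes $c_k$ nonnegative, and the reduction to a pure gradient term relies on $\frac{\mu L}{\mu+L}\geq\frac{\mu}{2}$. The rest is bookkeeping.

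A minor notational point: the statement writes $G_{k+1}\leq\varrho G_k$ without arguments, which I read as $G_{k+1}(a_{k+1})\leq\varrho\, G_k(a_k)$, with the weights $a_k$ fixed by the scheme.
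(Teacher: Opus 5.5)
Your proof is correct and is the paper's argument. Like the paper, you invoke \eqref{ly-ana-2-2} via \eqref{con-1}, cancel the cross term with \eqref{con-4}, lower-bound the bracket by $\frac{1}{\mu+L}\|\nabla f(x^{k+1})\|^2$ using \eqref{f-mu-L} together with your $c_k\geq 0$ from \eqref{con-2}, and identify the resulting gradient coefficient as the left-hand side of \eqref{con-3}; you just make explicit steps such as $\frac{\mu L}{\mu+L}\geq\frac{\mu}{2}$, which the paper compresses into ``by further exploiting the parameter conditions.''
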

\begin{proof}
For parameters satisfying \eqref{scheme1}, inequality \eqref{ly-ana-2-2} holds. Furthermore, since $f$ is $L$-smooth and $\mu$-strongly convex, we have
$\langle \nabla f(x^{k+1}),x^{k+1}-x^{*}\rangle -\frac{\mu}{2}\|x^{k+1}-x^{*}\|^2\geq \frac{1}{\mu + L} \|\nabla f(x^{k+1})\|^2$.
By further exploiting the parameter conditions in \eqref{scheme1}, we conclude that the right-hand side of \eqref{ly-ana-2-2} is nonnegative, which establishes the desired result.
\end{proof}

We next provide concrete parameter choices that satisfy the conditions in \eqref{scheme1}.

\begin{mdframed}[style=myframe]
\textbf{UGM-I (explicit).} Choose $\varrho $ satisfying $\frac{\sqrt{1+8\kappa}-3}{\sqrt{1+8\kappa}+1} \leq \varrho < \frac{\kappa-1}{\kappa+1}$,  and set 
$a_0\geq \frac{(1 - \varrho)[\kappa(1 - \varrho) - \varrho - 1]}{(\varrho + 1)\varrho}$.  The iteration-dependent parameters are updated via the following explicit rules:
\begin{subequations}  \label{explicit-form}
\begin{empheq}[left=\empheqlbrace]{align}
\quad & \theta_k  = \dfrac{\varrho}{1 - \varrho} + \dfrac{1}{a_k}, \label{sch-th}\\
\quad & a_{k+1}  = \varrho a_k + 1 - \varrho, \label{sch-a}\\
\quad & q_{k+1}  = \dfrac{\varrho a_k}{\varrho a_k + 1 - \varrho}. \label{sch-q}
\end{empheq}
\end{subequations}
\end{mdframed}

To verify that the parameters in \eqref{explicit-form} satisfy the constraints in \eqref{scheme1}, we first introduce a useful lemma.

\begin{lemma} \label{lem:ak_qk}
Let $\{a_k\}_{k\geq 0}$ be a sequence satisfying $a_{k+1} = \varrho a_k + 1 - \varrho$ for all $k \geq 0$, with $a_0 \geq \frac{(1 - \varrho)[\kappa(1 - \varrho) - \varrho - 1]}{(\varrho + 1)\varrho}$. If $\frac{\sqrt{1+8\kappa}-3}{\sqrt{1+8\kappa}+1} \leq \varrho < \frac{\kappa-1}{\kappa+1}$, then for all $k\geq 0$, we have
\begin{equation} \label{ak-lower}
a_k \geq \frac{(1 - \varrho)[\kappa(1 - \varrho) - \varrho - 1]}{(\varrho + 1)\varrho}.
\end{equation}
Moreover, for $q_{k+1} = \dfrac{\varrho a_k}{\varrho a_k + 1 - \varrho}$, we have $\min \left\{\varrho, \frac{\varrho a_0}{\varrho a_0 + 1-\varrho}\right\} \leq q_{k+1} < 1$.
\end{lemma}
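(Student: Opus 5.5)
Write $c:=\frac{(1-\varrho)[\kappa(1-\varrho)-\varrho-1]}{(\varrho+1)\varrho}$. The plan is to treat the recursion $a_{k+1}=\varrho a_k+1-\varrho$ as an affine contraction with fixed point $1$. It can be written as
\[
a_{k+1}-1=\varrho(a_k-1),
\qquad\text{so}\qquad
a_k=1+\varrho^k(a_0-1).
\]
Hence $a_k$ is a convex combination of $a_0$ and $1$, i.e., $a_k\ge\min\{a_0,1\}$ for all $k$, and $a_k$ is monotone in $k$ (nonincreasing if $a_0\ge 1$, nondecreasing if $a_0\le 1$). To prove \eqref{ak-lower} it therefore suffices to show $\min\{a_0,1\}\ge c$. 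The hypothesis gives $a_0\ge c$ directly, so the only remaining inequality is $c\le 1$.

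\textbf{Showing $c\le 1$.} This is the one place where the lower bound on $\varrho$ enters, and I expect it to be the main obstacle. The inequality $c\le1$ is equivalent to
\[
(1-\varrho)[\kappa(1-\varrho)-\varrho-1]\le\varrho(\varrho+1).
\]
Expanding gives $\kappa(1-\varrho)^2-(1-\varrho^2)\le\varrho^2+\varrho$, that is,
\[
\kappa(1-\varrho)^2\le 1+\varrho .
\]
I would view this as a quadratic inequality in $\varrho$ on $(0,1)$, namely $\kappa\varrho^2-(2\kappa+1)\varrho+\kappa-1\le0$. Its roots are
\[
\varrho=\frac{2\kappa+1\pm\sqrt{8\kappa+1}}{2\kappa}.
\]
The smaller root should coincide with $\frac{\sqrt{1+8\kappa}-3}{\sqrt{1+8\kappa}+1}$. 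To check this, set $s=\sqrt{1+8\kappa}$, so $\kappa=(s^2-1)/8$, and substitute: $\frac{2\kappa+1-s}{2\kappa}=\frac{(s^2+3-4s)/4}{(s^2-1)/4}=\frac{(s-1)(s-3)}{(s-1)(s+1)}=\frac{s-3}{s+1}$. The larger root exceeds $1$. So the inequality holds exactly when $\varrho\ge\frac{s-3}{s+1}$ (with $\varrho<1$), which is the assumed lower bound.

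\textbf{Bounds on $q_{k+1}$.} The map $a\mapsto\frac{\varrho a}{\varrho a+1-\varrho}$ is increasing for $a>0$ and takes values in $(0,1)$. For this we need $a_k>0$, which follows from $a_k\ge\min\{a_0,1\}$ together with $a_0>0$; positivity of $a_0$ is part of the standing assumptions (it is needed anyway for $G_k$ to be a Lyapunov function). Since $1-\varrho>0$, this immediately gives $q_{k+1}<1$. For the lower bound, use $a_k\ge\min\{a_0,1\}$ and monotonicity: if $a_k\ge1$ then $q_{k+1}\ge\frac{\varrho}{\varrho+1-\varrho}=\varrho$, and otherwise $q_{k+1}\ge\frac{\varrho a_0}{\varrho a_0+1-\varrho}$. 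In either case $q_{k+1}\ge\min\{\varrho,\frac{\varrho a_0}{\varrho a_0+1-\varrho}\}$.

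Finally, the upper bound $\varrho<\frac{\kappa-1}{\kappa+1}$ is not needed for these claims. It only ensures $c>0$, because it is equivalent to $\kappa(1-\varrho)>1+\varrho$, so it is harmless here.
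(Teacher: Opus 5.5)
Your proposal is correct and matches the paper's proof essentially step for step: the closed form $a_k=1+\varrho^k(a_0-1)$, the case split on $a_0\ge 1$ versus $a_0<1$ (which you package as $a_k\ge\min\{a_0,1\}$), monotonicity of $a\mapsto\frac{\varrho a}{\varrho a+1-\varrho}$, and the auxiliary bound $c\le 1$. The paper (Lemma~\ref{sec3-detail-1}) gets $c\le 1$ by squaring $\varrho+3\ge\sqrt{1+8\kappa}\,(1-\varrho)$, which yields the same condition $\kappa(1-\varrho)^2\le 1+\varrho$ that you derive from the roots of the quadratic. One small tidy-up: you do not need outside standing assumptions for $a_0>0$, because inside the lemma it follows from $a_0\ge c>0$, and $c>0$ is exactly what the upper bound $\varrho<\frac{\kappa-1}{\kappa+1}$ provides.
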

\begin{proof}
For $q_{k+1} = \tfrac{\varrho a_k}{\varrho a_k + 1 - \varrho}$, the inequality $q_{k+1}<1$ trivially holds due to $\varrho<1$. 
Furthermore, solving the recurrence relation $a_{k+1} = \varrho a_k + 1 - \varrho$ for all $k \geq 0$ yields the closed-form expression $a_k = 1 + \varrho^k (a_0 - 1)$. 
The subsequent proof proceeds by considering two separate cases: $a_0\geq 1$ and $a_0 < 1$.
\begin{enumerate}
    \item[(i)] If $a_0 \geq 1$, the sequence $\{a_k\}$ is decreasing and converges to $1$, which implies $a_k \geq 1$ for all $k\geq 0$.
    The condition $\varrho \geq \frac{\sqrt{1+8\kappa}-3}{\sqrt{1+8\kappa}+1}$ ensures $\frac{(1 - \varrho)[\kappa(1 - \varrho) - \varrho - 1]}{(\varrho + 1)\varrho} \leq 1$ 
    (see Lemma \ref{sec3-detail-1} for a rigorous verification). Consequently, $a_k \geq \frac{(1 - \varrho)[\kappa(1 - \varrho) - \varrho - 1]}{(\varrho + 1)\varrho}$ holds for all $k\geq 0$. In addition, since the function $x\mapsto\frac{\varrho x}{\varrho x+1-\varrho}$ is monotonically increasing and $a_k\geq 1$, we further obtain  $q_{k+1}\geq \varrho$.
    
    \item[(ii)] If $a_0 < 1$,  the sequence $\{a_k\}$ is increasing and converges to $1$, so $a_k \geq a_0$ for all $k\geq 0$. This further implies $a_k \geq a_0 \geq \frac{(1 - \varrho)[\kappa(1 - \varrho) - \varrho - 1]}{(\varrho + 1)\varrho}$ and $q_{k+1}\geq \frac{\varrho a_0}{\varrho a_0 + 1-\varrho}$. 
\end{enumerate}
Thus, we have shown that $a_k \geq \frac{(1 - \varrho)[\kappa(1 - \varrho) - \varrho - 1]}{(\varrho + 1)\varrho}$ and $q_{k+1}\geq \min \left\{\varrho, \frac{\varrho a_0}{\varrho a_0 + 1-\varrho}\right\}$  for all $k\geq 0$.
\end{proof}

\begin{lemma}
The parameters given in \eqref{explicit-form} satisfy \eqref{scheme1}.
\end{lemma}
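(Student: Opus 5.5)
The plan is to verify the four conditions \eqref{con-1}--\eqref{con-4} one by one by direct substitution of \eqref{sch-th}--\eqref{sch-q}, and to use Lemma \ref{lem:ak_qk} for the only non-trivial condition, \eqref{con-3}. I will also check the standing requirements $a_k>0$, $q_{k+1}\in(0,1)$ and $\theta_k>0$.

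\textbf{Admissibility.} Since $\varrho<\frac{\kappa-1}{\kappa+1}$, we have $\kappa(1-\varrho)>1+\varrho$. Hence the lower bound in \eqref{ak-lower} is strictly positive, and Lemma \ref{lem:ak_qk} gives $a_k>0$ for all $k$. It follows that $q_{k+1}=\frac{\varrho a_k}{\varrho a_k+1-\varrho}\in(0,1)$ and $\theta_k>0$.

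\textbf{Conditions \eqref{con-1}, \eqref{con-2}, \eqref{con-4}.} These follow from the identity
\[
1-q_{k+1}=\frac{1-\varrho}{\varrho a_k+1-\varrho}.
\]
\begin{itemize}
\item For \eqref{con-1}: $\frac{\varrho a_k}{q_{k+1}}=\varrho a_k+1-\varrho=a_{k+1}$, so \eqref{con-1} holds with equality.
\item For \eqref{con-2}: $\frac{1-q_{k+1}}{q_{k+1}}a_k=\frac{1-\varrho}{\varrho a_k}\,a_k=\frac{1-\varrho}{\varrho}$, so the left-hand side of \eqref{con-2} is exactly $0$.
\item For \eqref{con-4}: $(1-q_{k+1})a_k=\frac{(1-\varrho)a_k}{\varrho a_k+1-\varrho}$, whose reciprocal is $\frac{\varrho}{1-\varrho}+\frac{1}{a_k}=\theta_k$ by \eqref{sch-th}.
\end{itemize}

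\textbf{Condition \eqref{con-3}.} Since the bracket in \eqref{con-3} vanishes, the condition reduces to
\[
\frac{\varrho+1}{2L\varrho}\ \geq\ \frac{a_k(1-q_{k+1})^2}{2q_{k+1}\mu}.
\]
Substituting $q_{k+1}$, the right-hand side equals $\frac{(1-\varrho)^2}{2\mu\varrho(\varrho a_k+1-\varrho)}$. Multiplying through by $2L\varrho(\varrho a_k+1-\varrho)>0$, the condition becomes
\[
(\varrho+1)(\varrho a_k+1-\varrho)\ \geq\ \kappa(1-\varrho)^2,
\]
which is equivalent to
\[
a_k\ \geq\ \frac{(1-\varrho)[\kappa(1-\varrho)-\varrho-1]}{(\varrho+1)\varrho}.
\]
This is exactly \eqref{ak-lower}, which Lemma \ref{lem:ak_qk} guarantees for all $k$ under the stated range of $\varrho$ and the choice of $a_0$.

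\textbf{Main obstacle.} The genuine content sits in \eqref{con-3}, and it has already been isolated in Lemma \ref{lem:ak_qk}. The remaining work is careful algebra, mainly keeping track of signs when clearing denominators, which is legitimate because $a_k>0$ and $\varrho\in(0,1)$.
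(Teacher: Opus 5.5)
Your proposal is correct and is essentially the paper's proof. Conditions \eqref{con-1}, \eqref{con-2}, \eqref{con-4} follow by direct substitution of \eqref{explicit-form} (the first two with equality), and \eqref{con-3}, whose bracket vanishes, reduces exactly to the lower bound \eqref{ak-lower} of Lemma~\ref{lem:ak_qk}. Your extra positivity check of $a_k$, $q_{k+1}$ and $\theta_k$ is something the paper leaves implicit. Reading the box's requirement as $\theta_k>0$ is the right call: taken literally, $\theta_k\in(0,1)$ fails for \eqref{sch-th}, because $a_k\to 1$ gives $\theta_k\to 1/(1-\varrho)>1$.
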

\begin{proof}
First, direct computation using \eqref{sch-q} yields $\frac{1-q_{k+1}}{q_{k+1}}a_k = \frac{1-\varrho}{\varrho}$, which verifies condition \eqref{con-2}.
Second, it is straightforward to verify from \eqref{explicit-form} that $a_{k+1}=\varrho a_{k}/q_{k+1}$ and $(1-q_{k+1})a_k=1/\theta_k$, which validates \eqref{con-1} and \eqref{con-4}. 
Finally, using the foregoing relations and the update rules in \eqref{explicit-form}, one can directly derive that
\[
\text{LHS of~}\eqref{con-3} = \frac{\varrho +1}{2L\varrho}-\frac{1-\varrho}{2\varrho\mu}\frac{1-\varrho}{1-\varrho+\varrho a_k} \geq  \frac{\varrho +1}{2L\varrho}-\frac{1-\varrho}{2\varrho\mu}\frac{\varrho+1}{\kappa(1-\varrho)} = 0,
\]
where the inequality follows from 
\eqref{ak-lower}. This verifies condition \eqref{con-3}.
\end{proof}

\subsection{UGM-II: \texorpdfstring{$\langle \nabla f(x^{k+1}),x^{k+1}\!-\!z^{k+1} \rangle$-aware parameter rule}{}} \label{sec:ugm-II}
In UGM-I, we choose $\theta_k$ so that the coefficient of  $\langle \nabla f(x^{k+1}),x^{k+1}\!-\!z^{k+1} \rangle$ vanishes, since this inner-product term carries an indefinite sign. 
A natural question arises: if, under suitable conditions, we can determine the sign of $\langle \nabla f(x^{k+1}),x^{k+1}\!-\!z^{k+1} \rangle$, do we gain greater flexibility in selecting $\{\theta_k\}_{k\geq 0}$?
Suppose $\theta_k \neq 0$ for all $k\geq 0$. Substituting $x=x^k$ and $y=x^{k+1}$ into \eqref{jy-0706a} and invoking \eqref{ugm-ada}, we can derive 
\begin{align}\label{sec:est-0}
f(x^{k})-f(x^{k+1}) 
\geq \frac{1}{2L}\|\nabla f(x^{k})\|^2 + \frac{1}{2L}\|\nabla f(x^{k+1})\|^2+ \frac{1}{\theta_k}\langle \nabla f(x^{k+1}), x^{k+1}-z^{k+1} \rangle.
\end{align}
Hence, the term $\frac{1}{\theta_k}\langle \nabla f(x^{k+1}), x^{k+1}-z^{k+1} \rangle$ is regarded as a ``good" term whenever it is nonnegative, since \eqref{sec:est-0} then guarantees a reduction in the objective function value. 

Below, we derive two estimates for $\langle \nabla f(x^{k+1}),x^{k+1}-z^{k+1} \rangle$. First, we have
\begin{equation}\label{jy-0718a}
\mu\|x^{k+1}-z^{k+1} \|^2\leq \langle \nabla f(x^{k+1})-\nabla f(z^{k+1}),x^{k+1}-z^{k+1} \rangle \leq L\|x^{k+1}-z^{k+1} \|^2.
\end{equation}
Second, rearranging $x^{k+1}= \frac{\theta_k}{1+\theta_k}y^{k+1} + \frac{1}{1+\theta_k}z^{k+1}$ yields $x^{k+1}-z^{k+1}= \frac{\theta_k}{1+\theta_k}(y^{k+1}-z^{k+1})$.
Substituting this identity into \eqref{jy-0718a} and rearranging the terms, we obtain
\begin{equation}
\begin{aligned} \label{sec:est}
&  \frac{\mu\theta_k^2}{(1+\theta_k)^2} \|y^{k+1}-z^{k+1} \|^2+ \frac{\theta_k}{1+\theta_k}\langle \nabla f(z^{k+1}),y^{k+1}-z^{k+1} \rangle \\
\leq\, &\langle \nabla f(x^{k+1}),x^{k+1}-z^{k+1} \rangle 
\leq  \frac{L\theta_k^2}{(1+\theta_k)^2} \|y^{k+1}-z^{k+1} \|^2 + \frac{\theta_k}{1+\theta_k}\langle \nabla f(z^{k+1}),y^{k+1}-z^{k+1} \rangle.
\end{aligned}
\end{equation}
\begin{lemma} \label{lem:est}
Let  $\bar{\theta}>\underline{\theta}>0$ be fixed constants. Suppose $\theta_k \in [\underline{\theta},\bar{\theta}]$ for all $k\geq 0$. Then the following hold:
\begin{itemize}
    \item[(i)] If $\langle \nabla f(z^{k+1}), y^{k+1}-z^{k+1} \rangle \leq -\frac{L\bar{\theta}}{1+\bar{\theta}}\|y^{k+1}-z^{k+1}\|^2$, then $\langle \nabla f(x^{k+1}),x^{k+1}-z^{k+1} \rangle \leq 0$;
    \item[(ii)] If $\langle \nabla f(z^{k+1}),z^{k+1}-y^{k+1} \rangle \leq \frac{\mu\underline{\theta}}{1+\underline{\theta}}\|y^{k+1}-z^{k+1}\|^2$, then $\langle \nabla f(x^{k+1}),x^{k+1}-z^{k+1} \rangle \geq 0$.
\end{itemize}
\end{lemma}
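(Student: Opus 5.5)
The plan is to read both claims directly off the two-sided estimate \eqref{sec:est}; no further properties of $f$ are needed. To simplify notation, fix $k\geq 0$ and set
\[
s_k := \frac{\theta_k}{1+\theta_k},\qquad D := \|y^{k+1}-z^{k+1}\|^2,\qquad g := \langle \nabla f(z^{k+1}), y^{k+1}-z^{k+1}\rangle .
\]
Then \eqref{sec:est} reads
\[
s_k\big(\mu s_k D + g\big)\;\leq\; \langle \nabla f(x^{k+1}),x^{k+1}-z^{k+1}\rangle \;\leq\; s_k\big(L s_k D + g\big).
\]

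The key observation concerns the map $\theta\mapsto \theta/(1+\theta)$. It is strictly increasing and positive on $(0,\infty)$. Since $\theta_k\in[\underline{\theta},\bar{\theta}]$ with $\underline{\theta}>0$, this gives
\[
0<\underline{s}:=\frac{\underline{\theta}}{1+\underline{\theta}}\leq s_k\leq \bar{s}:=\frac{\bar{\theta}}{1+\bar{\theta}}.
\]
In particular $s_k>0$, so the sign of each bound in the display is the sign of its bracket.

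For (i), the hypothesis is $g\leq -L\bar{s}D$. Because $D\geq 0$ and $s_k\leq\bar{s}$, we get $g\leq -L s_k D$, i.e.\ $L s_k D+g\leq 0$. Multiplying by $s_k>0$ and using the upper bound in \eqref{sec:est} yields $\langle \nabla f(x^{k+1}),x^{k+1}-z^{k+1}\rangle\leq 0$.

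For (ii), the hypothesis $\langle \nabla f(z^{k+1}),z^{k+1}-y^{k+1}\rangle\leq \mu\underline{s}D$ is exactly $-g\leq \mu\underline{s}D$, that is, $g\geq -\mu\underline{s}D$. Since $s_k\geq \underline{s}$ and $D\geq 0$, this gives $g\geq -\mu s_k D$, so $\mu s_k D+g\geq 0$. The lower bound in \eqref{sec:est} then gives $\langle \nabla f(x^{k+1}),x^{k+1}-z^{k+1}\rangle\geq 0$.

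There is no real obstacle. The only points needing care are the direction in which the uniform bounds $\bar{\theta},\underline{\theta}$ are used: the largest $s$ for the upper estimate and the smallest $s$ for the lower one, each combined with $D\geq 0$. One also has to note the sign flip between $y^{k+1}-z^{k+1}$ and $z^{k+1}-y^{k+1}$ in the hypothesis of (ii).
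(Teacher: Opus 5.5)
Your proposal is correct and follows the paper's proof essentially step for step. Both arguments use the monotonicity of $\theta\mapsto\theta/(1+\theta)$ to pass from $\bar{\theta}$ (resp. $\underline{\theta}$) to $\theta_k$ in the hypothesis, and then read off the sign from the upper (resp. lower) bound in \eqref{sec:est}; you also state explicitly that $\theta_k/(1+\theta_k)>0$, which the paper leaves implicit.
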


\begin{proof}
Fix $k\geq 0$. 
Since  $\theta_k \leq \bar{\theta}$, the condition of (i) implies
$$\langle \nabla f(z^{k+1}), y^{k+1}-z^{k+1} \rangle \leq -\frac{L\bar{\theta}}{1+\bar{\theta}}\|y^{k+1}-z^{k+1}\|^2 \leq -\frac{L\theta_k}{1+\theta_k}\|y^{k+1}-z^{k+1}\|^2.$$
It then follows from the second inequality in \eqref{sec:est} that $\langle \nabla f(x^{k+1}),x^{k+1}-z^{k+1} \rangle \leq 0$.
Similarly, since $\theta_k \geq \underline{\theta}$, the condition of (ii) implies 
$$\langle \nabla f(z^{k+1}),z^{k+1}-y^{k+1} \rangle \leq \frac{\mu\underline{\theta}}{1+\underline{\theta}}\|y^{k+1}-z^{k+1}\|^2 \leq \frac{\mu \theta_k}{1+\theta_k}\|y^{k+1}-z^{k+1}\|^2.$$
Then, $\langle \nabla f(x^{k+1}),x^{k+1}-z^{k+1} \rangle \geq  0$ follows from the first inequality from \eqref{sec:est}.
\end{proof}

In view of \eqref{ly-ana-2-2}, if $\langle \nabla f(x^{k+1}), x^{k+1} - z^{k+1} \rangle \geq 0$, then it is desirable to choose $\theta_k$ satisfying 
$1/\theta_k \geq (1 - q_{k+1})a_k$.  
Furthermore, \eqref{sec:est-0} suggests that a smaller $\theta_k$ may be preferable, as it tends to yield a larger decrease in the objective function value. 
Conversely, if $\langle \nabla f(x^{k+1}), x^{k+1} - z^{k+1} \rangle \leq 0$, then it is desirable to choose $\theta_k$ so that the reverse inequality
$1/\theta_k \leq (1 - q_{k+1})a_k$ holds.
On the other hand, the parameters $\{(q_{k+1}, a_k)\}_{k\geq 0}$ and $\varrho$ are selected following the scheme UGM-I (explicit) presented in \eqref{explicit-form}. 
Recall that from \eqref{explicit-form} and Lemma \ref{lem:ak_qk}, for all $k\geq 0$ we have 
\begin{equation}\label{jy-0718b}
\min\left\{\varrho, \frac{\varrho a_0}{\varrho a_0 + 1-\varrho}\right\} \leq q_{k+1} < 1.
\end{equation}
Then, we have the following lemma.
\begin{lemma}\label{lem:jy-0718}
Choose $\{(q_{k+1}, a_k)\}_{k\geq 0}$ and $\varrho$ by following the scheme UGM-I (explicit) presented in \eqref{explicit-form}. 
Assume $\bar{\theta}>\underline{\theta}>0$ be fixed constants satisfying $\underline{\theta} \leq \frac{\varrho}{1-\varrho}$ and $\bar{\theta}\geq \max\left\{\frac{1}{1-\varrho},\frac{\varrho a_0+1-\varrho}{(1-\varrho)a_0}\right\}$. For $k\geq 0$, define $\theta_k$ via
\begin{equation} \label{up:theta_k}
\theta_k=\left\{
\begin{array}{ll}
\bar{\theta}, & \text{if } \langle \nabla f(z^{k+1}),y^{k+1}-z^{k+1} \rangle \leq -\frac{L\bar{\theta}}{1+\bar{\theta}}\|y^{k+1}-z^{k+1}\|^2,\\ \smallskip
\underline{\theta}, & \text{if } \langle \nabla f(z^{k+1}),z^{k+1}-y^{k+1} \rangle \leq \frac{\mu\underline{\theta}}{1+\underline{\theta}}\|y^{k+1}-z^{k+1}\|^2,\\ \smallskip
\dfrac{\varrho}{1-\varrho} + \dfrac{1}{a_k}, & \text{otherwise.}
\end{array}
\right.
\end{equation}
Then, we have $\theta_{k} \in [\underline{\theta},\bar{\theta}]$ for all $k\geq 0$.
\end{lemma}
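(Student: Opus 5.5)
The claim is trivial in the first two branches of \eqref{up:theta_k}, since there $\theta_k$ equals $\bar{\theta}$ or $\underline{\theta}$ and the hypothesis $\bar{\theta}>\underline{\theta}$ already places both values in $[\underline{\theta},\bar{\theta}]$. So the whole argument is about the third branch. There we must show that $\underline{\theta}\le \frac{\varrho}{1-\varrho}+\frac{1}{a_k}\le\bar{\theta}$ for every $k\ge0$. The plan is to bound $a_k$ from below and from above using the closed form $a_k=1+\varrho^k(a_0-1)$ derived in the proof of Lemma~\ref{lem:ak_qk}.

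For the lower bound, note that $a_k>0$. This holds because $a_k$ is a convex combination of $1$ and $a_0>0$: indeed $a_k=\varrho^k a_0+(1-\varrho^k)\cdot 1$. Hence $\frac{1}{a_k}>0$, and so
\[
\frac{\varrho}{1-\varrho}+\frac{1}{a_k}>\frac{\varrho}{1-\varrho}\ge\underline{\theta},
\]
using the assumption $\underline{\theta}\le\frac{\varrho}{1-\varrho}$.

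For the upper bound, the same convex-combination form gives $a_k\ge\min\{1,a_0\}$. Therefore
\[
\frac{1}{a_k}\le\max\Big\{1,\frac{1}{a_0}\Big\}.
\]
Split into the two cases used in Lemma~\ref{lem:ak_qk}.
\begin{itemize}
\item If $a_0\ge1$, then $\theta_k\le\frac{\varrho}{1-\varrho}+1=\frac{1}{1-\varrho}\le\bar{\theta}$.
\item If $a_0<1$, then $\theta_k\le\frac{\varrho}{1-\varrho}+\frac{1}{a_0}=\frac{\varrho a_0+1-\varrho}{(1-\varrho)a_0}\le\bar{\theta}$.
\end{itemize}
In both cases the final inequality is exactly one of the two terms in the maximum defining the assumed lower bound on $\bar{\theta}$.

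The only point needing care is the positivity of $a_0$. It is implicit in UGM-I, since $a_k>0$ is required there. The stated lower bound on $a_0$ could be nonpositive, so I would invoke the standing requirement $a_k>0$ from \eqref{scheme1} rather than that bound. With positivity in hand, the remaining steps are the two elementary algebraic identities above.
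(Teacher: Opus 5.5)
Your proof is correct and is essentially the paper's argument: the paper rewrites the third branch as $\theta_k=\frac{\varrho}{1-\varrho}\cdot\frac{1}{q_{k+1}}$ and invokes $\min\{\varrho,\frac{\varrho a_0}{\varrho a_0+1-\varrho}\}\le q_{k+1}<1$ from Lemma~\ref{lem:ak_qk}, and that bound encodes exactly your estimate $a_k\ge\min\{1,a_0\}$ obtained from the closed form of $a_k$. One small correction: your remark that the stated lower bound on $a_0$ could be nonpositive is wrong, since $\varrho<\frac{\kappa-1}{\kappa+1}$ is equivalent to $\kappa(1-\varrho)-\varrho-1>0$, so that bound is strictly positive and $a_0>0$ follows directly without appealing to the requirement in \eqref{scheme1}.
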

\begin{proof}
By the definition of $\theta_k$ in \eqref{up:theta_k}, we only need to treat the ``otherwise" case. 
Note that in the ``otherwise" case,  from  \eqref{sch-q} we have $\theta_k = \frac{\varrho}{1-\varrho} \!+ \!\frac{1}{a_k}
= \frac{\varrho}{1-\varrho} \frac{1}{q_{k+1}}$.
Then, it follows from \eqref{jy-0718b} that
\begin{equation*}
\underline{\theta} \leq \frac{\varrho}{1-\varrho} \stackrel{\eqref{jy-0718b}}{<} \theta_k \equiv \frac{\varrho}{1-\varrho} \frac{1}{q_{k+1}} 
\stackrel{\eqref{jy-0718b}}{\leq}  \max\left\{\frac{1}{1-\varrho},\frac{\varrho a_0+1-\varrho}{(1-\varrho)a_0}\right\} \leq \bar{\theta}.
\end{equation*}
This completes the proof.
\end{proof}

Based on Lemma \ref{lem:jy-0718}, we are now ready to present our $\langle \nabla f(x^{k+1}),x^{k+1}\!-\!z^{k+1} \rangle$-aware parameter rule as follows.
\begin{mdframed}[style=myframe]
\textbf{UGM-II: $\langle \nabla f(x^{k+1}),x^{k+1}\!-\!z^{k+1} \rangle$-aware parameter rule.} 
Choose parameters $(\varrho,a_0,\underline{\theta}, \bar{\theta})$ such that
$0< \underline{\theta}  \leq \frac{\varrho}{1-\varrho}$,
$\bar{\theta}  \geq \max\left\{\frac{1}{1-\varrho},\frac{\varrho a_0+1-\varrho}{(1-\varrho)a_0}\right\}$,
$\frac{\sqrt{1+8\kappa}-3}{\sqrt{1+8\kappa}+1} \leq \varrho < \frac{\kappa-1}{\kappa+1}$, 
$a_0\geq \frac{(1 - \varrho)[\kappa(1 - \varrho) - \varrho - 1]}{(\varrho + 1)\varrho}$. 
For all $k\geq 0$, compute $\theta_k$, $a_{k+1}$ and $q_{k+1}$ via
\eqref{up:theta_k}, \eqref{sch-a} and \eqref{sch-q}, respectively. 
\end{mdframed}

\begin{thom}
For the iterations generated by \eqref{ugm-ada} with the parameters specified in UGM-II, we have $G_{k+1}\leq \varrho G_k$ for all $k\geq 0$.
\end{thom}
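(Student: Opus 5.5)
We start from the general estimate \eqref{ly-ana-2-2}, which holds because the UGM-II parameters $(a_k,q_{k+1},\varrho)$ come from UGM-I (explicit) and therefore satisfy $a_{k+1}=\varrho a_k/q_{k+1}$. For these parameters the computations in the lemma verifying \eqref{scheme1} give $\frac{1-q_{k+1}}{q_{k+1}}a_k=\frac{1-\varrho}{\varrho}$. Hence the first term on the right-hand side of \eqref{ly-ana-2-2} vanishes. Those computations, together with \eqref{ak-lower}, also give
\[
\frac{\varrho+1}{2L\varrho}-\frac{a_k(1-q_{k+1})^2}{2q_{k+1}\mu}\geq 0 .
\]
Note that $\theta_k$ does not enter these two facts, since it appears only through \eqref{con-4}. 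The last term $\frac{\mu(1-q_{k+1})a_k}{2}\|x^{k+1}-z^{k+1}\|^2$ is nonnegative. So it suffices to show, for every $k$,
\[
\Big(\tfrac{1}{\theta_k}-(1-q_{k+1})a_k\Big)\langle \nabla f(x^{k+1}),x^{k+1}-z^{k+1}\rangle\geq 0 .
\]
Then $G_k-\frac1\varrho G_{k+1}\geq 0$, which is the claim.

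We verify this sign condition case by case according to \eqref{up:theta_k}. Throughout we use the identity $(1-q_{k+1})a_k=\frac{(1-\varrho)a_k}{\varrho a_k+1-\varrho}$, which equals $1/\hat\theta_k$ with $\hat\theta_k:=\frac{\varrho}{1-\varrho}+\frac1{a_k}=\frac{\varrho}{(1-\varrho)q_{k+1}}$.

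\emph{Otherwise case.} Here $\theta_k=\hat\theta_k$, so the coefficient is zero, exactly as in UGM-I.

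\emph{First case} ($\theta_k=\bar\theta$). Lemma \ref{lem:jy-0718} gives $\theta_k\in[\underline\theta,\bar\theta]$, so Lemma \ref{lem:est}(i) applies and yields $\langle\nabla f(x^{k+1}),x^{k+1}-z^{k+1}\rangle\le 0$. It then suffices to show $1/\bar\theta\le 1/\hat\theta_k$, i.e.\ $\hat\theta_k\le\bar\theta$. This is the upper bound in the proof of Lemma \ref{lem:jy-0718}: by \eqref{jy-0718b}, $\hat\theta_k\le \frac{\varrho}{1-\varrho}\max\{\frac1\varrho,\frac{\varrho a_0+1-\varrho}{\varrho a_0}\}\le\bar\theta$.

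\emph{Second case} ($\theta_k=\underline\theta$). Lemma \ref{lem:est}(ii) yields a nonnegative inner product, and we need $\underline\theta\le\hat\theta_k$. This holds because $\hat\theta_k=\frac{\varrho}{(1-\varrho)q_{k+1}}>\frac{\varrho}{1-\varrho}\ge\underline\theta$, using $q_{k+1}<1$.

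If both of the first two conditions hold at once, \eqref{up:theta_k} picks the first branch. In that case both conclusions of Lemma \ref{lem:est} hold, so the inner product is zero and the term is trivially nonnegative.

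The main subtlety is that Lemma \ref{lem:est} requires $\theta_k\in[\underline\theta,\bar\theta]$ for the chosen $\theta_k$ itself, because $x^{k+1}$ depends on $\theta_k$. There is no circularity, however: the tests in \eqref{up:theta_k} involve only $y^{k+1}$ and $z^{k+1}$, which are independent of $\theta_k$. Combined with the range guaranteed by Lemma \ref{lem:jy-0718}, this makes the case analysis legitimate.
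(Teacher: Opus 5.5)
Your proof is correct and follows the same route as the paper. You start from \eqref{ly-ana-2-2}, use the UGM-I (explicit) choice of $(a_k,q_{k+1})$ to make the first term vanish and the $\|\nabla f(x^{k+1})\|^2$ coefficient nonnegative, and then check the sign of the inner-product term in each branch of \eqref{up:theta_k} via Lemma~\ref{lem:est} and the bounds $\underline{\theta}\le\frac{\varrho}{1-\varrho}<\frac{\varrho}{1-\varrho}+\frac{1}{a_k}\le\bar{\theta}$; this simply spells out what the paper's one-line proof calls ``straightforward to verify.''
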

\begin{proof}
It is straightforward to verify that, in all three cases defining $\theta_k$ in \eqref{up:theta_k}, the right-hand side of \eqref{ly-ana-2-2} is nonnegative, which 
gives the desired conclusion.
\end{proof}

\subsection{UGM-III: \texorpdfstring{$\langle \nabla f(x^{k+1}),x^{k+1}\!-\!z^{k+1} \rangle$- and $\|x^{k+1}-z^{k+1}\|^2$-aware parameter rule}{}} \label{sec:ugm-III}
We further note the term $\|x^{k+1}-z^{k+1}\|^2$ on the right-hand side of \eqref{ly-ana-2-2} also depends implicitly on the choice of $\theta_k$, since
$x^{k+1}-z^{k+1} =  \frac{\theta_k}{1+\theta_k} (y^{k+1}-z^{k+1})$.
In this subsection, we select $\theta_k$ by relating it to this previously neglected term; specifically, we seek to find $\theta_k$ such that
\begin{equation} \label{sat-con}
\left(1/\theta_k\!-\!(1\!-\!q_{k+1})a_k\right) \langle \nabla f(x^{k+1}),x^{k+1}\!-\!z^{k+1} \rangle  +
\frac{\mu(1\!-\!q_{k+1})a_k}{2} 
 \|x^{k+1}-z^{k+1} \|^2 
\geq 0.
\end{equation}
Next, we focus on the case $1/\theta_k - (1\!-\!q_{k+1})a_k \geq 0$.

\begin{lemma}\label{lem:jy-0719b}
Let $\{(a_k, q_{k+1})\}_{k\geq 0}$ be any sequence satisfying $a_k>0$ and $q_{k+1} \in (0,1)$ for all $k\geq 0$.
Define $A_k:=(1-q_{k+1})a_k$, $B_k:=\|y^{k+1}-z^{k+1}\|^2$, and $C_k:=\langle \nabla f(z^{k+1}),y^{k+1}-z^{k+1}\rangle$ for all $k\geq 0$.
Suppose that $\sup_{k\geq 0} A_k < \infty$ and $\underline{\theta}$ satisfies $0 < \underline{\theta} \leq 1/\sup_{k\geq 0}A_k$. 
For any $k\geq 0$, if $B_k=0$, then inequality \eqref{sat-con} holds trivially, and if $B_k>0$ then 
\begin{equation}\label{new_theta_k}
\tilde{\theta}_k
:=
\dfrac{-2C_k}
{\mu B_k+(1-A_k)C_k+
\sqrt{\mu^2B_k^2+2\mu B_kC_k+(1+A_k)^2C_k^2}} \leq \frac{1}{A_k},
\end{equation}
and inequality \eqref{sat-con} holds for any $\theta_k$ satisfying $\max\{\underline{\theta},  \tilde{\theta}_k\} \leq \theta_k \leq 1/A_k$.
\end{lemma}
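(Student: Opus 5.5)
The plan is to reduce \eqref{sat-con} to a scalar quadratic inequality in $\theta_k$ and read off $\tilde\theta_k$ as one of its roots. Fix $k$ and write $A=A_k$, $B=B_k$, $C=C_k$, $\theta=\theta_k$. If $B=0$ then $y^{k+1}=z^{k+1}$, so $x^{k+1}=z^{k+1}$ by \eqref{ugm-ada}. Both terms of \eqref{sat-con} then vanish, and the trivial case is done. Assume $B>0$ and restrict to $0<\theta\le 1/A$, so that the coefficient $1/\theta-A$ is nonnegative.

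\textbf{Step 1: reduction to a quadratic.} Since $1/\theta-A\ge0$, I will replace $\langle \nabla f(x^{k+1}),x^{k+1}-z^{k+1}\rangle$ by its lower bound from the first inequality in \eqref{sec:est}. I will also use $\|x^{k+1}-z^{k+1}\|^2=\frac{\theta^2}{(1+\theta)^2}B$. Multiplying by $(1+\theta)^2>0$, the left side of \eqref{sat-con} is then bounded below by a positive multiple of
\[
p(\theta):=(1-A\theta)\big[\mu B\theta+C(1+\theta)\big]+\tfrac{\mu A B}{2}\theta^2
=-A\big(\tfrac{\mu B}{2}+C\big)\theta^2+\big(\mu B+(1-A)C\big)\theta+C .
\]
It therefore suffices to show $p(\theta)\ge0$ on $[\max\{\underline\theta,\tilde\theta_k\},1/A]$. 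Two values are immediate: $p(0)=C$ and $p(1/A)=\frac{\mu B}{2A}>0$.

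\textbf{Step 2: identify $\tilde\theta_k$ as a root.} With $a=-A(\mu B/2+C)$, $b=\mu B+(1-A)C$ and $c=C$, a direct expansion gives
\[
D:=b^2-4ac=\mu^2B^2+2\mu BC+(1+A)^2C^2,
\]
which is exactly the radicand in \eqref{new_theta_k}. The rationalized root formula $\theta=\frac{2c}{-b\mp\sqrt{D}}$ then shows that $\tilde\theta_k=-2C/(b+\sqrt D)$ is a root of $p$, provided $b+\sqrt D\neq0$.

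\textbf{Step 3: sign analysis.} I split according to the sign of $C$.

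\emph{Case $C\ge0$.} Here $D-b^2=4AC(\mu B/2+C)\ge0$, so $b+\sqrt D\ge |b|+b\ge0$, with equality only in degenerate subcases that I will dispose of separately. Hence $\tilde\theta_k\le0<1/A$. Moreover $a\le0$, so $p$ is concave (or affine) with $p(0)\ge0$ and $p(1/A)>0$. Therefore $p\ge0$ on $[0,1/A]$, and in particular on the required interval.

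\emph{Case $C<0$.} Now $p(0)<0<p(1/A)$, so $p$ has a root in $(0,1/A)$. I need to show that this root is $\tilde\theta_k$ and that $p\ge0$ to its right. If $a<0$, $p$ is concave; the interval on which $p\ge0$ lies strictly to the right of $0$ and contains $1/A$. Its left endpoint is the smaller positive root, and checking signs in the formula shows this is $\tilde\theta_k$. If $a>0$, the product of the roots $c/a$ is negative, so $p$ has exactly one positive root. Since $p$ is convex with $p(0)<0$, we get $p\ge0$ to the right of that root, and again it must be $\tilde\theta_k$. The case $a=0$ is linear and handled directly. In every subcase $\tilde\theta_k\le1/A$ and $p\ge0$ on $[\tilde\theta_k,1/A]$.

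The lower bound $\underline\theta\le 1/\sup_j A_j\le 1/A$ only guarantees that the interval $[\max\{\underline\theta,\tilde\theta_k\},1/A]$ is nonempty and consists of positive $\theta$.

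\textbf{Main obstacle.} The delicate point is Step 3: proving that the denominator $b+\sqrt D$ is positive, so that $\tilde\theta_k$ is well defined, and that the explicit formula selects the correct root in each sign regime. I would first try to show $\sqrt D\ge |b|$ in the case $C\ge0$, and $\sqrt D>-b$ via $D-b^2=4AC(\mu B/2+C)$ in the case $C<0$, handling the subcase $\mu B/2+C<0$ separately.
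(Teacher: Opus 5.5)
Your proposal is correct and follows essentially the same route as the paper: the first inequality in \eqref{sec:est} reduces \eqref{sat-con} (for $0<\theta_k\le 1/A_k$) to nonnegativity of the same quadratic $p=I_1$, anchored by $p(1/A_k)=\mu B_k/(2A_k)>0$. In both, $\tilde\theta_k$ is identified via the rationalized root formula as the smaller root in the concave case, the larger root in the convex case, and $1/(1+A_k)$ in the affine case $2C_k+\mu B_k=0$. The differences are only organizational: you split first on the sign of $C_k$ (so for $C_k\ge 0$ you need only $\tilde\theta_k\le 0$), and you use the product of roots rather than the axis of symmetry in the convex case. You are also more explicit than the paper about the denominator $b+\sqrt{D}$, which is in fact always positive when $B_k>0$. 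One small correction to your ``main obstacle'' remark: for $C_k<0$ the identity $D-b^2=4AC(\mu B/2+C)$ settles the subcase $\mu B/2+C<0$ directly, and it is the subcase $\mu B/2+C>0$ that needs the separate observation $b=(\mu B+C)-AC>0$.
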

\begin{proof}
Fix any $k\geq 0$. Recall that we always have $A_k>0$ and the identity $x^{k+1}-z^{k+1} =  \frac{\theta_k}{1+\theta_k} (y^{k+1}-z^{k+1})$.
If $B_k = 0$, then $y^{k+1}=z^{k+1}$, and hence $x^{k+1}-z^{k+1} = 0$, which implies that \eqref{sat-con} holds.
Next, we assume $B_k > 0$. 
From the definitions of $A_k$, $B_k$, $C_k$, the relation $x^{k+1}-z^{k+1} =  \frac{\theta_k}{1+\theta_k} (y^{k+1}-z^{k+1})$, and the left-hand side inequality in \eqref{sec:est}, we obtain
\begin{equation*}
\begin{aligned}
\text{LHS of~} \eqref{sat-con} = \, & \left(\frac{1}{\theta_k}\!-\!A_k\right) \langle \nabla f(x^{k+1}),x^{k+1}\!-\!z^{k+1} \rangle  + \frac{\mu A_k}{2}\frac{\theta_k^2}{(1+\theta_k)^2}\|y^{k+1}-z^{k+1} \|^2 \\
\geq \, & \left(\frac{1}{\theta_k}\!-\!A_k\right)\left( \frac{\mu\theta_k^2 B_k}{(1+\theta_k)^2}   \! +\!\frac{\theta_k C_k}{1+\theta_k} \right) + \frac{\mu A_k B_k}{2} \frac{\theta_k^2}{(1+\theta_k)^2}
=   \frac{I_1(\theta_k)}{(1+\theta_k)^2},
\end{aligned}
\end{equation*}
where $I_1(\theta) := - \frac{1}{2}A_k(2C_k+\mu B_k)\theta^2 + (C_k-A_kC_k+\mu B_k)\theta+C_k$ is a quadratic function of $\theta$.
Note that $I_1(1/A_k)=\frac{\mu B_k}{2A_k} > 0$, and the axis of symmetry of $I_1(\theta)$ is $\frac{C_k-A_kC_k+\mu B_k}{A_k(2C_k+\mu B_k)}$.
Next, we split the discussion into three mutually disjoint cases.
\begin{itemize}
    \item[(i)] $2C_k+\mu B_k>0$.  
    In this case, the coefficient of the quadratic term in $I_1(\theta)$ is negative, and
    straightforward calculations show that $\tilde{\theta}_k$ defined in \eqref{new_theta_k} is the smaller root of the quadratic equation $I_1(\theta)=0$. 
    Since $I_1(1/A_k)>0$, we conclude that $\tilde{\theta}_k \leq 1/A_k$, and for all $\theta_k \in [\tilde{\theta}_k, 1/A_k]$, we have $I_1(\theta_k) \geq 0$, hence \eqref{sat-con} follows;
    
    \item[(ii)]  $2C_k+\mu B_k<0$. 
    In this case, the coefficient of the quadratic term in $I_1(\theta)$ is positive,  and
    straightforward calculations show that $\tilde{\theta}_k$ defined in \eqref{new_theta_k} is the larger root of the quadratic equation $I_1(\theta)=0$. 
    Since $C_k<0$ and $A_k>0$, we have $C_k-A_kC_k+\mu B_k > 2C_k+\mu B_k$. Combined with $2C_k+\mu B_k < 0$, this implies that 
    the axis of symmetry of $I_1(\theta)$, namely $\frac{C_k-A_kC_k+\mu B_k}{A_k(2C_k+\mu B_k)}$, is bounded above $1/A_k$. 
    Recalling $I_1(1/A_k)>0$, we further obtain  $\tilde{\theta}_k \leq 1/A_k$, and $I_1(\theta_k) \geq 0$ for all $\theta_k \in [\tilde{\theta}_k, 1/A_k]$, so \eqref{sat-con} holds;
    
    \item[(iii)] $2C_k+\mu B_k=0$. In this case, $C_k < 0$ and straightforward calculations yields $\tilde{\theta}_k = 1/(A_k+1)$, which solves the linear equation 
       $I_1(\theta)= \frac{1}{2} \mu B_k \big((A_k+1)\theta-1\big) = 0$.
       Hence, $I_1(\theta_k) \geq 0$  for all $\theta_k \in [\tilde{\theta}_k, 1/A_k]$, and thus \eqref{sat-con} follows.
\end{itemize}
Thus, we complete the proof.
\end{proof}

Based on Lemma \ref{lem:jy-0719b} and in analogy with \eqref{up:theta_k}, we now propose the following more flexible update rule for selecting $\theta_k$:
\begin{equation} \label{up2:theta_k}
\theta_k \in \left\{
\begin{array}{ll}
\{\bar{\theta}\}, & \text{if } \langle \nabla f(z^{k+1}),y^{k+1}-z^{k+1} \rangle \leq -\frac{L\bar{\theta}}{1+\bar{\theta}}\|y^{k+1}-z^{k+1}\|^2,\\ \smallskip 
\{\underline{\theta}\}, & \text{if } \langle \nabla f(z^{k+1}),z^{k+1}-y^{k+1} \rangle \leq \frac{\mu\underline{\theta}}{1+\underline{\theta}}\|y^{k+1}-z^{k+1}\|^2,\\ \smallskip 
\left[\max\big\{\underline{\theta},\tilde{\theta}_k\big\}, \dfrac{\varrho}{1-\varrho} + \dfrac{1}{a_k}\right], & \text{otherwise.}
\end{array}
\right.
\end{equation}

We are now ready to present our $\langle \nabla f(x^{k+1}),x^{k+1}\!-\!z^{k+1} \rangle$- and $\|x^{k+1}-z^{k+1}\|^2$-aware parameter rule as follows.
\begin{mdframed}[style=myframe]
\textbf{UGM-III: $\langle \nabla f(x^{k+1}),x^{k+1}\!-\!z^{k+1} \rangle$ \& $\|x^{k+1}-z^{k+1}\|^2$-aware parameter rule.} 
Choose parameters $(\varrho,a_0,\underline{\theta}, \bar{\theta})$ such that
$0< \underline{\theta}  \leq \frac{\varrho}{1-\varrho}$, 
$\bar{\theta}  \geq \max\left\{\frac{1}{1-\varrho},\frac{\varrho a_0+1-\varrho}{(1-\varrho)a_0}\right\}$,
$\frac{\sqrt{1+8\kappa}-3}{\sqrt{1+8\kappa}+1} \leq \varrho < \frac{\kappa-1}{\kappa+1}$, 
$a_0\geq \frac{(1 - \varrho)[\kappa(1 - \varrho) - \varrho - 1]}{(\varrho + 1)\varrho}$.
For all $k\geq 0$, compute $\theta_k$, $a_{k+1}$ and $q_{k+1}$ via
\eqref{up2:theta_k}, \eqref{sch-a} and \eqref{sch-q}, respectively. 
\end{mdframed}
\begin{thom}
For the iterations generated by \eqref{ugm-ada} with the parameters specified in UGM-III, we have $G_{k+1}\leq \varrho G_k$ for all $k\geq 0$.
\end{thom}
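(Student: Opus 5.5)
The plan is to reuse the one-step estimate \eqref{ly-ana-2-2} and show that its right-hand side is nonnegative in each of the three branches of \eqref{up2:theta_k}. Since $q_{k+1}$ and $a_k$ are generated by \eqref{sch-a}--\eqref{sch-q}, exactly as in UGM-I (explicit), the argument for the lemma verifying that \eqref{explicit-form} satisfies \eqref{scheme1} carries over. In particular:
\begin{itemize}
\item $a_{k+1}=\varrho a_k/q_{k+1}$, so the hypothesis of the theorem giving \eqref{ly-ana-2-2} holds;
\item $\frac{1-q_{k+1}}{q_{k+1}}a_k=\frac{1-\varrho}{\varrho}$, so the coefficient in \eqref{con-2} vanishes;
\item $(1-q_{k+1})a_k = \big(\frac{\varrho}{1-\varrho}+\frac1{a_k}\big)^{-1}=:A_k$;
\item \eqref{con-3} holds by Lemma \ref{lem:ak_qk}.
\end{itemize}
Consequently the first term of \eqref{ly-ana-2-2} is zero and the $\|\nabla f(x^{k+1})\|^2$ coefficient is nonnegative. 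It then remains to show, for every admissible $\theta_k$, that
\[
\Big(\tfrac1{\theta_k}-A_k\Big)\langle \nabla f(x^{k+1}),x^{k+1}-z^{k+1}\rangle+\tfrac{\mu A_k}{2}\|x^{k+1}-z^{k+1}\|^2\ge 0,
\]
which is exactly \eqref{sat-con}.

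\textbf{First branch, $\theta_k=\bar\theta$.} Lemma \ref{lem:est}(i) gives $\langle\nabla f(x^{k+1}),x^{k+1}-z^{k+1}\rangle\le 0$. Lemma \ref{lem:jy-0718} shows $1/A_k=\frac{\varrho}{1-\varrho}\frac{1}{q_{k+1}}\le\bar\theta$, so $1/\bar\theta-A_k\le0$. The product is therefore nonnegative, and the squared-norm term is nonnegative as well.

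\textbf{Second branch, $\theta_k=\underline\theta$.} Lemma \ref{lem:est}(ii) gives a nonnegative inner product. Since $1/A_k>\frac{\varrho}{1-\varrho}\ge\underline\theta$, we have $1/\underline\theta-A_k\ge0$, and again every term is nonnegative.

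Lemma \ref{lem:est} requires $\theta_k\in[\underline\theta,\bar\theta]$. In these two branches $\theta_k$ equals $\bar\theta$ or $\underline\theta$, so this is satisfied, and for the lemma's conclusion only the realized $\theta_k$ matters via \eqref{sec:est}. If both branch conditions hold simultaneously, whichever value is selected is covered by the corresponding case.

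\textbf{Third branch, the interval case.} This is the main step. I will invoke Lemma \ref{lem:jy-0719b} with $A_k$ as above. Its hypotheses are:
\begin{itemize}
\item $\sup_k A_k<\infty$: this holds because $A_k<\frac{1-\varrho}{\varrho}$.
\item $\underline\theta\le 1/\sup_k A_k$: this follows from $\underline\theta\le\frac{\varrho}{1-\varrho}\le \inf_k 1/A_k$.
\end{itemize}
The interval in \eqref{up2:theta_k} is $[\max\{\underline\theta,\tilde\theta_k\},1/A_k]$, since $\frac{\varrho}{1-\varrho}+\frac1{a_k}=1/A_k$. It is nonempty because $\tilde\theta_k\le 1/A_k$ and $\underline\theta\le 1/A_k$. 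Lemma \ref{lem:jy-0719b} then yields \eqref{sat-con}.

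The subtle points I will check carefully are two. First, the identification of the upper endpoint with $1/A_k$. Second, the degenerate case $B_k=0$, where $\tilde\theta_k$ is undefined; I will interpret the interval as $[\underline\theta,1/A_k]$, and \eqref{sat-con} then holds trivially. With all branches covered, the right-hand side of \eqref{ly-ana-2-2} is nonnegative, so $G_{k+1}\le\varrho G_k$.
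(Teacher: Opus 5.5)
Your proposal is correct and follows the same route as the paper. The paper simply asserts that the right-hand side of \eqref{ly-ana-2-2} is nonnegative in each of the three branches of \eqref{up2:theta_k}, and you supply exactly the checks that assertion needs: the vanishing first coefficient, \eqref{con-3} via Lemma \ref{lem:ak_qk}, the sign arguments from Lemma \ref{lem:est} using $1/A_k=\frac{\varrho}{1-\varrho}+\frac{1}{a_k}\in(\underline\theta,\bar\theta]$, and Lemma \ref{lem:jy-0719b} for the interval case. One small simplification: the degenerate case $B_k=0$ never reaches the third branch, because then $C_k=0$ and both of the first two branch conditions hold trivially.
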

\begin{proof}
It is straightforward to verify that, in all three cases defining $\theta_k$ in \eqref{up2:theta_k}, 
the right-hand side of \eqref{ly-ana-2-2} is nonnegative, which gives the desired conclusion. 
\end{proof}

\section{Numerical experiments} \label{sec:ne}

This section presents three groups of numerical experiments to comprehensively validate the performance and practical merits of the proposed algorithms. First, we test the proposed methods on unconstrained quadratic optimization problems to explore their favorable application scenarios and analyze the functional role of the parameter $\varrho$. Second, we evaluate the algorithm performance on the logistic regression task, verifying that our methods achieve competitive results in practical real-world learning problems. Third, we conduct experiments on softmax regression to investigate whether the iteration efficiency advantages of UGM-II and UGM-III can be converted into CPU time savings in distributed environments, leveraging cached-logit computation and scalar aggregation techniques. All experiments are implemented in Python 3.11.0 and run on a personal computer equipped with an AMD Ryzen 7 5800H processor, Radeon Graphics, and 16 GB of RAM.

\subsection{Unconstrained quadratic optimization} \label{sec:qp}

For the first experiment, we consider the unconstrained optimization of quadratic functions. Let $A\in\mathbb{R}^{n\times n}$ be a symmetric positive definite matrix and $b \in\mathbb{R}^n$. Consider minimizing $f(x) = \frac{1}{2} x^\top A x - b^\top x$ over $x\in\mathbb{R}^n$. We evaluate four accelerated gradient methods in this setting, including the classical FGM algorithm \cite{N87} and our three proposed schemes, namely UGM-I, UGM-II, and UGM-III, which are developed in Section \ref{sec:new_algs}.
To simplify the experimental setup, we construct $A$ and $b$ in a controlled manner. We first randomly generate an optimal solution $x^*$ whose entries are uniformly sampled from $[0,1]$, and then set $b = A x^*$. The matrix $A$ is defined as the diagonal matrix $A = \operatorname{diag}(\lambda_1,\lambda_2,\dots,\lambda_n)$ with ordered eigenvalues $\lambda_1\leq \lambda_2\leq \cdots\leq\lambda_n$. For a fixed dimension $n$, we set the extreme eigenvalues to $\lambda_1 = \mu$ and $\lambda_n = L$, which ensures that $f$ is $\mu$-strongly convex and $L$-smooth. The intermediate eigenvalues $\lambda_2,\ldots,\lambda_{n-1}$ are sampled from five predefined distributions, rescaled to the interval $[\mu, L]$, and finally sorted in ascending order. 
The detailed configurations of the five eigenvalue distributions are presented as follows.

\begin{enumerate}[itemindent=2em]
    \item[(Case 1)]  
    Geometric progression: set $\lambda_i = \mu r^{\,i-1}$ for $i=2,\ldots,n-1$ with $r = (L/\mu)^{1/(n-1)}$.
    
    \item[(Case 2)] U-shaped distribution: generate $n-2$ $i.i.d.$ samples from $\texttt{Beta}(0.01,0.01)$.
    
    \item[(Case 3)] Central-concentrated distribution: generate $n-2$ $i.i.d.$ samples from $\texttt{Beta}(50,50)$.
    
    \item[(Case 4)] Lower-end skewed distribution: generate $n-2$ $i.i.d.$  samples from $\texttt{Beta}(1,20)$.
    
    \item[(Case 5)] Upper-end skewed distribution: generate $n-2$ $i.i.d.$ samples from $\texttt{Beta}(20,1)$.
\end{enumerate}
For Cases 2-5,  all samples are linearly rescaled to the interval $[\mu, L]$ via the mapping $x \mapsto \mu + (L-\mu)x$, and the resulting values are then sorted in ascending order to form the intermediate eigenvalues. In this experiment, we set $n=1000$. The five eigenvalue distributions are visualized in Figure \ref{fig:eigendist}.

\begin{figure}[!htbp]
    \centering
    \includegraphics[width=0.8\textwidth]{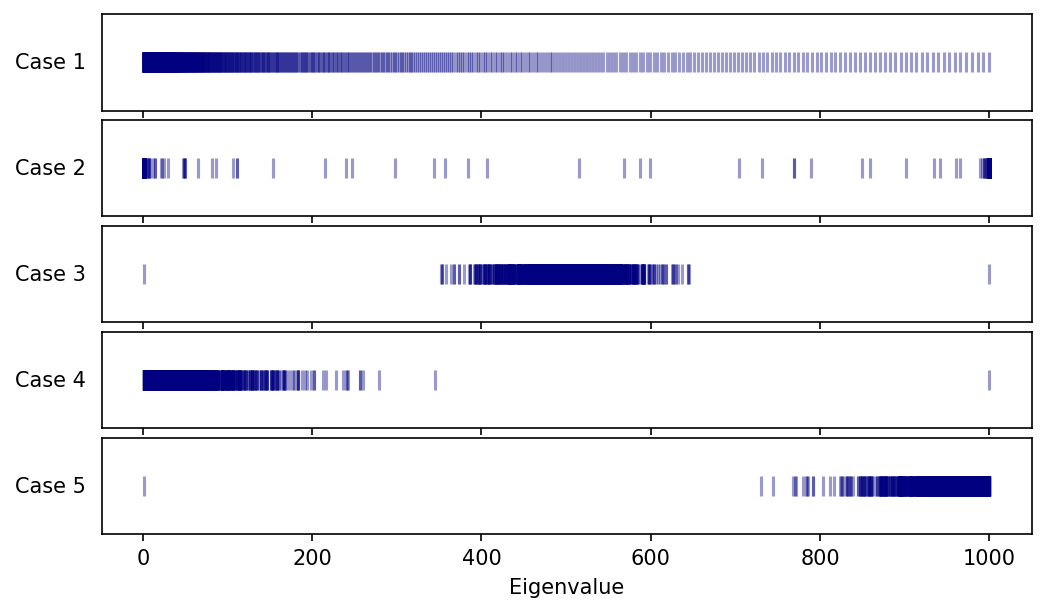}
    \caption{Illustration of eigenvalue distributions for Cases 1-5 with $\mu=1$ and $L=10^3$.}
    \label{fig:eigendist}
\end{figure}
We consider five combinations of strong convexity and smoothness parameters $(\mu, L)$: $(1,10^2)$, $(1,10^3)$, $(1,10^6)$, $(10,10^2)$, and $(10^2,10^4)$. These configurations produce condition numbers $\kappa = L/\mu$ ranging from $10$ to $10^6$. 
To ensure fair comparison across different conditioning levels, the maximum iteration number $K$ is adaptively determined according to $\kappa$. Specifically, we specify three anchor points $(\log_{10}\kappa, K) = (1,50),\;(3,300),\;(6,5000)$ and adopt linear interpolation on the $\log_{10}\kappa$ scale to compute the iteration budget for each tested condition number.

For the proposed UGMs, we adopt the following parameter settings. We set $\varrho=1-1/\sqrt{\kappa}$ for all the three UGMs, $q_0 = \frac{\sqrt{\kappa}-1}{\sqrt{\kappa}+1}$ for UGM-I and $q_0 = \varrho$ for both UGM-II and UGM-III.  The lower bound parameter $\underline{\theta}$ is set to $1$ for $\kappa \geq 10^3$ and $0.01$ otherwise, which satisfies the theoretical constraint $\underline{\theta}<\frac{\varrho}{1-\varrho}$. Furthermore, we set $a_0= \frac{(1 - \varrho)[\kappa(1 - \varrho) - \varrho - 1]}{(\varrho + 1)\varrho}$ and define the upper bound parameter $\bar{\theta}=\max\left\{\frac{1}{1-\varrho},\frac{\varrho a_0+1-\varrho}{(1-\varrho)a_0}\right\}=\sqrt{\kappa}$.  All compared algorithms are initialized at the origin. 

\paragraph{Comparison results.} 
Each $(\mu,L)$ configuration is run five times with independent random seeds. For each eigenvalue distribution and each tested $(\mu, L)$ pair, we report the objective suboptimality $f(x^k)-f_*$ against iteration count and CPU time (s), alongside the evolution of the extrapolation parameter $\theta_k$ for UGM-II and UGM-III. Numerical results for Cases 1–5 are presented respectively in Figures \ref{fig:case1}–\ref{fig:case5}.

\begin{figure}[!htbp]
\centering
\begin{subfigure}[b]{0.19\textwidth}\includegraphics[width=\textwidth]{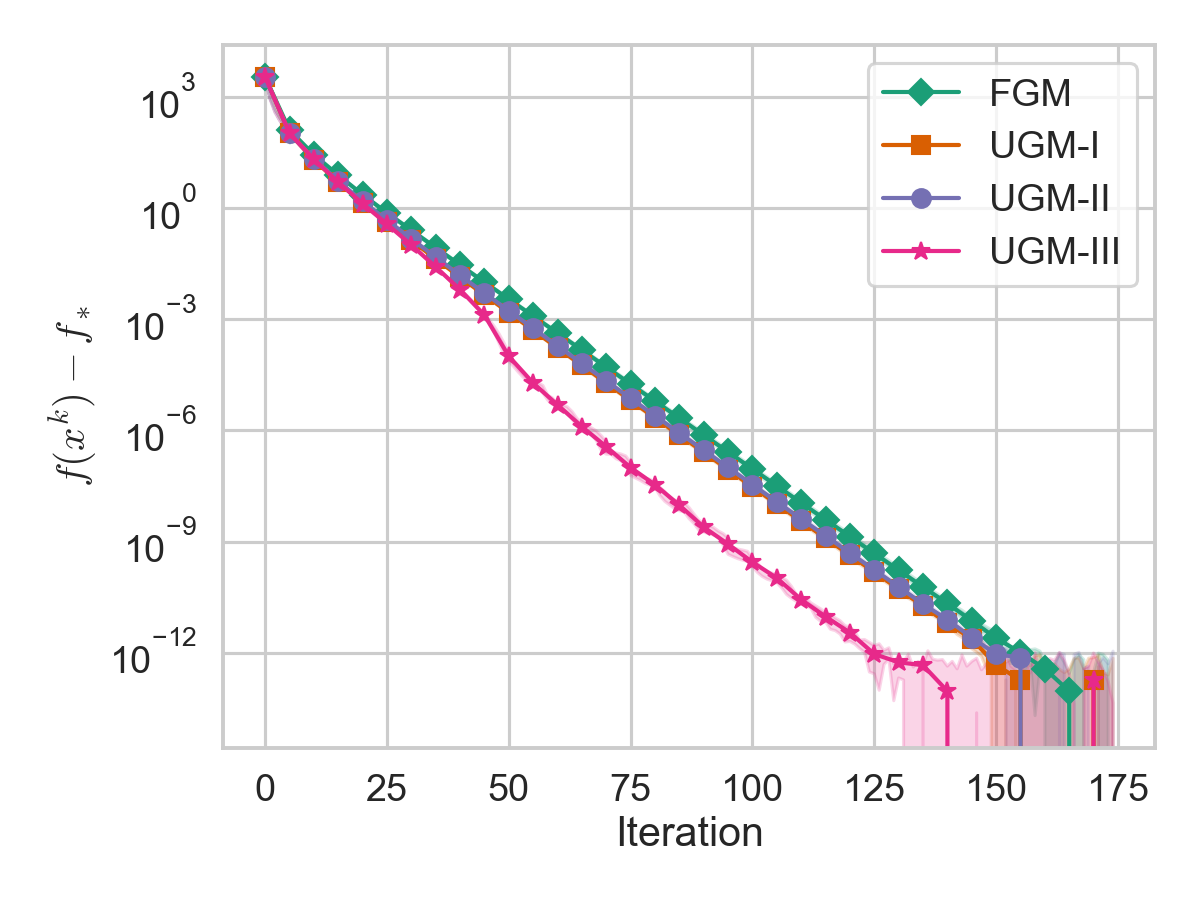}\end{subfigure}\hfill
\begin{subfigure}[b]{0.19\textwidth}\includegraphics[width=\textwidth]{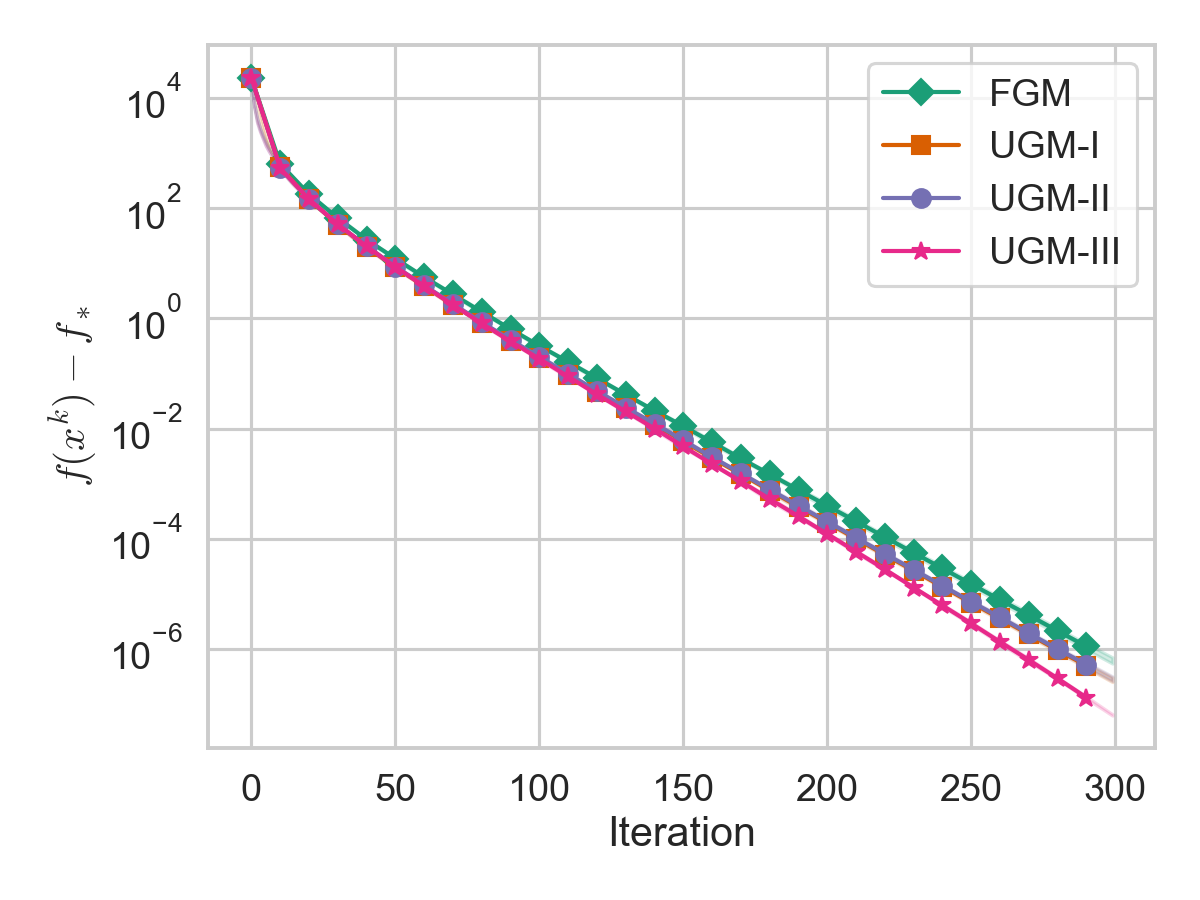}\end{subfigure}\hfill
\begin{subfigure}[b]{0.19\textwidth}\includegraphics[width=\textwidth]{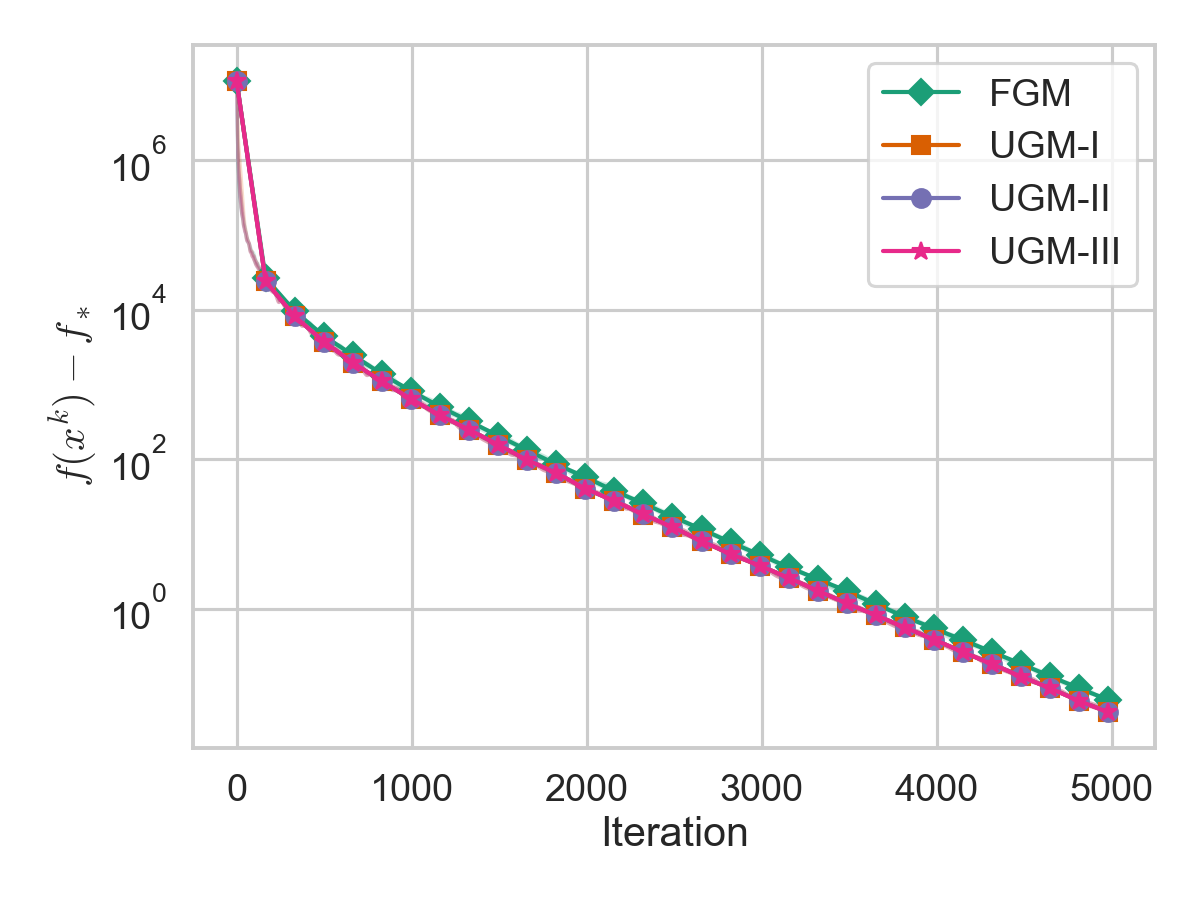}\end{subfigure}\hfill
\begin{subfigure}[b]{0.19\textwidth}\includegraphics[width=\textwidth]{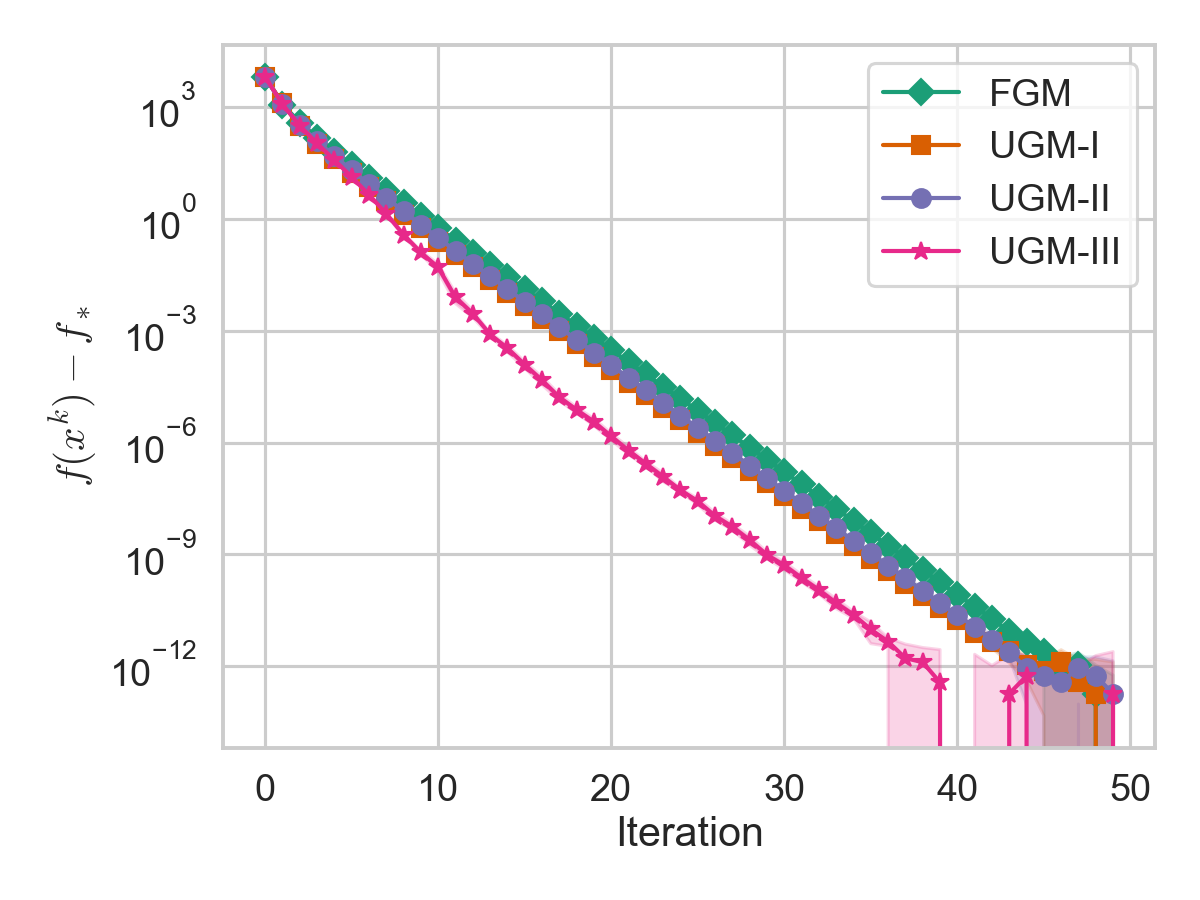}\end{subfigure}\hfill
\begin{subfigure}[b]{0.19\textwidth}\includegraphics[width=\textwidth]{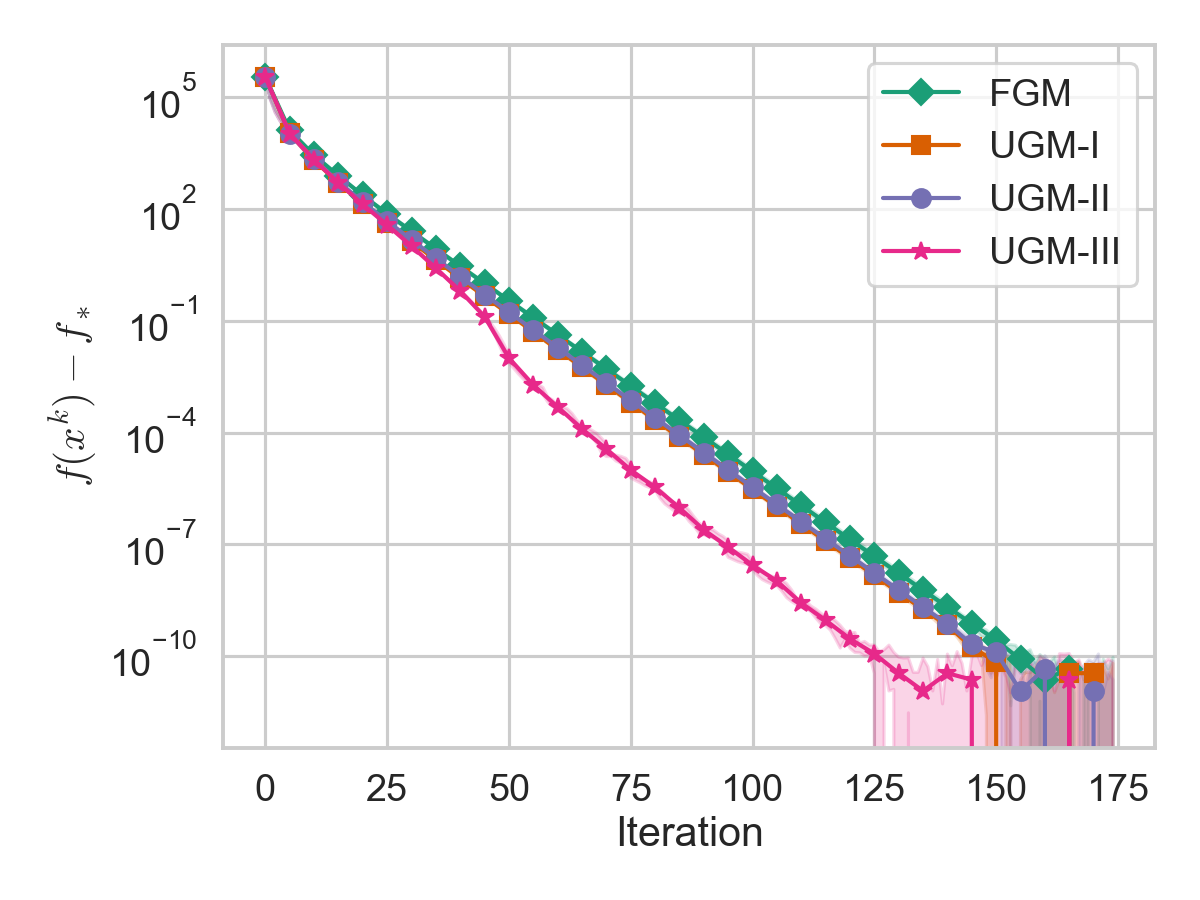}\end{subfigure}
\\
\begin{subfigure}[b]{0.19\textwidth}\includegraphics[width=\textwidth]{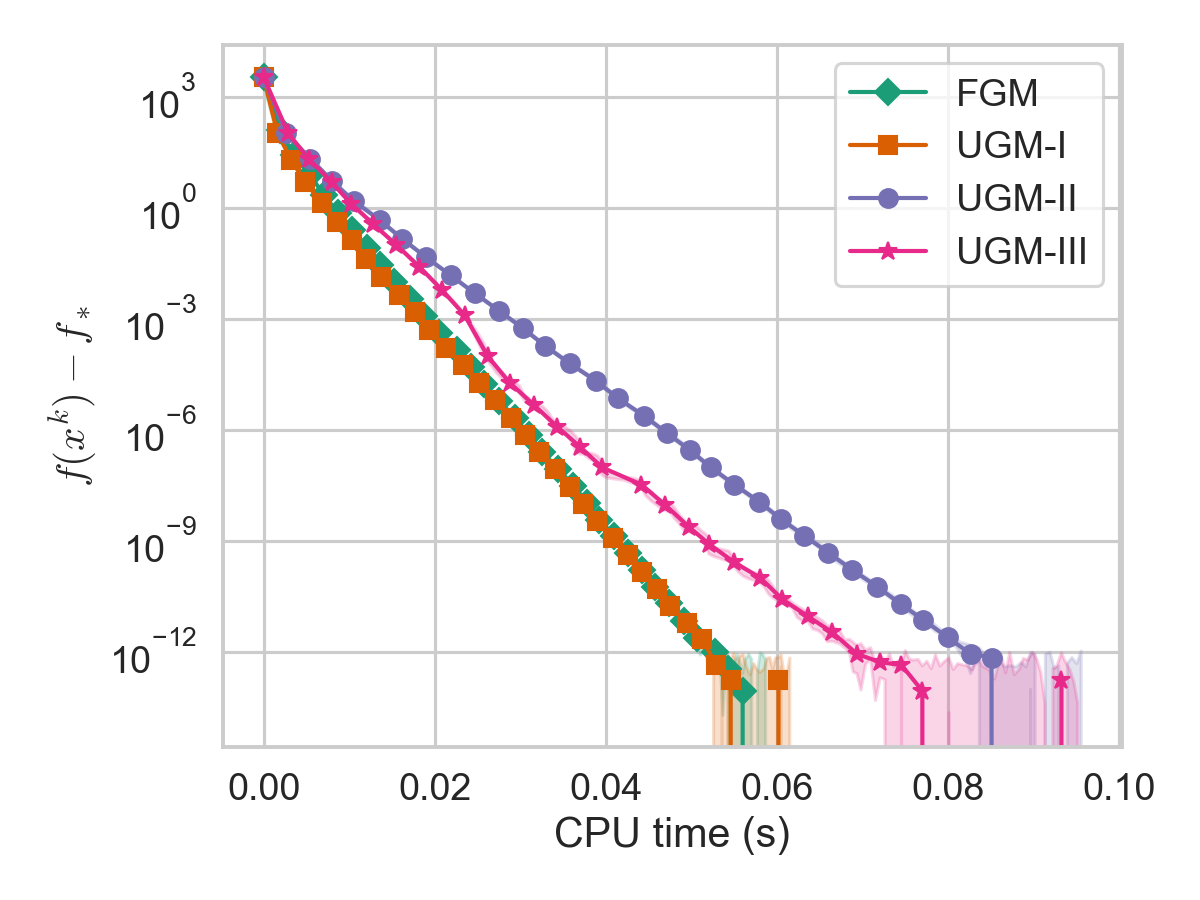}\end{subfigure}\hfill
\begin{subfigure}[b]{0.19\textwidth}\includegraphics[width=\textwidth]{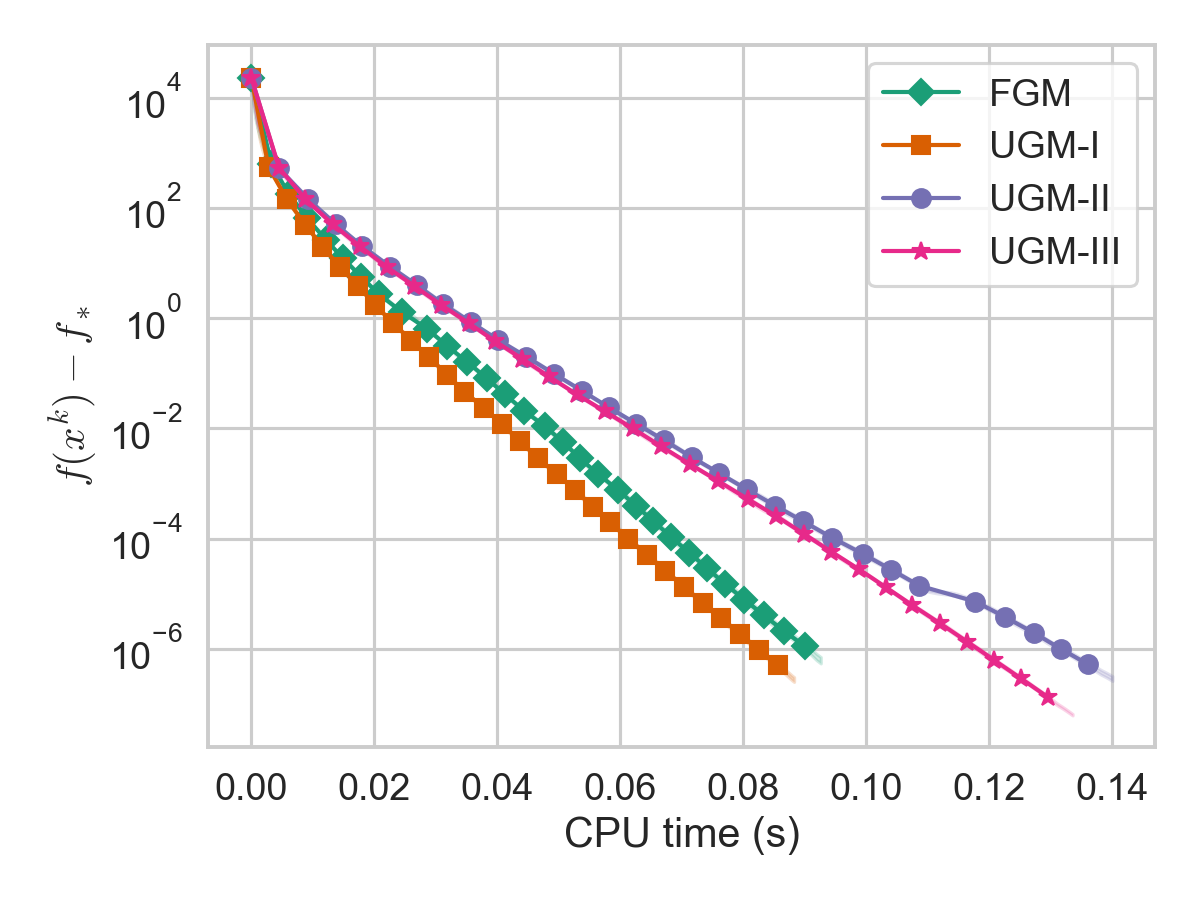}\end{subfigure}\hfill
\begin{subfigure}[b]{0.19\textwidth}\includegraphics[width=\textwidth]{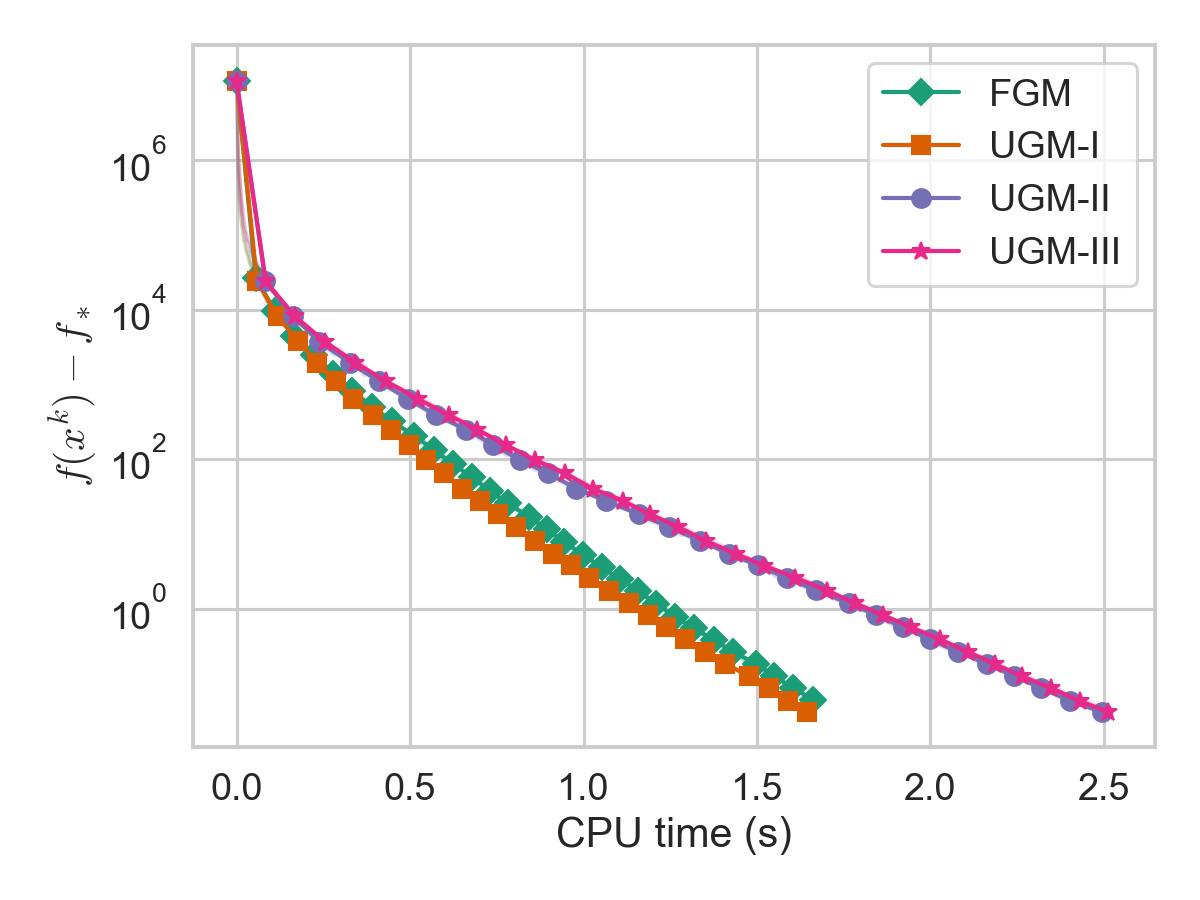}\end{subfigure}\hfill
\begin{subfigure}[b]{0.19\textwidth}\includegraphics[width=\textwidth]{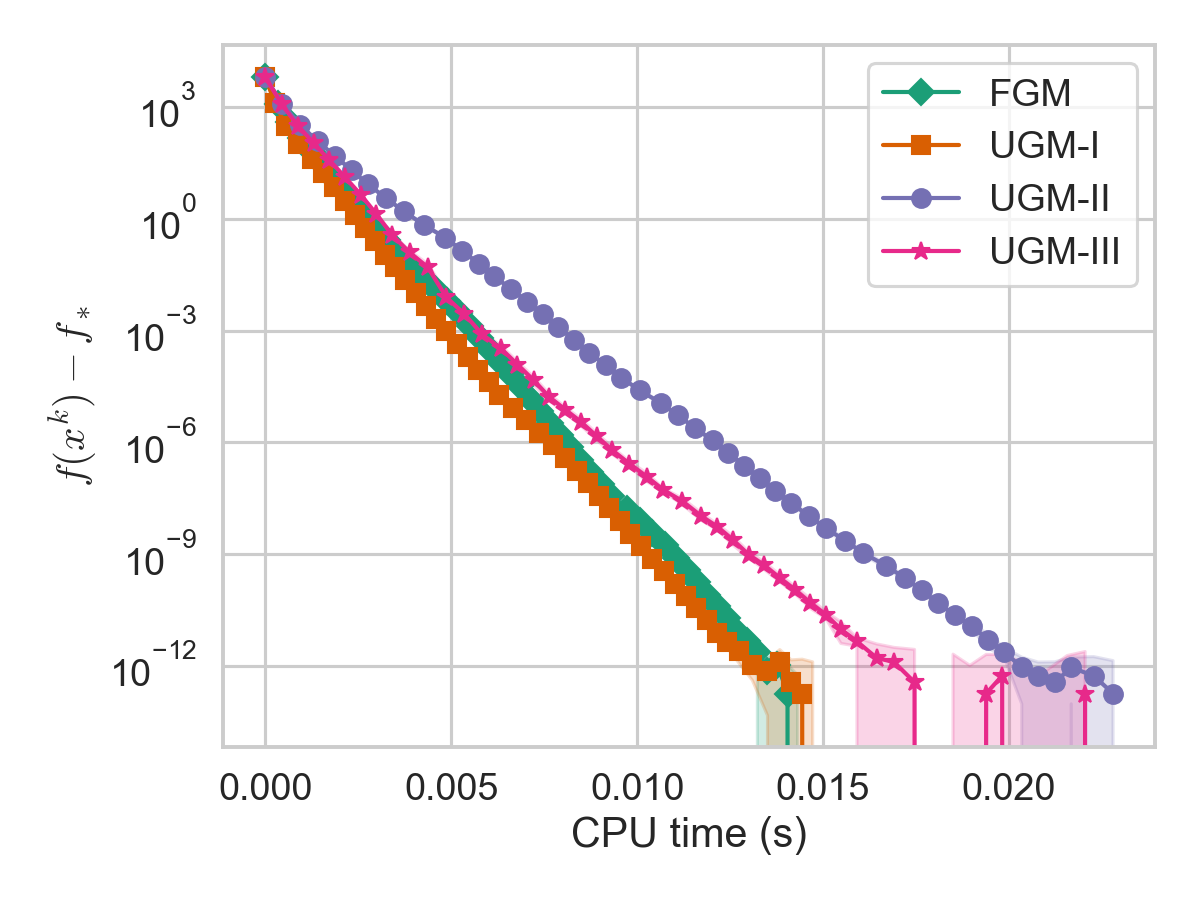}\end{subfigure}\hfill
\begin{subfigure}[b]{0.19\textwidth}\includegraphics[width=\textwidth]{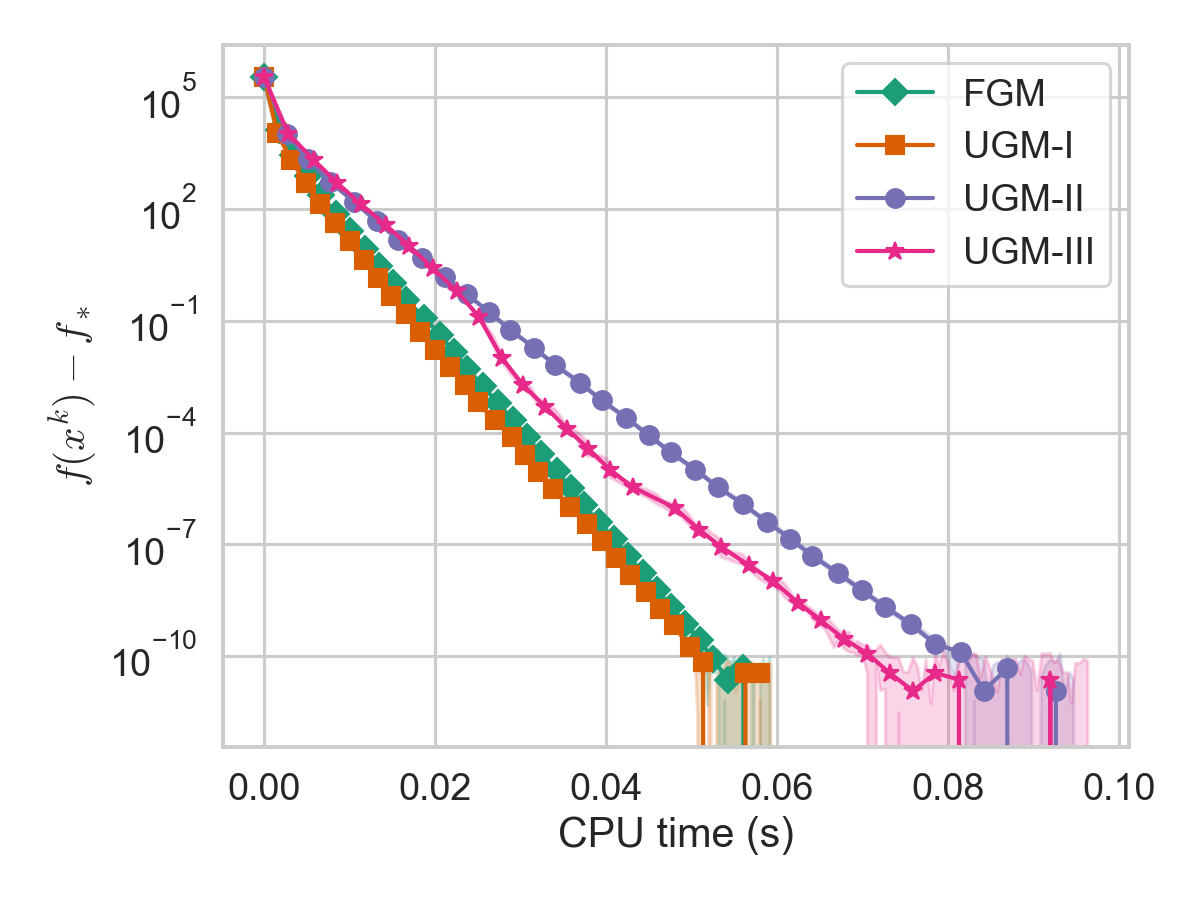}\end{subfigure}
\\
\begin{subfigure}[b]{0.19\textwidth}\includegraphics[width=\textwidth]{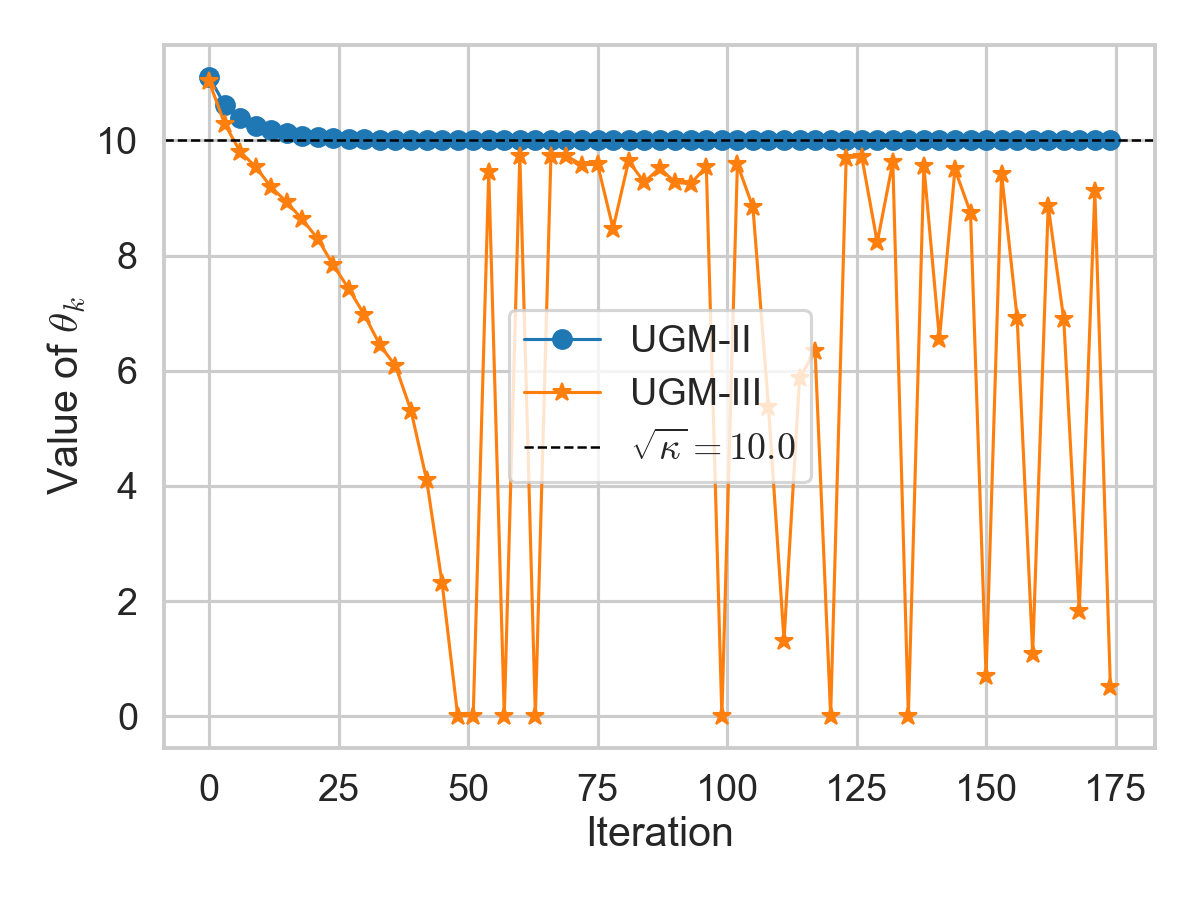}\caption{$\mu=1, L=10^2$}\end{subfigure}\hfill
\begin{subfigure}[b]{0.19\textwidth}\includegraphics[width=\textwidth]{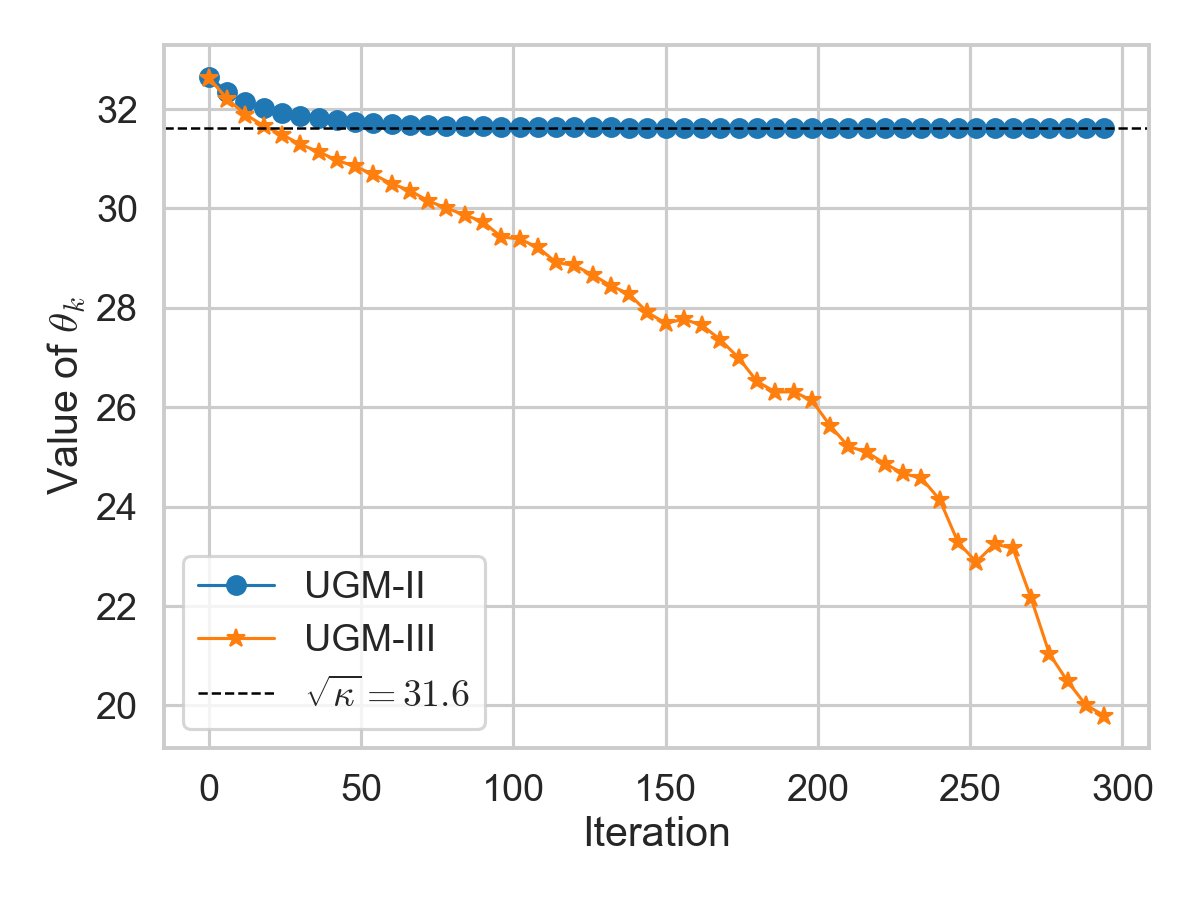}\caption{$\mu=1, L=10^3$}\end{subfigure}\hfill
\begin{subfigure}[b]{0.19\textwidth}\includegraphics[width=\textwidth]{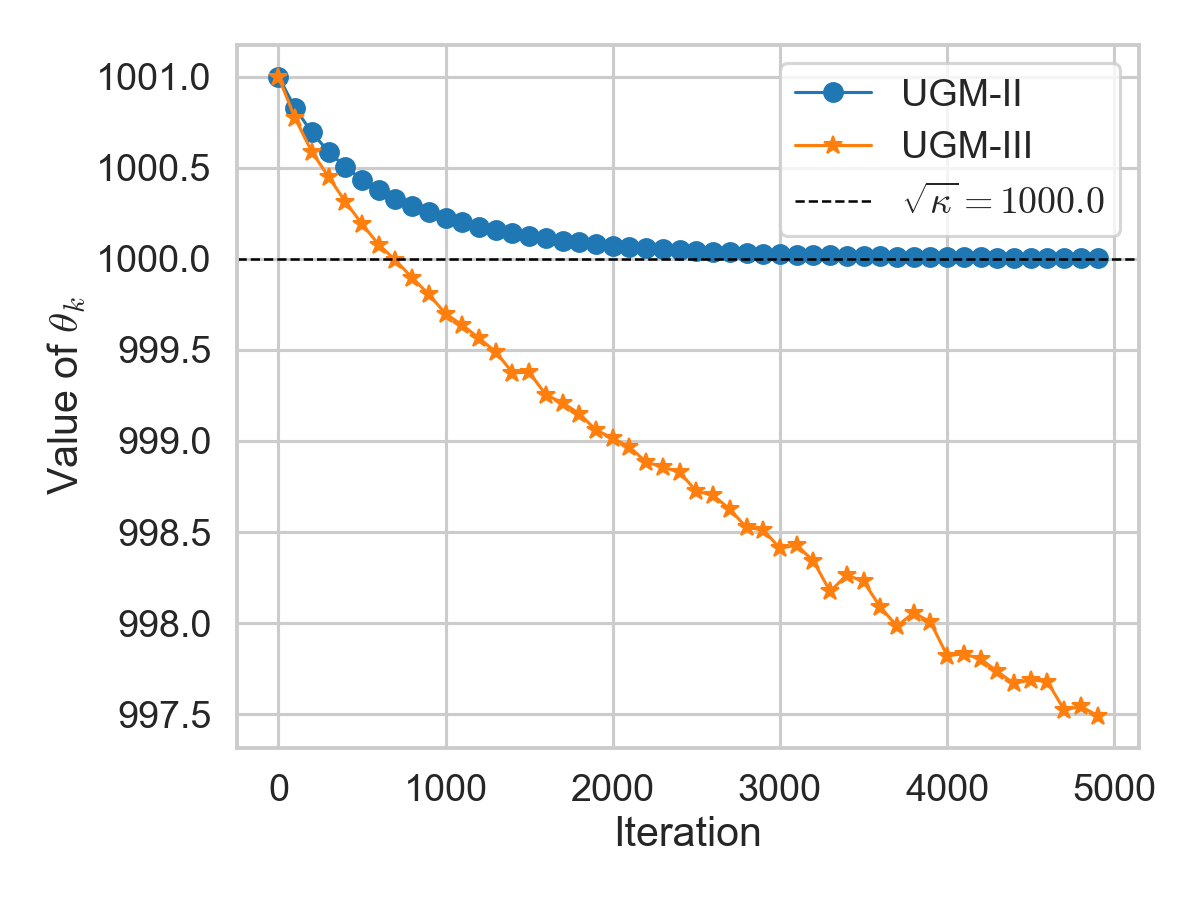}\caption{$\mu=1, L=10^6$}\end{subfigure}\hfill
\begin{subfigure}[b]{0.19\textwidth}\includegraphics[width=\textwidth]{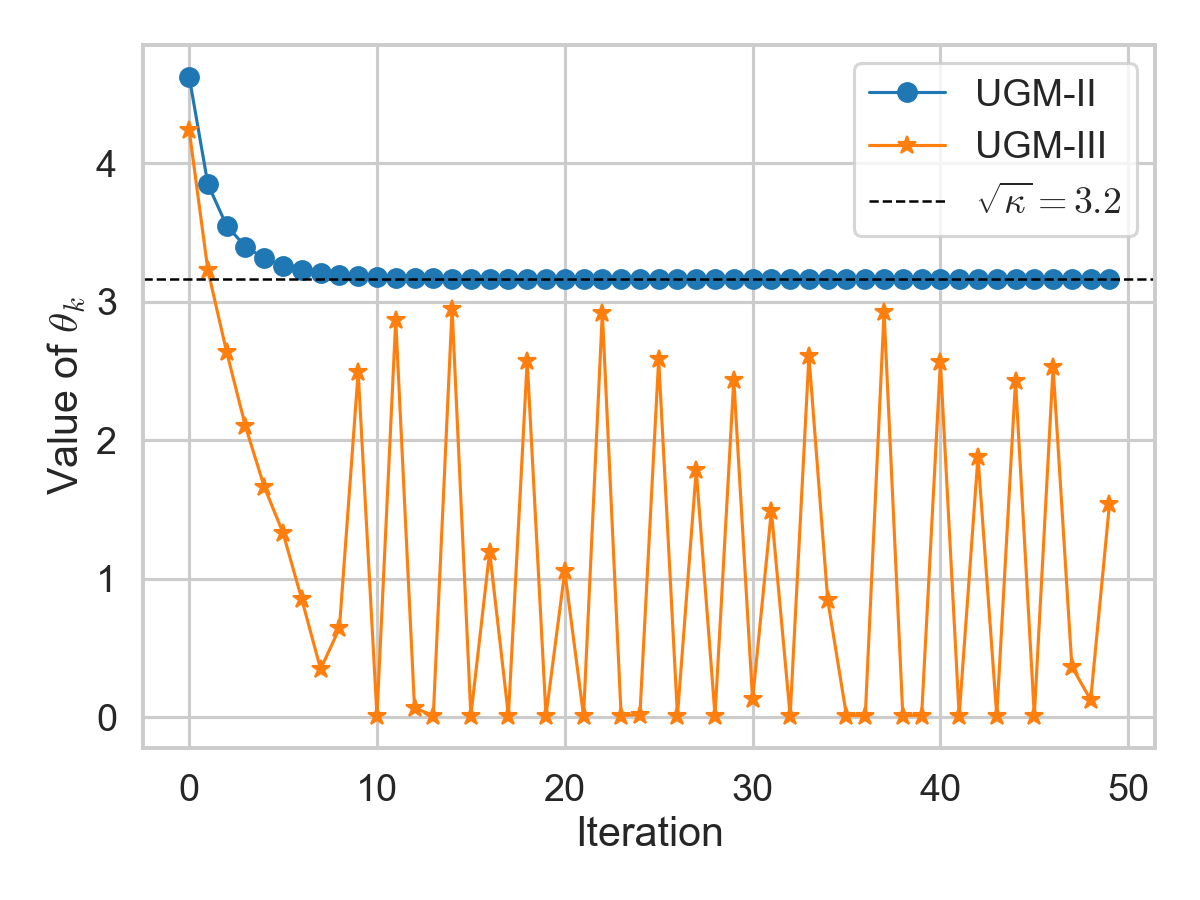}\caption{$\mu=10, L=10^2$}\end{subfigure}\hfill
\begin{subfigure}[b]{0.19\textwidth}\includegraphics[width=\textwidth]{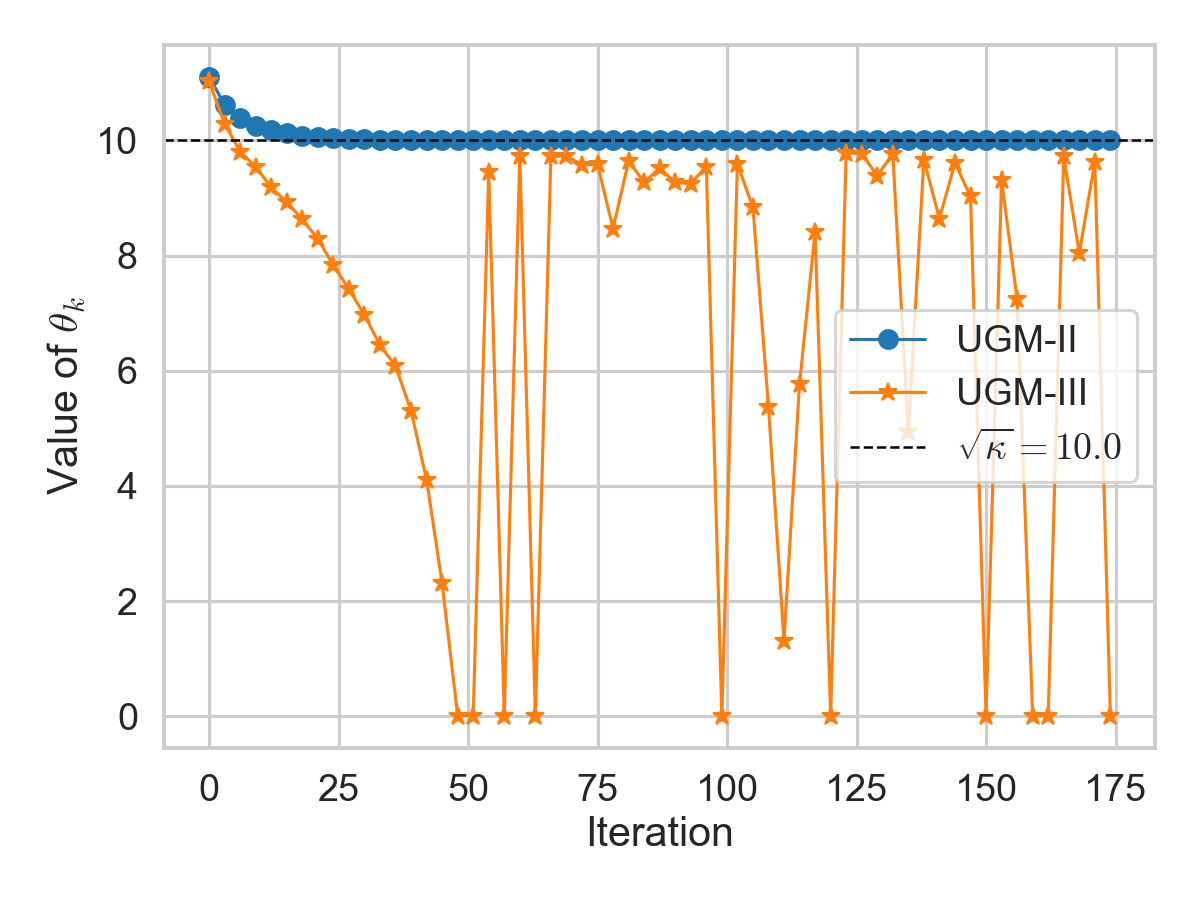}\caption{$\mu=10^2, L=10^4$}\end{subfigure}
\caption{Comparison of FGM, UGM-I, UGM-II, and UGM-III on Case 1. Rows show the objective suboptimality versus iterations, objective suboptimality versus CPU time, and the evolution of $\theta_k$, respectively. Columns correspond to different $(\mu, L)$ configurations.}
\label{fig:case1}
\end{figure}

\begin{figure}[!htbp]
\centering
\begin{subfigure}[b]{0.19\textwidth}\includegraphics[width=\textwidth]{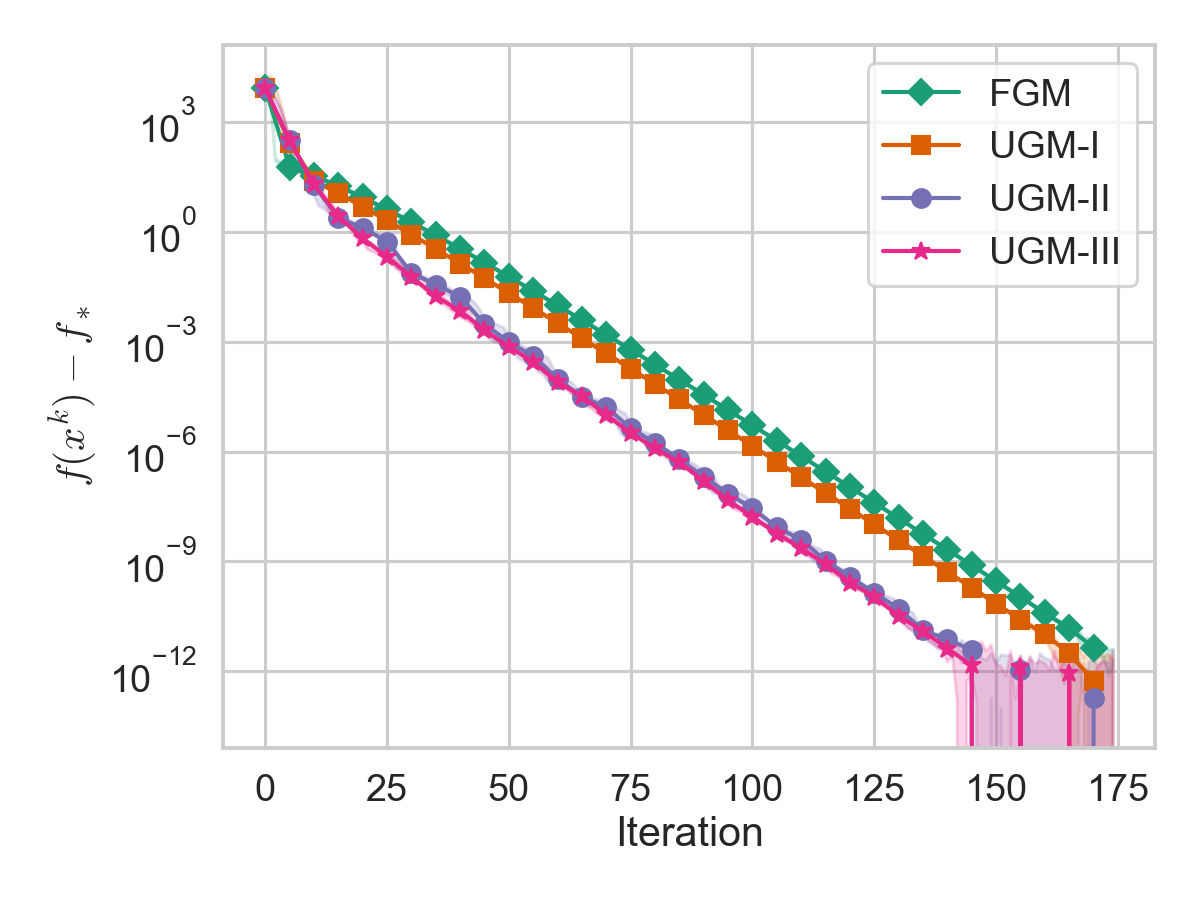}\end{subfigure}\hfill
\begin{subfigure}[b]{0.19\textwidth}\includegraphics[width=\textwidth]{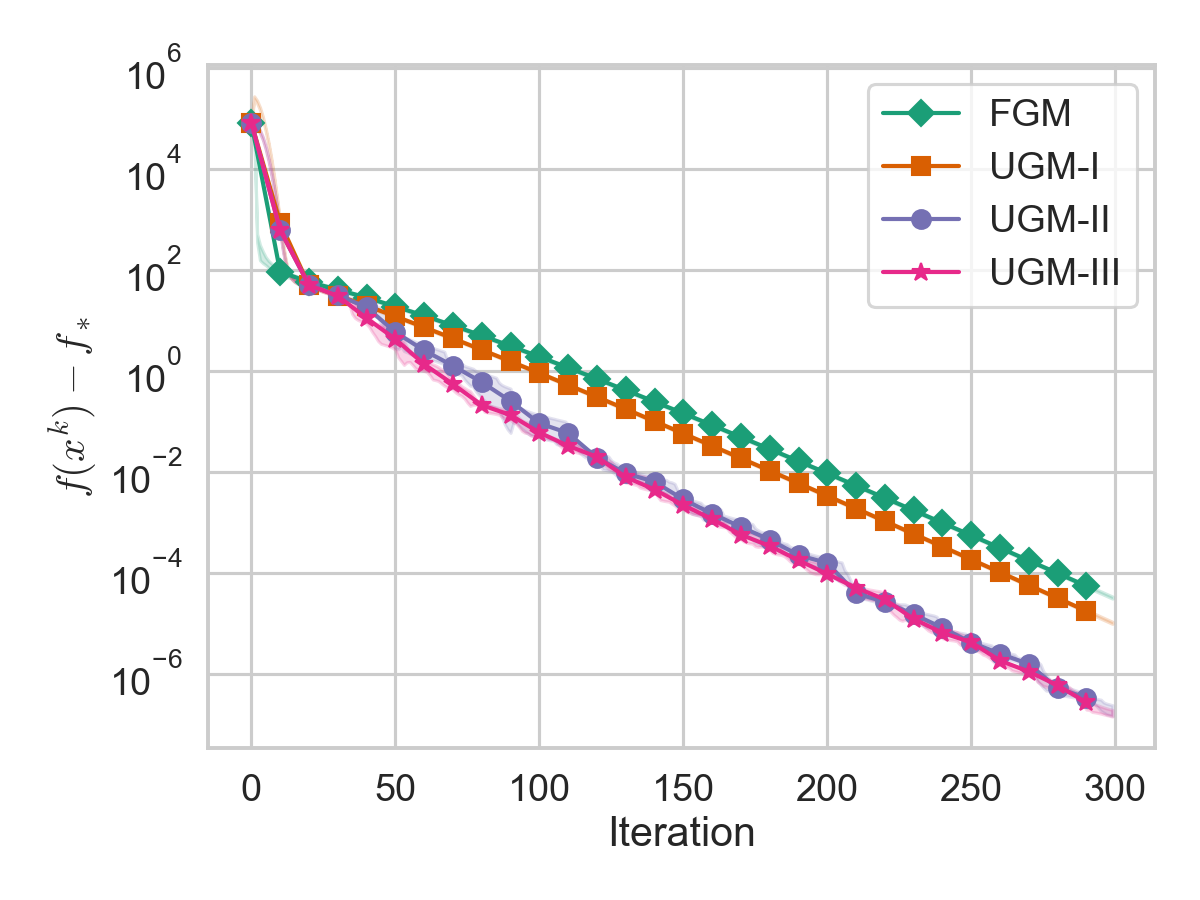}\end{subfigure}\hfill
\begin{subfigure}[b]{0.19\textwidth}\includegraphics[width=\textwidth]{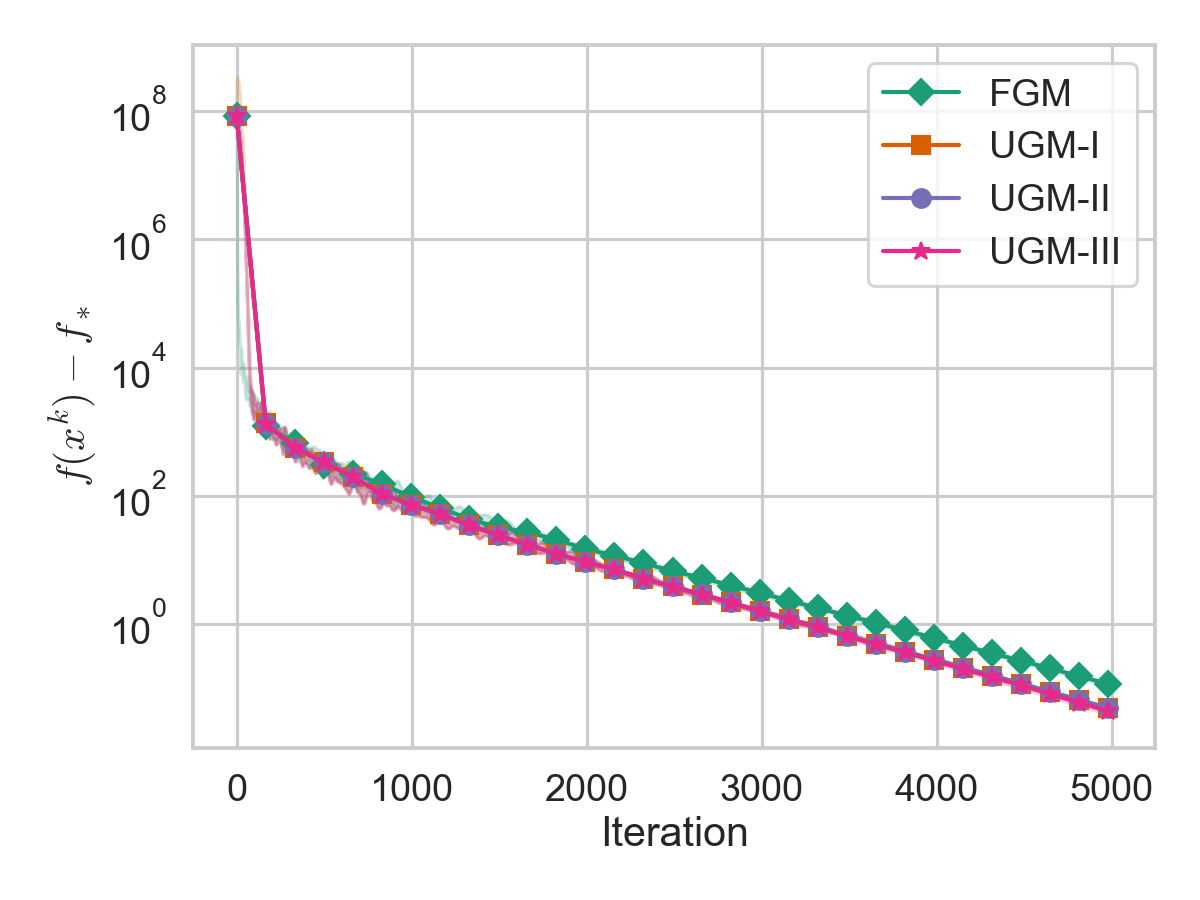}\end{subfigure}\hfill
\begin{subfigure}[b]{0.19\textwidth}\includegraphics[width=\textwidth]{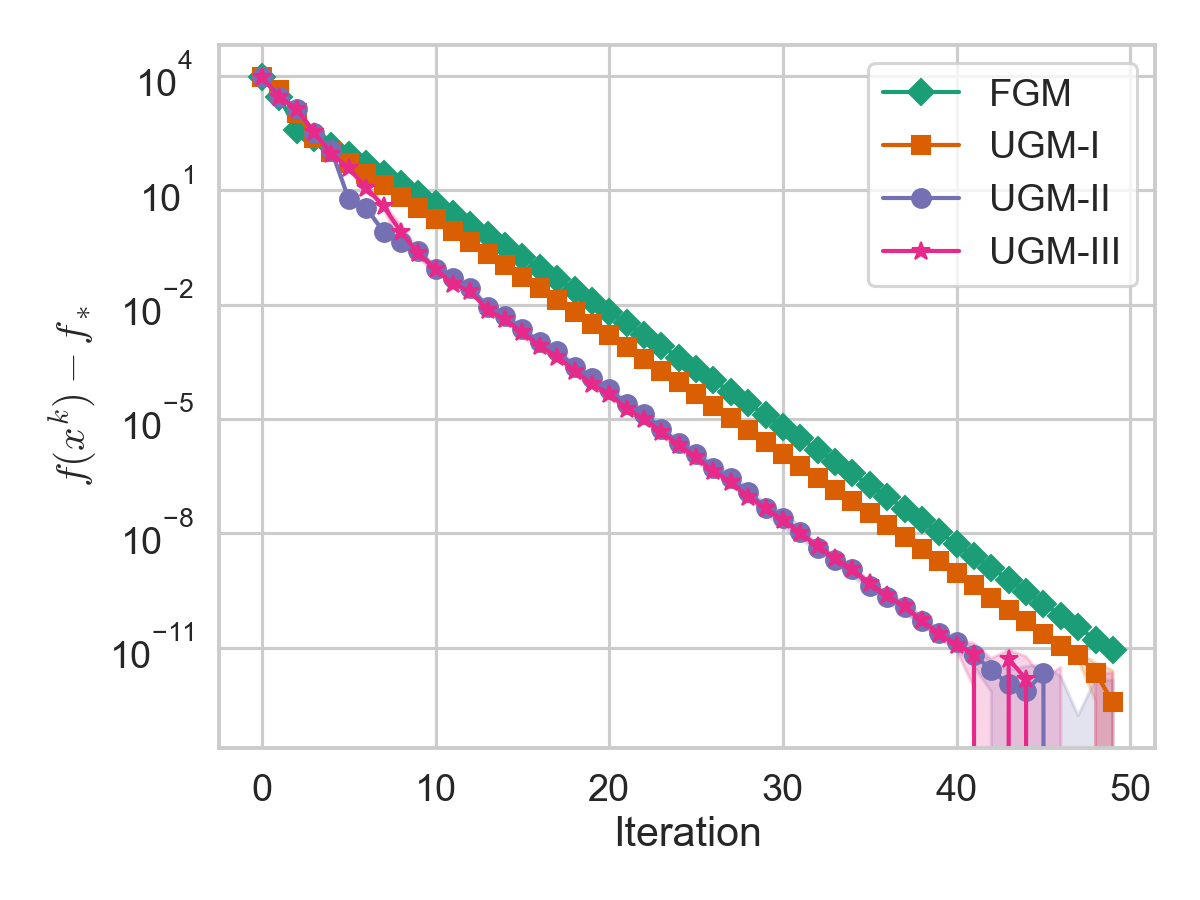}\end{subfigure}\hfill
\begin{subfigure}[b]{0.19\textwidth}\includegraphics[width=\textwidth]{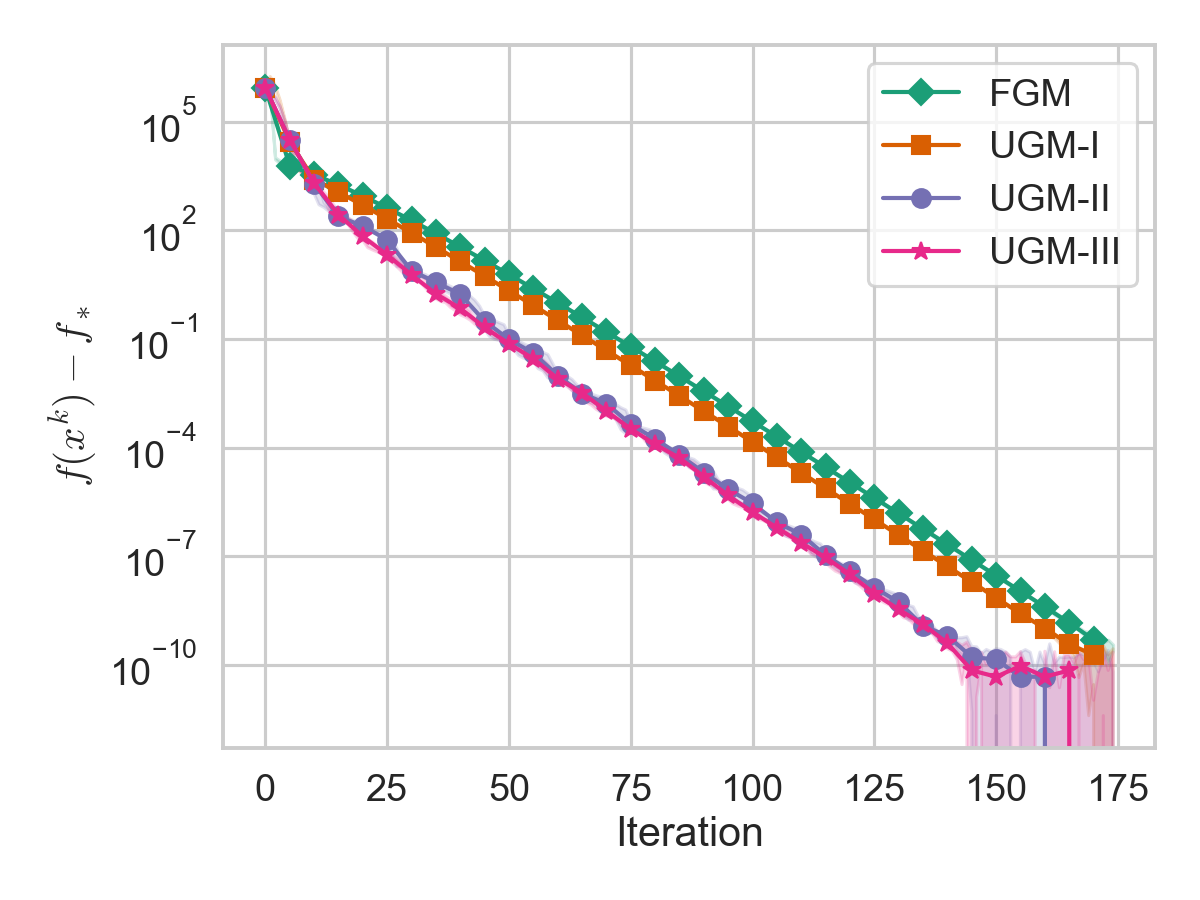}\end{subfigure}
\\
\begin{subfigure}[b]{0.19\textwidth}\includegraphics[width=\textwidth]{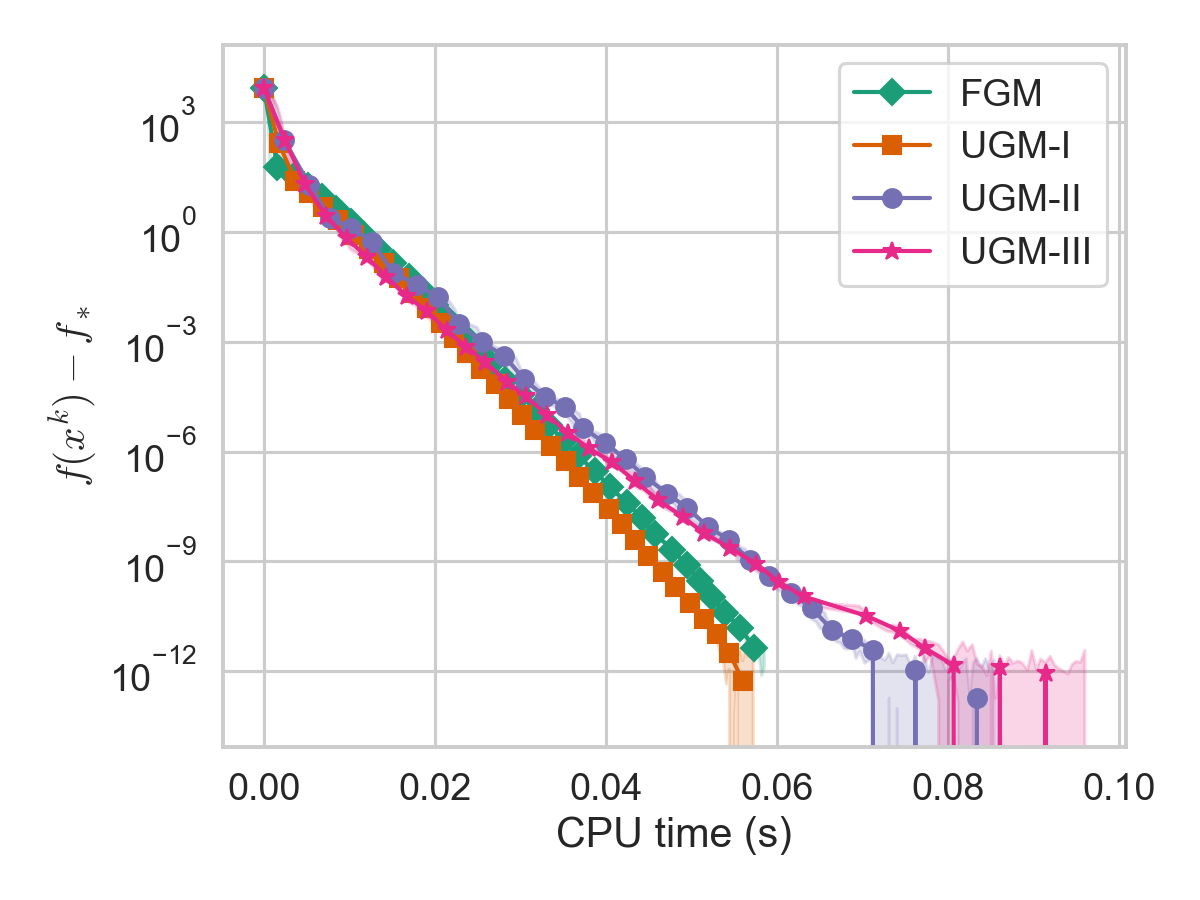}\end{subfigure}\hfill
\begin{subfigure}[b]{0.19\textwidth}\includegraphics[width=\textwidth]{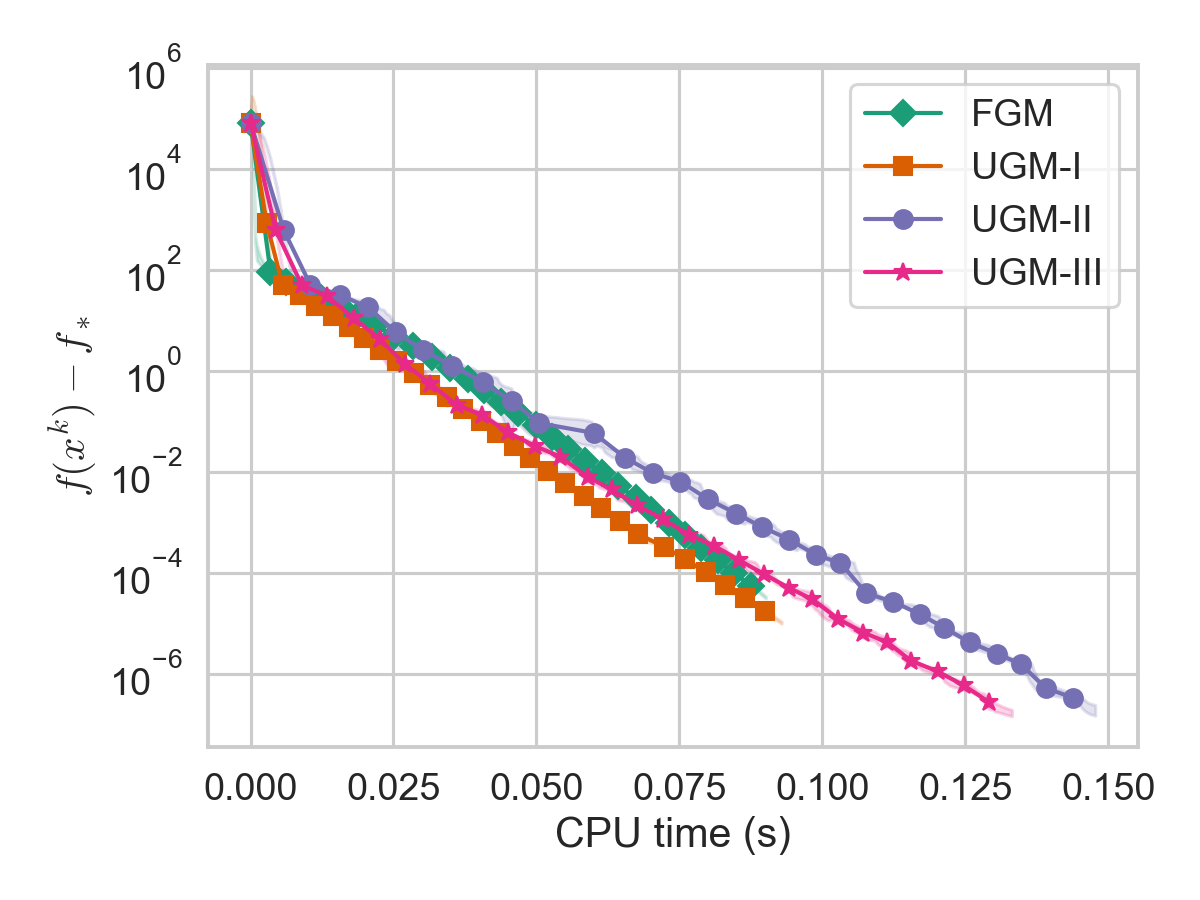}\end{subfigure}\hfill
\begin{subfigure}[b]{0.19\textwidth}\includegraphics[width=\textwidth]{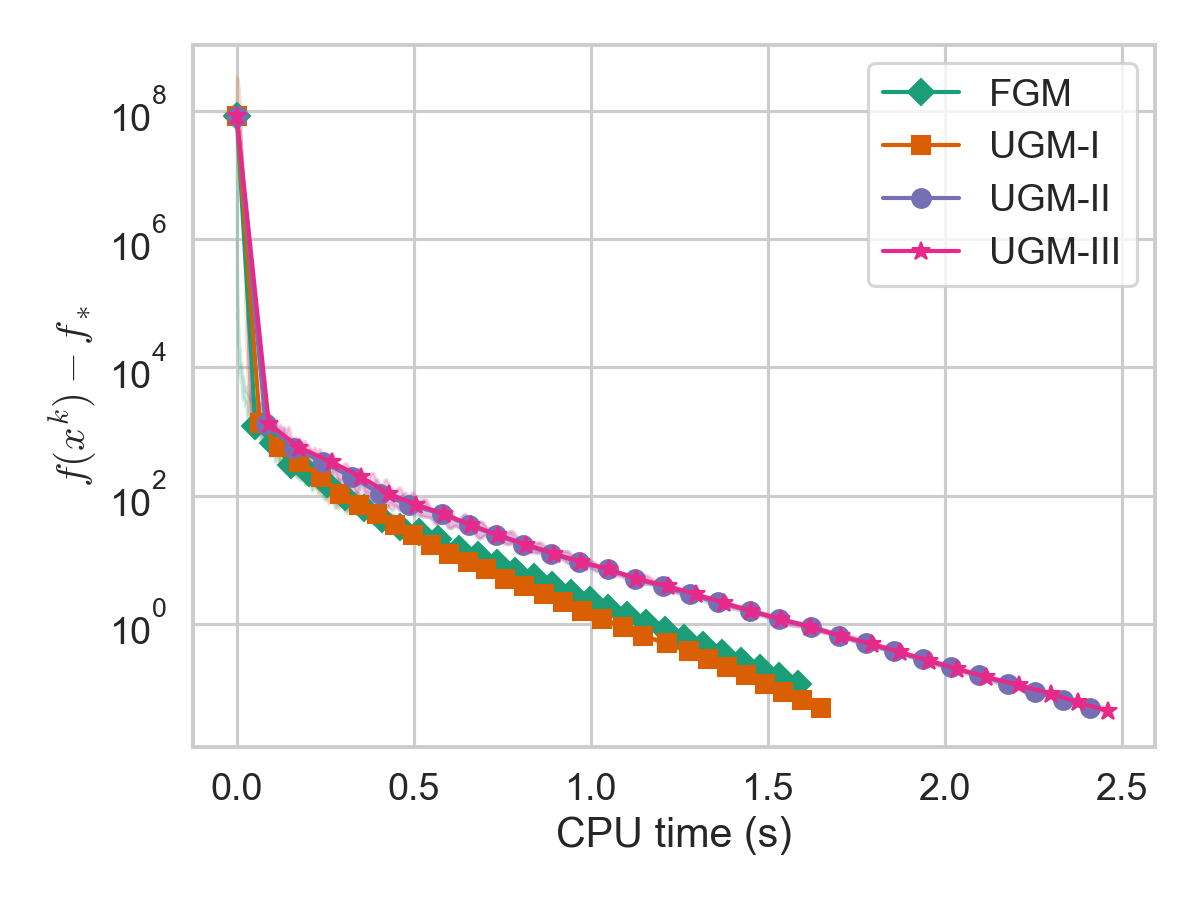}\end{subfigure}\hfill
\begin{subfigure}[b]{0.19\textwidth}\includegraphics[width=\textwidth]{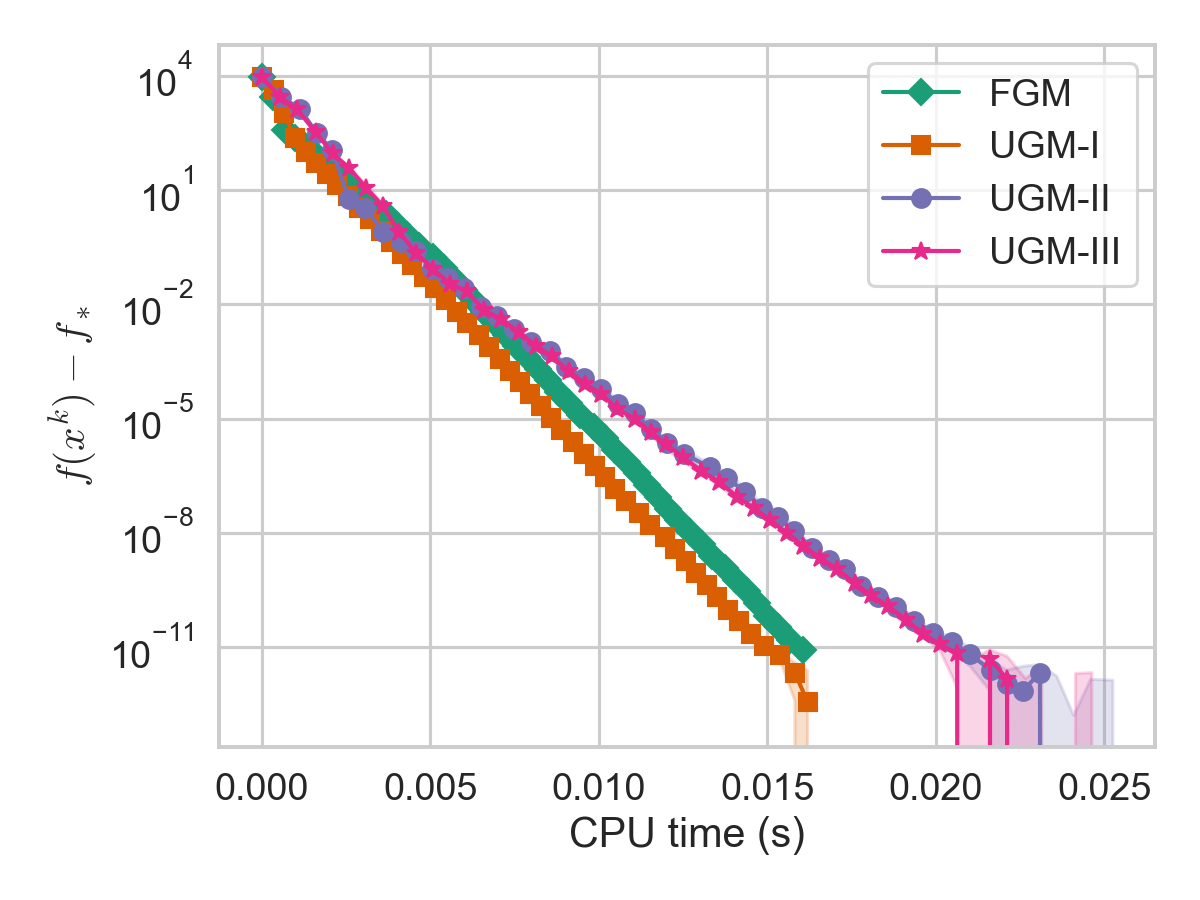}\end{subfigure}\hfill
\begin{subfigure}[b]{0.19\textwidth}\includegraphics[width=\textwidth]{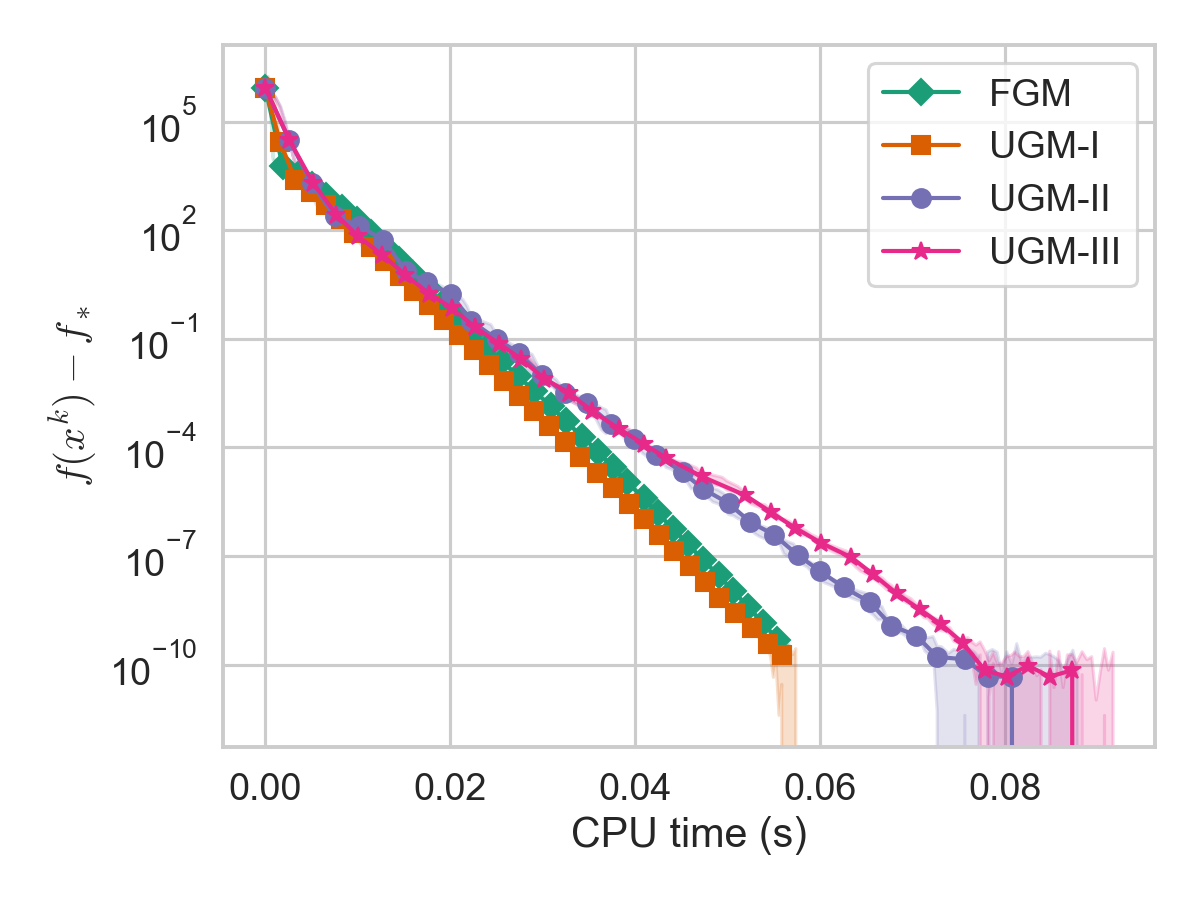}\end{subfigure}
\\
\begin{subfigure}[b]{0.19\textwidth}\includegraphics[width=\textwidth]{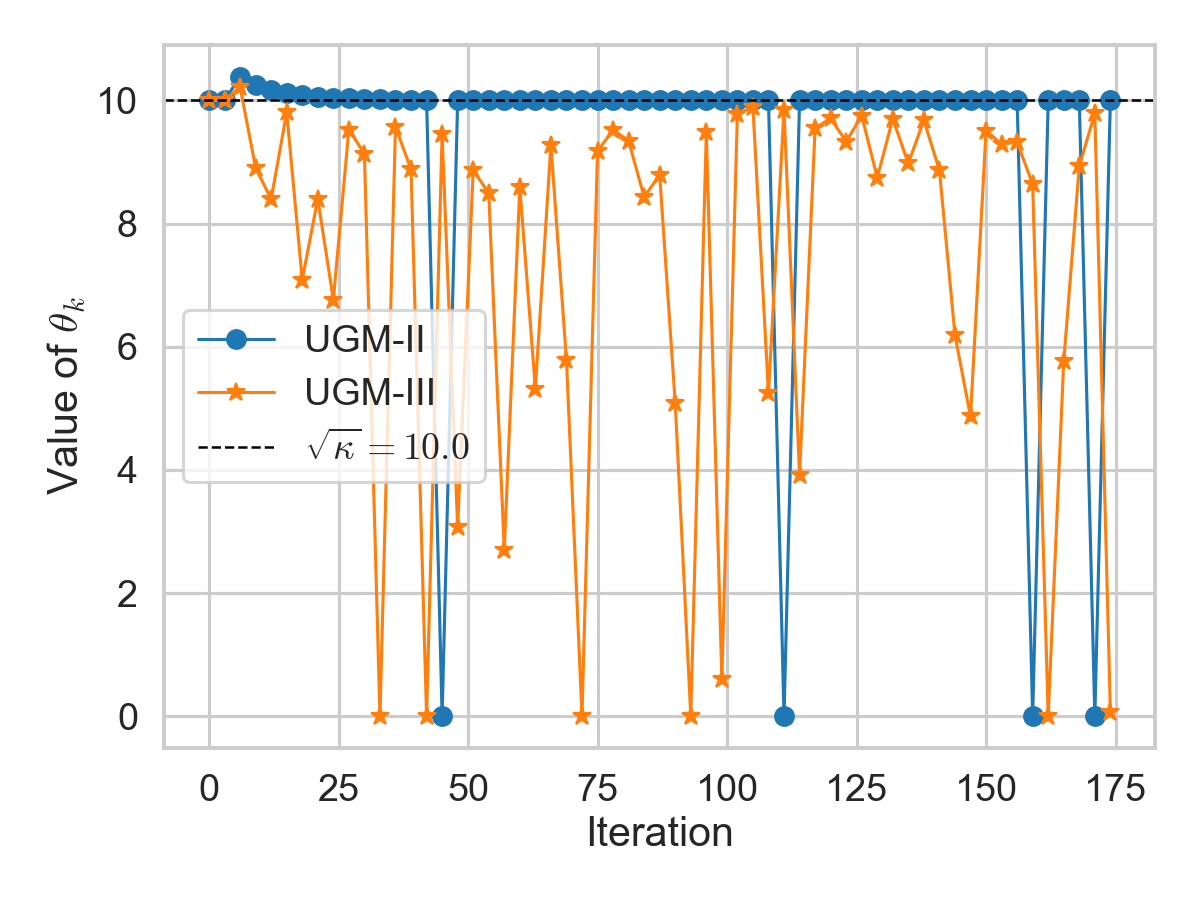}\caption{$\mu=1, L=10^2$}\end{subfigure}\hfill
\begin{subfigure}[b]{0.19\textwidth}\includegraphics[width=\textwidth]{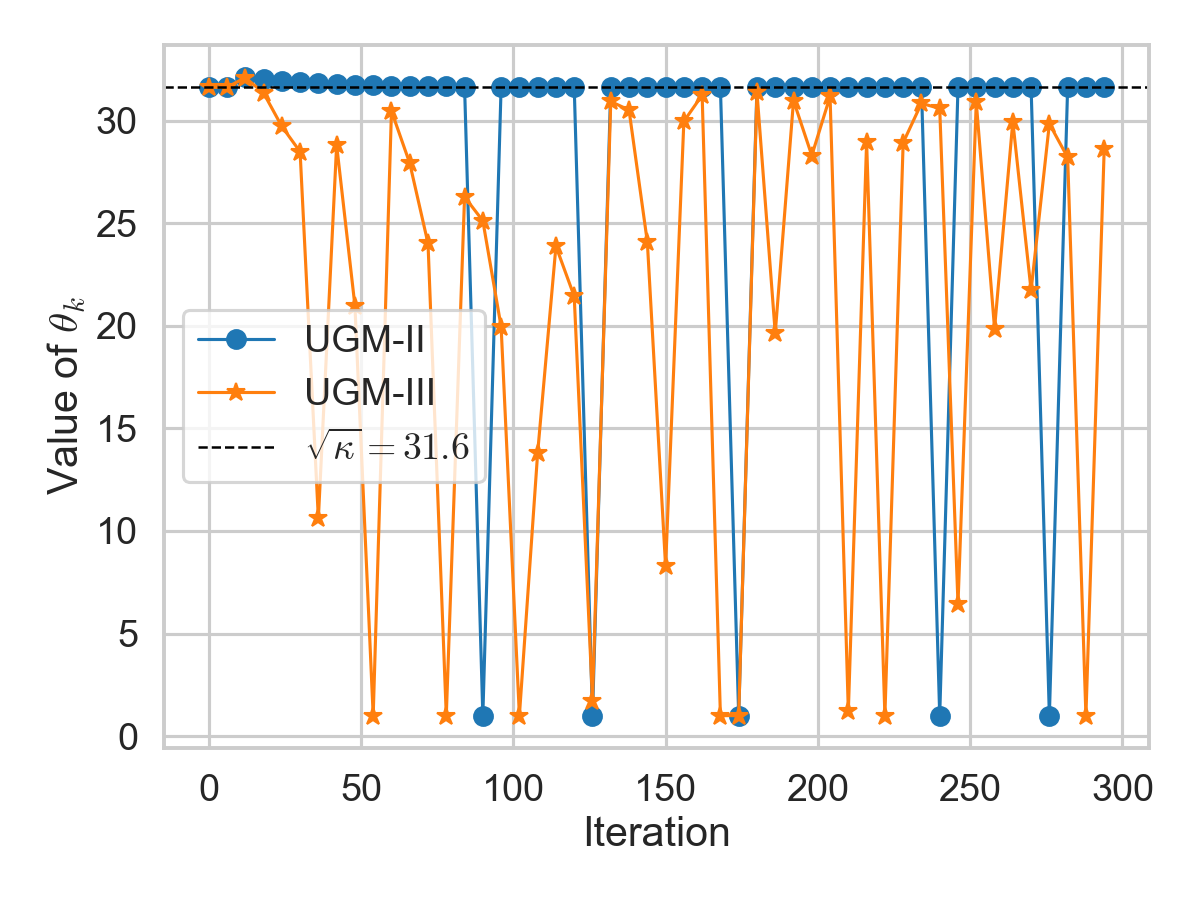}\caption{$\mu=1, L=10^3$}\end{subfigure}\hfill
\begin{subfigure}[b]{0.19\textwidth}\includegraphics[width=\textwidth]{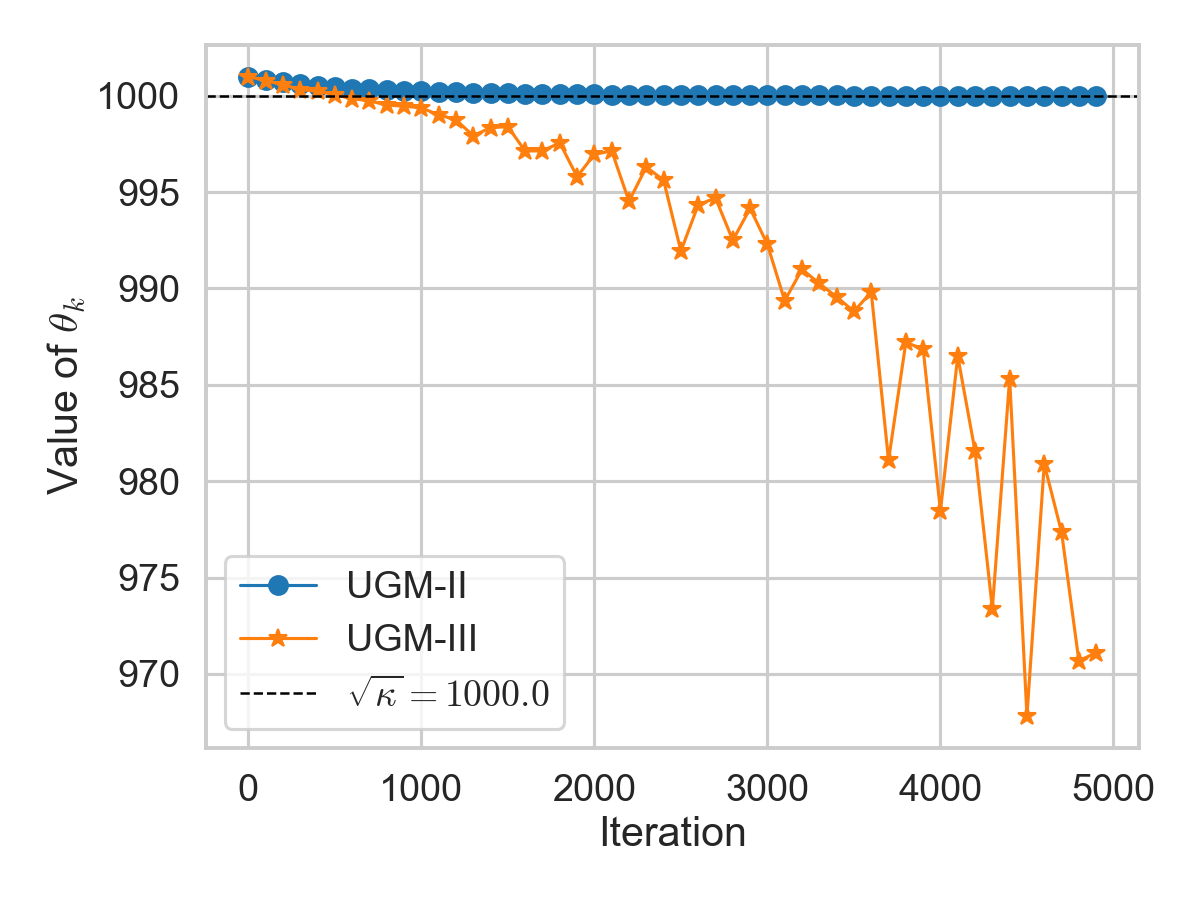}\caption{$\mu=1, L=10^6$}\end{subfigure}\hfill
\begin{subfigure}[b]{0.19\textwidth}\includegraphics[width=\textwidth]{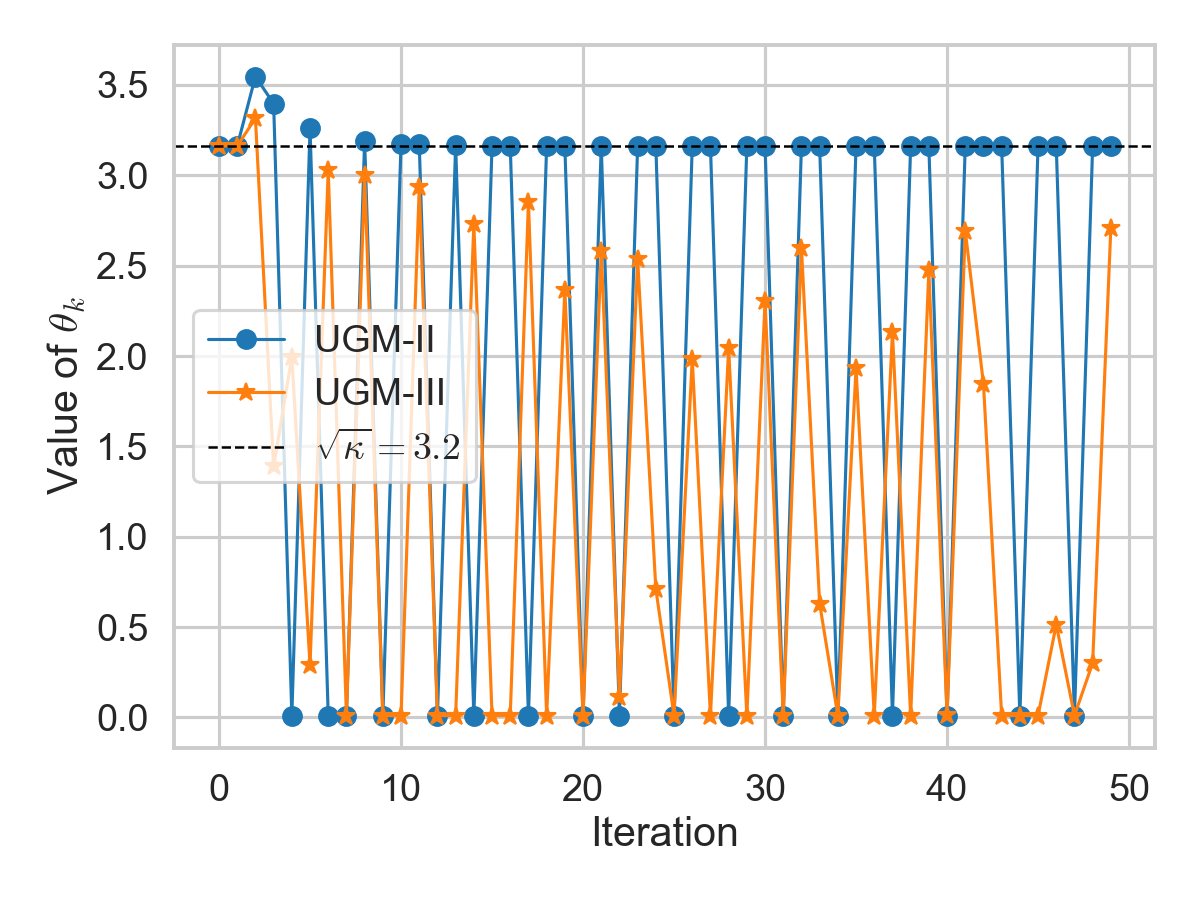}\caption{$\mu=10, L=10^2$}\end{subfigure}\hfill
\begin{subfigure}[b]{0.19\textwidth}\includegraphics[width=\textwidth]{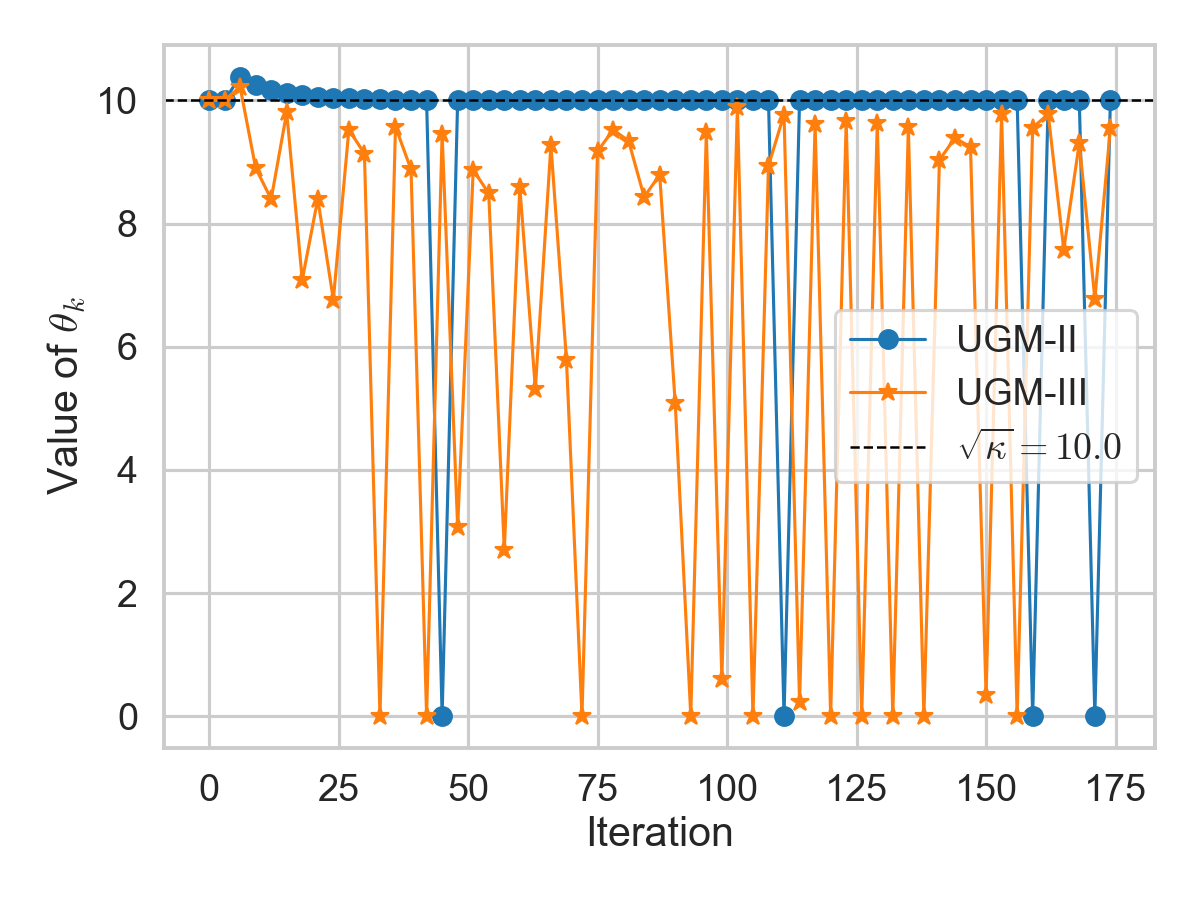}\caption{$\mu=10^2, L=10^4$}\end{subfigure}
\caption{Comparison of FGM, UGM-I, UGM-II, and UGM-III on Case 2. Rows show the objective suboptimality versus iterations, objective suboptimality versus CPU time, and the evolution of $\theta_k$, respectively. Columns correspond to different $(\mu, L)$ configurations.}
\label{fig:case2}
\end{figure}

\begin{figure}[!htbp]
\centering
\begin{subfigure}[b]{0.19\textwidth}\includegraphics[width=\textwidth]{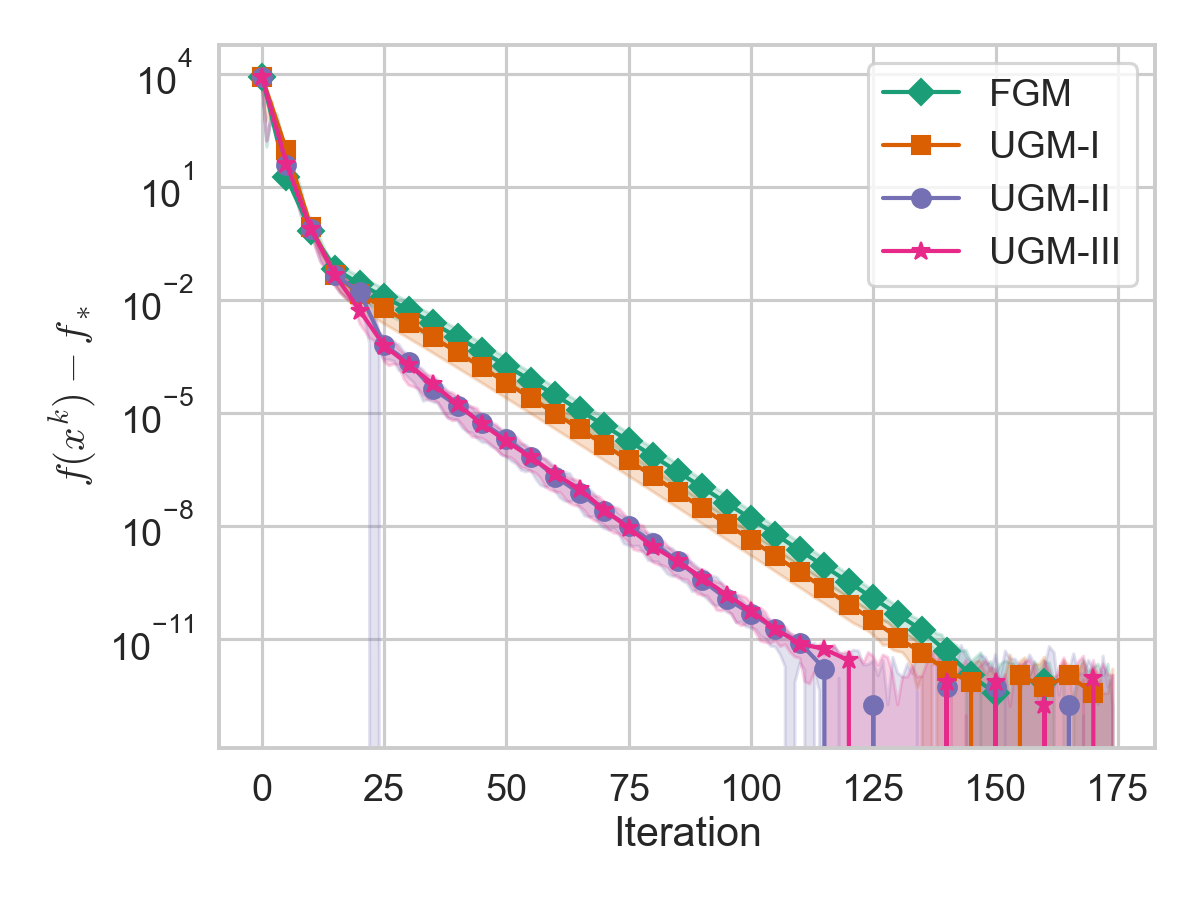}\end{subfigure}\hfill
\begin{subfigure}[b]{0.19\textwidth}\includegraphics[width=\textwidth]{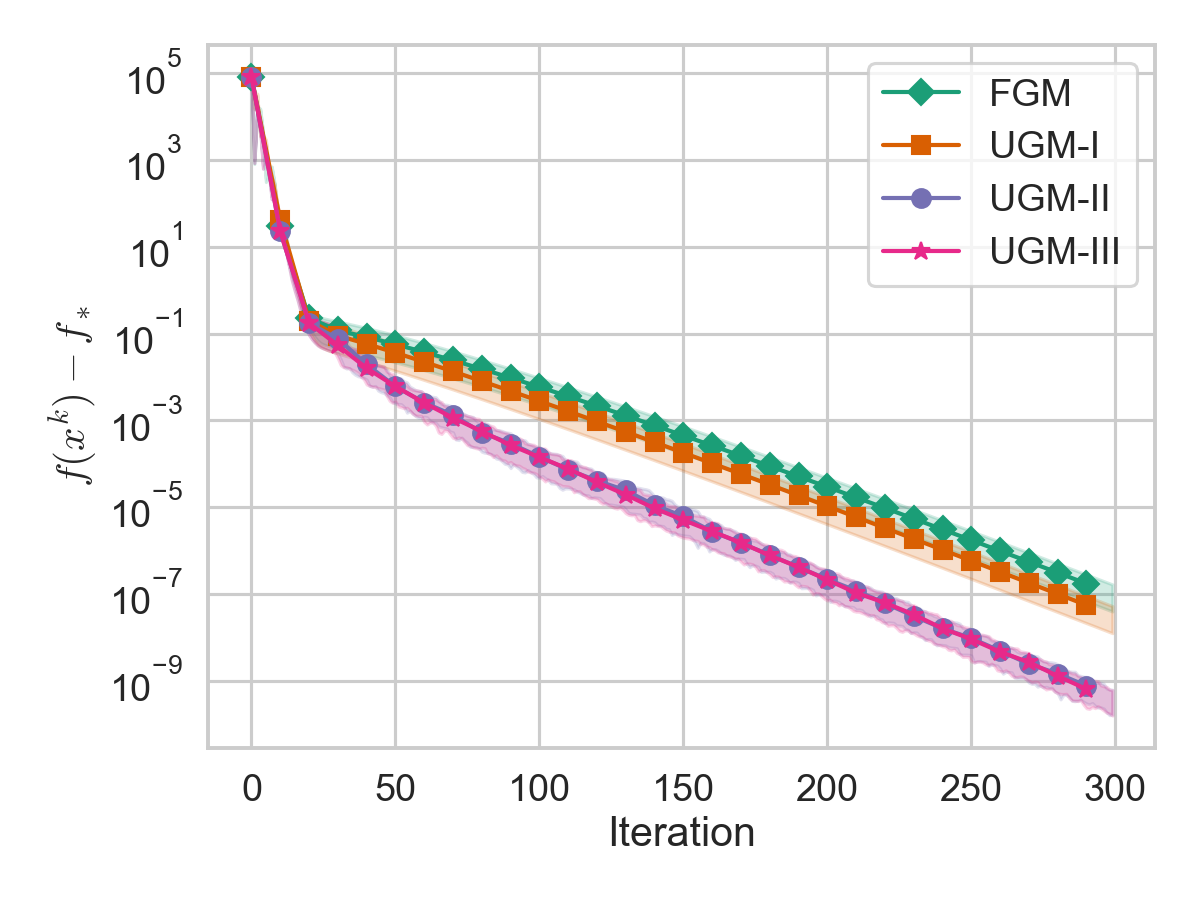}\end{subfigure}\hfill
\begin{subfigure}[b]{0.19\textwidth}\includegraphics[width=\textwidth]{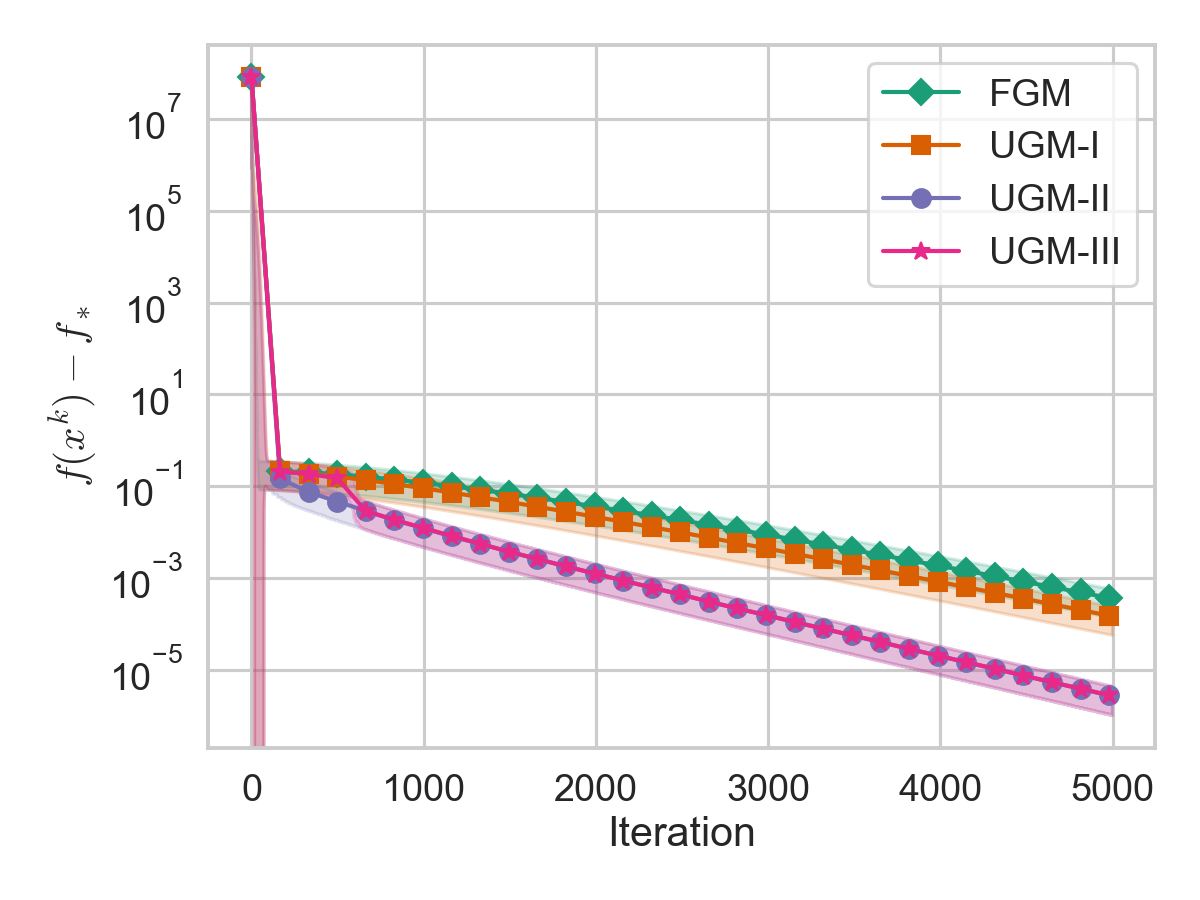}\end{subfigure}\hfill
\begin{subfigure}[b]{0.19\textwidth}\includegraphics[width=\textwidth]{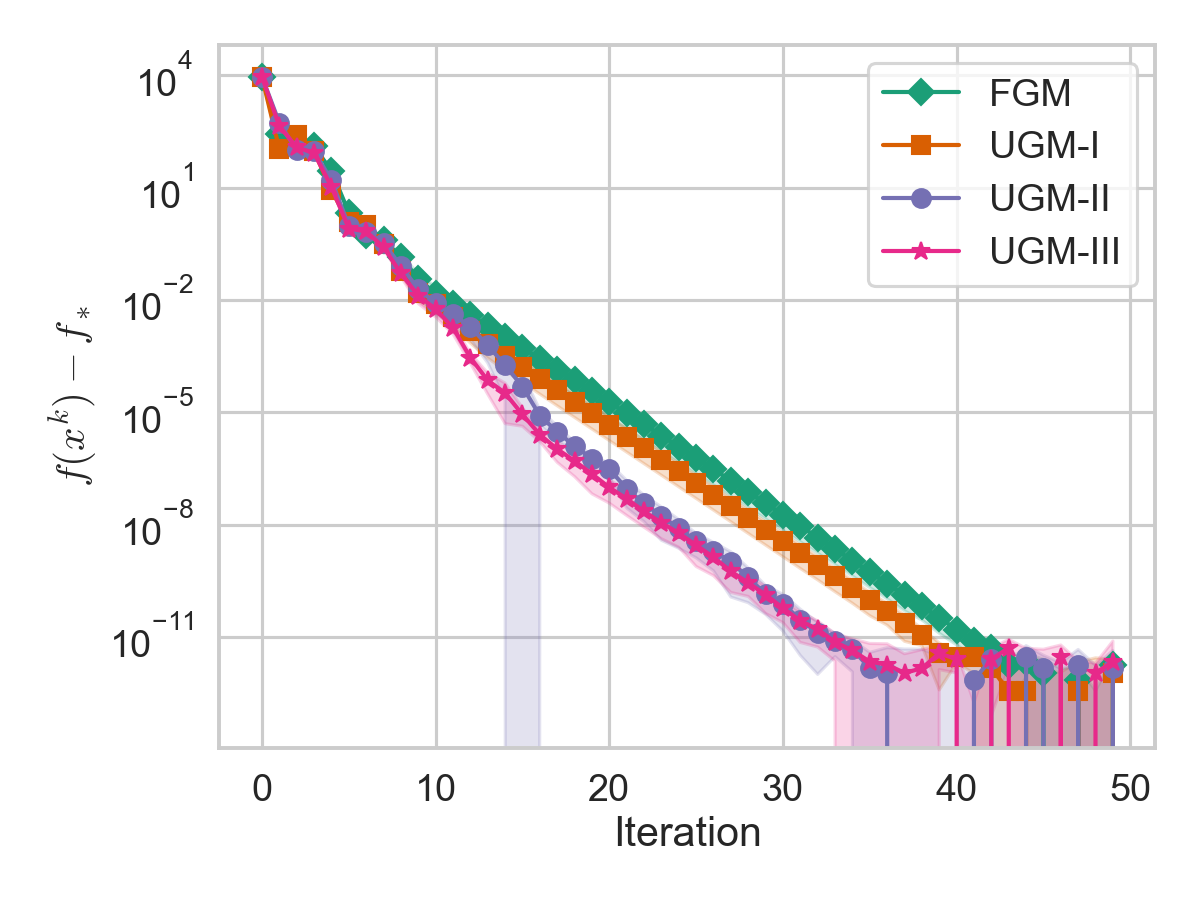}\end{subfigure}\hfill
\begin{subfigure}[b]{0.19\textwidth}\includegraphics[width=\textwidth]{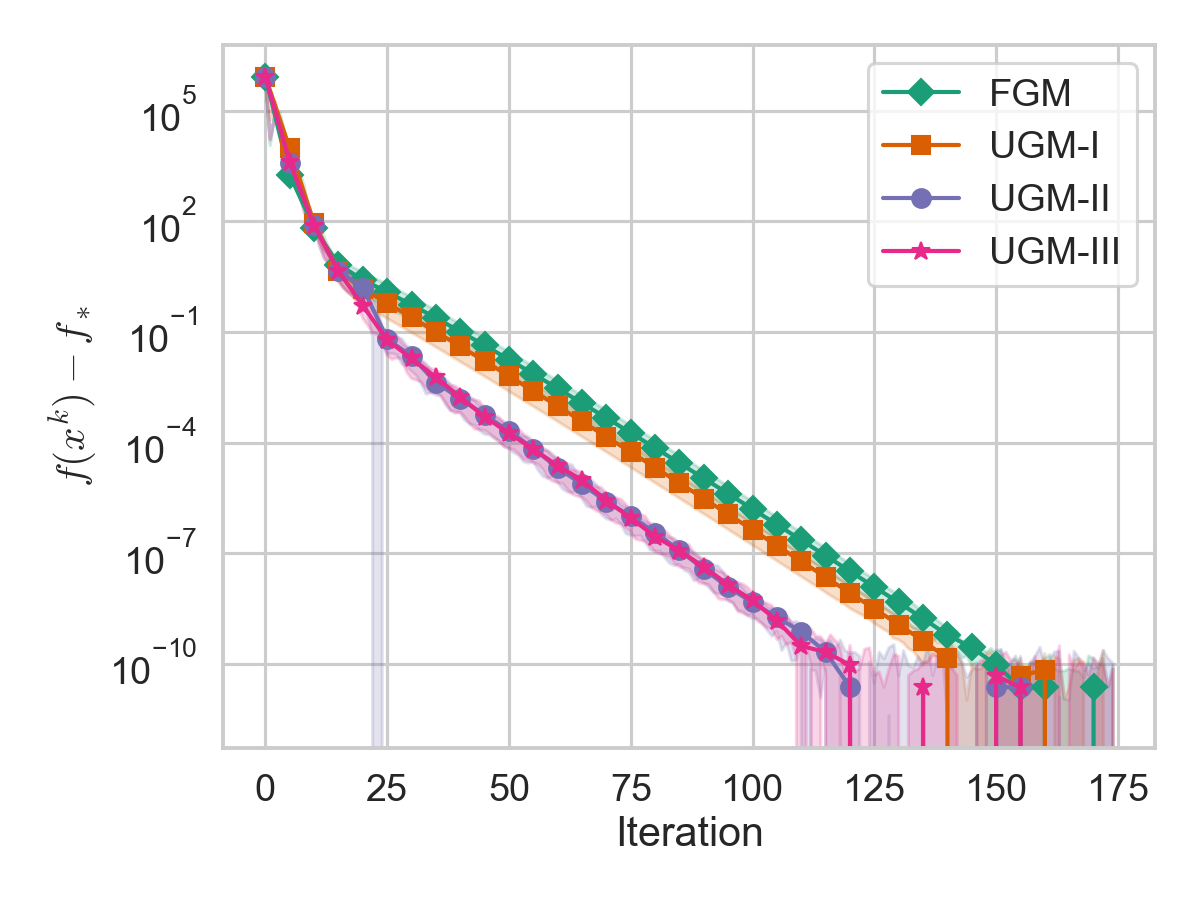}\end{subfigure}
\\
\begin{subfigure}[b]{0.19\textwidth}\includegraphics[width=\textwidth]{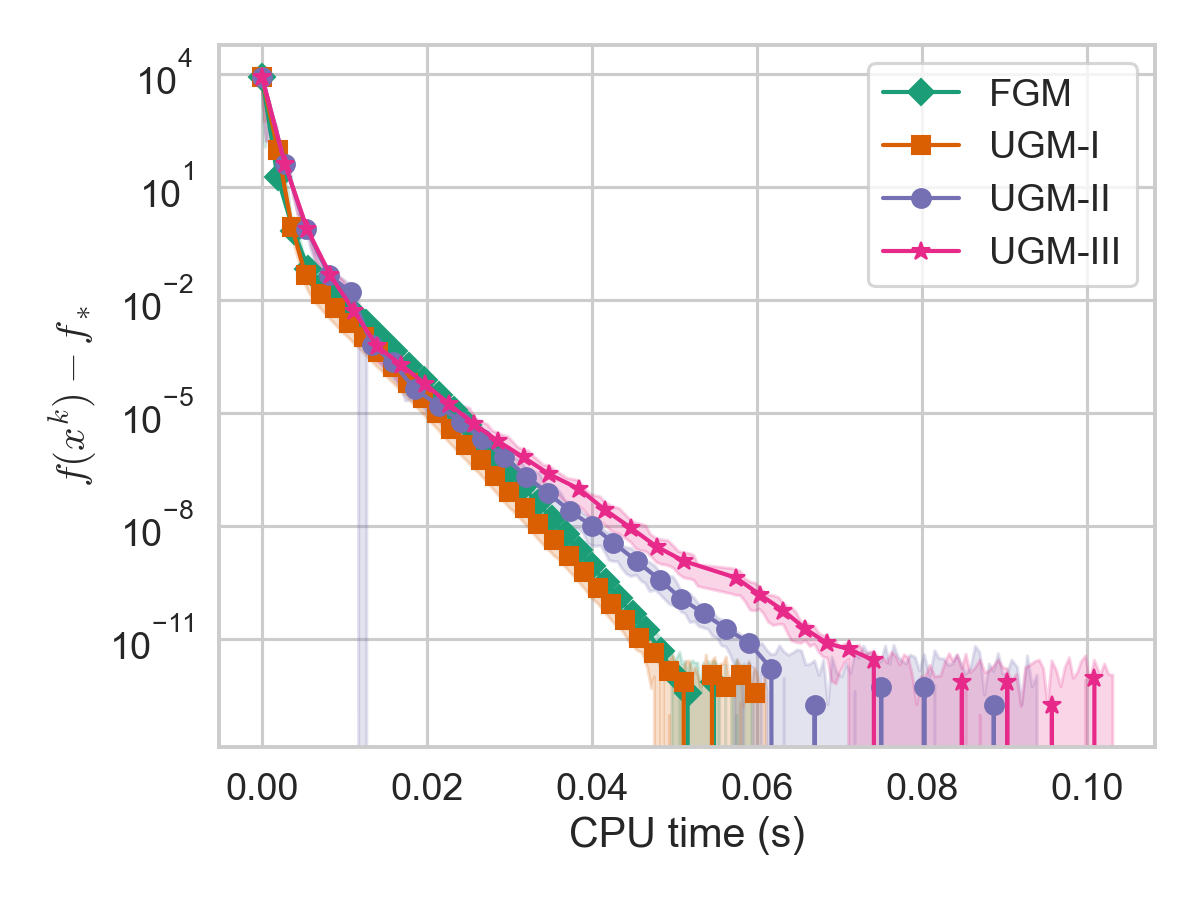}\end{subfigure}\hfill
\begin{subfigure}[b]{0.19\textwidth}\includegraphics[width=\textwidth]{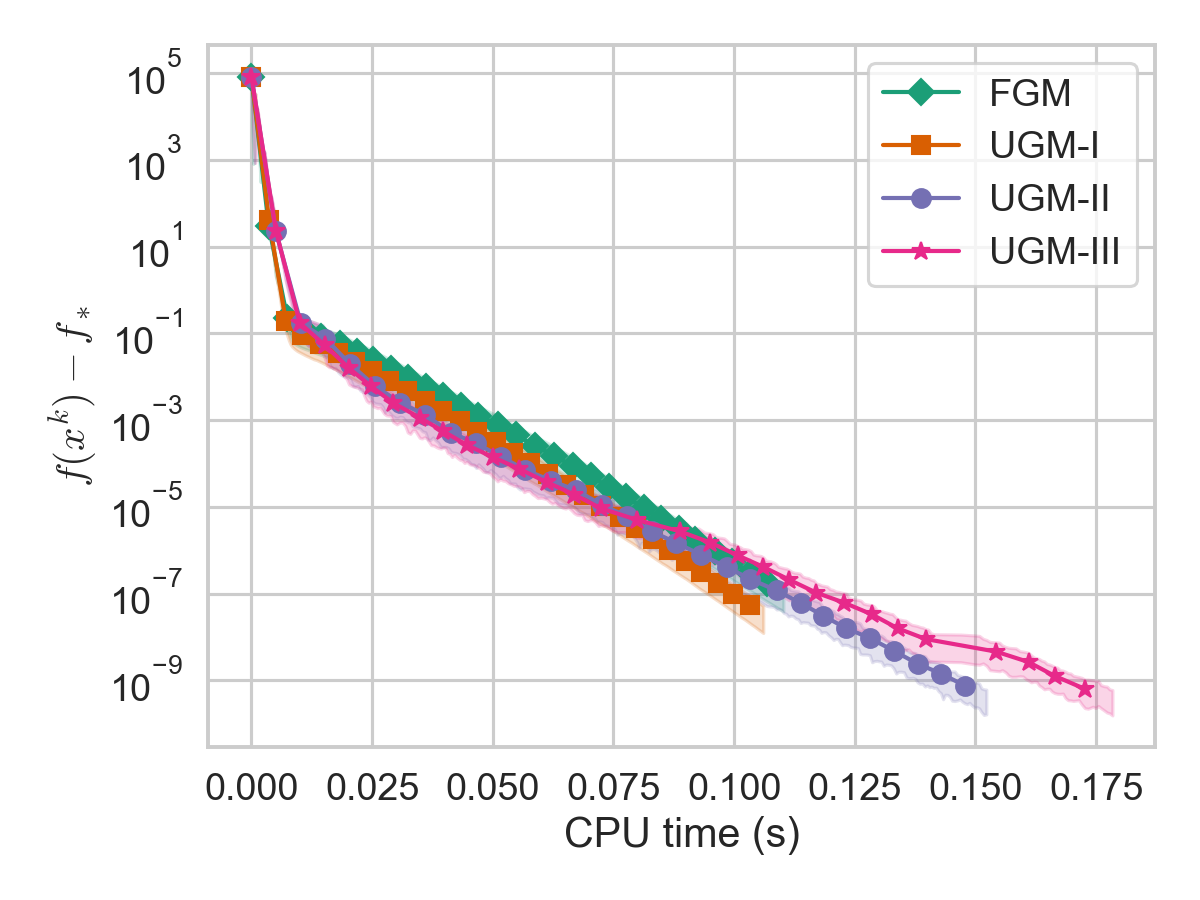}\end{subfigure}\hfill
\begin{subfigure}[b]{0.19\textwidth}\includegraphics[width=\textwidth]{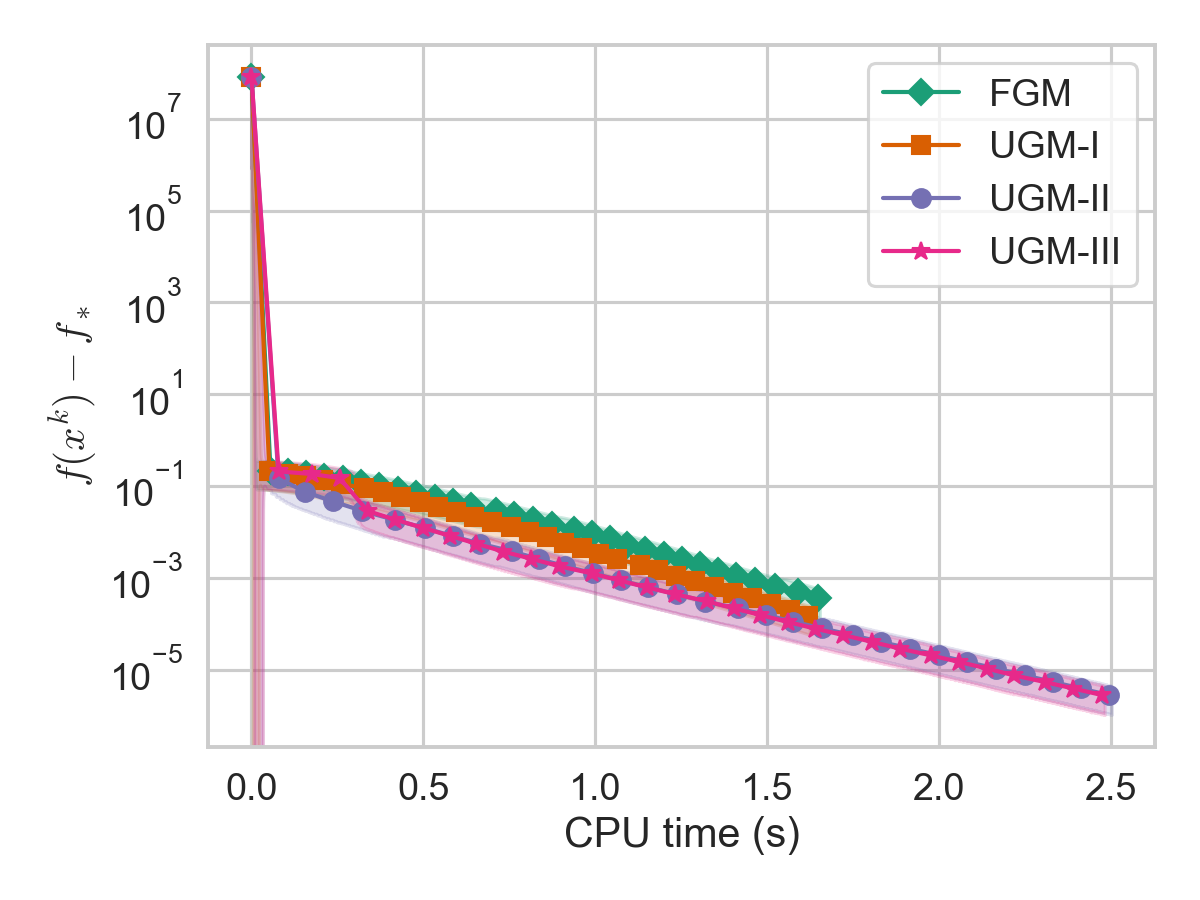}\end{subfigure}\hfill
\begin{subfigure}[b]{0.19\textwidth}\includegraphics[width=\textwidth]{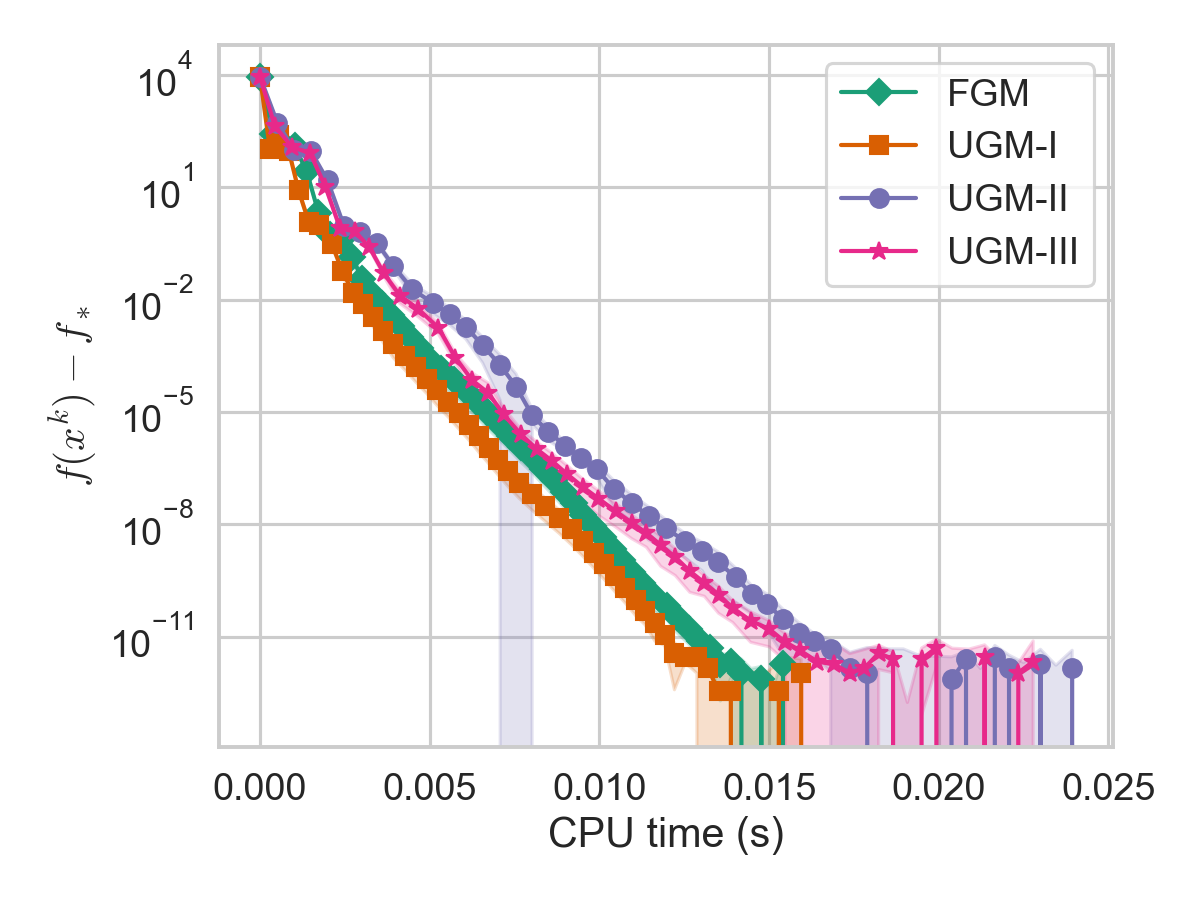}\end{subfigure}\hfill
\begin{subfigure}[b]{0.19\textwidth}\includegraphics[width=\textwidth]{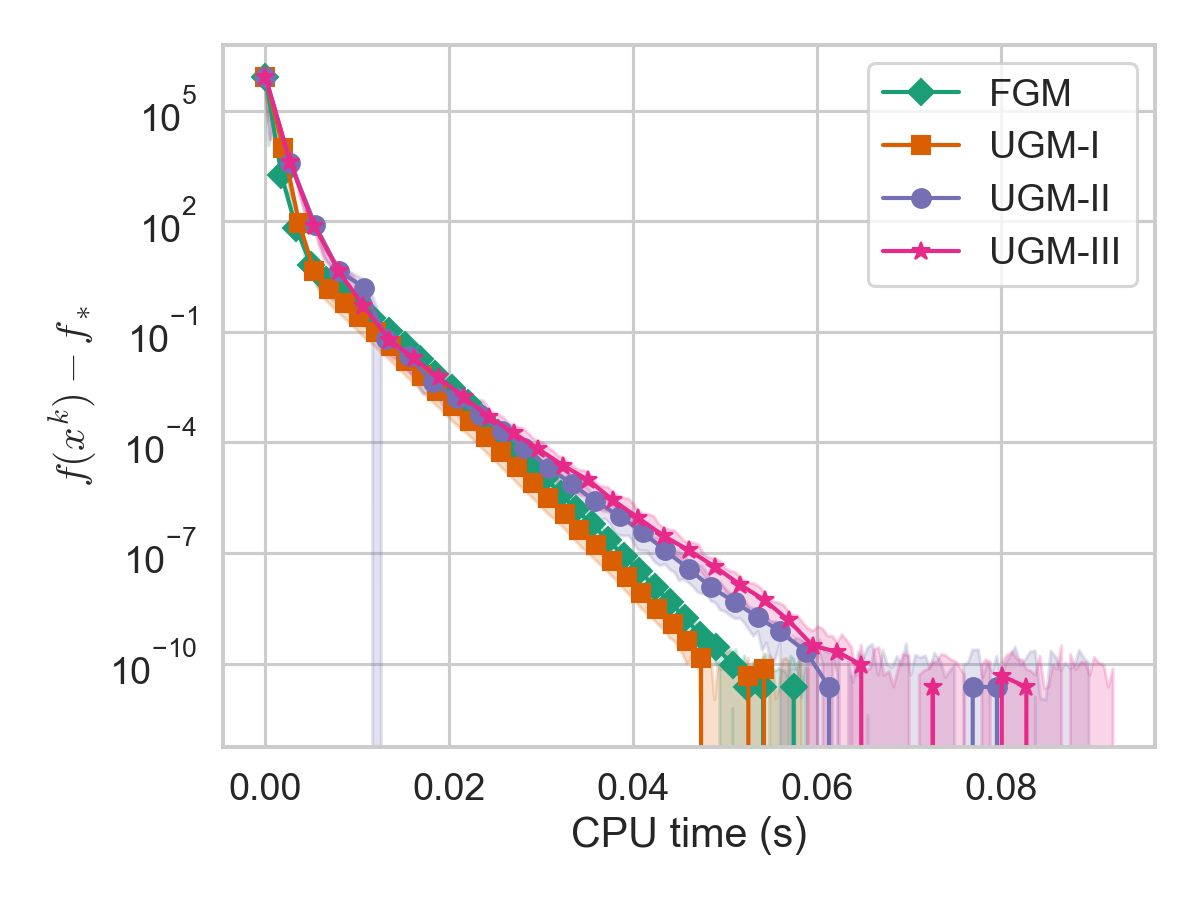}\end{subfigure}
\\
\begin{subfigure}[b]{0.19\textwidth}\includegraphics[width=\textwidth]{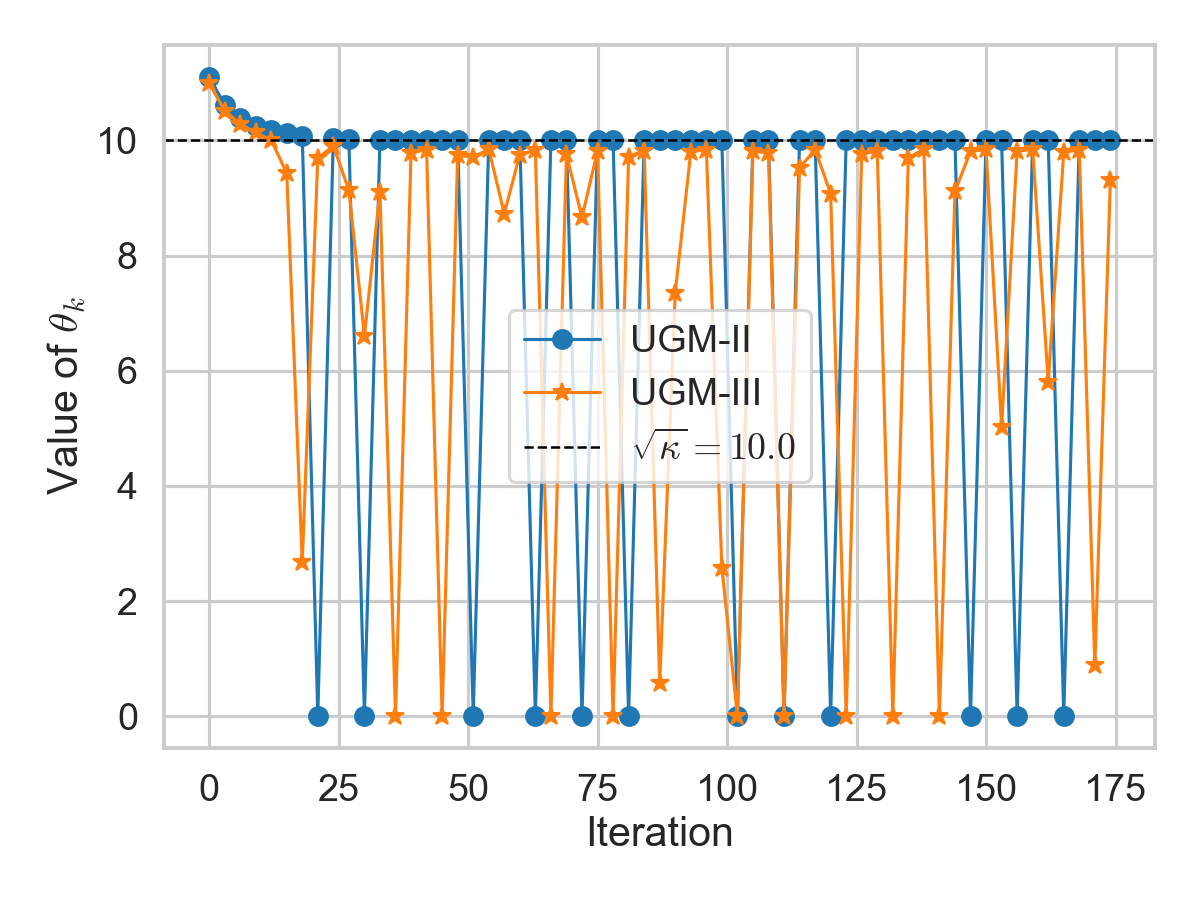}\caption{$\mu=1, L=10^2$}\end{subfigure}\hfill
\begin{subfigure}[b]{0.19\textwidth}\includegraphics[width=\textwidth]{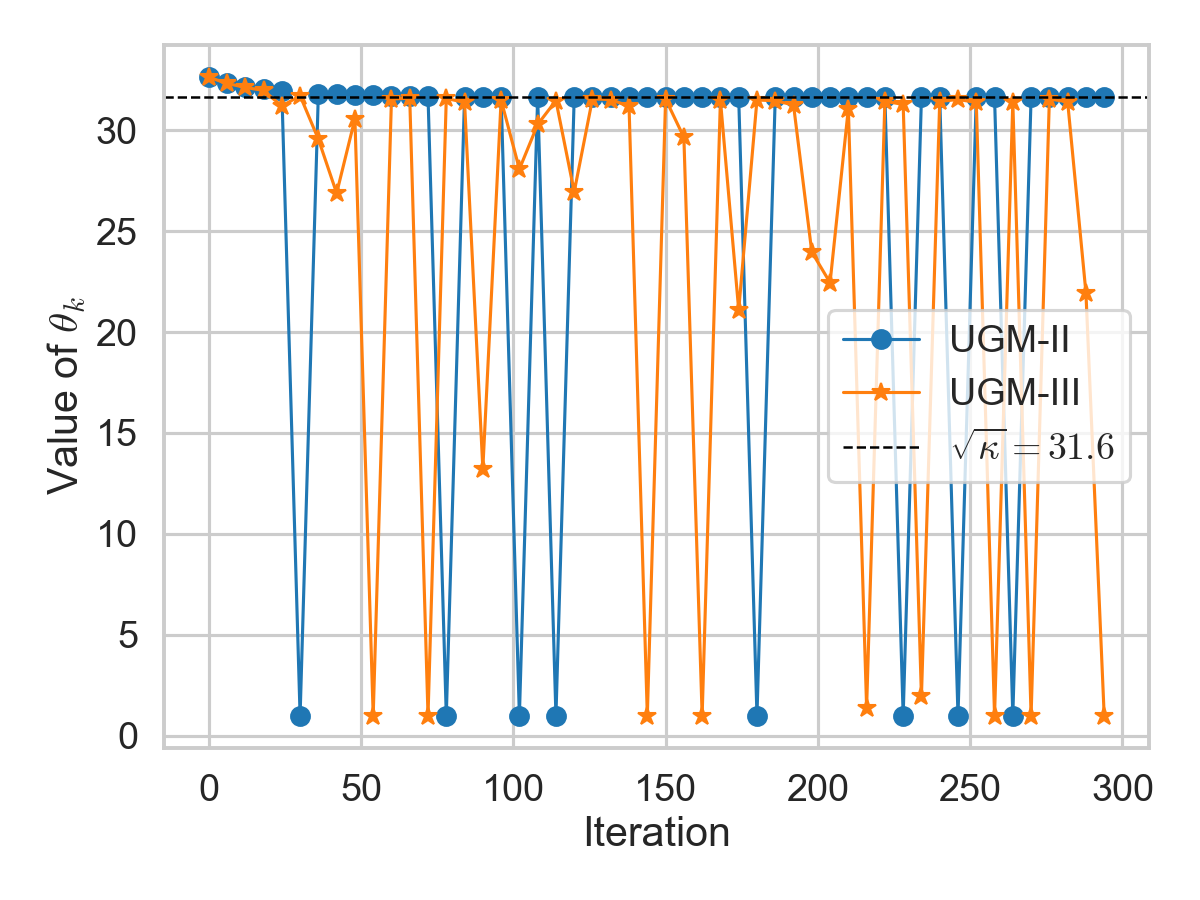}\caption{$\mu=1, L=10^3$}\end{subfigure}\hfill
\begin{subfigure}[b]{0.19\textwidth}\includegraphics[width=\textwidth]{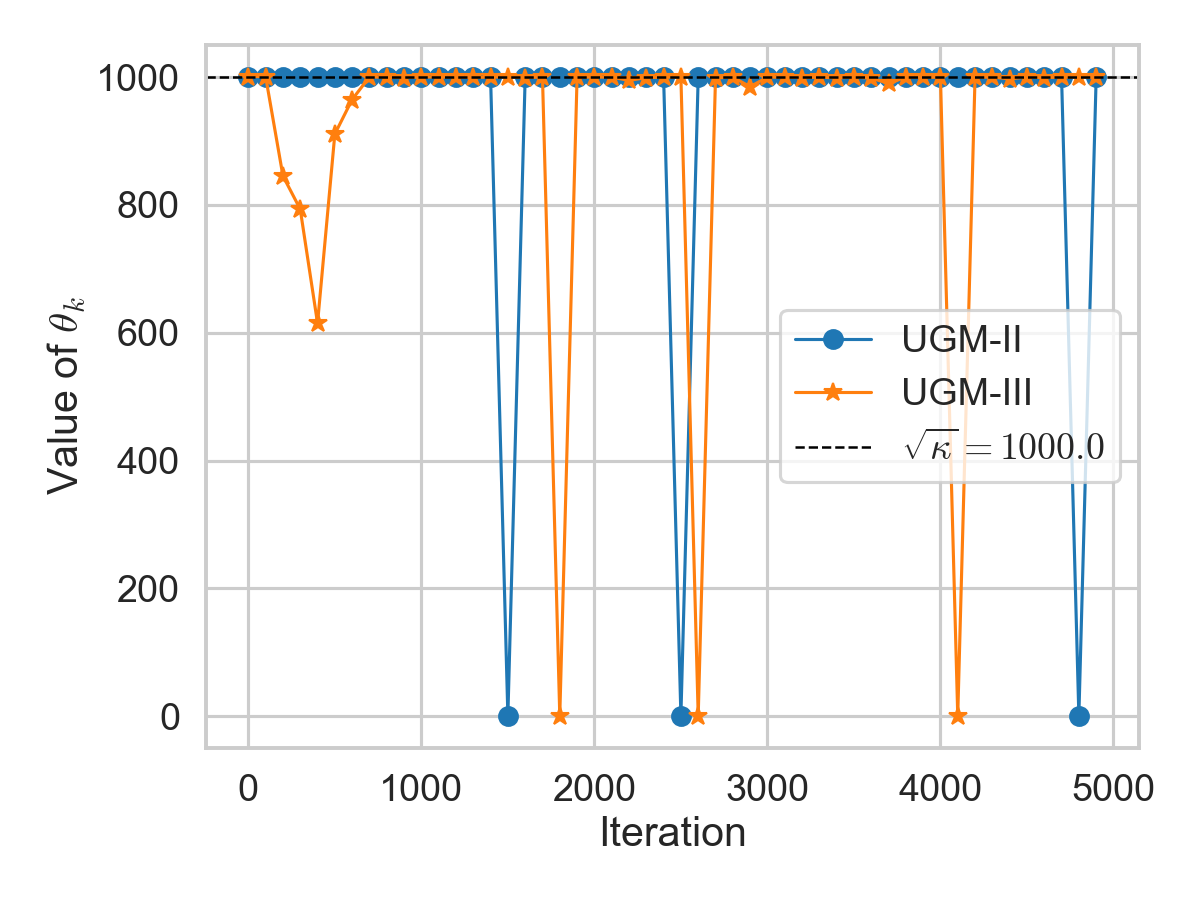}\caption{$\mu=1, L=10^6$}\end{subfigure}\hfill
\begin{subfigure}[b]{0.19\textwidth}\includegraphics[width=\textwidth]{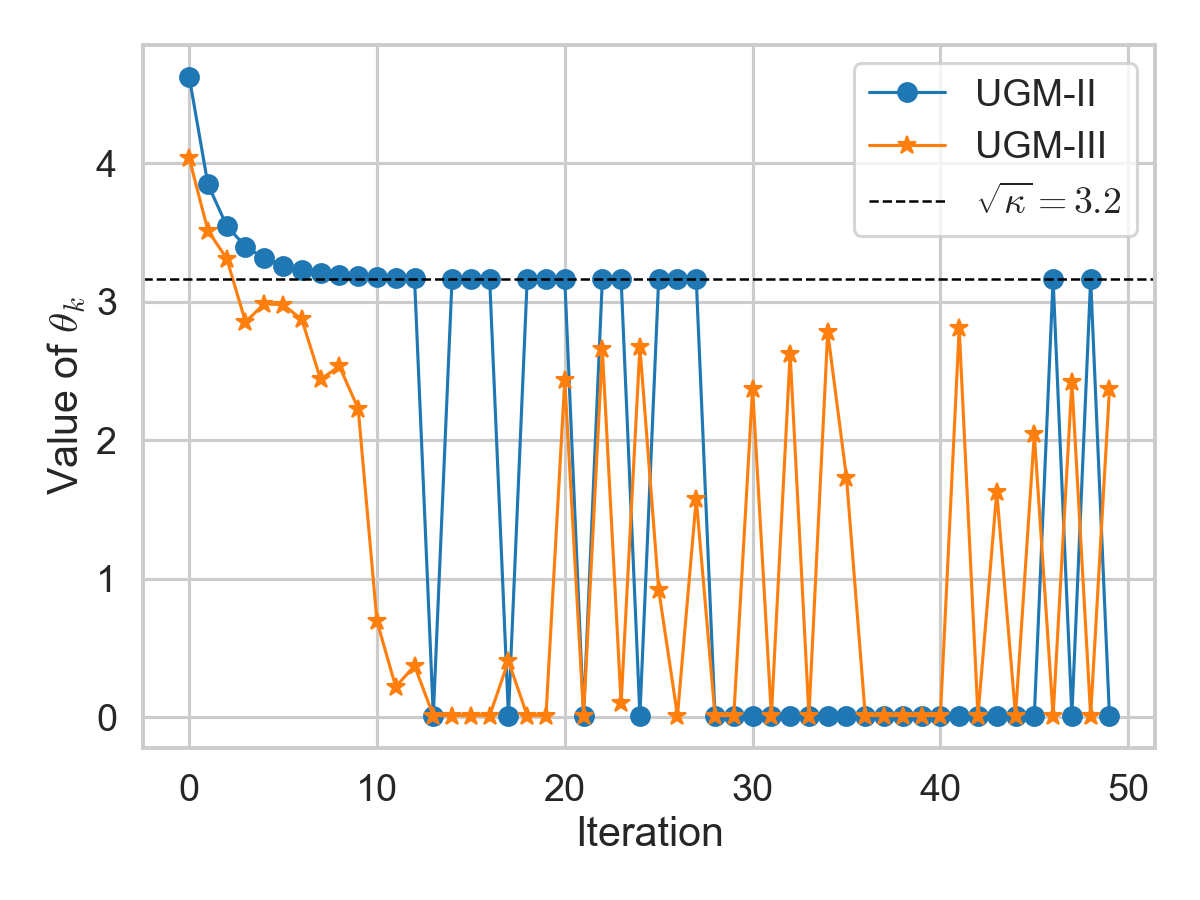}\caption{$\mu=10, L=10^2$}\end{subfigure}\hfill
\begin{subfigure}[b]{0.19\textwidth}\includegraphics[width=\textwidth]{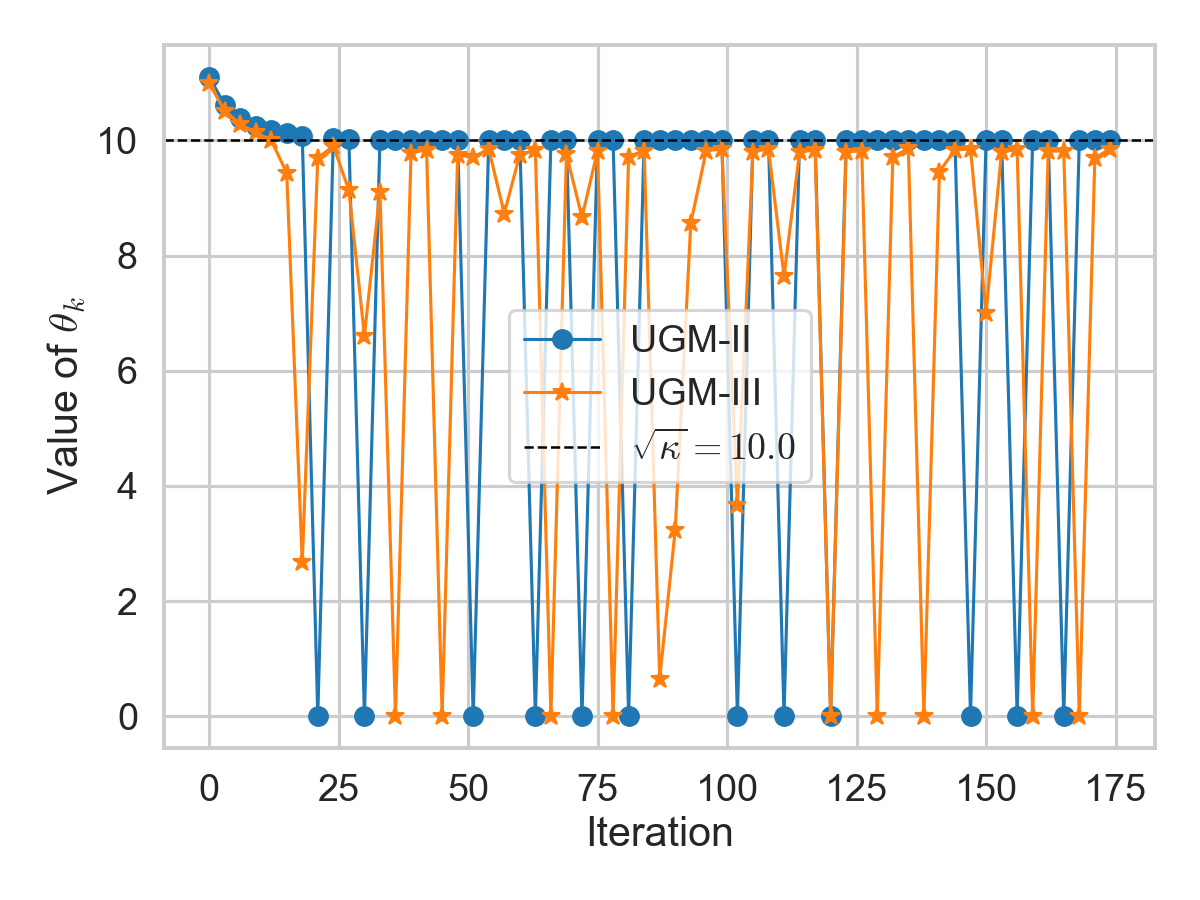}\caption{$\mu=10^2, L=10^4$}\end{subfigure}
\caption{Comparison of FGM, UGM-I, UGM-II, and UGM-III on Case 3. Rows show the objective suboptimality versus iterations, objective suboptimality versus CPU time, and the evolution of $\theta_k$, respectively. Columns correspond to different $(\mu, L)$ configurations.}
\label{fig:case3}
\end{figure}

\begin{figure}[!htbp]
\centering
\begin{subfigure}[b]{0.19\textwidth}\includegraphics[width=\textwidth]{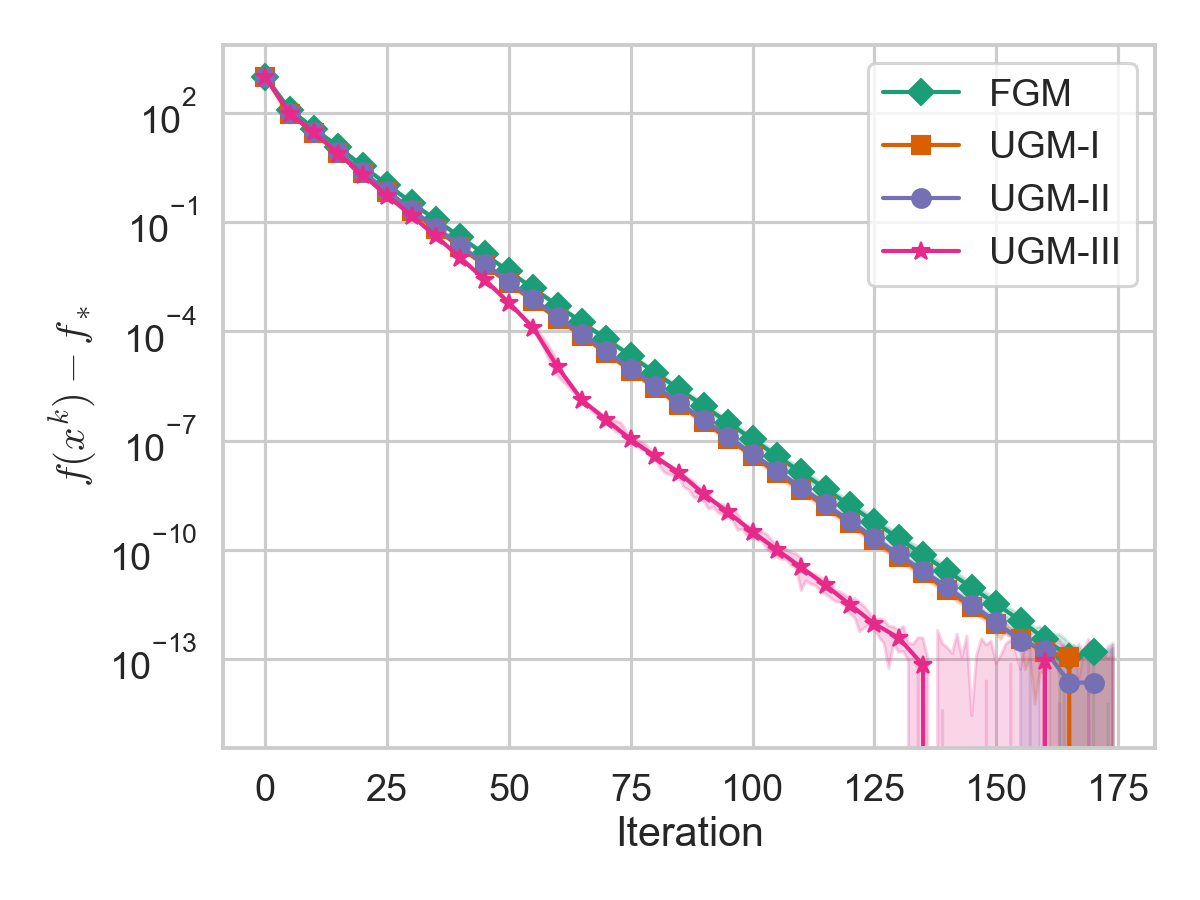}\end{subfigure}\hfill
\begin{subfigure}[b]{0.19\textwidth}\includegraphics[width=\textwidth]{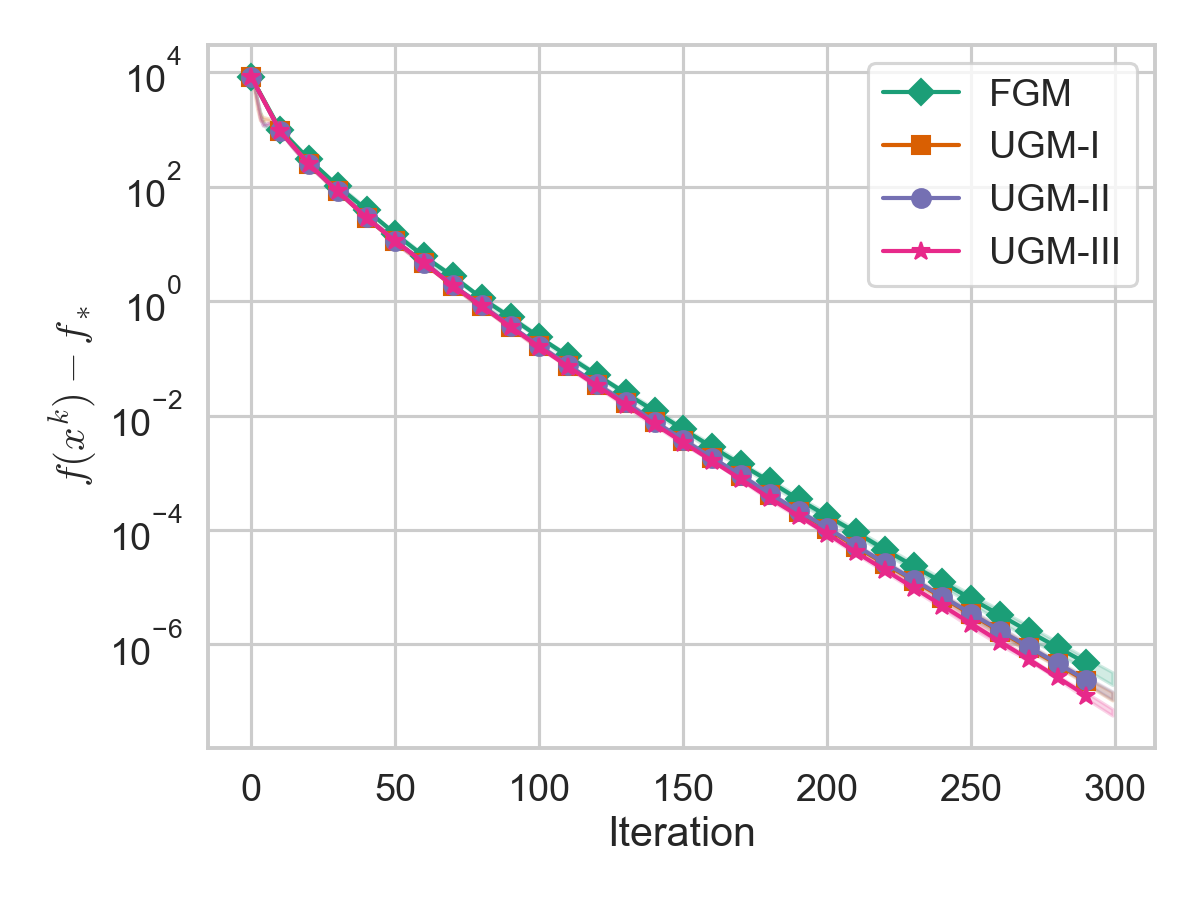}\end{subfigure}\hfill
\begin{subfigure}[b]{0.19\textwidth}\includegraphics[width=\textwidth]{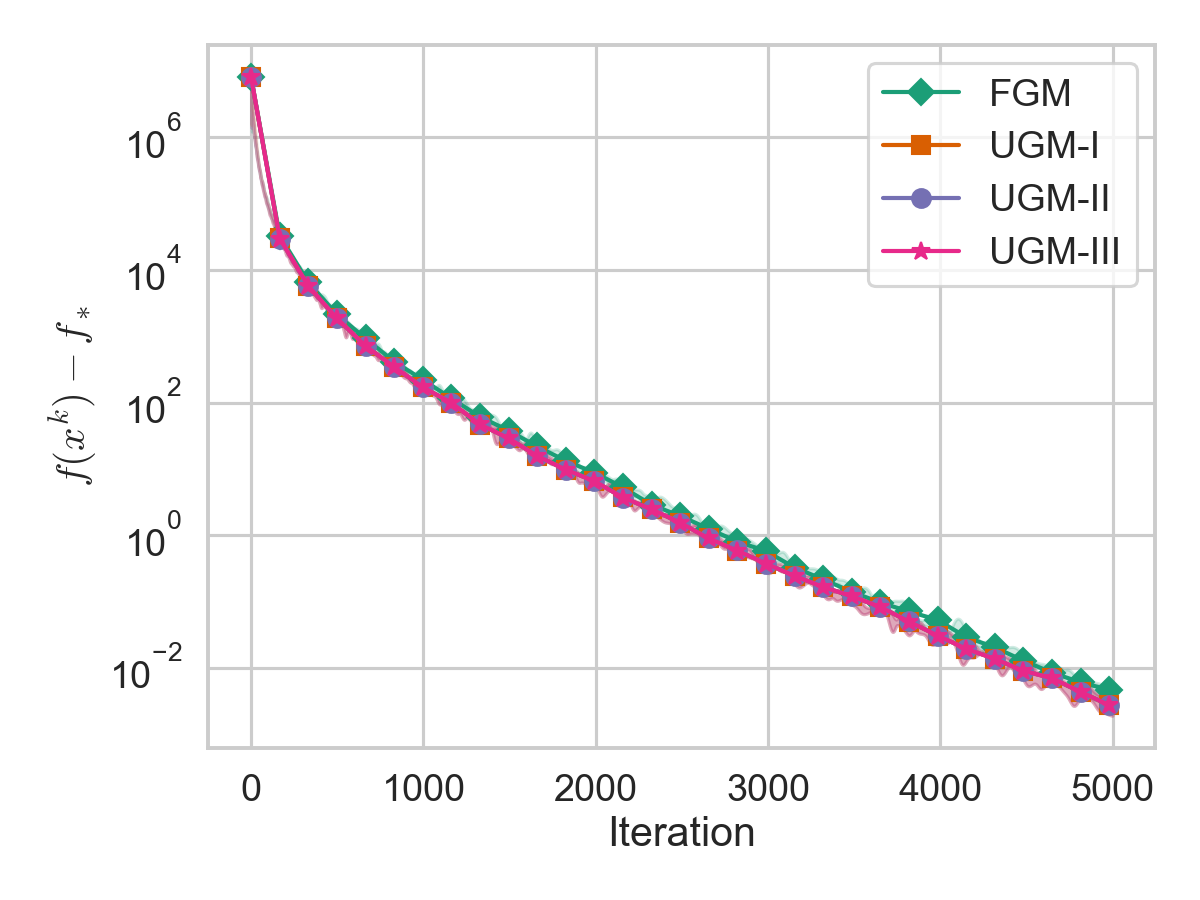}\end{subfigure}\hfill
\begin{subfigure}[b]{0.19\textwidth}\includegraphics[width=\textwidth]{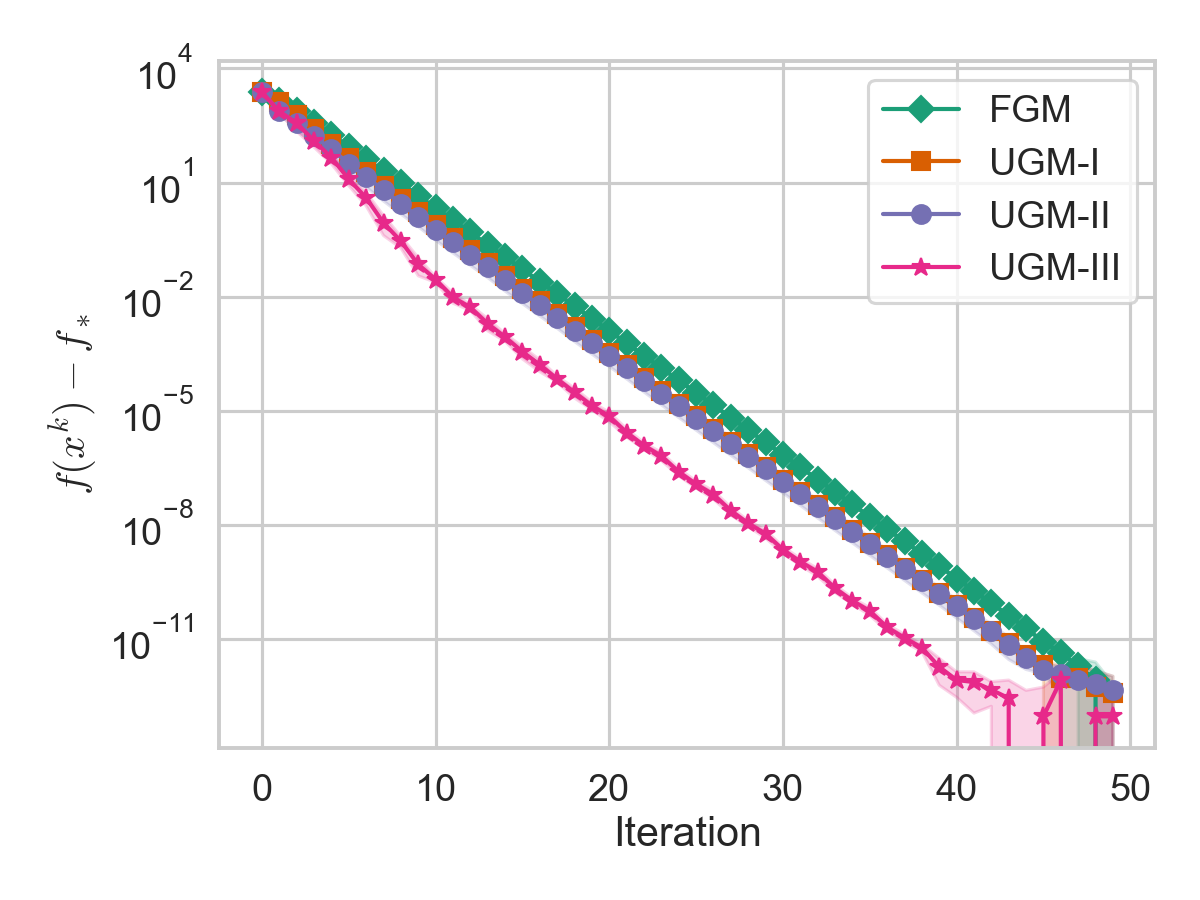}\end{subfigure}\hfill
\begin{subfigure}[b]{0.19\textwidth}\includegraphics[width=\textwidth]{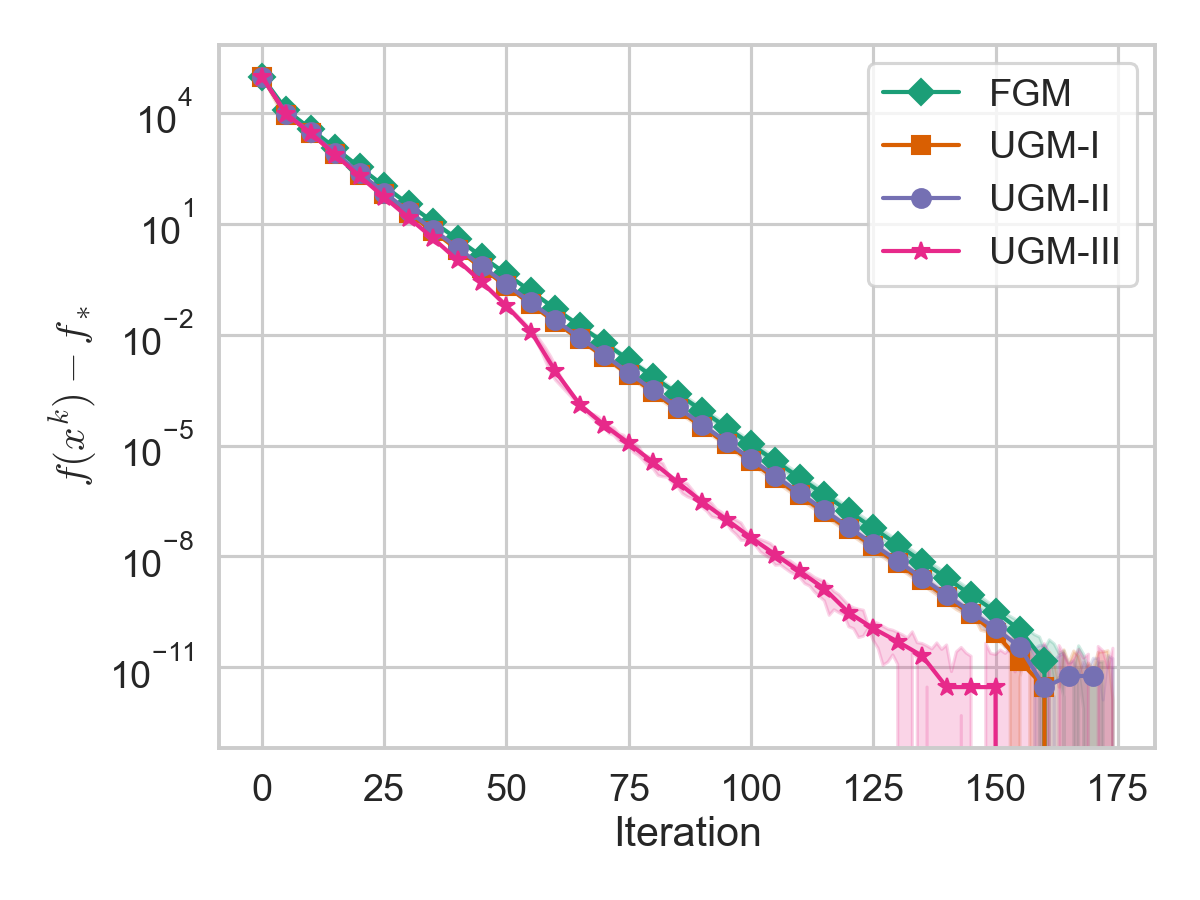}\end{subfigure}
\\
\begin{subfigure}[b]{0.19\textwidth}\includegraphics[width=\textwidth]{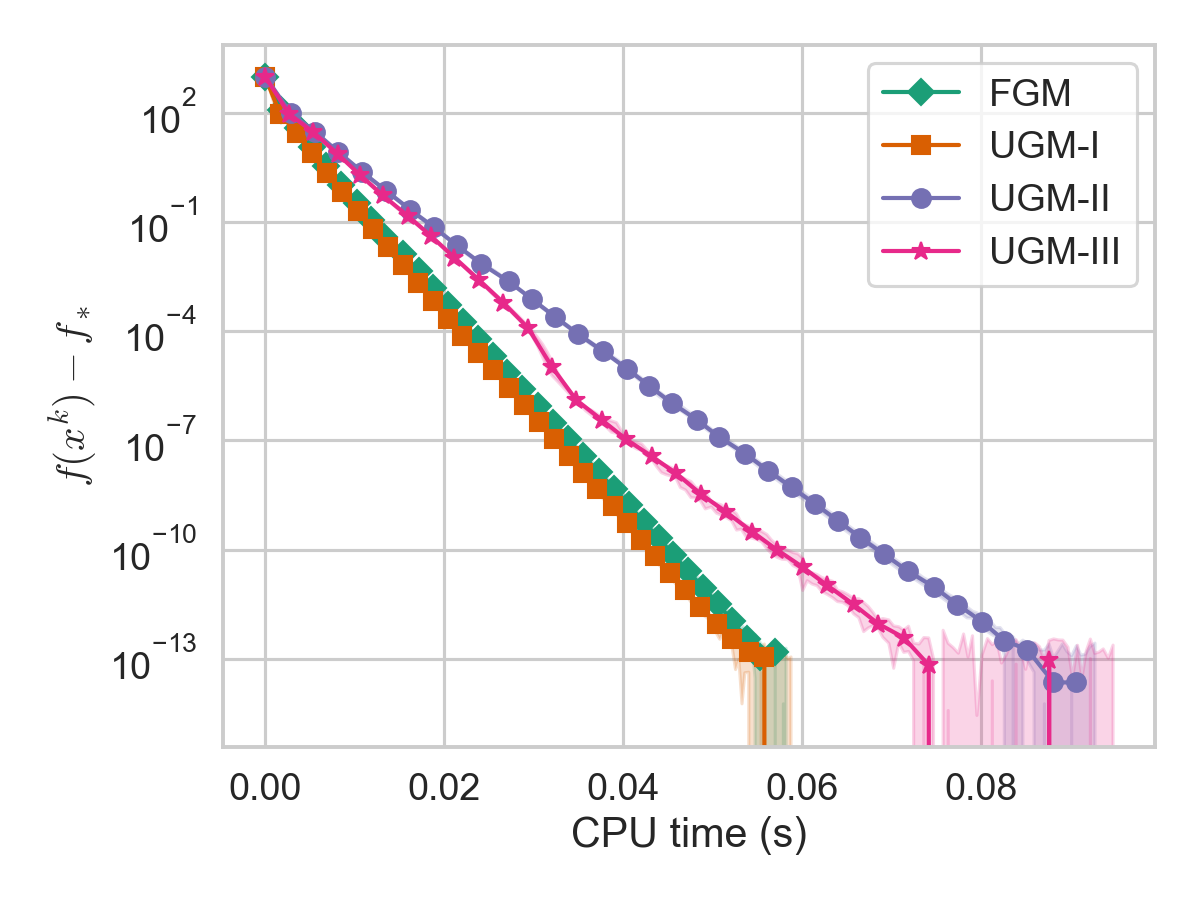}\end{subfigure}\hfill
\begin{subfigure}[b]{0.19\textwidth}\includegraphics[width=\textwidth]{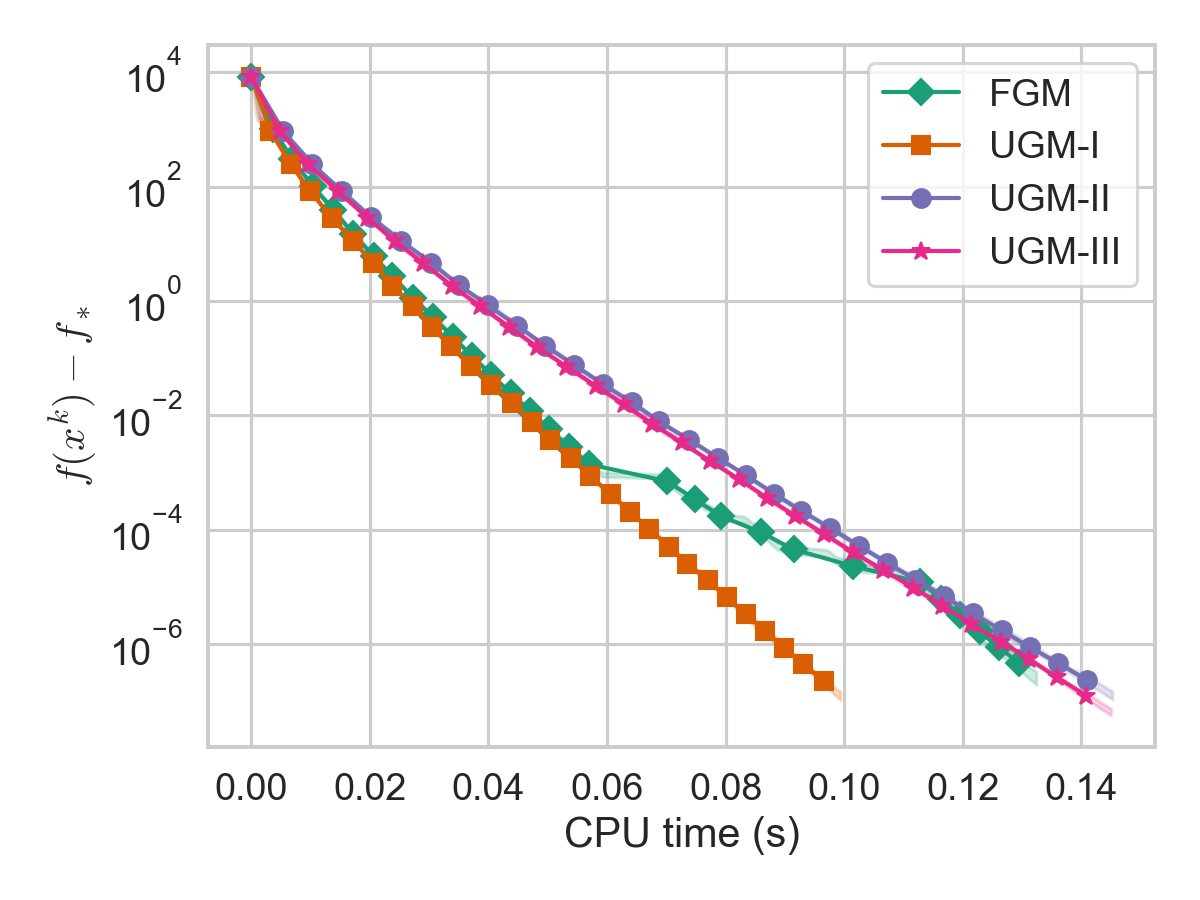}\end{subfigure}\hfill
\begin{subfigure}[b]{0.19\textwidth}\includegraphics[width=\textwidth]{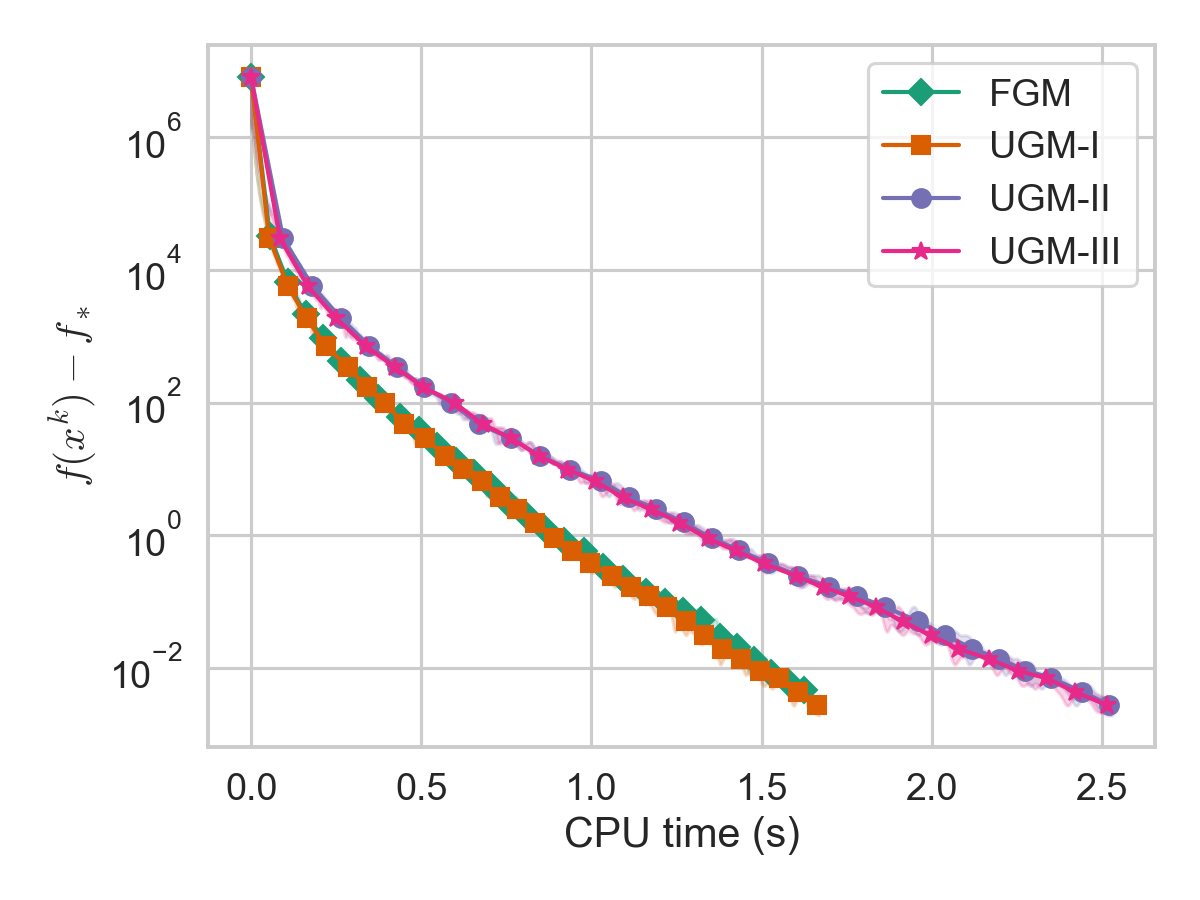}\end{subfigure}\hfill
\begin{subfigure}[b]{0.19\textwidth}\includegraphics[width=\textwidth]{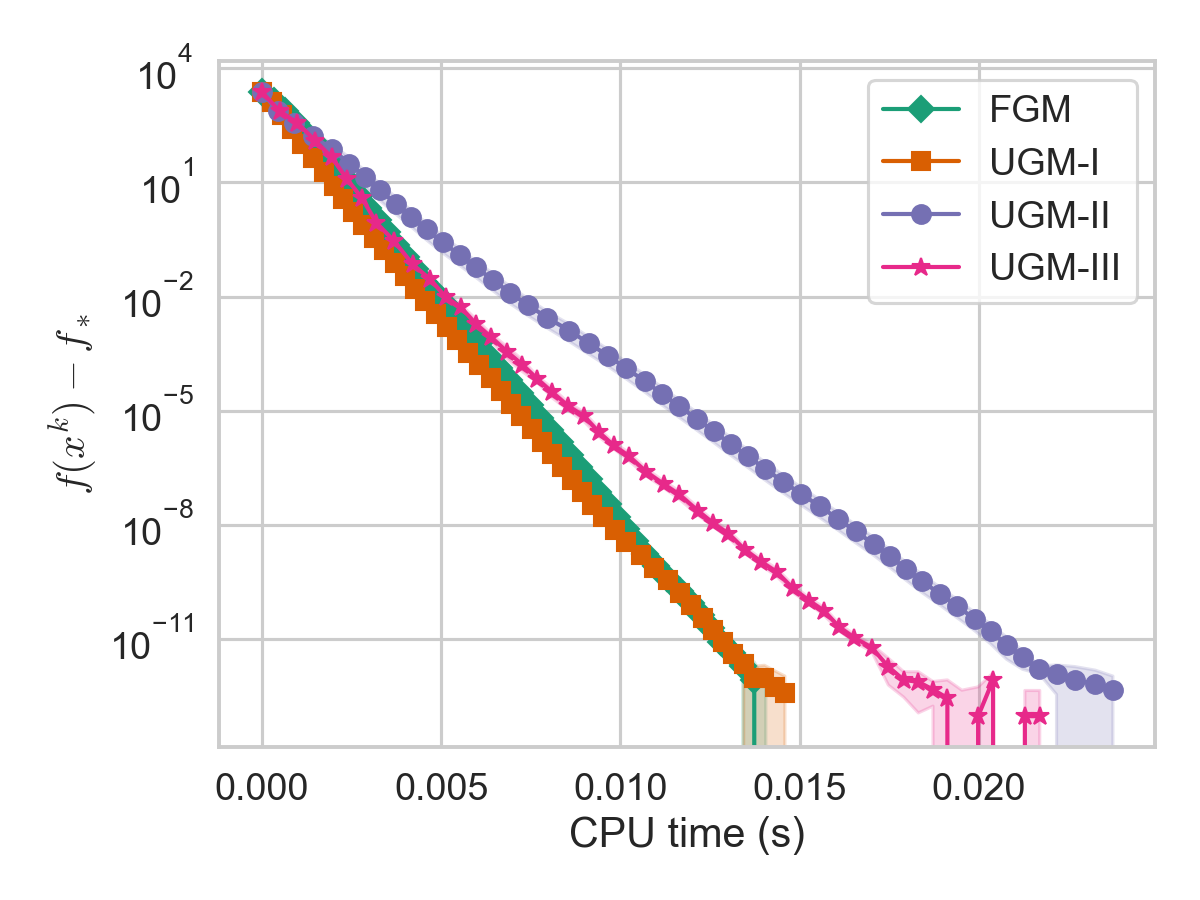}\end{subfigure}\hfill
\begin{subfigure}[b]{0.19\textwidth}\includegraphics[width=\textwidth]{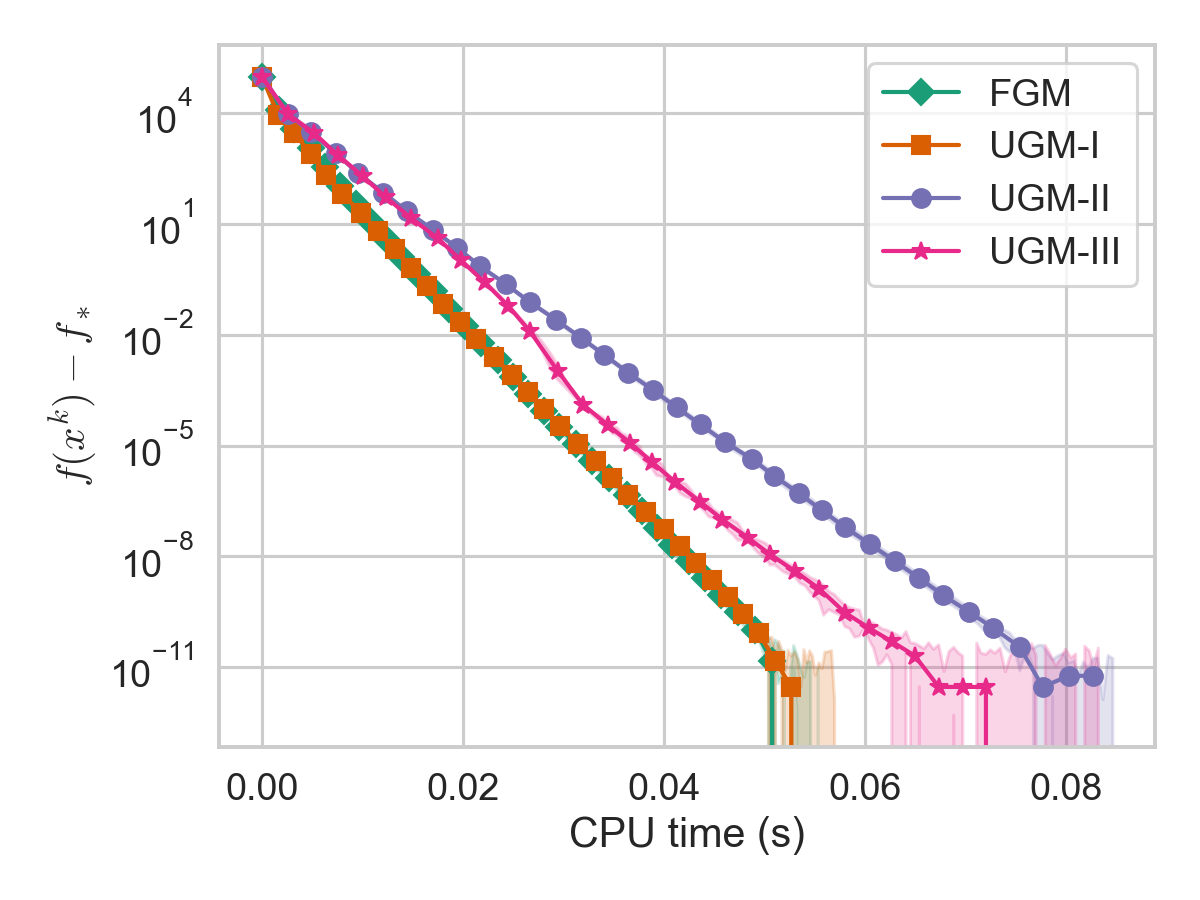}\end{subfigure}
\\
\begin{subfigure}[b]{0.19\textwidth}\includegraphics[width=\textwidth]{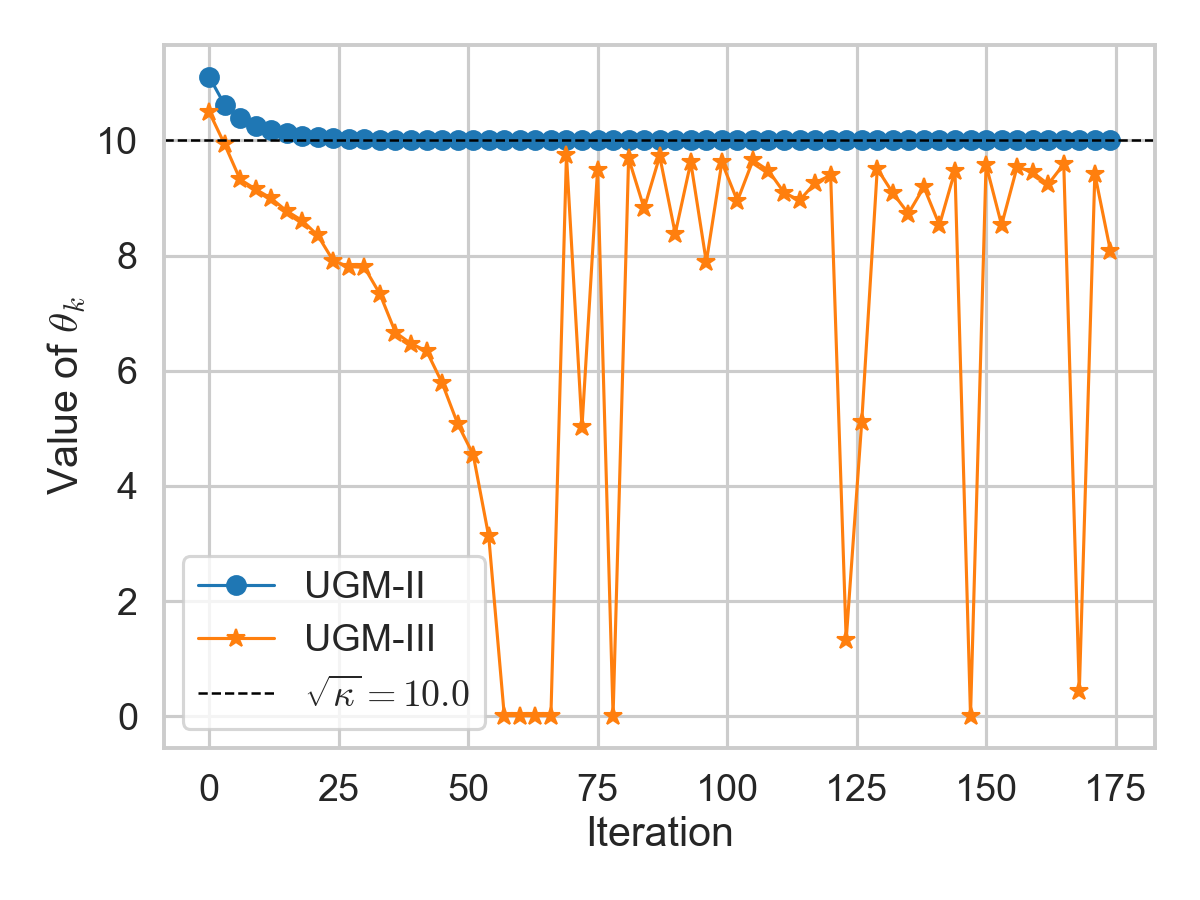}\caption{$\mu=1, L=10^2$}\end{subfigure}\hfill
\begin{subfigure}[b]{0.19\textwidth}\includegraphics[width=\textwidth]{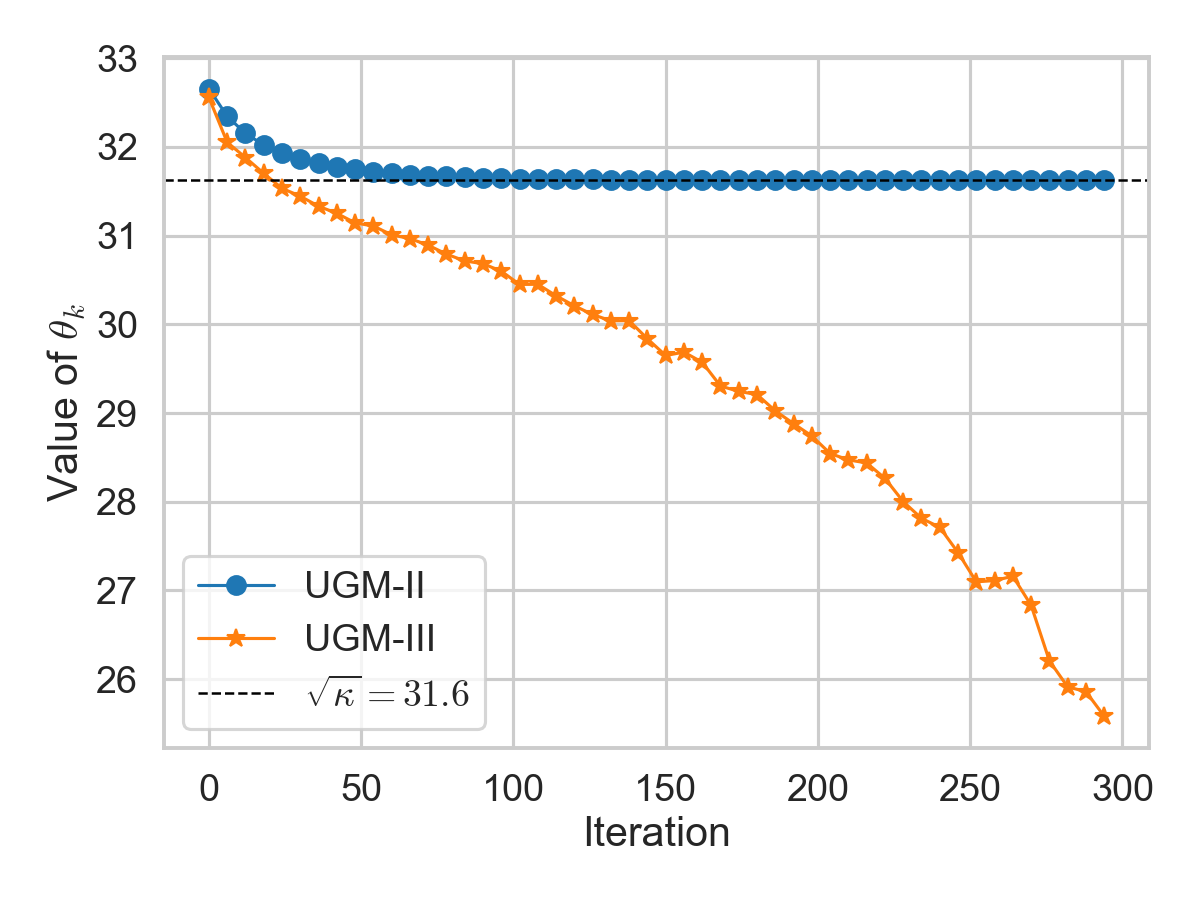}\caption{$\mu=1, L=10^3$}\end{subfigure}\hfill
\begin{subfigure}[b]{0.19\textwidth}\includegraphics[width=\textwidth]{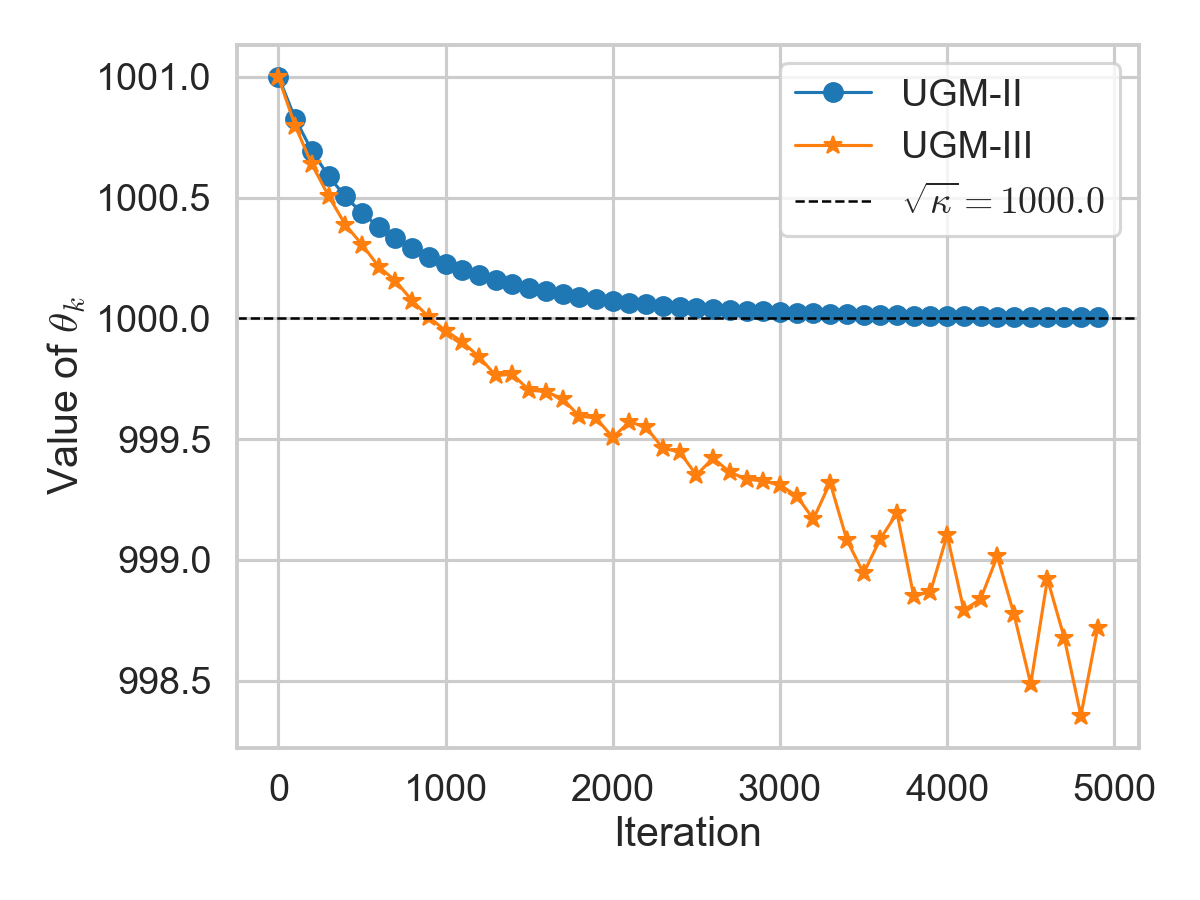}\caption{$\mu=1, L=10^6$}\end{subfigure}\hfill
\begin{subfigure}[b]{0.19\textwidth}\includegraphics[width=\textwidth]{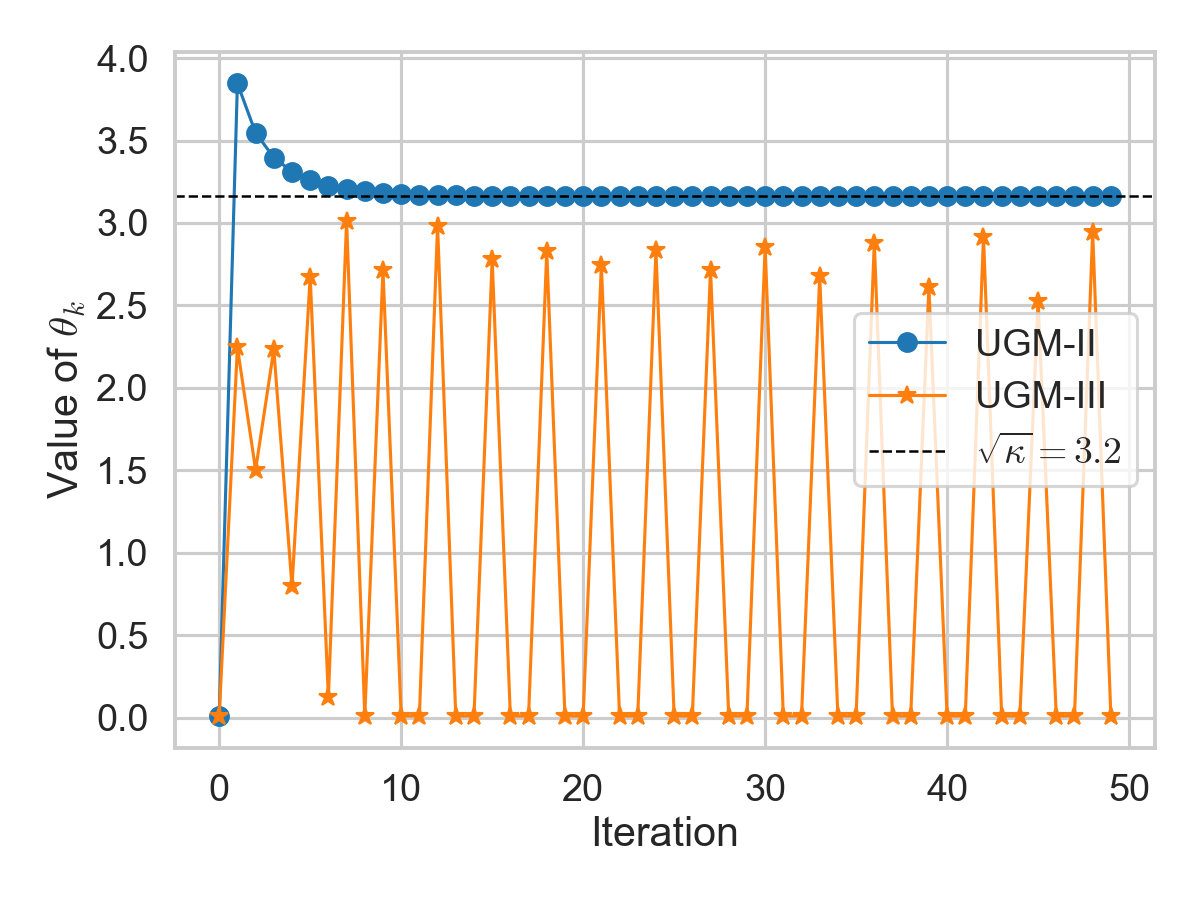}\caption{$\mu=10, L=10^2$}\end{subfigure}\hfill
\begin{subfigure}[b]{0.19\textwidth}\includegraphics[width=\textwidth]{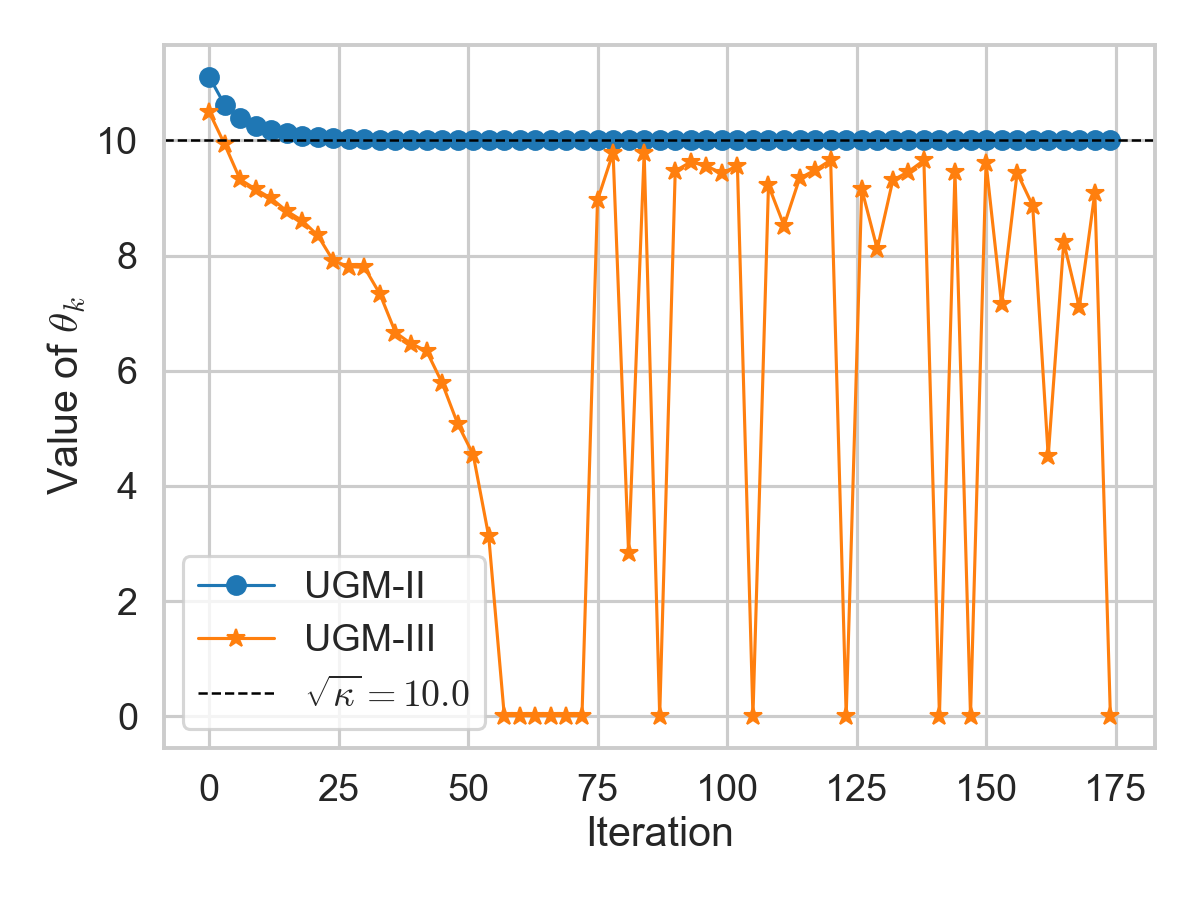}\caption{$\mu=10^2, L=10^4$}\end{subfigure}
\caption{Comparison of FGM, UGM-I, UGM-II, and UGM-III on Case 4. Rows show the objective suboptimality versus iterations, objective suboptimality versus CPU time, and the evolution of $\theta_k$, respectively. Columns correspond to different $(\mu, L)$ configurations.}
\label{fig:case4}
\end{figure}

\begin{figure}[!htbp]
\centering
\begin{subfigure}[b]{0.19\textwidth}\includegraphics[width=\textwidth]{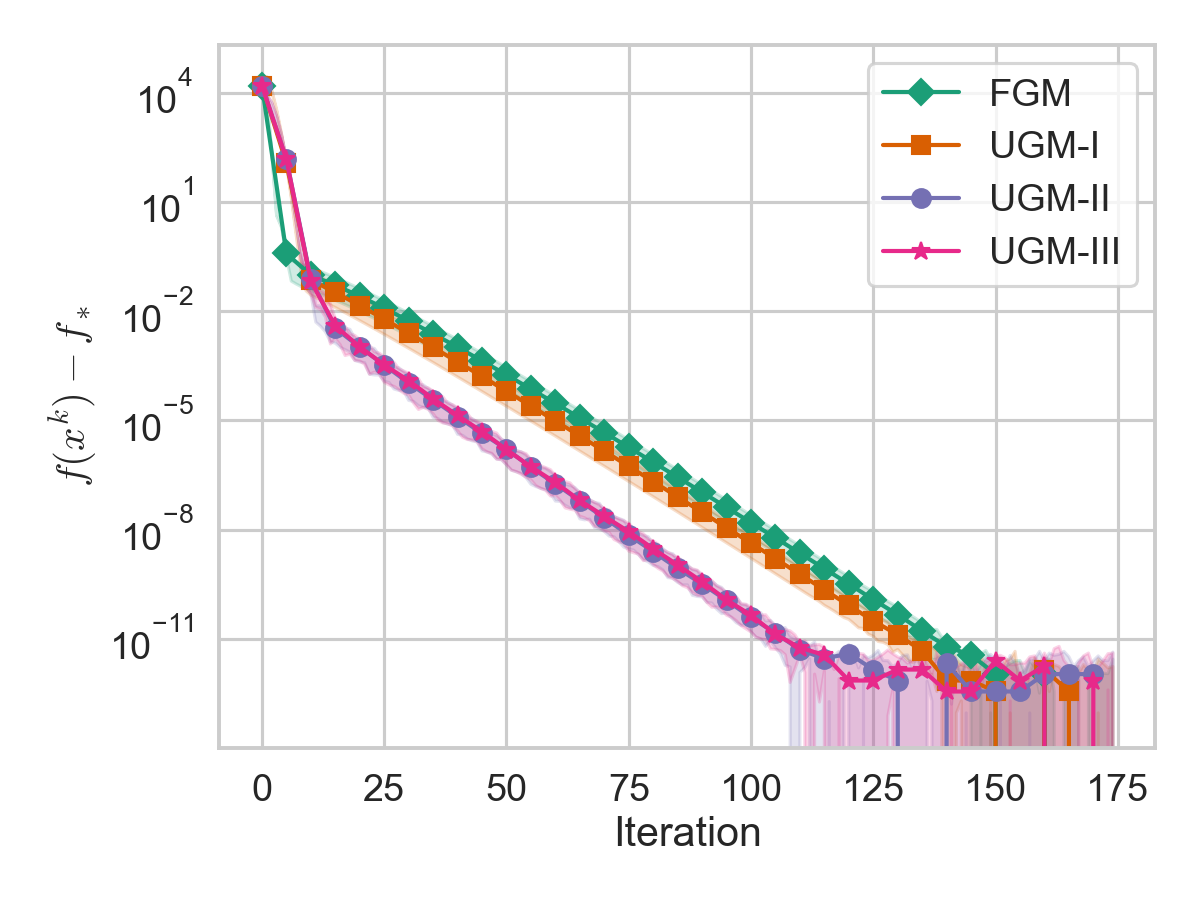}\end{subfigure}\hfill
\begin{subfigure}[b]{0.19\textwidth}\includegraphics[width=\textwidth]{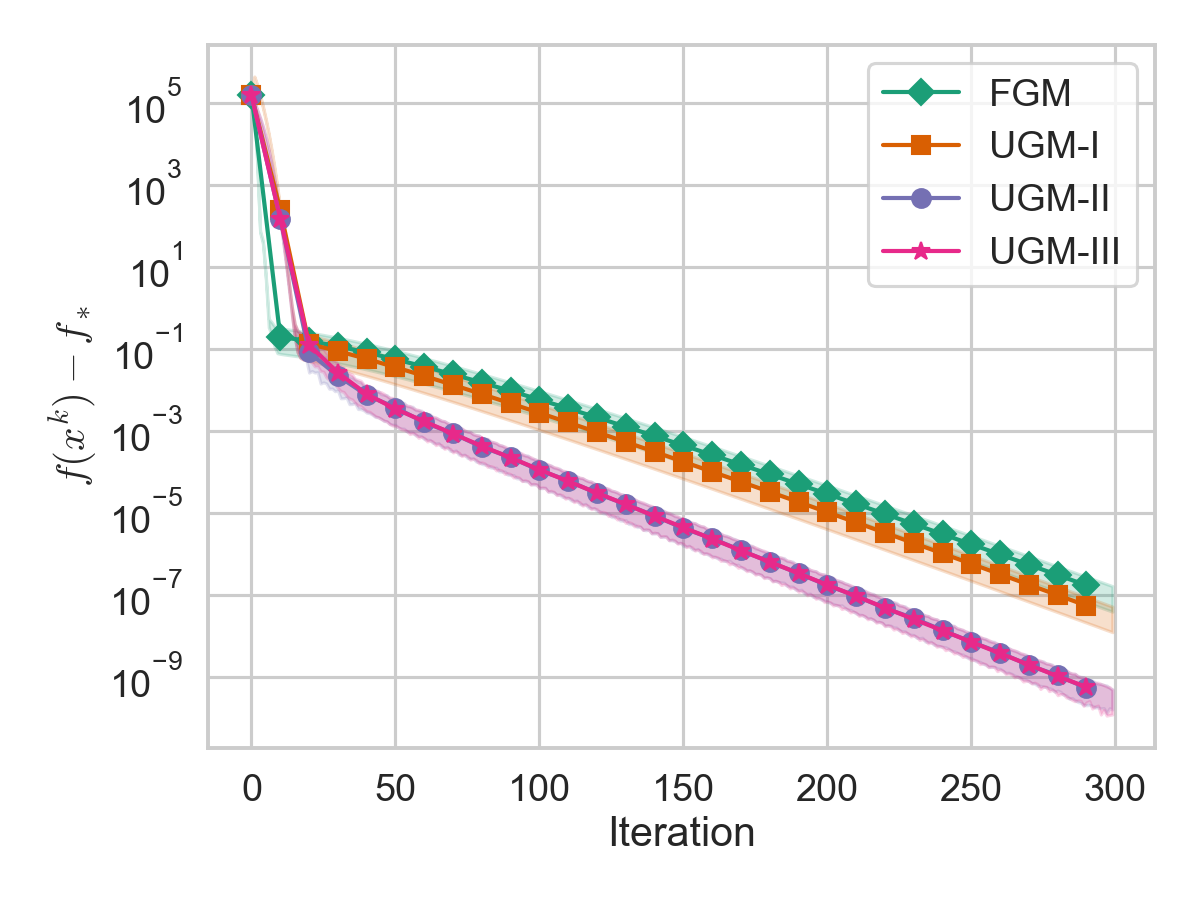}\end{subfigure}\hfill
\begin{subfigure}[b]{0.19\textwidth}\includegraphics[width=\textwidth]{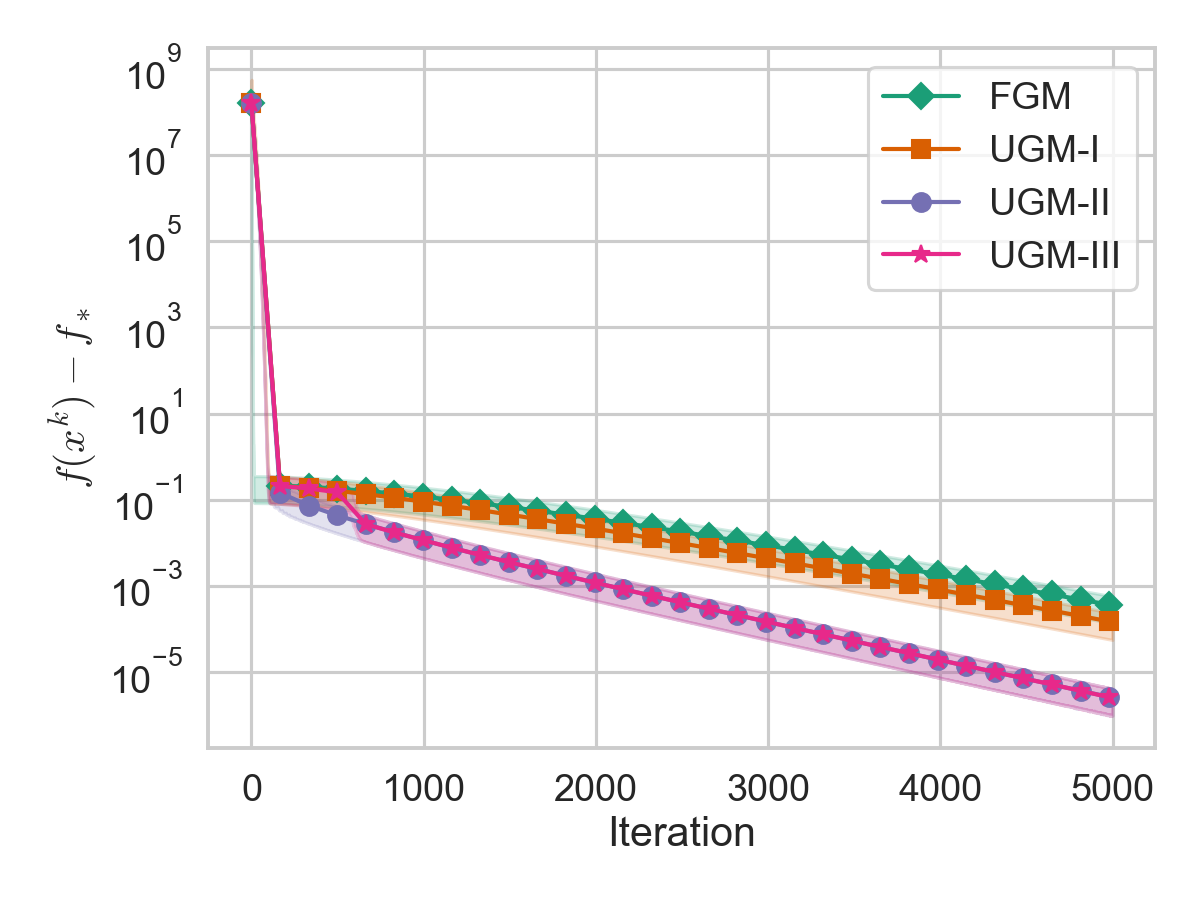}\end{subfigure}\hfill
\begin{subfigure}[b]{0.19\textwidth}\includegraphics[width=\textwidth]{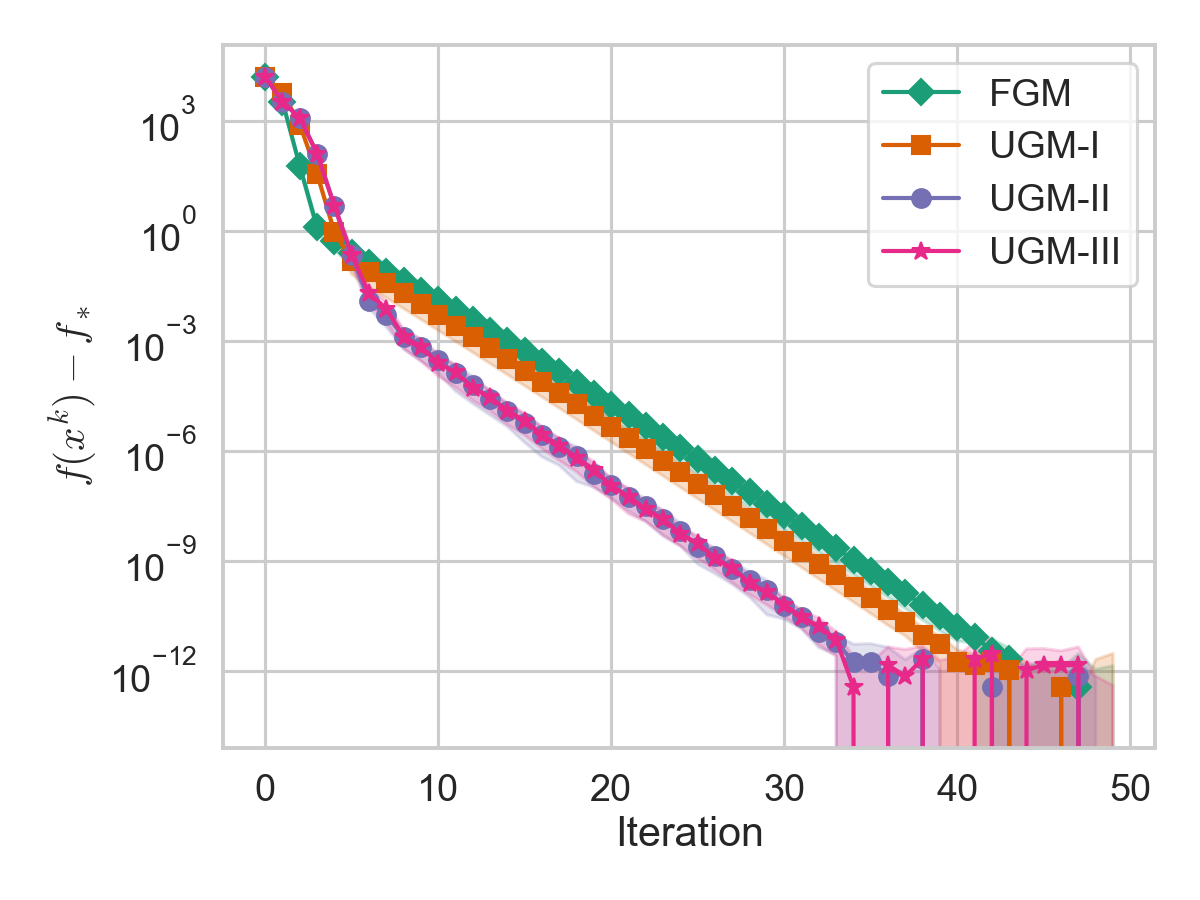}\end{subfigure}\hfill
\begin{subfigure}[b]{0.19\textwidth}\includegraphics[width=\textwidth]{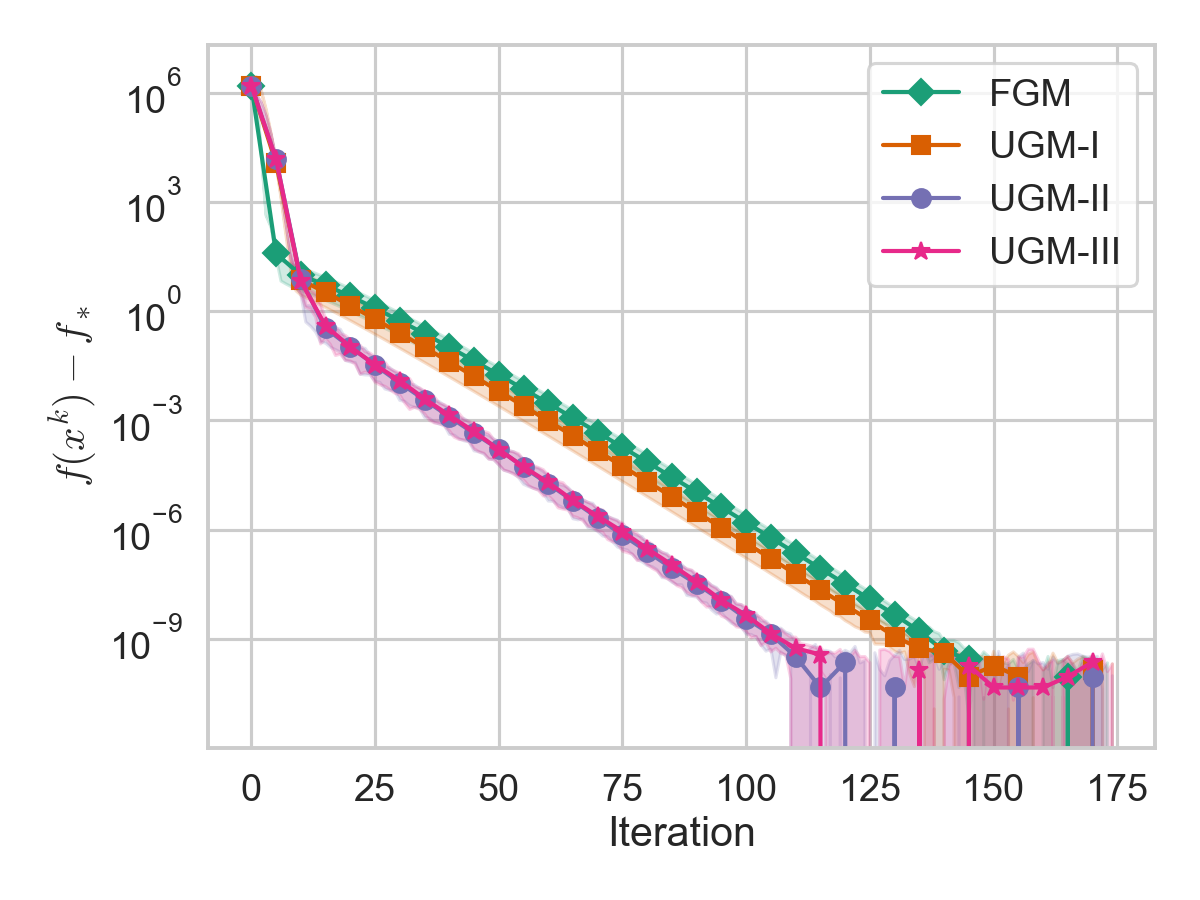}\end{subfigure}
\\
\begin{subfigure}[b]{0.19\textwidth}\includegraphics[width=\textwidth]{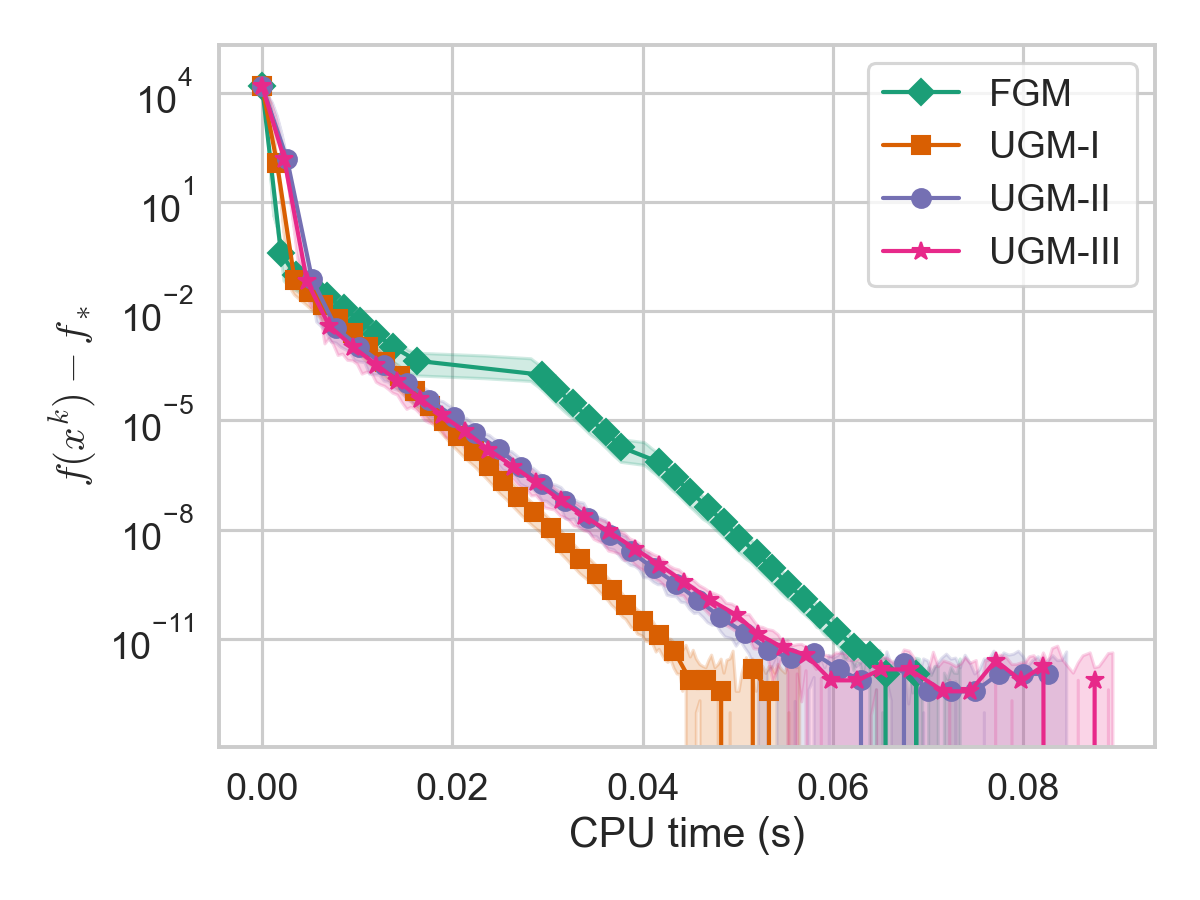}\end{subfigure}\hfill
\begin{subfigure}[b]{0.19\textwidth}\includegraphics[width=\textwidth]{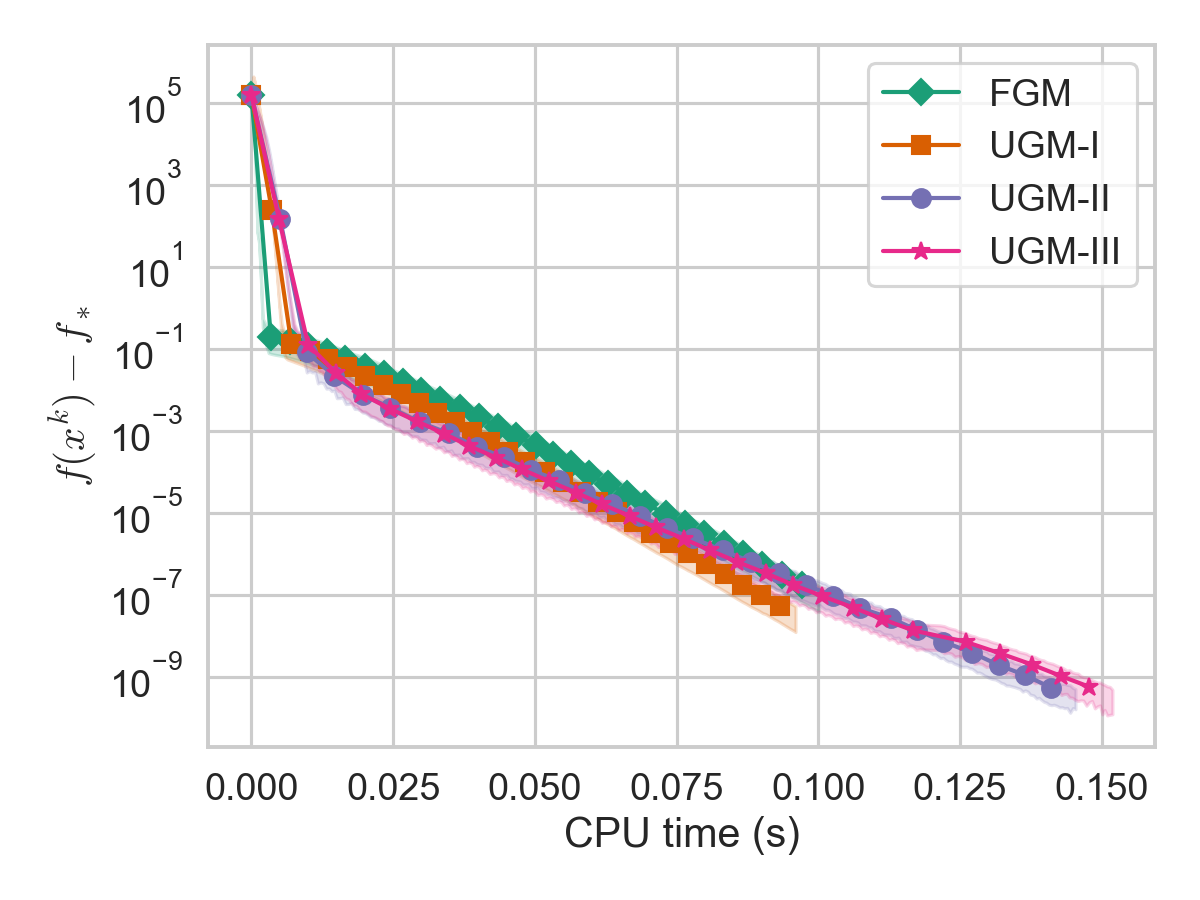}\end{subfigure}\hfill
\begin{subfigure}[b]{0.19\textwidth}\includegraphics[width=\textwidth]{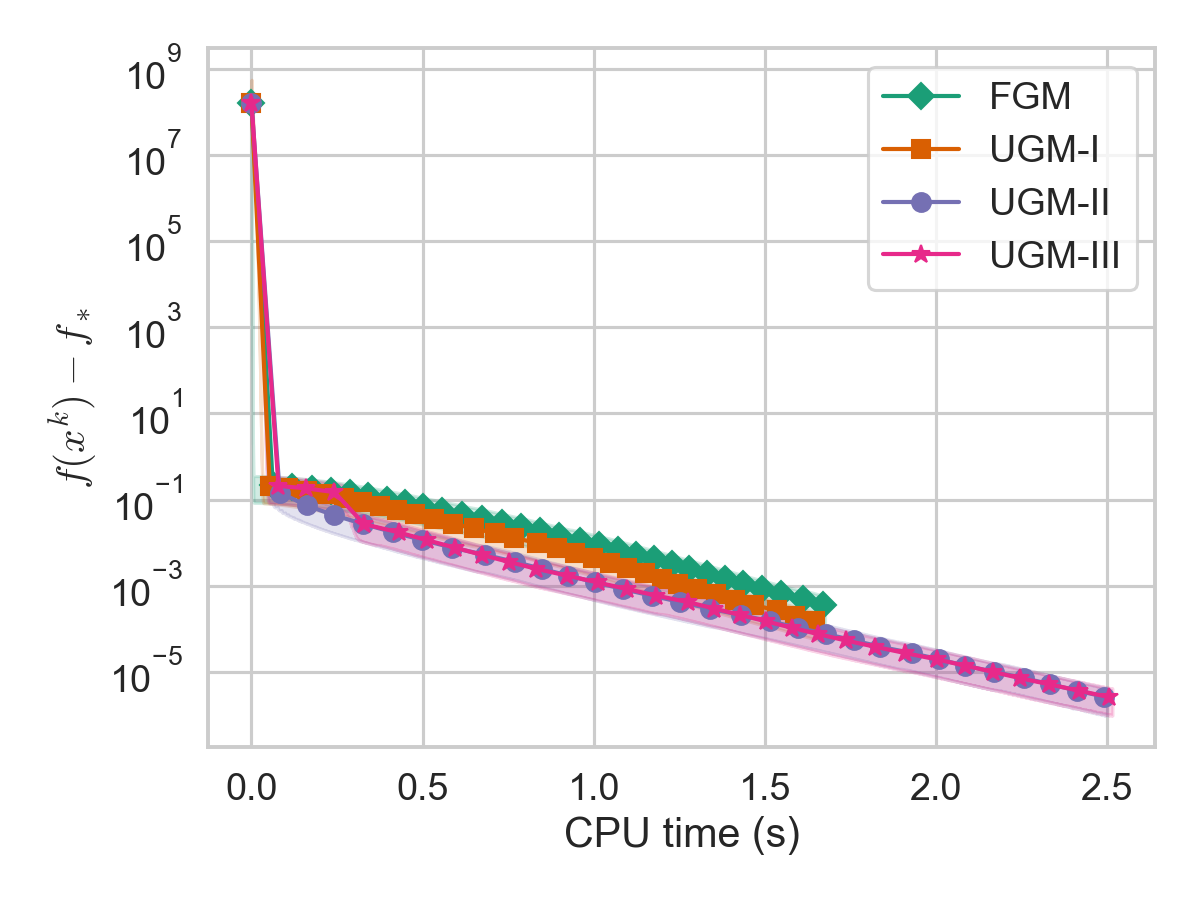}\end{subfigure}\hfill
\begin{subfigure}[b]{0.19\textwidth}\includegraphics[width=\textwidth]{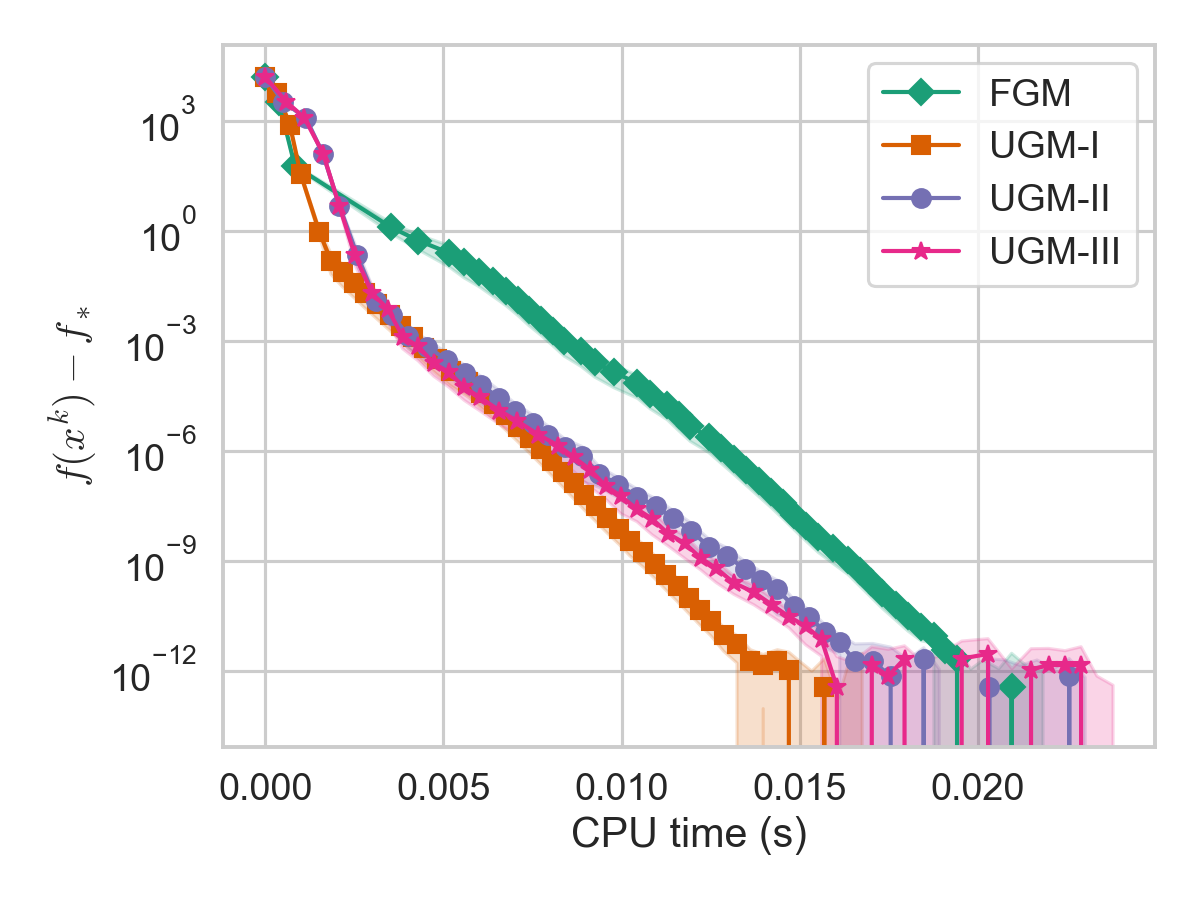}\end{subfigure}\hfill
\begin{subfigure}[b]{0.19\textwidth}\includegraphics[width=\textwidth]{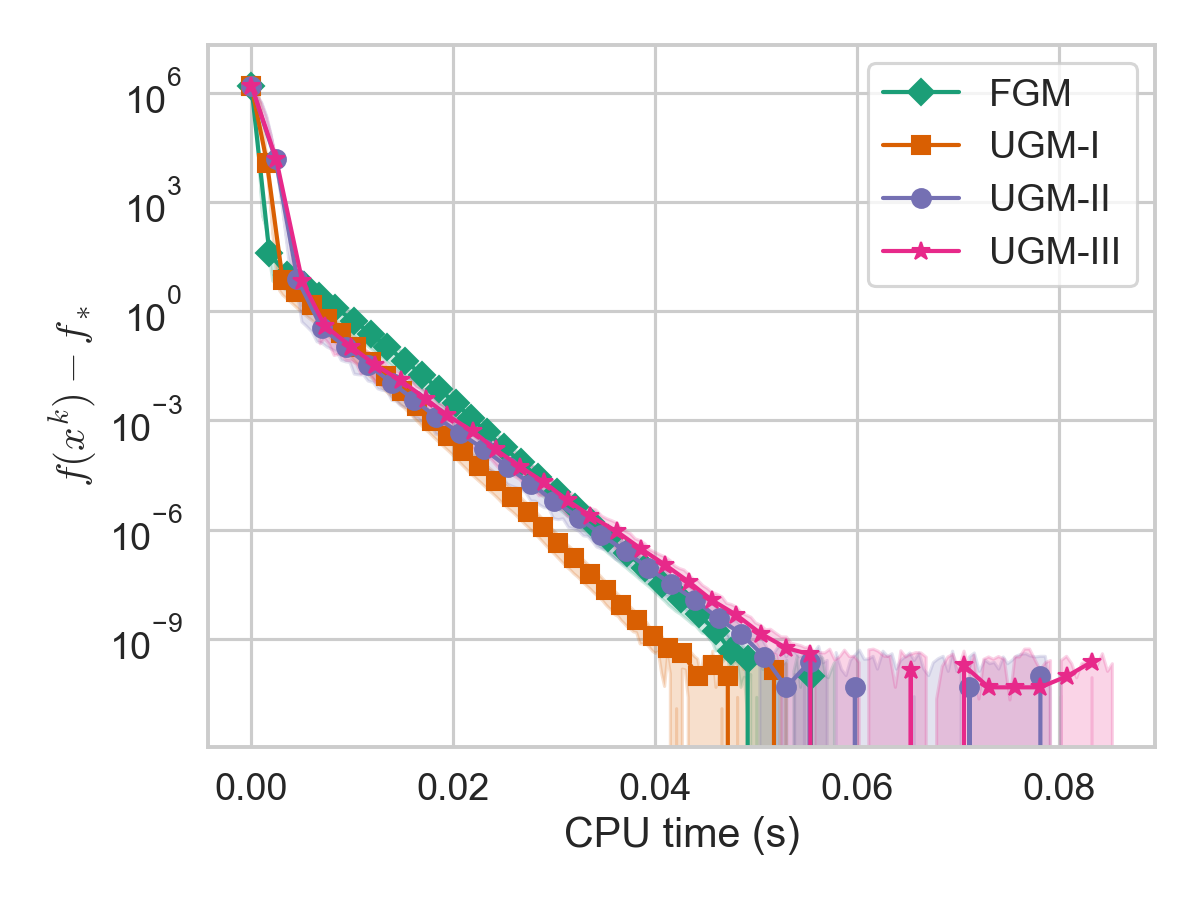}\end{subfigure}
\\
\begin{subfigure}[b]{0.19\textwidth}\includegraphics[width=\textwidth]{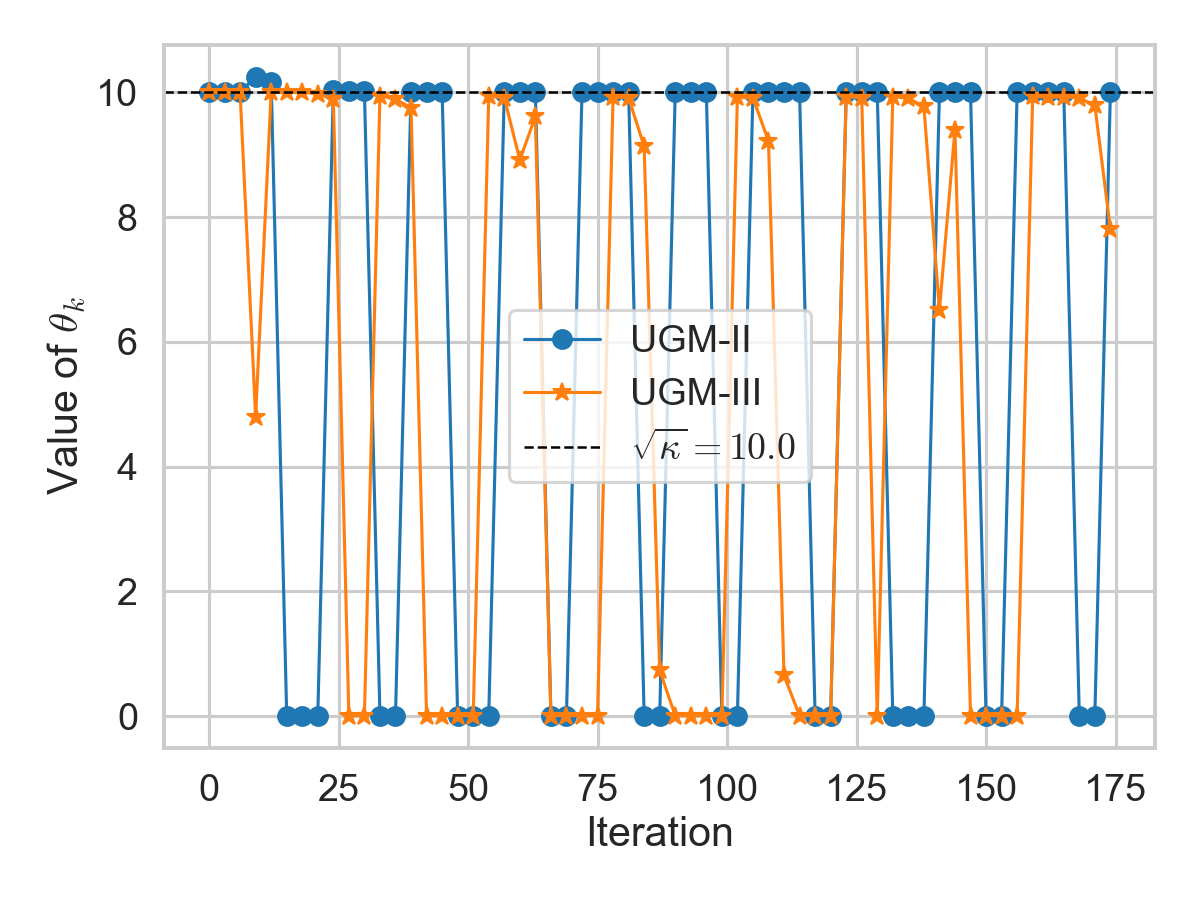}\caption{$\mu=1, L=10^2$}\end{subfigure}\hfill
\begin{subfigure}[b]{0.19\textwidth}\includegraphics[width=\textwidth]{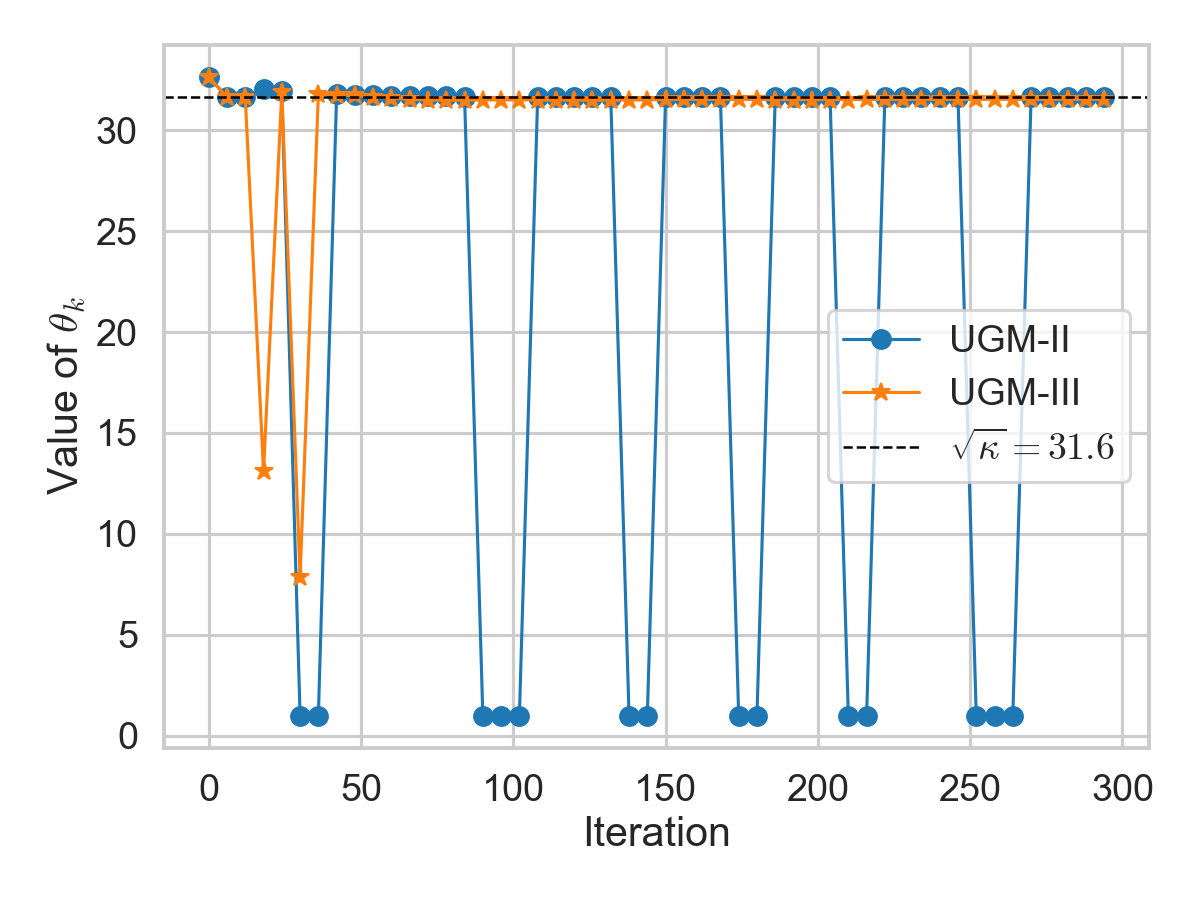}\caption{$\mu=1, L=10^3$}\end{subfigure}\hfill
\begin{subfigure}[b]{0.19\textwidth}\includegraphics[width=\textwidth]{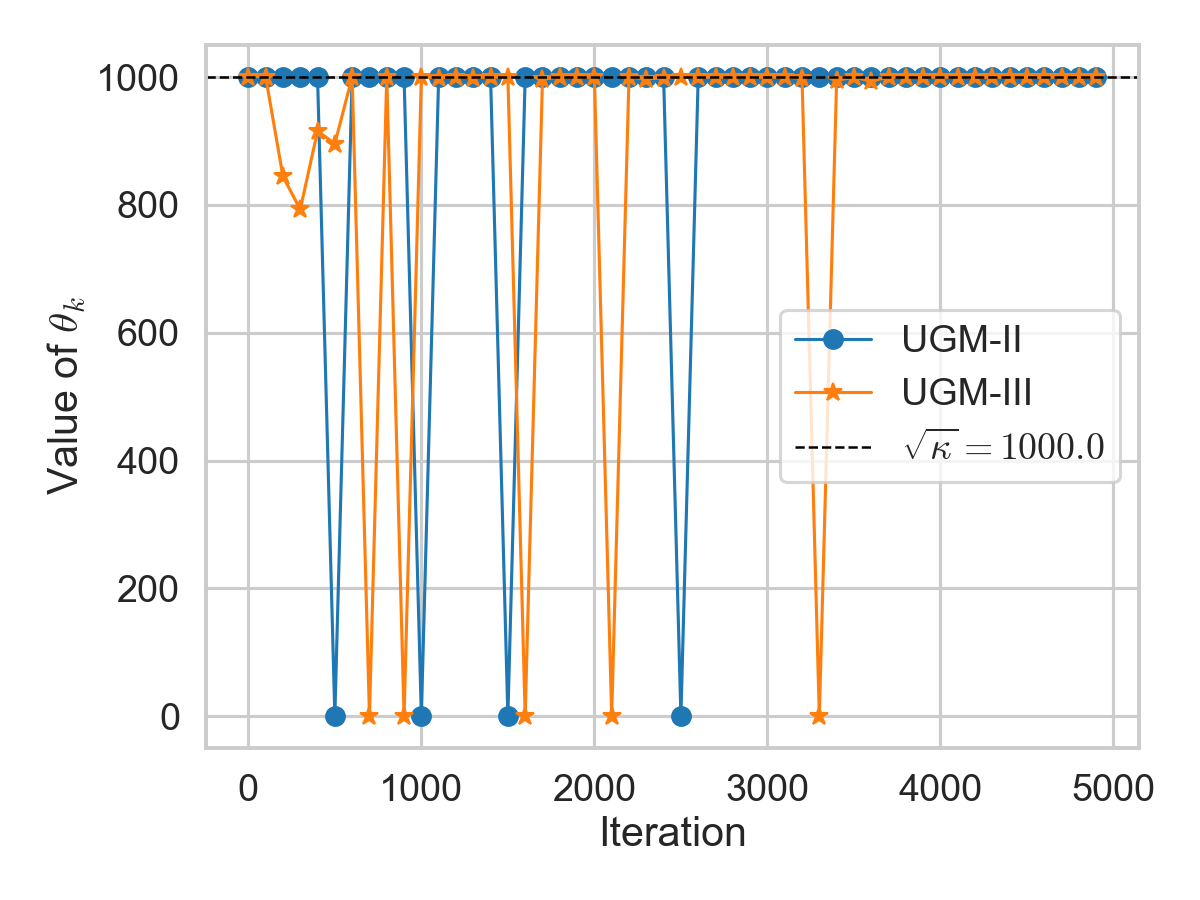}\caption{$\mu=1, L=10^6$}\end{subfigure}\hfill
\begin{subfigure}[b]{0.19\textwidth}\includegraphics[width=\textwidth]{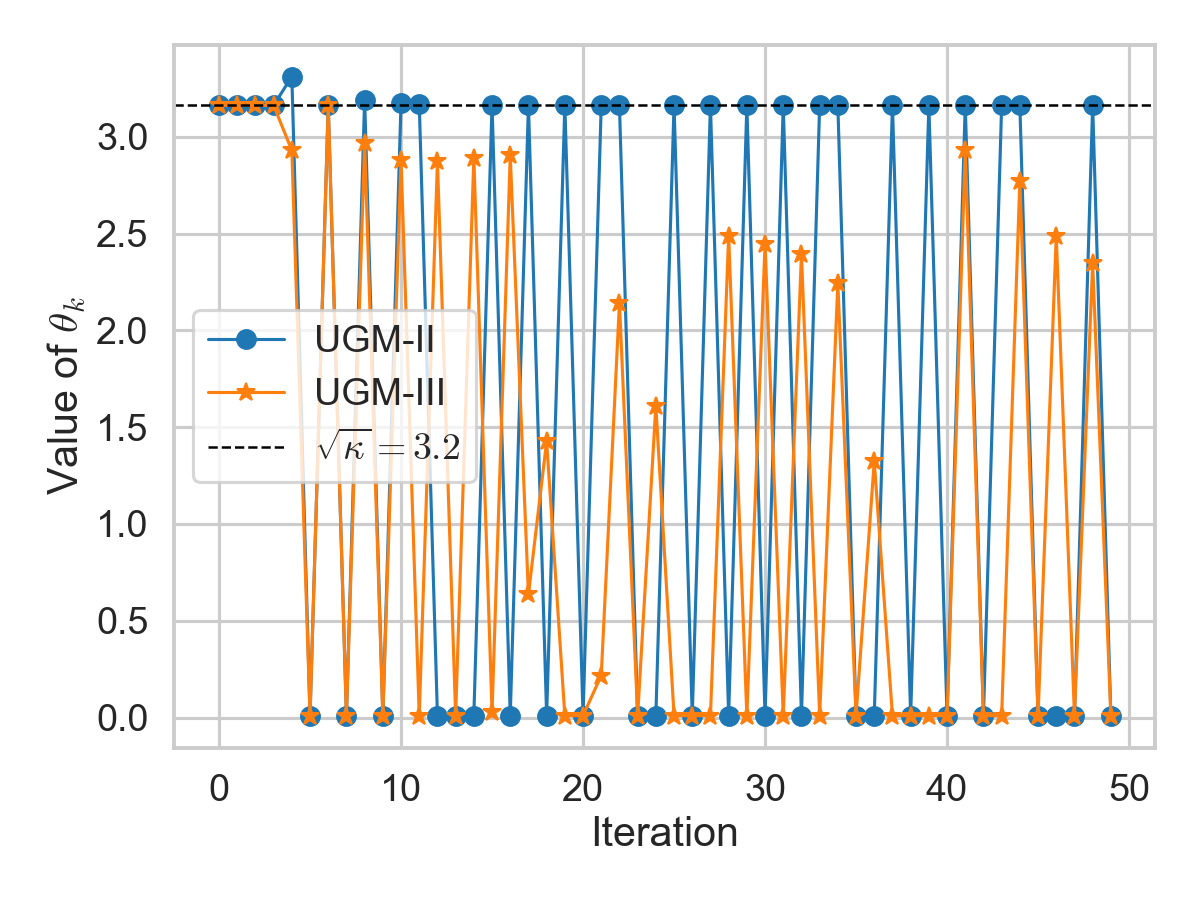}\caption{$\mu=10, L=10^2$}\end{subfigure}\hfill
\begin{subfigure}[b]{0.19\textwidth}\includegraphics[width=\textwidth]{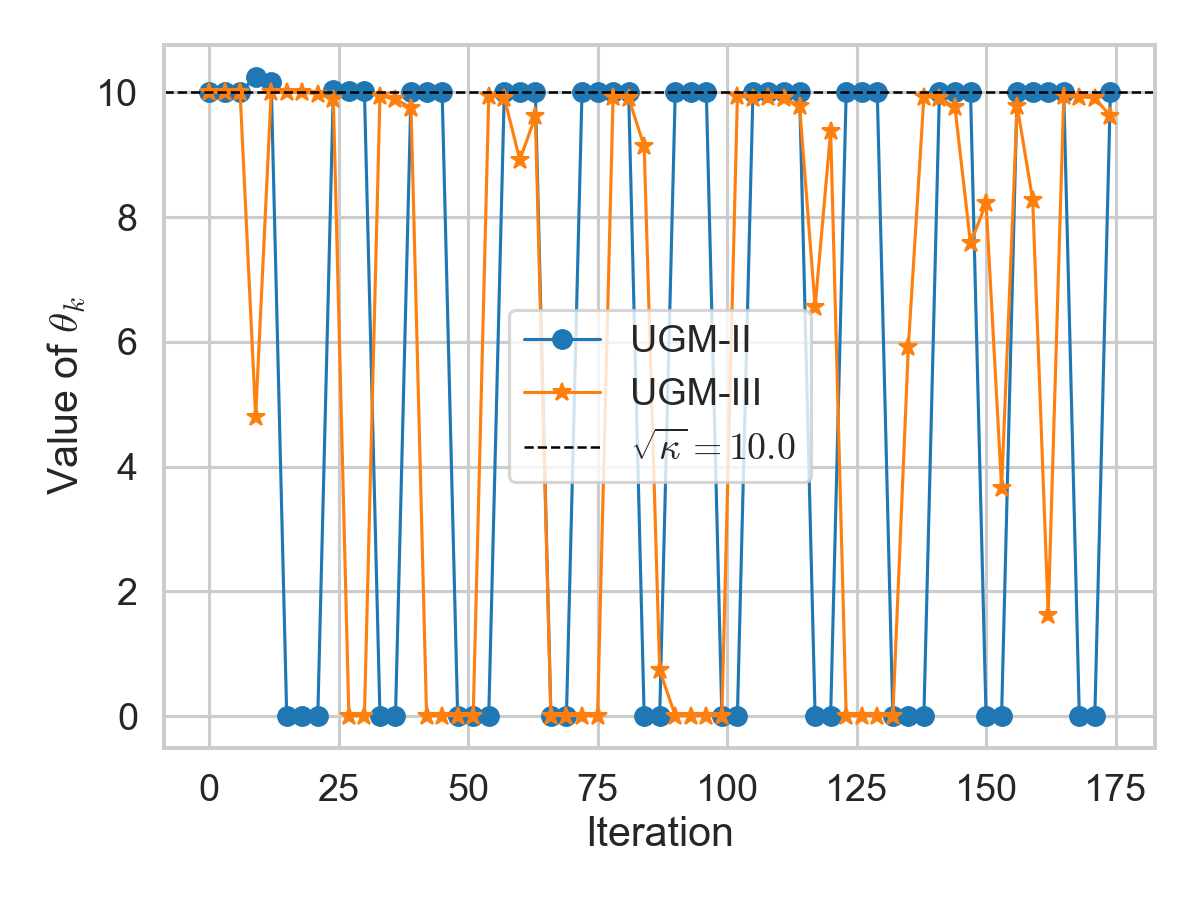}\caption{$\mu=10^2, L=10^4$}\end{subfigure}
\caption{Comparison of FGM, UGM-I, UGM-II, and UGM-III on Case 5. Rows show the objective suboptimality versus iterations, objective suboptimality versus CPU time, and the evolution of $\theta_k$, respectively. Columns correspond to different $(\mu, L)$ configurations.}
\label{fig:case5}
\end{figure}

In terms of iteration counts, UGM‑II and UGM‑III attain slightly lower suboptimality than FGM.
However, since UGM‑II and UGM‑III require one extra gradient evaluation per iteration, they do not offer a clear advantage in CPU time in most test cases.
Some exceptions include Figure \ref{fig:case3} (column (c)), and Figure \ref{fig:case5} (column (a), (c), (d)), where the faster convergence of UGM‑II and UGM‑III in terms of iterations offsets the additional gradient evaluations, rendering them competitive in CPU‑time performance. Across all test cases, the proposed UGMs achieve performance comparable to FGM.

\paragraph{The role of $\varrho$ in UGM‑I.} We next investigate the effect of the parameter $\varrho$ on the performance of UGM‑I.
We select four representative values of $\varrho$ from the interval
$[\frac{\sqrt{1+8\kappa}-3}{\sqrt{1+8\kappa}+1}, \frac{\kappa-1}{\kappa+1})$. 
The smallest value is taken at the lower bound endpoint, while the largest is set to $0.99(\kappa-1)/(\kappa+1)$ to stay safely below the strict upper limit. The remaining two values are equally spaced between these two extremes.
We compare the four resulting variants of UGM‑I on two contrasting eigenvalue distributions, Case 1 and Case 5, using three $(\mu, L)$ configurations: $(1,10^2)$, $(1,10^4)$, and $(10^2,10^4)$.  
The numerical results are presented in Figure~\ref{fig:rho_both}.

\begin{figure}[!htbp]
    \centering
    \begin{subfigure}[b]{0.32\textwidth}
        \includegraphics[width=\textwidth]{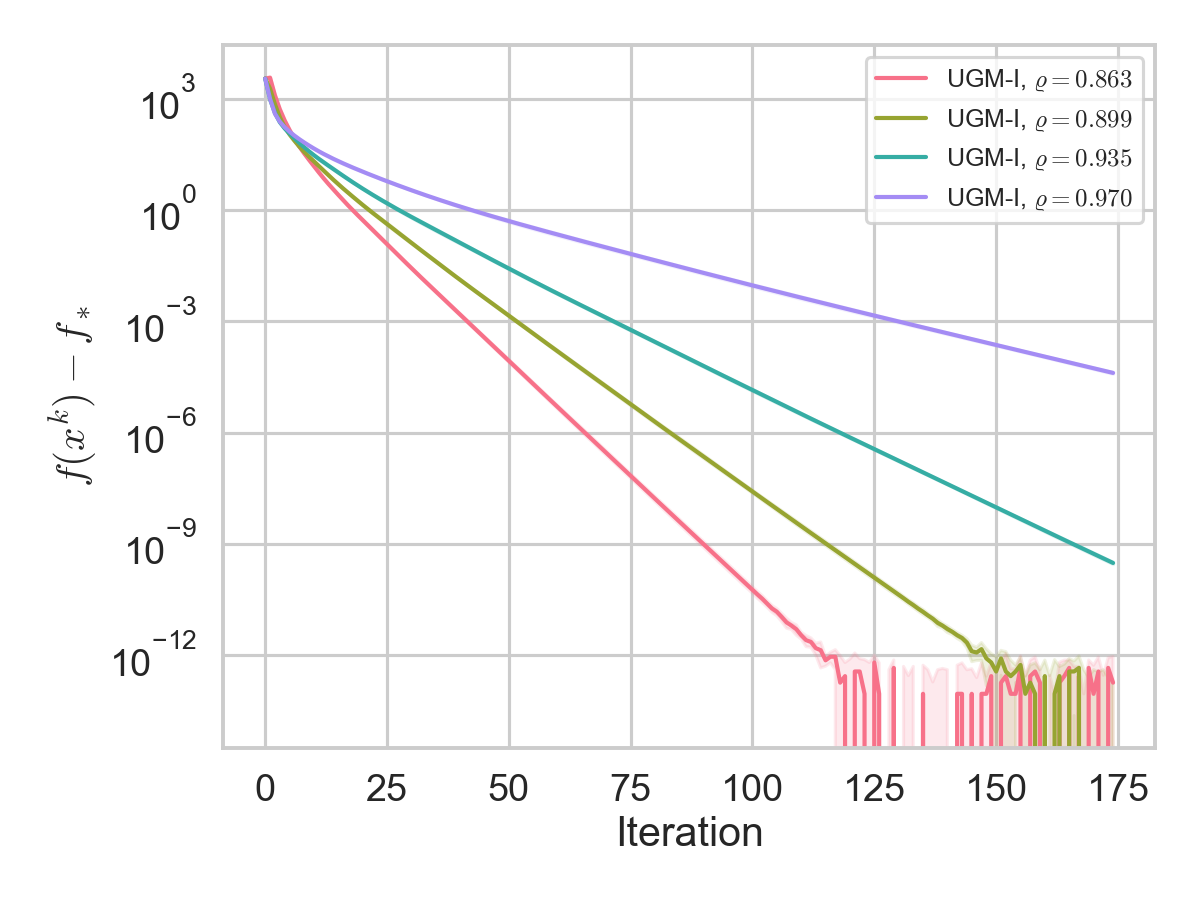}
    \end{subfigure}\hfill
    \begin{subfigure}[b]{0.32\textwidth}
        \includegraphics[width=\textwidth]{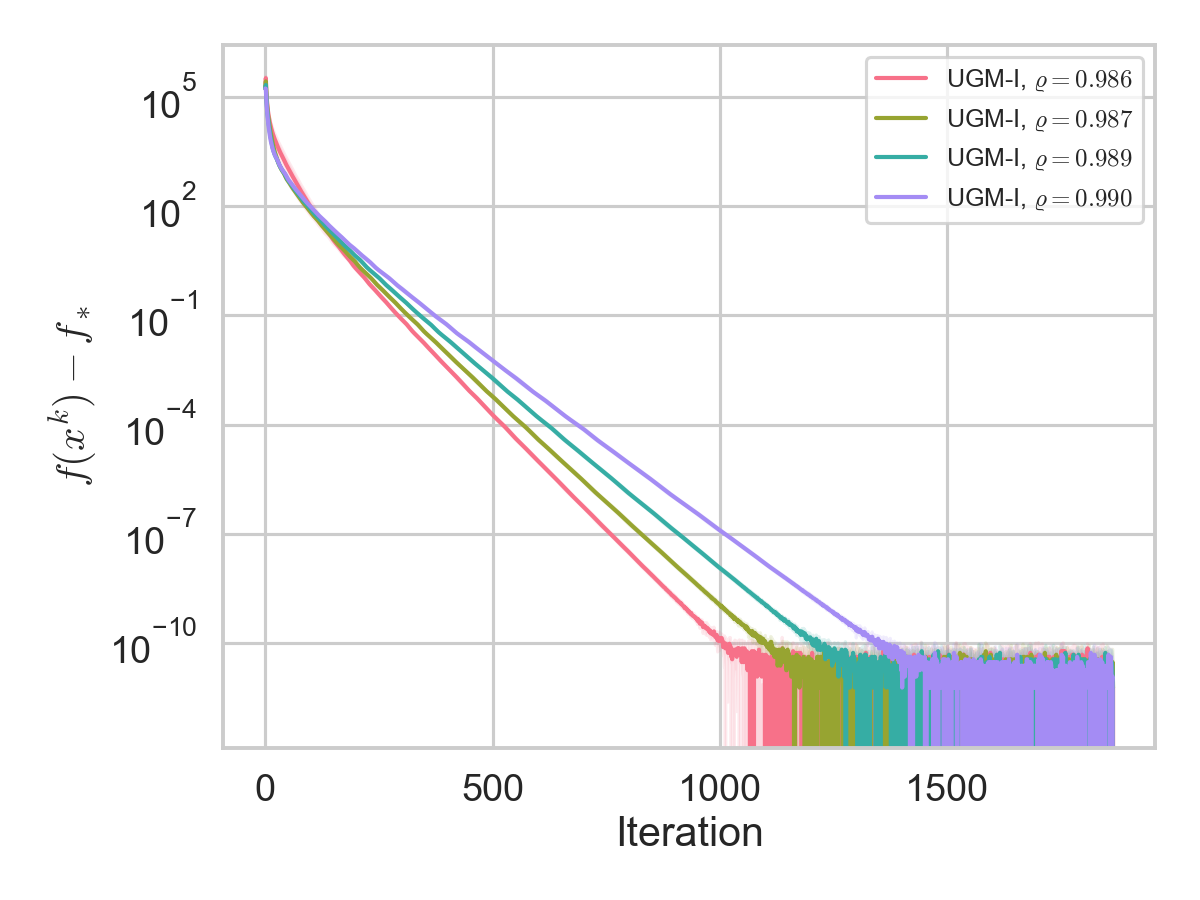}
    \end{subfigure}\hfill
    \begin{subfigure}[b]{0.32\textwidth}
        \includegraphics[width=\textwidth]{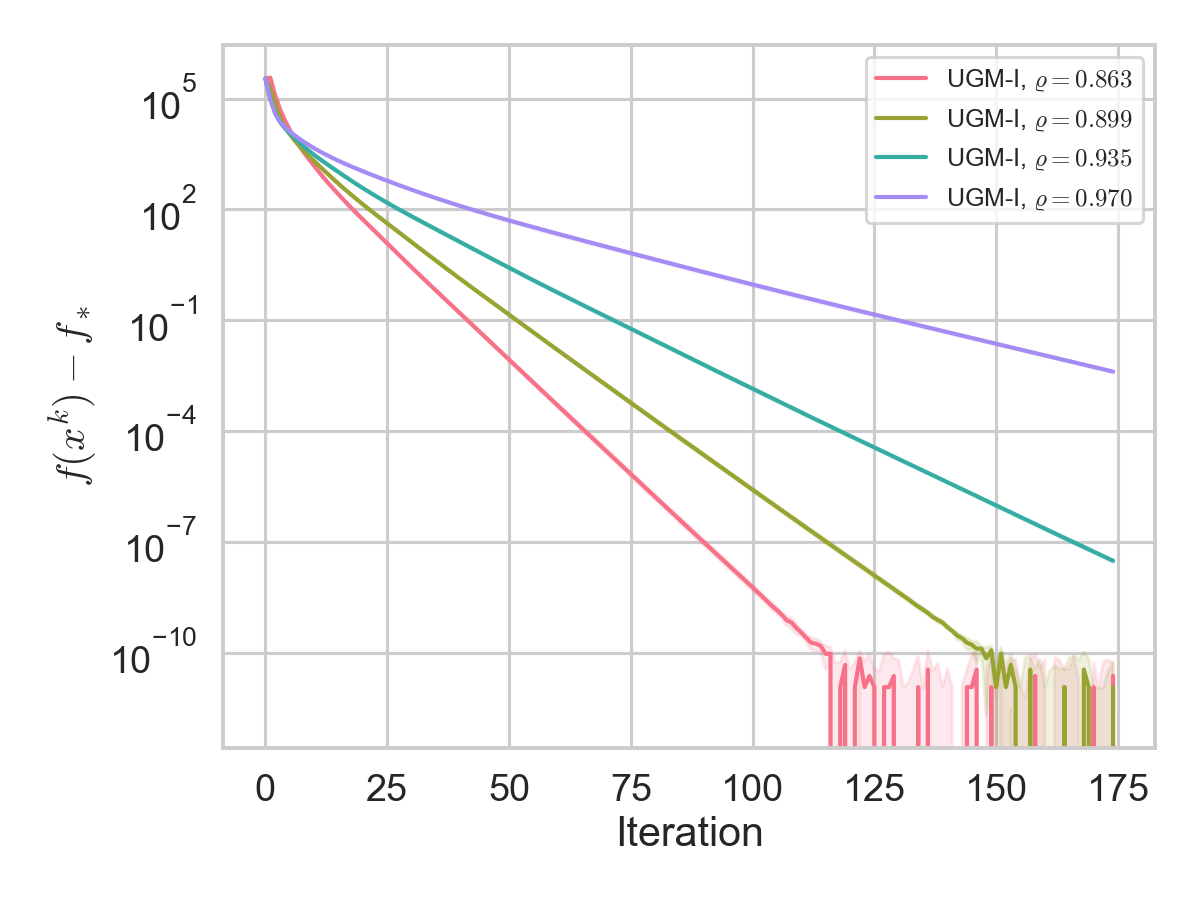}
    \end{subfigure}

    \begin{subfigure}[b]{0.32\textwidth}
        \includegraphics[width=\textwidth]{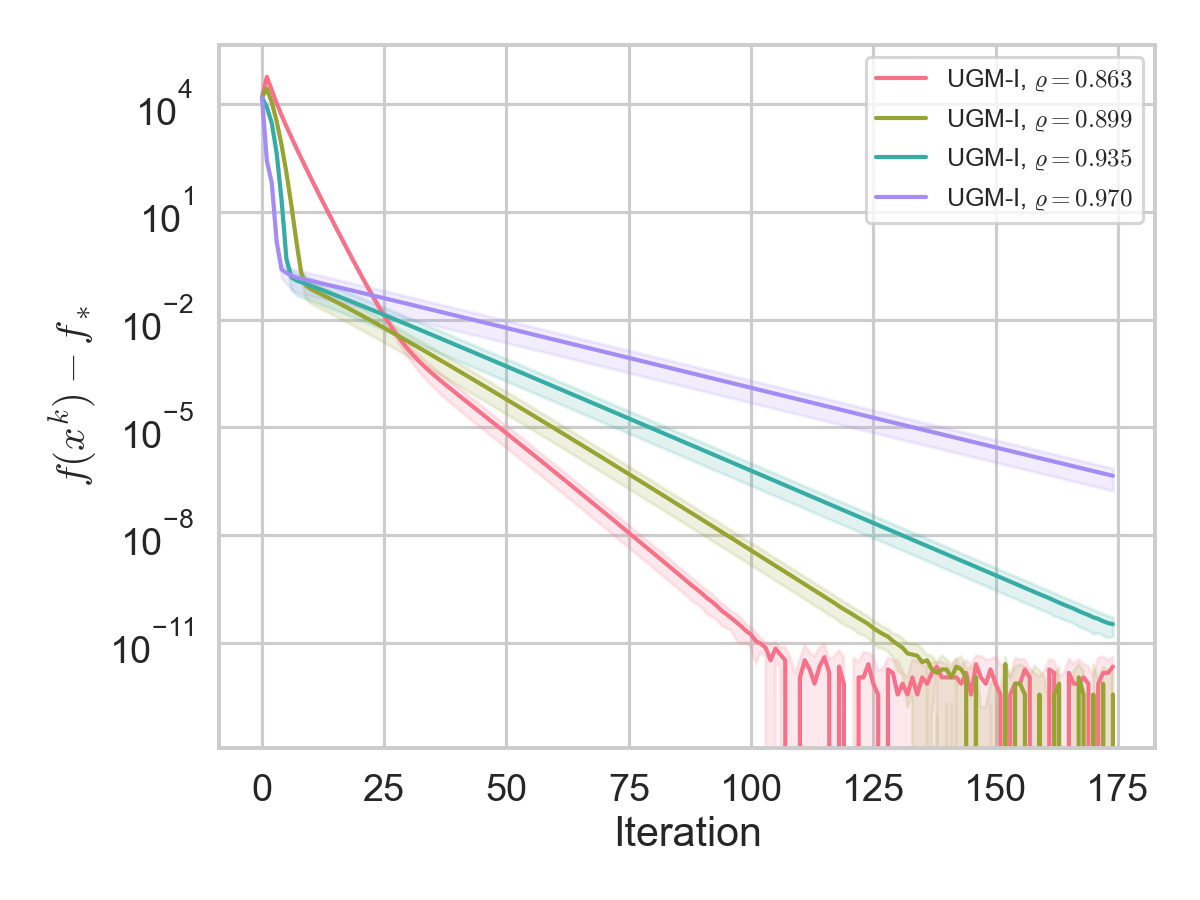}
        \caption{$\mu=1,\; L=10^2$}
    \end{subfigure}\hfill
    \begin{subfigure}[b]{0.32\textwidth}
        \includegraphics[width=\textwidth]{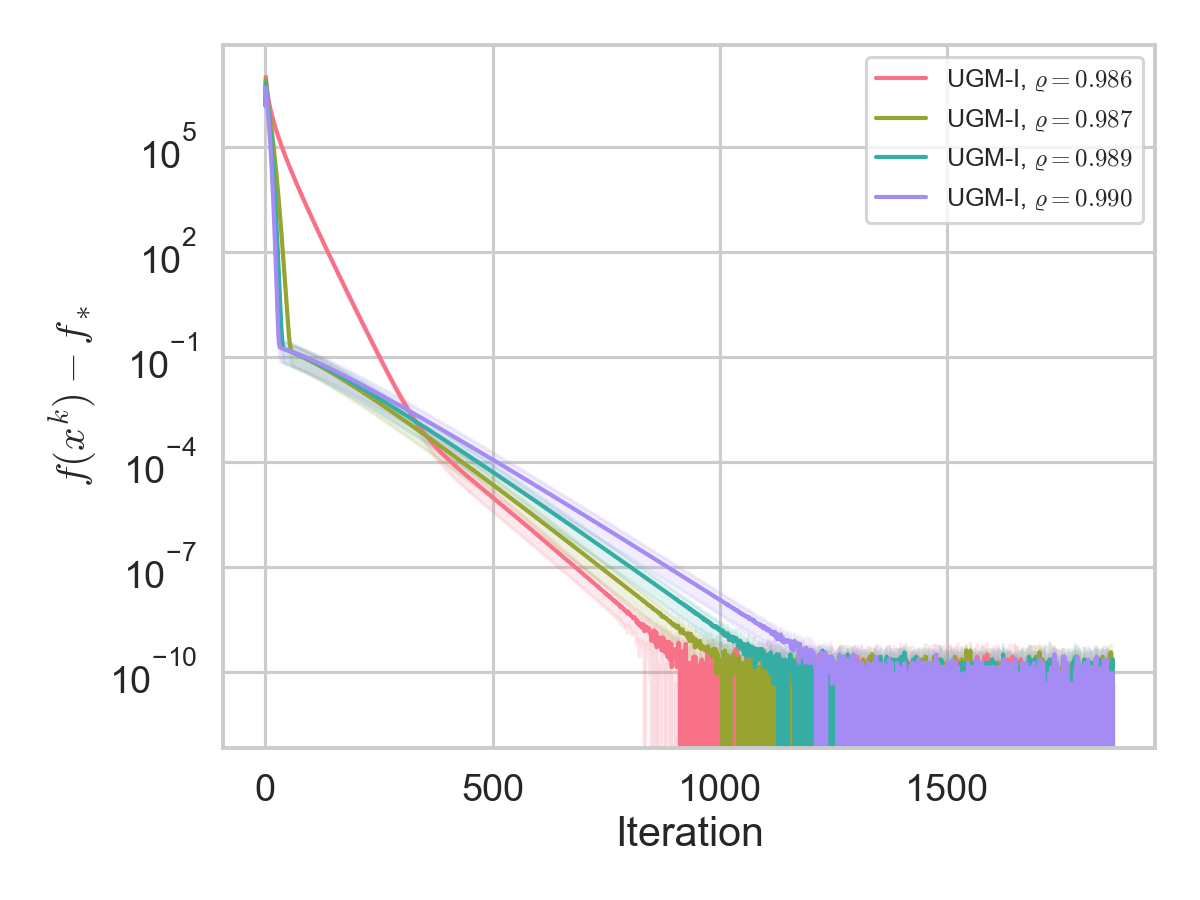}
        \caption{$\mu=1,\; L=10^4$}
    \end{subfigure}\hfill
    \begin{subfigure}[b]{0.32\textwidth}
        \includegraphics[width=\textwidth]{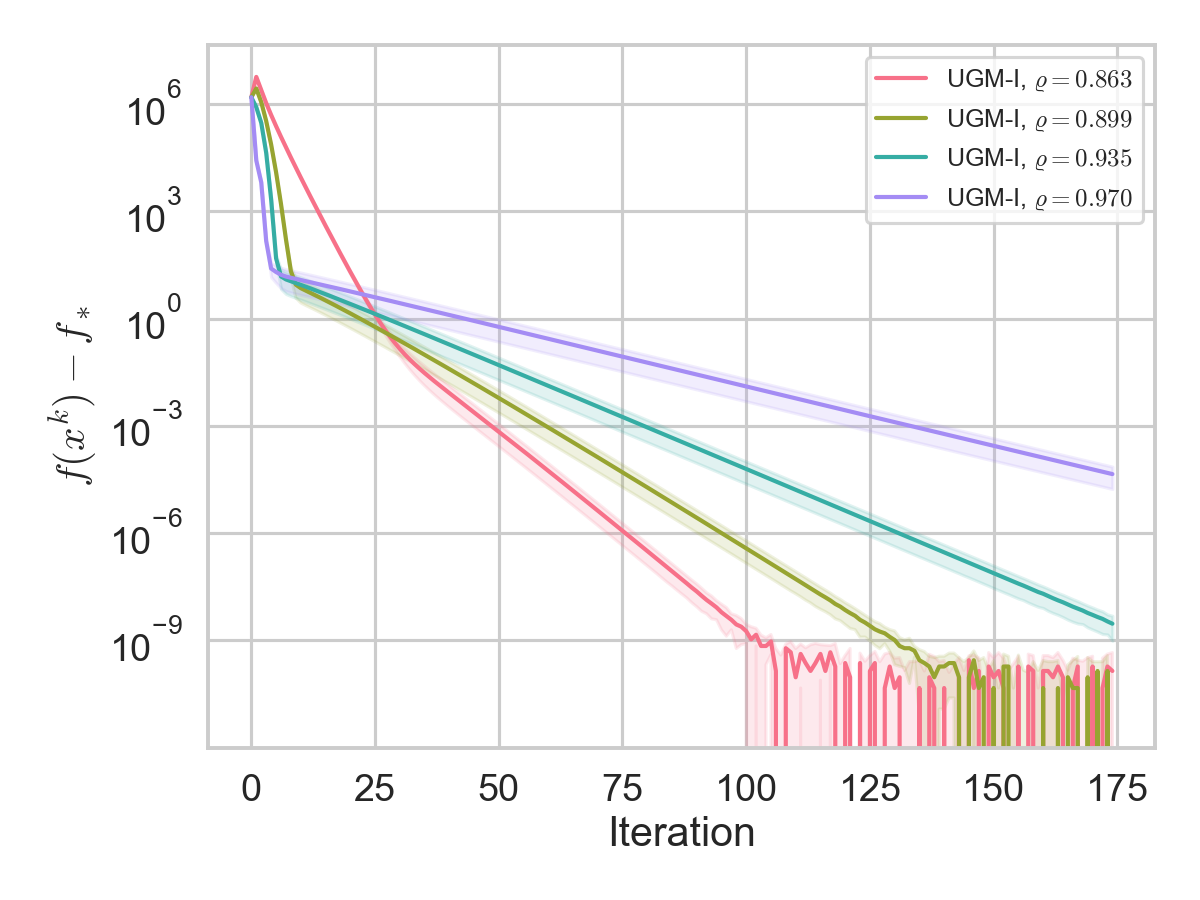}
        \caption{$\mu=10^2,\; L=10^4$}
    \end{subfigure}

    \caption{Comparison of UGM‑I with four choices of $\varrho$. Rows correspond to Cases 1 and 5, and columns correspond to different $(\mu, L)$ pairs.}
    \label{fig:rho_both}
\end{figure}

It is clear from the results in  Figure~\ref{fig:rho_both} that the parameter $\varrho$ dictates the convergence speed of UGM‑I: the smaller the value of $\varrho$, the steeper the decrease of the suboptimality $f(x^k)-f_*$.

\subsection{Logistic regression} \label{sec:lr}
In this subsection, we consider the regularized logistic regression problem
\begin{equation*}\label{log-regression}
\min_{x\in \mathbb R^{n}} f(x)=-\frac{1}{m}\sum_{i=1}^m \left(y_i \log\big(s(a_i^\top x)\big) + (1 - y_i) \log\big(1 - s(a_i^\top x)\big)\right) + \frac{\gamma}{2}\|x\|^2,
\end{equation*}
where $\{(a_i,y_i)\in  \mathbb R^{n} \times \{0, 1\}: i=1,\ldots,m\}$ is the dataset, $s(z)=1/\bigl(1+\exp(-z)\bigr)$ denotes the sigmoid function, $m$ is the number of samples, and $\gamma>0$ is the regularization parameter.
For this problem, the gradient of $f$ is given by $\nabla f(x) = \frac{1}{m}\sum_{i=1}^m a_i\big(s(a_i^\top x)-y_i\big) + \gamma x$. Consequently, $f$ is $L$-smooth with Lipschitz constant $L=\lambda_{\max}(A^\top A)/4 + \gamma$, where $A = (a_1^{\top},\ldots,a_m^{\top})^{\top}$ and $\lambda_{\max}(A^\top A)$ denotes the largest eigenvalue of $A^\top A$.
In this experiment, all competing algorithms are run for $3000$ iterations. We test on the mushrooms dataset from LIBSVM \cite{CL11}, which has sample size $m=8124$ and feature dimension $n=112$. The Lipschitz constant is $L=2.59$, and the regularization parameter is set to $\gamma=L/m=3.18\times 10^{-5}$.
Consistent with the experiments in Section \ref{sec:qp}, every algorithm is initialized at the origin.
For the proposed UGMs, the parameters are chosen as follows. We take $\varrho=1-1/\sqrt{\kappa}$, matching the convergence‑rate parameter of the FGM method. We further set $q_0 = (\sqrt{\kappa}-1)/(\sqrt{\kappa}+1)$ for UGM‑I and $q_0 = \varrho$ for UGM‑II and UGM‑III. We then let $\underline{\theta} = 1$, $a_0 = \frac{(1 - \varrho)[\kappa(1 - \varrho) - \varrho - 1]}{(\varrho + 1)\varrho}$, and $\bar{\theta} = \max\big\{\frac{1}{1-\varrho},\frac{\varrho a_0+1-\varrho}{(1-\varrho)a_0}\big\} = \sqrt{\kappa}$.

In Figure~\ref{fig:lr}, we plot the objective suboptimality $f(x^k)-f_*$ against iteration count and CPU time (in seconds), along with the evolution of the extrapolation parameter $\theta_k$ for UGM‑II and UGM‑III. 
From Figures~\ref{fig:lr}(a) and~\ref{fig:lr}(b), we observe that both UGM‑II and UGM‑III consistently outperform FGM in terms of both iteration efficiency and computational cost on the tested logistic regression instance. This performance gain arises from the adaptive updating rule of the extrapolation parameter $\theta_k$. As shown in Figure~\ref{fig:lr}(c), $\theta_k$ eventually converges to its lower bound $\underline{\theta}$ after a certain number of iterations, which suppresses parameter oscillations and substantially accelerates the decay of objective suboptimality.

\begin{figure}[!htbp]
    \centering
    \begin{subfigure}[b]{0.32\textwidth}
        \includegraphics[width=\textwidth]{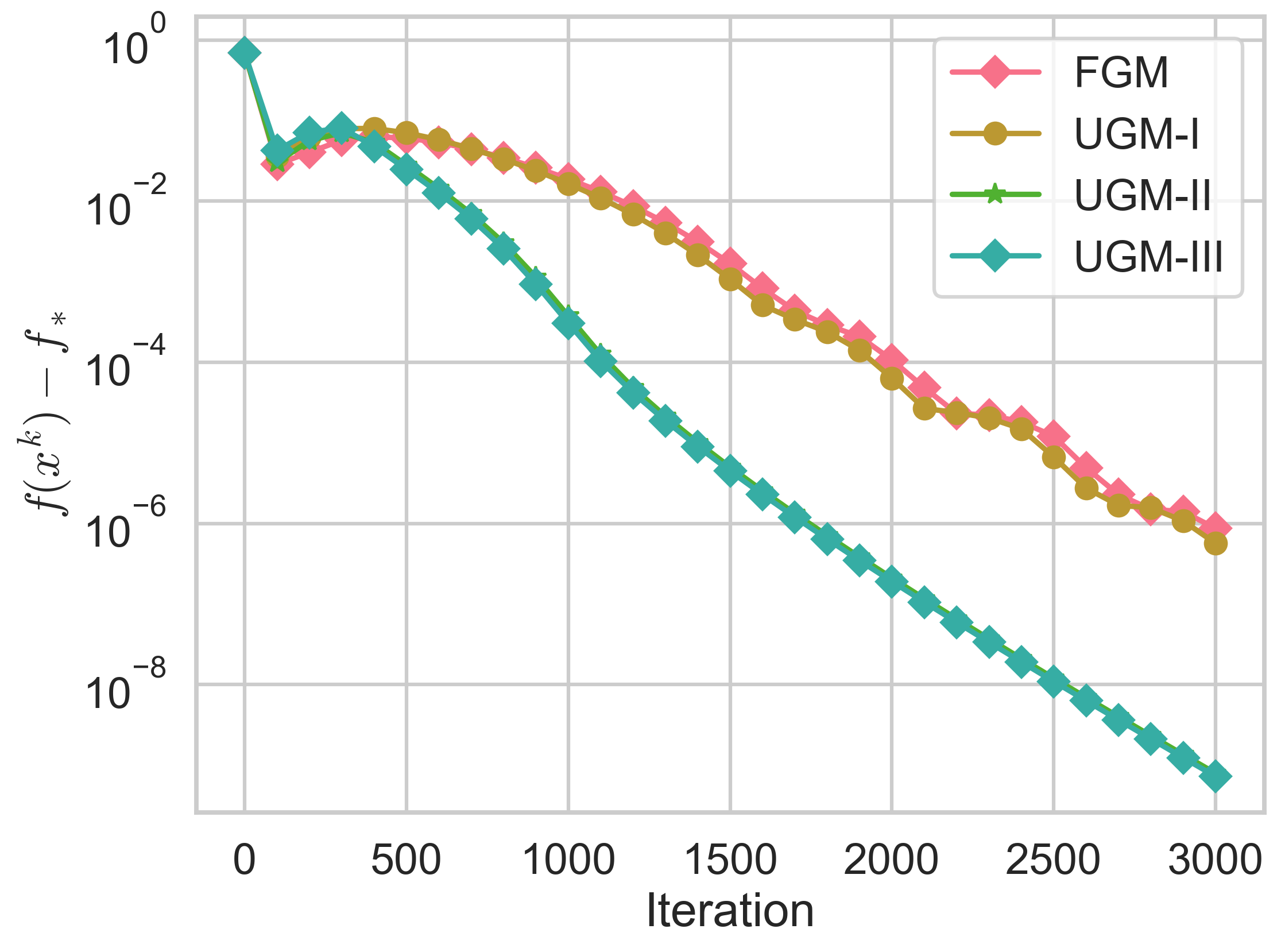}
    \end{subfigure}\hfill
    \begin{subfigure}[b]{0.32\textwidth}
        \includegraphics[width=\textwidth]{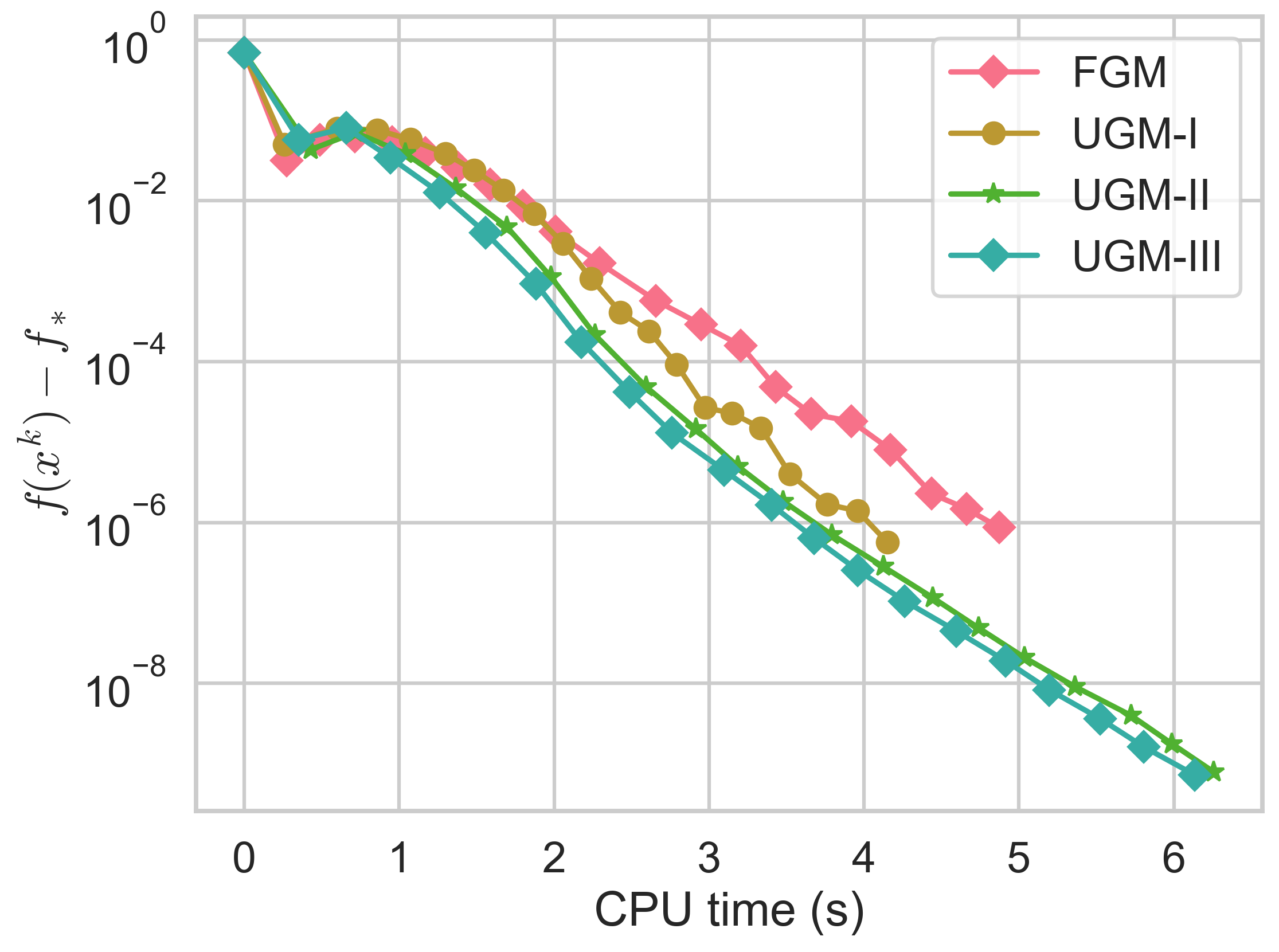}
    \end{subfigure}\hfill
    \begin{subfigure}[b]{0.32\textwidth}
        \includegraphics[width=\textwidth]{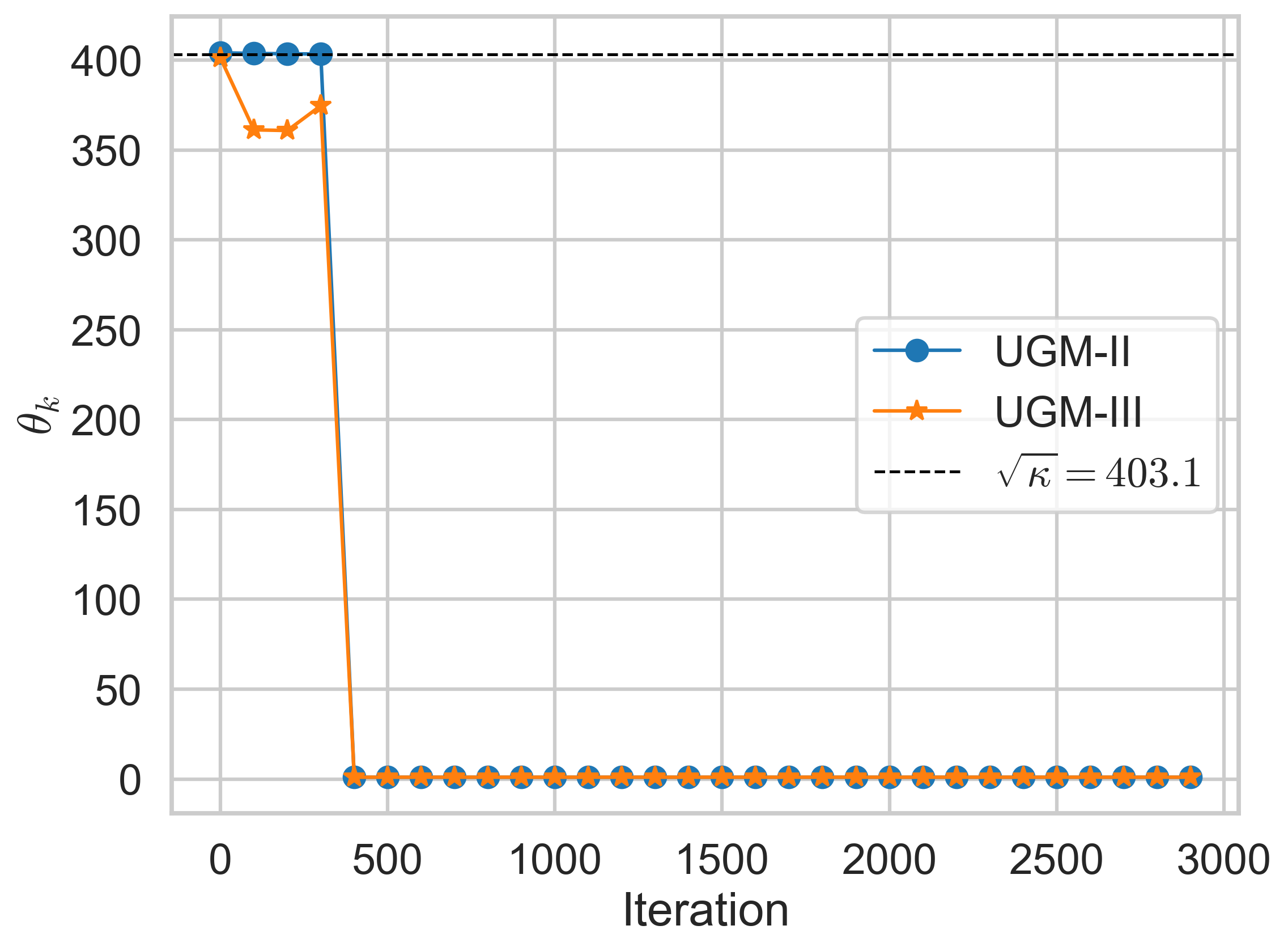}
    \end{subfigure}
    \caption{Comparison of FGM, UGM-I, UGM-II, and UGM-III on the logistic regression problem. From left to right: suboptimality versus iteration count, suboptimality versus CPU time, and the evolution of the extrapolation parameter $\theta_k$.}
    \label{fig:lr}
\end{figure}

\subsection{Softmax regression}
\label{sec:ne-cached-directional}
The numerical results above demonstrate that UGM‑II and UGM‑III achieve faster convergence in terms of iteration count, yet this benefit does not always yield a corresponding wall‑clock improvement, owing to the extra gradient evaluation required at each iteration. In this subsection, we examine whether their iteration‑wise advantage can be converted into better computational efficiency when the inner‑product scalar $\langle \nabla f(z^{k+1}),z^{k+1}-y^{k+1}\rangle$ is computed without constructing an additional full‑gradient vector. To this end, we consider the following $\ell_2$-regularized multiclass logistic regression problem, commonly referred to as softmax regression:
\begin{equation}
\label{eq:ne-softmax}
\min_{W\in\mathbb{R}^{d\times K}}
f(W) =
\frac{1}{m} \sum_{i=1}^{m}
\left[
\log\!\sum\nolimits_{c=1}^{K}
\exp\bigl((XW)_{i,c}\bigr)
-(XW)_{i,y_i}\right]+
\frac{\gamma}{2}\|W\|_F^2.
\end{equation}
Here $(XW)_{i,c}$ denotes the score of sample $i$ for class $c$, $(XW)_{i,y_i}$ is the score corresponding to its true class label $y_i$,  $\|\cdot\|_F$ denotes the Frobenius norm, and $\gamma > 0$. 
Given a prescribed condition number $\kappa=3000$, we set $\gamma:=L_{\mathrm{data}}/(\kappa-1)$, with $L_{\mathrm{data}} :=\frac{1}{2m}\|X\|_2^2$. 
With this choice, the objective function $f$ in \eqref{eq:ne-softmax} is $L$-smooth with Lipschitz constant $L=L_{\mathrm{data}}+\gamma$ and $\gamma$-strongly convex. Its resulting condition number is therefore $L/\gamma=\kappa$.

We investigate two ways to convert UGM‑II and UGM‑III’s iteration‑count advantage into CPU‑time savings.
The first uses cached logits to evaluate $\langle \nabla f(z^{k+1}),z^{k+1}-y^{k+1}\rangle$ without computing an additional full gradient. The second employs scalar aggregation in a distributed setting, replacing communication of high‑dimensional gradient matrices with a single scalar transmitted from each worker. Both experiments use the Digits dataset from \texttt{sklearn.datasets.load\_digits}, which contains $m=1797$ grayscale images belonging to $10$ digit classes. Each $8\times 8$ image is flattened into a 64‑dimensional vector with pixel values in $[0,16]$. We standardize the original $64$ features and use all samples to construct the finite‑sum problem. The two experiments differ only in the nonlinear feature map subsequently applied to the standardized data. Furthermore, the UGM variants in these experiments adopt the same parameter settings for $\varrho$, $q_0$, $\underline{\theta}$, $a_0$, and $\bar{\theta}$ as specified in Section~\ref{sec:lr}.

\paragraph{Scalar evaluation using cached logits.} Let $P(z^{k+1})\in\mathbb{R}^{n\times K}$ denote the matrix of softmax probabilities corresponding to the logits $Xz^{k+1}$, with entries
\[
\bigl[P(z^{k+1})\bigr]_{i,c}
=
\exp\bigl((Xz^{k+1})_{i,c}\bigr) \big/ \sum\nolimits_{j=1}^{K}\exp\bigl((Xz^{k+1})_{i,j}\bigr).
\]
Then, the scalar required by the update can be expressed as
\begin{equation}
\label{eq:ne-cached-Ck}
\begin{aligned}
\left\langle\nabla f(z^{k+1}),y^{k+1}-z^{k+1}\right\rangle
=\frac{1}{n}\left\langle P(z^{k+1})-Y,X\bigl(y^{k+1}-z^{k+1}\bigr)
\right\rangle+\mu\left\langle
z^{k+1},y^{k+1}-z^{k+1}\right\rangle,
\end{aligned}
\end{equation}
where $Y\in\mathbb{R}^{n\times K}$ is the one-hot encoding matrix of the class labels. Here, for matrix variables, $\langle\cdot,\cdot\rangle$ denotes the trace inner product.  
The quantity $X(y^{k+1}-z^{k+1})$ can be computed directly from the cached logits $Xy^{k+1}$ and $Xz^{k+1}$.
Thus, this directional implementation only evaluates the Frobenius inner products in \eqref{eq:ne-cached-Ck}, without explicitly constructing the full‑gradient term $X^\top(P(z^{k+1})-Y)$. The directional and full‑gradient implementations produce identical iterates up to floating‑point round‑off error, yet they incur different per‑iteration computational costs.

Our first experiment maps the $1797$ handwritten‑digit samples to $d=768$ random Fourier features (Digits‑RFF) and considers $K=10$ classes.

\begin{figure}[!htbp]
    \centering
    \begin{subfigure}[t]{0.32\textwidth}
        \centering
        \includegraphics[width=\linewidth]{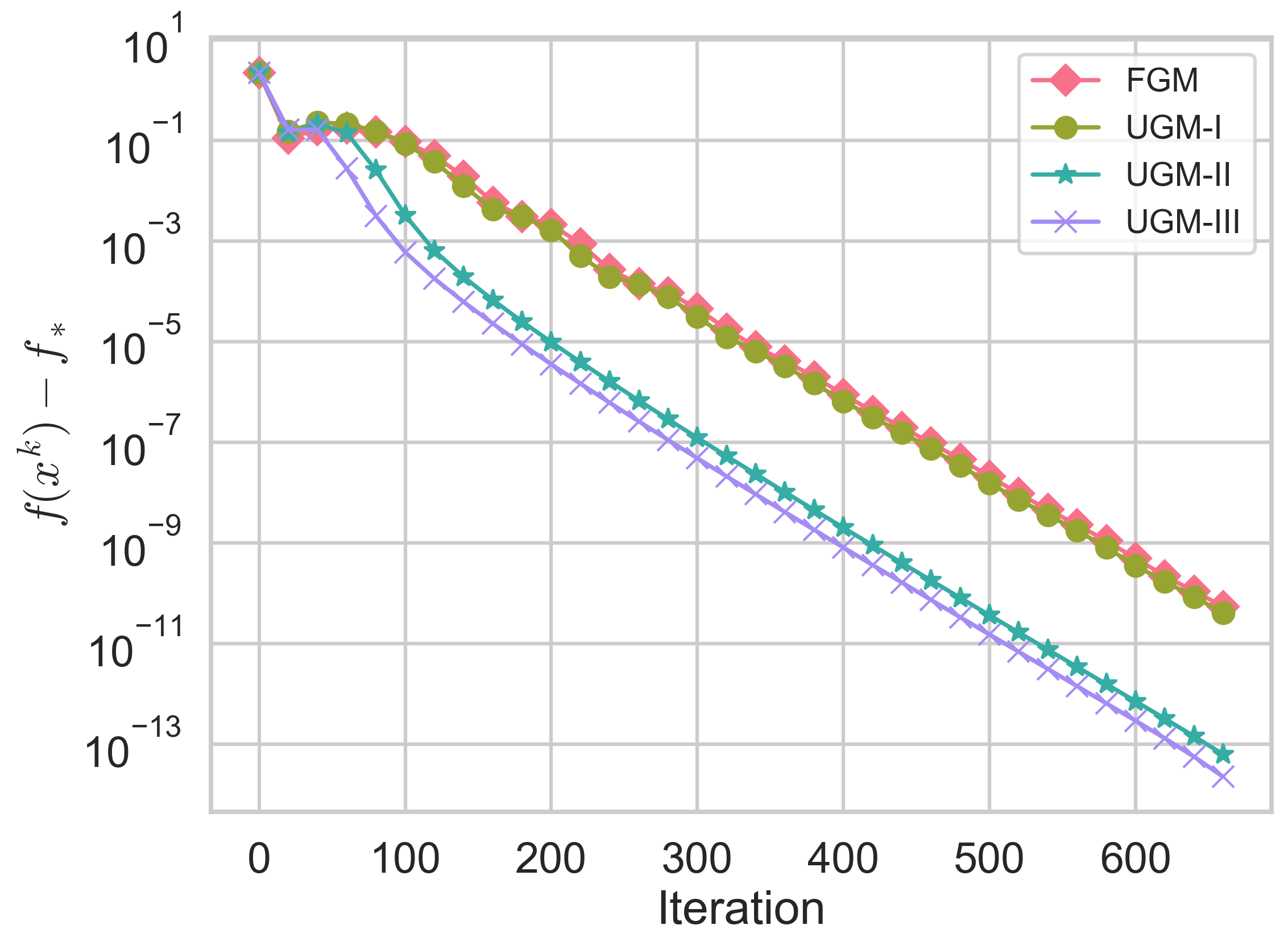}
    \end{subfigure}
    \hfill
    \begin{subfigure}[t]{0.32\textwidth}
        \centering
        \includegraphics[width=\linewidth]{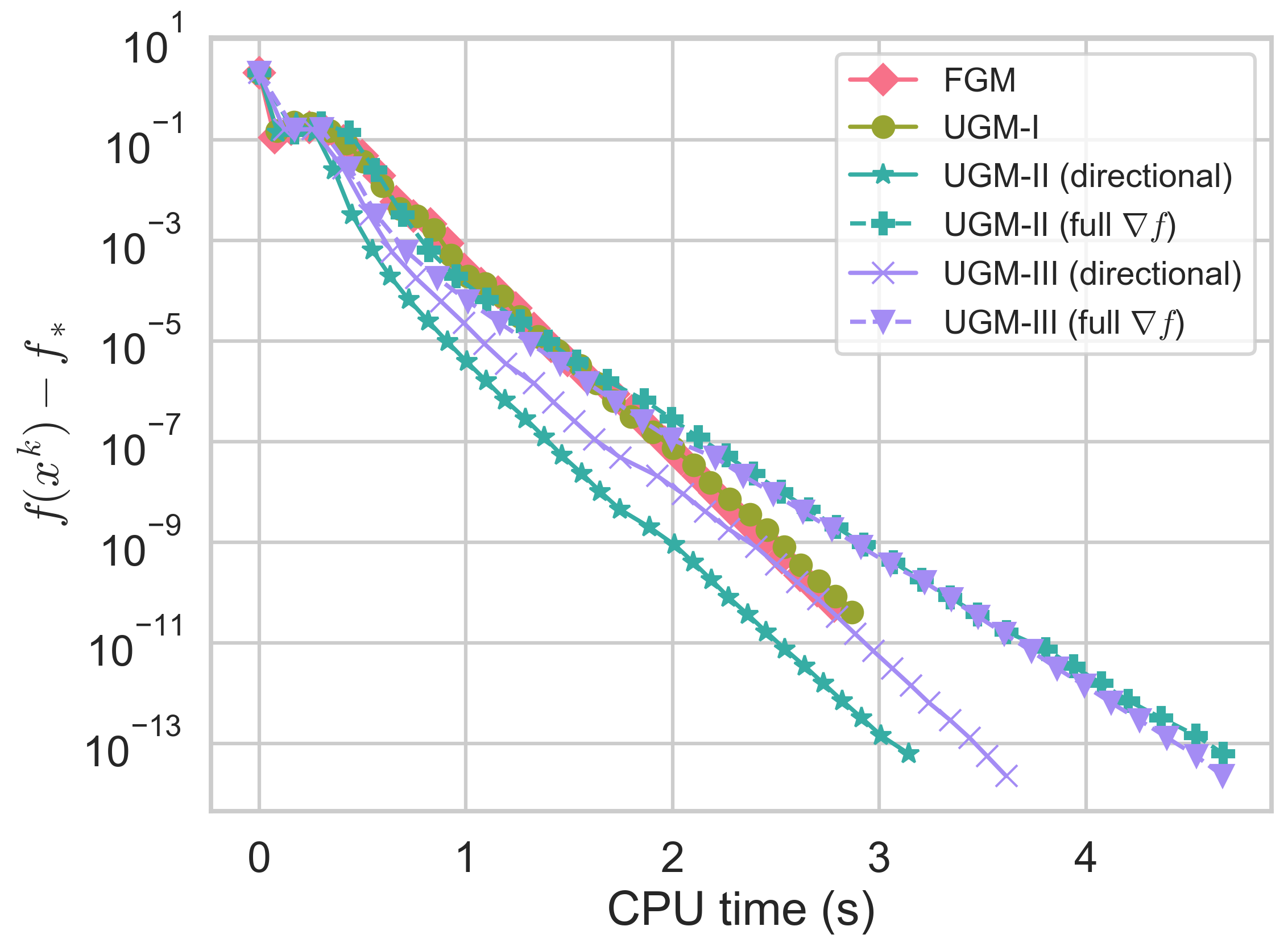}
    \end{subfigure}
    \hfill
    \begin{subfigure}[t]{0.32\textwidth}
        \centering
        \includegraphics[width=\linewidth]{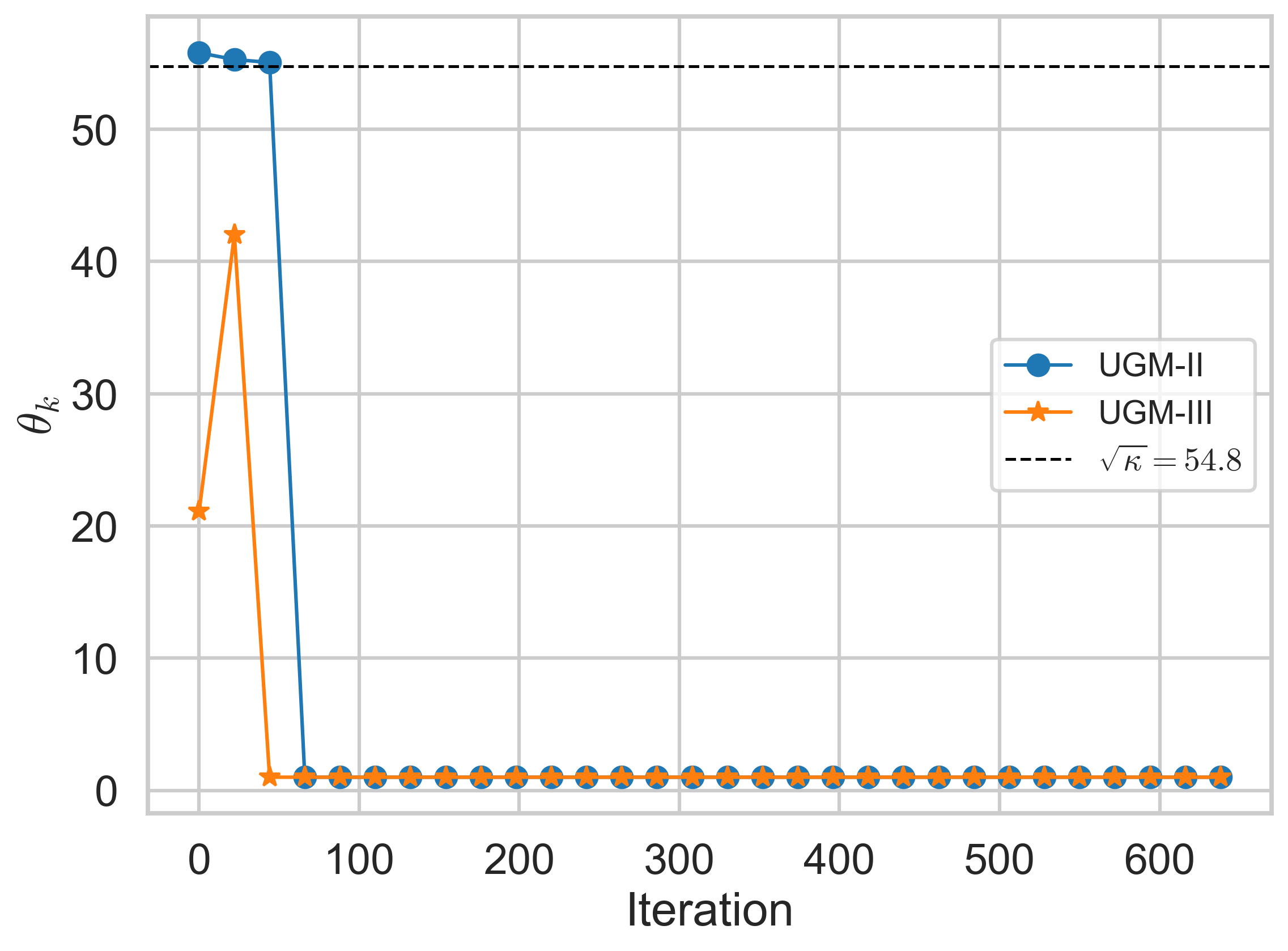}
    \end{subfigure}
    \caption{Comparison of FGM, UGM-I, UGM-II, and UGM-III on the Digits-RFF multiclass logistic regression problem. From left to right: suboptimality versus iteration count, suboptimality versus CPU time, and the evolution of the extrapolation parameter $\theta_k$.}
    \label{fig:ne-rff}
\end{figure}
The experimental results are presented in Figure~\ref{fig:ne-rff}.
In our tests, UGM‑II and UGM‑III attain the stopping criterion $f(x^k)-f_{*}\leq 10^{-6}$ in $255$ and $230$ iterations, respectively, compared with $400$ iterations for FGM and $390$ iterations for UGM‑I. With the cached‑logit directional implementation, their CPU times are $1.619$ and $1.340$ seconds. Compared with FGM, these results yield speedups of $1.52\times$ and $1.84\times$. These findings demonstrate that the cached‑logit implementation successfully converts the iteration‑count advantages of UGM‑II and UGM‑III into substantial CPU‑time savings.

\paragraph{Digits‑Poly2 with scalar aggregation.}
Our second experiment expands the same samples into $d=2144$ standardized degree‑2 polynomial features (Digits‑Poly2), again with $K=10$ classes, to further evaluate the benefits of scalar aggregation in a distributed setting. The samples are partitioned across four logical workers. For the directional computation, each worker returns a single double‑precision scalar, resulting in a total communication payload of $32$ bytes. In contrast, full‑gradient 
aggregation requires transmitting four $2144\times 10$ matrices, corresponding to $686080$ bytes. Therefore, scalar aggregation reduces the communication payload by a factor of $21440$.

\begin{figure}[!htbp]
    \centering
    \begin{subfigure}[t]{0.32\textwidth}
        \centering
        \includegraphics[width=\linewidth]{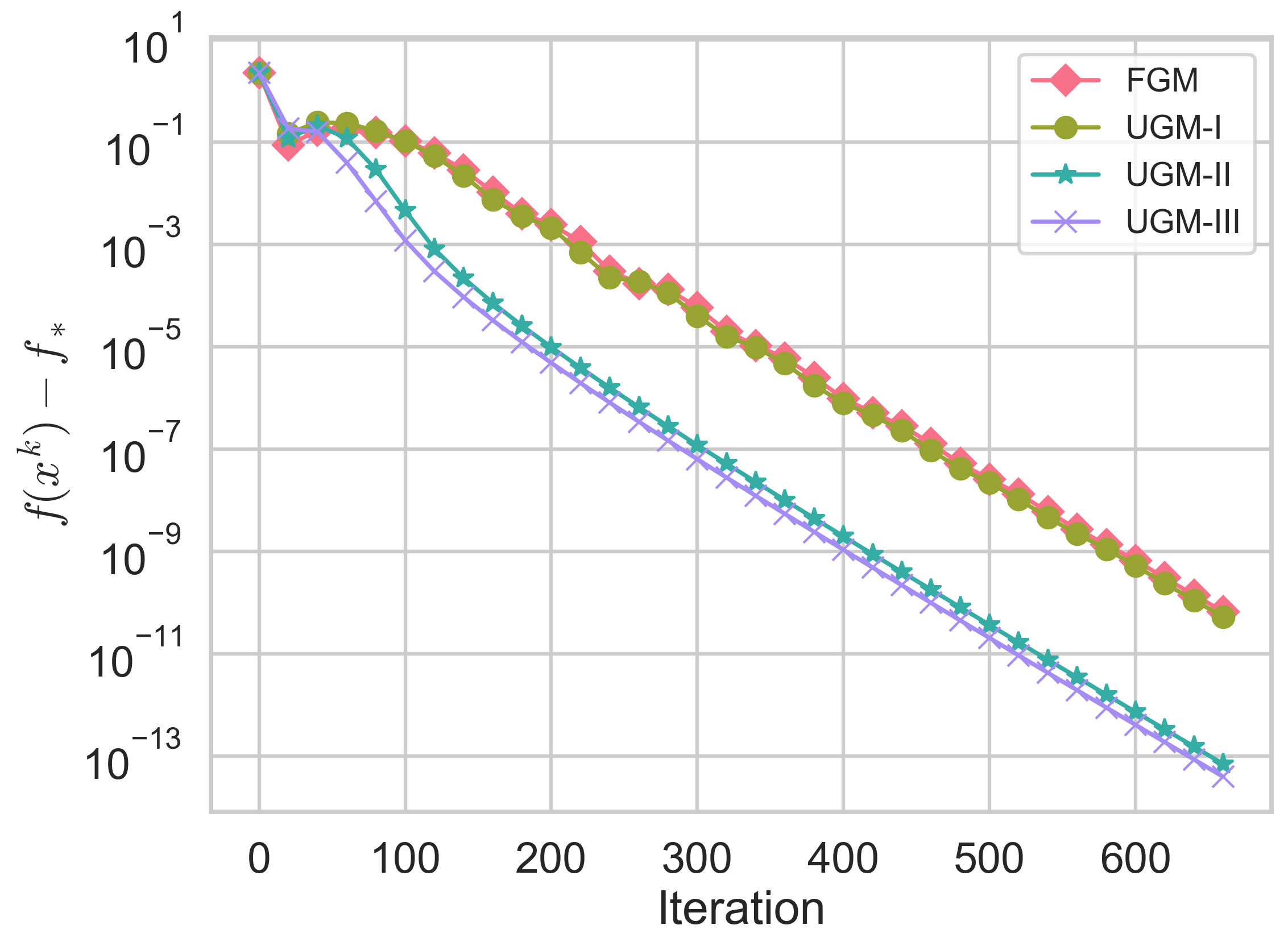}
    \end{subfigure}
    \hfill
    \begin{subfigure}[t]{0.32\textwidth}
        \centering
        \includegraphics[width=\linewidth]{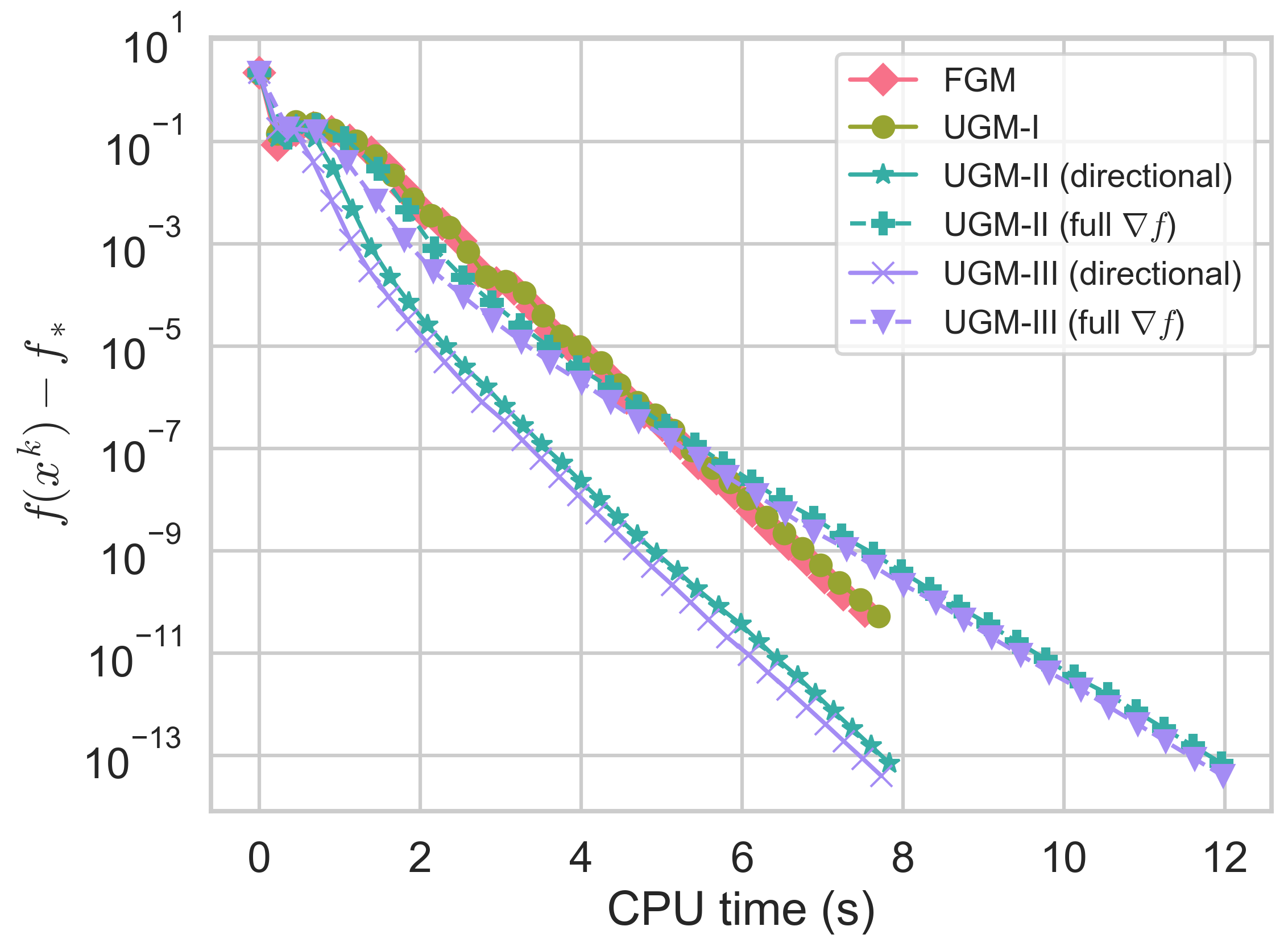}
    \end{subfigure}
    \hfill
    \begin{subfigure}[t]{0.32\textwidth}
        \centering
        \includegraphics[width=\linewidth]{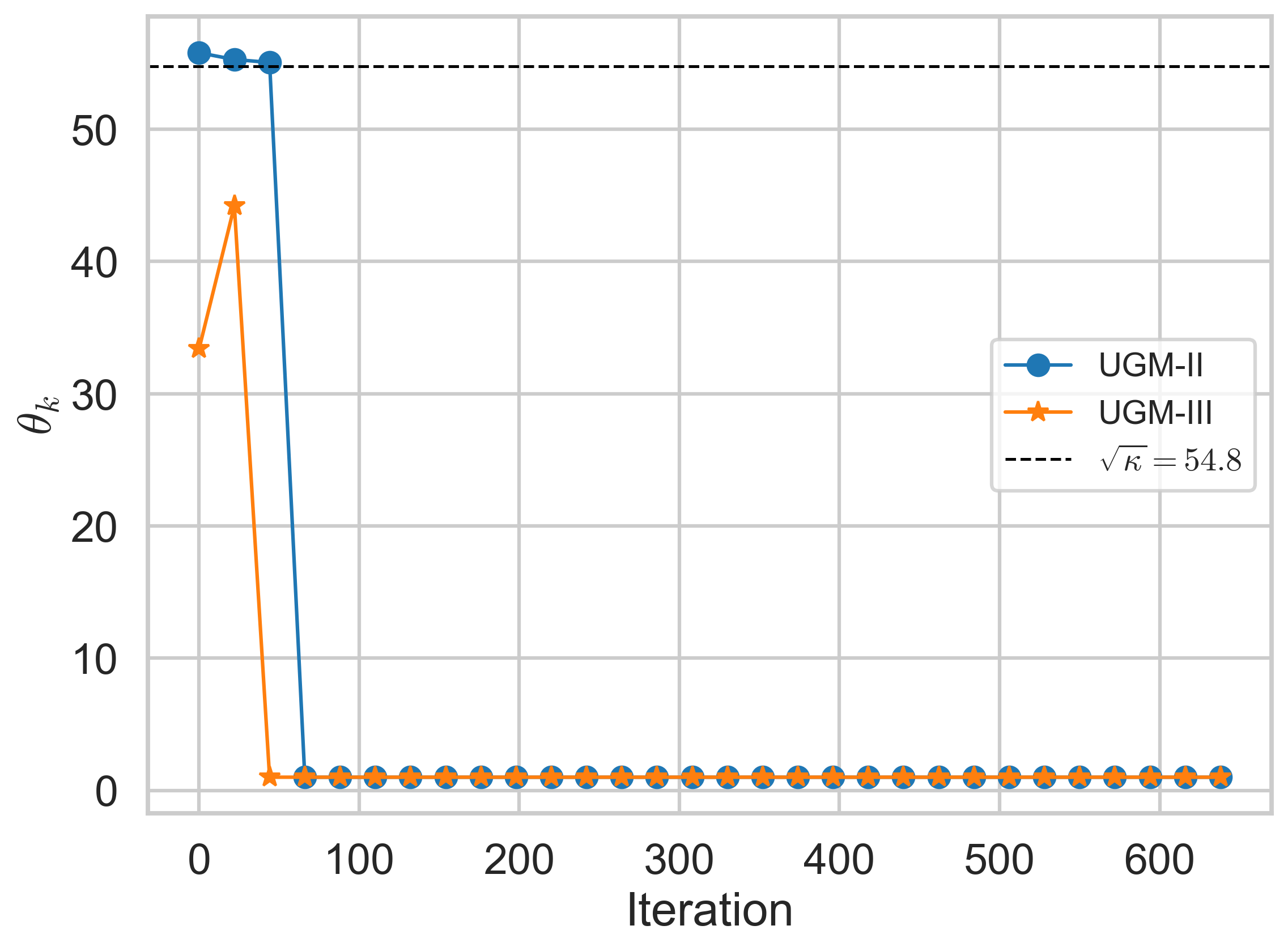}
    \end{subfigure}
    \caption{Comparison of FGM, UGM-I, UGM-II, and UGM-III on the Digits-Poly2 multiclass logistic regression problem. From left to right: suboptimality versus iteration count, suboptimality versus CPU time, and the evolution of the extrapolation parameter $\theta_k$.}
    \label{fig:ne-poly2}
\end{figure}

The experimental results are presented in Figure~\ref{fig:ne-poly2}. In our tests, UGM‑II and UGM‑III attain the stopping criterion $f(x^k)-f_{*}\leq 10^{-6}$ in $255$ and $240$ iterations, while FGM and UGM‑I require $400$ and $395$ iterations. With scalar aggregation across the distributed workers, their CPU times are reduced to $4.480$ and $3.918$ seconds, whereas FGM requires $6.128$ seconds. Relative to FGM, this yields speedups of $1.37\times$ and $1.56\times$, respectively. These findings demonstrate that scalar aggregation can convert the lower iteration counts of UGM‑II and UGM‑III into measurable computational gains in distributed settings.

Moreover, the $\theta_k$ panels (last plots in Figures~\ref{fig:ne-rff} and~\ref{fig:ne-poly2}) show a brief initial adaptive transient phase, after which the algorithm stably selects the lower bound $\underline{\theta}=1$. Therefore, the observed improvement in iteration‑count performance does not arise merely from employing a fixed momentum parameter. Instead, it stems from the branch‑aware update rule, while cached‑logit evaluation and scalar aggregation reduce the overhead of this extra selection information sufficiently to achieve an end‑to‑end runtime advantage.

\section{Concluding remarks}
In this paper, we study a unified first‑order method that encompasses a broad family of accelerated and non‑accelerated algorithms for minimizing $f \in \mathcal{F}_{\mu, L}$. We show that the proposed method can be naturally interpreted as a combination of the heavy‑ball method and gradient descent. This reveals that classical acceleration schemes effectively incorporate a conservative gradient‑descent step into the fast yet potentially divergent heavy‑ball dynamics to balance acceleration 
We then present a unified convergence analysis built upon a Lyapunov‑function framework, which further motivates the design of problem‑dependent accelerated algorithms and their variants. Applications to unconstrained quadratic optimization, logistic regression, and softmax regression demonstrate that the proposed methods can outperform existing approaches under specific structural conditions.

The following directions are worthy of further investigation.
\begin{enumerate}
    \item[(i)] One interesting direction is to estimate the sign of $\langle \nabla\! f(x^{k+1}),x^{k+1}-z^{k+1}  \rangle$ without relying on Lemma~\ref{lem:est}, which requires either additional full-gradient evaluations or the corresponding directional computations. Directly determining the sign of this inner product would reduce computational overhead and improve practicality. Such a technique is consistent with our unified Lyapunov framework and may support the design of additional accelerated optimization variants.

    \item[(ii)] Estimating the $L$-smoothness and $\mu$-strong convexity parameters without prior problem information remains challenging \cite{CLW25, XC26}. It is worth investigating whether adaptive iteration-dependent parameters $L_k$ and $\mu_k$ can be incorporated into the UGM framework \eqref{ugm} to achieve accelerated convergence, following the parameter strategy of Nesterov’s FGM \cite{N87}. The corresponding update scheme is provided below:
\begin{equation*}
\label{ugm-lk-mu-k}
\left\{ \quad 
\begin{aligned}
y^{k+1} &= x^{k} - \frac{1}{L_k} \nabla f(x^{k}), \\
w^{k+1} &= x^{k} - \frac{1}{\mu_k} \nabla f(x^{k}), \\
z^{k+1} &= \Big(1 - \sqrt{\mu_k/L_k} \Big) z^{k} + \sqrt{\mu_k/L_k} w^{k+1}, \\
x^{k+1} &= \frac{\sqrt{L_k/\mu_k}}{1 + \sqrt{L_k/\mu_k}} y^{k+1} + \frac{1}{1 + \sqrt{L_k/\mu_k}} z^{k+1}.
\end{aligned}
\right.
\end{equation*}

    \item[(iii)] We note that if $\langle \nabla f(x^{k+1}),x^{k+1}-z^{k+1}\rangle\leq 0$, any negative $\theta_k$ satisfies the condition \eqref{sat-con}, which allows negative momentum while maintaining convergence guarantees. Previous studies have shown that negative momentum may cause oscillation and instability in minimax optimization \cite{GHR19, SWA26}. Examining its behavior under the current algorithmic setting constitutes another interesting research direction.
\end{enumerate}

\bibliographystyle{alpha}
\bibliography{ref}

\newpage
\appendix

\section{Proof on the equivalence between \texorpdfstring{\eqref{hb-method}}{hb-method} and \texorpdfstring{\eqref{hb}}{hb}} 
\label{proof-of-hb}
\begin{lemma}
The iteration scheme \eqref{hb-method} is equivalent to \eqref{hb} with $z^0=x^0$, $q = \frac{\sqrt{\kappa}-1}{\sqrt{\kappa}+1}$, and $\theta = \frac{\sqrt{\kappa}-1}{2}$.
\end{lemma}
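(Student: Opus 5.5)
Writing $\beta:=\frac{\theta}{1+\theta}$ and $\gamma:=\frac{1}{1+\theta}$, we have $\beta+\gamma=1$ and \eqref{hb-x} reads $x^{k+1}=\beta x^k+\gamma z^{k+1}$. The plan is to eliminate the auxiliary sequences $w$ and $z$ from \eqref{hb} and obtain a two-step recursion in $x$ alone, then match its coefficients with \eqref{hb-method}.

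First I would solve \eqref{hb-x} for $z$: $z^{k+1}=(1+\theta)x^{k+1}-\theta x^k$ for all $k\ge0$. For the initial step, \eqref{hb-x} at $k=0$ with $z^1$ is used directly. For general $k\ge1$, I substitute this expression, together with $z^k=(1+\theta)x^k-\theta x^{k-1}$, into \eqref{hb-z}, using $w^{k+1}=x^k-\frac1\mu\nabla f(x^k)$. This gives
\[
(1+\theta)x^{k+1}-\theta x^k = q\big((1+\theta)x^k-\theta x^{k-1}\big)+(1-q)x^k-\frac{1-q}{\mu}\nabla f(x^k).
\]
Dividing by $1+\theta$ yields
\[
x^{k+1}=x^k-\frac{1-q}{\mu(1+\theta)}\nabla f(x^k)+\frac{q\theta}{1+\theta}(x^k-x^{k-1}),
\]
after checking that the coefficient of $x^k$ collapses to $1$: indeed $\theta+q(1+\theta)+1-q-q\theta=1+\theta$.

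Next, the parameter identities. With $\sqrt\kappa=s$, $q=\frac{s-1}{s+1}$ and $\theta=\frac{s-1}{2}$, we get $1+\theta=\frac{s+1}{2}$. Hence
\[
\frac{q\theta}{1+\theta}=\frac{(s-1)^2}{(s+1)^2}=\varsigma,
\qquad
\frac{1-q}{\mu(1+\theta)}=\frac{2/(s+1)}{\mu(s+1)/2}=\frac{4}{\mu(s+1)^2}=\frac{4}{(\sqrt L+\sqrt\mu)^2}=\upsilon,
\]
which match \eqref{hb-method} exactly for $k\ge1$.

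The main point needing care is the first step and the convention $x^{-1}=x^0$. At $k=0$, with $z^0=x^0$, we have $z^1=x^0-\frac{1-q}{\mu}\nabla f(x^0)$, so
\[
x^1=x^0-\frac{1-q}{\mu(1+\theta)}\nabla f(x^0),
\]
which is \eqref{hb-method} at $k=0$ since $x^0-x^{-1}=0$. Equivalently, defining $x^{-1}:=x^0$ makes $z^0=(1+\theta)x^0-\theta x^{-1}$ consistent, so the general derivation covers $k=0$ as well.

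The argument also runs in reverse: given the \eqref{hb-method} iterates, define $z^k:=(1+\theta)x^k-\theta x^{k-1}$ and $w^{k+1}$ by \eqref{hb-w}. Then \eqref{hb-x} holds by construction, and \eqref{hb-z} follows from the same computation read backward. This establishes equivalence, by induction on $k$, of the generated $x$-sequences.
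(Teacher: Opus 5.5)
Your proof is correct and follows the paper's argument. Like the paper, you eliminate $z$ via $z^{k+1}=(1+\theta)x^{k+1}-\theta x^{k}$ and substitute into the $z$-update to obtain $x^{k+1}=x^{k}-\frac{1-q}{\mu(1+\theta)}\nabla f(x^{k})+\frac{q\theta}{1+\theta}(x^{k}-x^{k-1})$, then verify the two coefficient identities; your explicit handling of the $k=0$ step (through the convention $x^{-1}=x^{0}$) and of the converse direction supplies details the paper leaves implicit.
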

\begin{proof}
First, it follows from \eqref{hb-x} that $z^{i+1}=(1+\theta)x^{i+1}-\theta x^{i}$ for all $i\geq 0$. 
Substituting this relation with $i=k$ and $i=k+1$ into \eqref{hb-z} and using \eqref{hb-w}, we obtain
\[
\begin{aligned}
(1+\theta) x^{k+1} - \theta x^{k} = z^{k+1} 
&
\xlongequal{\eqref{hb-z},\,\eqref{hb-w}}
q \left( (1+\theta) x^{k} - \theta x^{k-1} \right)  + (1-q) \big(x^{k} -  \mu^{-1}\nabla f(x^{k})  \big)\\
& = x^{k}   +  q\theta ( x^{k} -  x^{k-1}) - (1-q)\mu^{-1} \nabla f(x^{k}).
\end{aligned}
\]
By reformulating the above equality, we obtain $x^{k+1} = x^{k} - \frac{1-q}{\mu(1+\theta)} \nabla f(x^{k}) + \frac{q\theta}{1+\theta} (x^{k}-x^{k-1})$. 
By setting $q = \frac{\sqrt{\kappa}-1}{\sqrt{\kappa}+1}$ and $\theta = \frac{\sqrt{\kappa}-1}{2}$, one can readily verify that
\[
1-q = \frac{1}{1+\theta} = \frac{2\sqrt{\mu}}{\sqrt{L}+\sqrt{\mu}}, \quad \frac{1-q}{\mu(1+\theta)}=\frac{4}{(\sqrt{L}+\sqrt{\mu})^2} \text{~~and~~}
\frac{q\theta}{1+\theta} = \frac{(\sqrt{L}-\sqrt{\mu})^2}{(\sqrt{L}+\sqrt{\mu})^2}.
\]
Therefore, the two schemes \eqref{hb-method} and \eqref{hb} are equivalent.
\end{proof}

\section{Reformulation of several algorithms} \label{sec:refor}
In this section, we present equivalent formulations of several well‑known algorithms within the UGM framework \eqref{ugm}.
Note that all cumbersome identities used in the proofs of Lemmas \ref{sec-b2}, \ref{sec-b3}, \ref{sec-b4} and \ref{sec-b5}  have been validated by hand-written derivation and symbolic computation, where \texttt{check\_proof.ipynb} is used exclusively for the symbolic-computation verification.

\begin{lemma} \label{lemma:ugm-2}
The UGM framework \eqref{ugm} with initialization $z^0=x^0$ is equivalent to
    \begin{subequations}  \label{ugm-2}
    \begin{align}
    y^{k+1} &= x^{k}-\frac{1}{L}\nabla f(x^{k}), \label{ugm-2a}\\
    x^{k+1} & = \frac{\theta + (1-q)\kappa}{1+\theta} y^{k+1} + \frac{1 + q\theta - (1-q)\kappa}{1+\theta} x^{k} - \frac{q\theta}{1+\theta} y^{k}, \label{ugm-2b}
    \end{align}
    \end{subequations}
with initialization $y^0=x^0$.
\end{lemma}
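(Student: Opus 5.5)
Our plan is to eliminate $w^{k+1}$ and $z^{k+1}$ from \eqref{ugm} and express everything through $x^k$ and $y^k$, following the same pattern as the proof in Appendix \ref{proof-of-hb}. From \eqref{ugm-x} we solve for $z$: for every $i\geq 0$,
\[
z^{i+1} = (1+\theta)x^{i+1} - \theta y^{i+1}.
\]
We also need this relation at $i=-1$, i.e.\ $z^0 = (1+\theta)x^0-\theta y^0$. This holds precisely because of the initializations $z^0=x^0$ and $y^0=x^0$, so the identity $z^k=(1+\theta)x^k-\theta y^k$ is valid for all $k\geq 0$.

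Next we write the gradient in terms of the two GD points. From \eqref{ugm-y}, $\nabla f(x^k) = L(x^k-y^{k+1})$. Hence \eqref{ugm-w} gives
\[
w^{k+1} = x^k - \kappa (x^k - y^{k+1}) = \kappa y^{k+1} + (1-\kappa)x^k.
\]
Substituting this and the expression for $z^k$ into \eqref{ugm-z} yields
\[
(1+\theta)x^{k+1}-\theta y^{k+1} = q(1+\theta)x^k - q\theta y^k + (1-q)\bigl(\kappa y^{k+1} + (1-\kappa)x^k\bigr).
\]
Dividing by $1+\theta$ (allowed since $\theta\neq -1$) and collecting coefficients gives the following.
\begin{itemize}
\item The coefficient of $y^{k+1}$ is $(\theta+(1-q)\kappa)/(1+\theta)$.
\item The coefficient of $y^k$ is $-q\theta/(1+\theta)$.
\item The coefficient of $x^k$ is $\bigl(q(1+\theta)+(1-q)(1-\kappa)\bigr)/(1+\theta) = \bigl(1+q\theta-(1-q)\kappa\bigr)/(1+\theta)$.
\end{itemize}
Together these are exactly \eqref{ugm-2b}.

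For the converse direction, we define $z^k := (1+\theta)x^k-\theta y^k$ and $w^{k+1}$ by \eqref{ugm-w}, starting from a sequence satisfying \eqref{ugm-2}. Reversing the computation above shows that \eqref{ugm-z} and \eqref{ugm-x} hold, with $z^0=x^0$. So both schemes generate the same $\{x^k,y^k\}$.

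The only delicate point is the base case. We must check that the initializations $z^0=x^0$ and $y^0=x^0$ are compatible, which ensures the identity for $z^k$ also holds at $k=0$ and hence \eqref{ugm-2b} is valid already at $k=0$. The rest is routine coefficient matching.
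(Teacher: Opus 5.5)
Your proposal is correct and follows essentially the same route as the paper: write $\nabla f(x^k)=L(x^k-y^{k+1})$ to get $w^{k+1}=(1-\kappa)x^k+\kappa y^{k+1}$, invert \eqref{ugm-x} to get $z^i=(1+\theta)x^i-\theta y^i$, substitute into \eqref{ugm-z}, and match coefficients. Your explicit check that this identity also holds at $i=0$ (via $z^0=x^0=y^0$) is a worthwhile addition, since the paper states it only for $i\geq 1$ but then applies it at $i=k=0$.
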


\begin{proof}
First, \eqref{ugm-y} is the same as  \eqref{ugm-2a}, from which
we obtain $\nabla f(x^{k}) = L(x^{k} - y^{k+1})$. Substituting this relation into \eqref{ugm-w} yields 
\begin{equation}\label{jy-0808a}
w^{k+1} = x^{k} - \kappa (x^{k} - y^{k+1}) = \left(1 - \kappa\right) x^{k} + \kappa y^{k+1}.
\end{equation}
Additionally, from \eqref{ugm-x}, we have $z^{i} = (1+\theta) x^{i} - \theta y^{i}$ for all $i\geq 1$. 
Substituting this relation with $i=k$ and $i=k+1$ into \eqref{ugm-z} and using \eqref{jy-0808a}, we obtain
\begin{align*}
(1+\theta) x^{k+1} - \theta y^{k+1} &= z^{k+1} \xlongequal{\eqref{ugm-z}, \eqref{jy-0808a}} 
q\big((1+\theta) x^{k} - \theta y^{k}\big) + (1-q)\big(  (1 - \kappa) x^{k} + \kappa y^{k+1} \big)\\
&= \big( q(1+\theta) + (1-q)(1 - \kappa) \big) x^{k} - q\theta y^{k} + (1-q)\kappa y^{k+1}\\
&= \big(1 + q\theta - (1-q)\kappa\big) x^{k} - q\theta y^{k} + (1-q)\kappa y^{k+1}.
\end{align*}
By rearranging the above equality to explicitly solve for $x^{k+1}$, we obtain \eqref{ugm-2b}, which establishes the desired equivalence.
\end{proof}

\subsection{Gradient descent methods}
\begin{lemma} \label{gd-1l}
With the initial condition $z^{0}=x^{0}$ and parameters $q=1-1/\kappa$ and arbitrary $\theta\neq -1$, 
the UGM framework \eqref{ugm} is equivalent to GD with step size $1/L$, i.e., \eqref{gd}.
\end{lemma}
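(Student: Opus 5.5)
The plan is to prove, by induction on $k$, that with $q=1-1/\kappa$ the UGM iterates satisfy $x^k=z^k$ for all $k\geq 0$. With this in hand, $x^{k+1}=x^k-\frac{1}{L}\nabla f(x^k)$ and $y^{k+1}=x^{k+1}$ follow directly.

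The base case is the initialization $z^0=x^0$.

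For the inductive step, assume $z^k=x^k$. By \eqref{ugm-w} and \eqref{ugm-z},
\[
z^{k+1}=q x^k+(1-q)\Big(x^k-\tfrac{1}{\mu}\nabla f(x^k)\Big)=x^k-\tfrac{1-q}{\mu}\nabla f(x^k).
\]
Since $1-q=1/\kappa=\mu/L$, this gives $z^{k+1}=x^k-\frac{1}{L}\nabla f(x^k)=y^{k+1}$, using \eqref{ugm-y}.

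Then \eqref{ugm-x} yields
\[
x^{k+1}=\tfrac{\theta}{1+\theta}y^{k+1}+\tfrac{1}{1+\theta}y^{k+1}=y^{k+1}.
\]
This is well defined for every $\theta\neq -1$ and does not depend on $\theta$. Therefore $x^{k+1}=z^{k+1}=y^{k+1}=x^k-\frac1L\nabla f(x^k)$, which closes the induction and is exactly \eqref{gd}.

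Conversely, the GD sequence together with $y^k=z^k=x^k$ for $k\ge1$ satisfies every line of \eqref{ugm}. Since both schemes are deterministic given $x^0$, they produce identical sequences $\{x^k\}$.

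There is no genuine obstacle here. The only point needing care is the observation that the convex combination in \eqref{ugm-x} collapses precisely because $z^{k+1}=y^{k+1}$. That is why $\theta$ is arbitrary, subject only to $\theta\neq-1$ so the weights are defined. (The section text says ``$\theta\neq 1$'', which should read $\theta\neq -1$.)

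As a cross-check, one could instead plug $q=1-1/\kappa$ into Lemma \ref{lemma:ugm-2}. The coefficient of $y^{k+1}$ becomes $\frac{\theta+1}{1+\theta}=1$, and the $x^k$ and $y^k$ coefficients become $\frac{q\theta}{1+\theta}$ and $-\frac{q\theta}{1+\theta}$. The recursion then reduces to GD once one inductively shows $y^k=x^k$.
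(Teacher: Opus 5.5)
Your proof is correct and follows the paper's argument: induction on $z^k=x^k$, using $1-q=\mu/L$ to get $z^{k+1}=y^{k+1}$, after which \eqref{ugm-x} collapses to $x^{k+1}=y^{k+1}$ for every $\theta\neq-1$. Your side remark is also correct: the ``$\theta\neq 1$'' in Section~\ref{sec:why_ugm} is a typo for $\theta\neq -1$.
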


\begin{proof}
Let $\{(y^k, w^k, z^k, x^k): k\geq 1\}$ be the sequence generated by UGM \eqref{ugm} with initialization $z^0=x^0$ and parameters $q=1-1/\kappa$ and arbitrary $\theta\neq -1$.
We prove by induction that $z^{k} = x^{k}$ for all $k\geq 1$.  
Assume that $z^{k}=x^{k}$ for some $k\geq 0$. 
Then, from \eqref{ugm-z}, \eqref{ugm-w} and \eqref{ugm-y}, we obtain
\begin{equation*}
z^{k+1} \xlongequal{\eqref{ugm-z}, \eqref{ugm-w}} q x^{k} + (1-q)\big(x^{k} - \mu^{-1}\nabla f(x^{k})\big) 
= x^{k} - L^{-1}\nabla f(x^{k}) \stackrel{\eqref{ugm-y}}{=} y^{k+1},
\end{equation*}
where the second ``$=$" follows from  $q = 1-\frac{1}{\kappa} = 1 - \frac{\mu}{L}$. 
By plugging $z^{k+1}=y^{k+1}$ into \eqref{ugm-x}, we obtain $x^{k+1} = y^{k+1}$.
Hence,  $z^{k+1}=y^{k+1}=x^{k+1} = x^{k} - L^{-1}\nabla f(x^{k})$.
The proof is completed by induction.
\end{proof}

\begin{lemma} \label{gd-2l}
With the initial condition $z^{0}=x^{0}$ and parameters $q=\frac{\kappa-1}{\kappa+1}$ and $\theta = 0$, 
the UGM framework \eqref{ugm} is equivalent to GD \eqref{gd-alpha} with step size $\alpha = 2/(L+\mu)$.
\end{lemma}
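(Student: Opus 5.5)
With $\theta=0$ the update \eqref{ugm-x} reads $x^{k+1}=z^{k+1}$, so the $y$-sequence never enters the definition of $x^{k+1}$. I would therefore argue by induction on $k$ that $z^k=x^k$ and that $x^{k+1}=x^k-\frac{2}{L+\mu}\nabla f(x^k)$, mirroring the proof of Lemma~\ref{gd-1l}.

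The base case is the initialization $z^0=x^0$. For the inductive step, assume $z^k=x^k$ for some $k\ge 0$. Then \eqref{ugm-z} and \eqref{ugm-w} give
\[
z^{k+1}=q x^k+(1-q)\big(x^k-\mu^{-1}\nabla f(x^k)\big)=x^k-\frac{1-q}{\mu}\nabla f(x^k).
\]
With $q=\frac{\kappa-1}{\kappa+1}$ we have $1-q=\frac{2}{\kappa+1}=\frac{2\mu}{L+\mu}$, hence $\frac{1-q}{\mu}=\frac{2}{L+\mu}$.

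By \eqref{ugm-x} with $\theta=0$ (admissible since $\theta\neq -1$), $x^{k+1}=z^{k+1}$. This closes the induction and shows that $x^{k+1}=x^k-\frac{2}{L+\mu}\nabla f(x^k)$, which is exactly \eqref{gd-alpha} with $\alpha=2/(L+\mu)$.

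Conversely, given a GD sequence with step size $2/(L+\mu)$, define $y^{k+1}$, $w^{k+1}$ and $z^{k+1}$ from $x^k$ by \eqref{ugm-y}--\eqref{ugm-z}. These auxiliary sequences reproduce the UGM iterates, so the two schemes generate identical $\{x^k\}$.

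No step is a real obstacle; the only computation is the identity $(1-q)/\mu=2/(L+\mu)$. Alternatively, one can substitute $\theta=0$ into Lemma~\ref{lemma:ugm-2}. The coefficient of $y^k$ vanishes, and using $y^{k+1}=x^k-\frac1L\nabla f(x^k)$ gives $x^{k+1}=x^k-\frac{(1-q)\kappa}{L}\nabla f(x^k)=x^k-\frac{2}{L+\mu}\nabla f(x^k)$.
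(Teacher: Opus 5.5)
Your proposal is correct and follows essentially the same route as the paper: use $\theta=0$ in \eqref{ugm-x} together with $z^0=x^0$ to get $x^k=z^k$ for all $k$, then expand \eqref{ugm-z} and \eqref{ugm-w} and apply the identity $(1-q)/\mu=2/(L+\mu)$. Your alternative check via Lemma~\ref{lemma:ugm-2} is also valid, but the paper does not need it.
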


\begin{proof}
Since $\theta=0$ and $z^{0}=x^{0}$, \eqref{ugm-x} yields $x^{k} = z^{k}$ for all $k\geq 0$. 
Hence, for every $k\geq 0$, we obtain
\[
x^{k+1}=z^{k+1}
 \xlongequal{\eqref{ugm-z}, \eqref{ugm-w}}  q x^{k}+(1-q)\big(x^{k}- \mu^{-1}\nabla f(x^{k})\big)
      = x^{k}-\frac{2}{L+\mu}\nabla f(x^{k}),
\]
where the final equality follows from $\frac{1-q}{\mu} = \frac{2}{L+\mu}$, which holds with  $q = \frac{\kappa-1}{\kappa+1}$.
This completes the proof. 
\end{proof}

\subsection{Nesterov's FGM}
\begin{lemma} \label{sec-b2}
With the initial condition $z^{0}=x^{0}$ and parameters $q=1-\frac{1}{\sqrt{\kappa}}$ and  $\theta=\sqrt{\kappa}$, 
the UGM framework \eqref{ugm} is equivalent to \eqref{fgm}.
\end{lemma}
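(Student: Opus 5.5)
The plan is to go through the two-step reformulation of Lemma \ref{lemma:ugm-2} rather than work with \eqref{ugm} directly. By that lemma, UGM with $z^0=x^0$ is equivalent to \eqref{ugm-2} with $y^0=x^0$. So it suffices to show that, for $q=1-\frac{1}{\sqrt{\kappa}}$ and $\theta=\sqrt{\kappa}$, the update \eqref{ugm-2b} reduces to $x^{k+1}=y^{k+1}+\frac{\sqrt{\kappa}-1}{\sqrt{\kappa}+1}(y^{k+1}-y^k)$. The $y$-update \eqref{ugm-2a} already coincides with the first step of \eqref{fgm}, and the initializations $y^0=x^0$ agree.

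The key computation is the three coefficients in \eqref{ugm-2b}. Substituting $1-q=1/\sqrt{\kappa}$ gives $(1-q)\kappa=\sqrt{\kappa}$. Hence:
\begin{itemize}
\item the coefficient of $y^{k+1}$ is $\frac{\sqrt{\kappa}+\sqrt{\kappa}}{1+\sqrt{\kappa}}=\frac{2\sqrt{\kappa}}{\sqrt{\kappa}+1}$;
\item the coefficient of $x^k$ is $\frac{1+q\theta-\sqrt{\kappa}}{1+\sqrt{\kappa}}$, where $q\theta=\sqrt{\kappa}-1$, so the numerator is $1+\sqrt{\kappa}-1-\sqrt{\kappa}=0$ and the $x^k$ term vanishes;
\item the coefficient of $-y^k$ is $\frac{q\theta}{1+\theta}=\frac{\sqrt{\kappa}-1}{\sqrt{\kappa}+1}$.
\end{itemize}
Since $\frac{2\sqrt{\kappa}}{\sqrt{\kappa}+1}=1+\frac{\sqrt{\kappa}-1}{\sqrt{\kappa}+1}$, \eqref{ugm-2b} becomes exactly the momentum step of \eqref{fgm}.

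One point needs care: Lemma \ref{lemma:ugm-2} was derived using $z^i=(1+\theta)x^i-\theta y^i$ for $i\ge1$, and this relation also holds at $i=0$ because $z^0=x^0=y^0$. So both recursions generate identical sequences $\{x^k,y^k\}$ from the same start, and the equivalence follows by induction on $k$. I do not expect a real obstacle; the only delicate step is making sure the $x^k$ coefficient cancels exactly, which hinges on the identity $q\theta=\sqrt{\kappa}-1$.
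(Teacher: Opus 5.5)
Your proposal is correct and follows the paper's own proof. Like the paper, you pass through the compact form \eqref{ugm-2} of Lemma \ref{lemma:ugm-2}, compute the three coefficients (the $x^k$ coefficient vanishes since $1+q\theta-(1-q)\kappa=1+(\sqrt{\kappa}-1)-\sqrt{\kappa}=0$), and read off the FGM momentum step. Your check that $z^0=(1+\theta)x^0-\theta y^0$ holds because $y^0=x^0=z^0$ covers the $k=0$ step, which the paper handles implicitly through the initialization $y^0=x^0$ in Lemma \ref{lemma:ugm-2}.
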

\begin{proof} 
To prove the equivalence between  \eqref{ugm} and \eqref{fgm}, we only need to verify that the update $x^{k+1}$ in \eqref{ugm-x} reduces to that given in \eqref{fgm}. 
Utilizing the given values of $q$ and $\theta$, the following identities hold:
\begin{equation}\label{jy-0808b}
\frac{\theta + (1-q)\kappa}{1+\theta} = \frac{2\sqrt{\kappa}}{\sqrt{\kappa}+1},\quad
\frac{1 + q\theta - (1-q)\kappa}{1+\theta} = 0 \text{~~and~~}
\frac{q\theta}{1+\theta} = \frac{\sqrt{\kappa}-1}{\sqrt{\kappa}+1}.
\end{equation}
We have validated these equalities through manual derivation, while the symbolic‑computation check is also implemented in the script \texttt{check\_proof.ipynb}.
It then follows from \eqref{ugm-2b} and \eqref{jy-0808b} that
\[
x^{k+1} = \frac{2\sqrt{\kappa}}{\sqrt{\kappa}+1} y^{k+1}
- \frac{\sqrt{\kappa}-1}{\sqrt{\kappa}+1} y^{k} = y^{k+1}+\frac{\sqrt{\kappa}-1}{\sqrt{\kappa}+1}(y^{k+1}-y^{k}).
\]
This exactly matches the update of $x^{k+1}$ in \eqref{fgm}, which completes the proof.
\end{proof}

\subsection{TM method}

\begin{lemma} \label{sec-b3}
With the initial condition $z^{0}=x^{0}$ and parameters $q=\rho=1-\frac{1}{\sqrt{\kappa}}$ and $\theta=\frac{\rho}{2(1-\rho)}$, 
the sequence $\{(z^k,x^k): k\geq 1\}$ generated by the UGM framework \eqref{ugm} 
coincides with the sequence generated by the TM method \eqref{tm-method}.
\end{lemma}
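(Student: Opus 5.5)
We prove the lemma by showing that both methods can be written as the \emph{same} first-order linear recursion in the pair $(x^k,z^k)$, driven by the gradient $g_k:=\nabla f(x^k)$, and that they start from the same initial pair. Equality of the sequences then follows by induction on $k$.

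\textbf{UGM side.} Substituting \eqref{ugm-w} into \eqref{ugm-z}, and then \eqref{ugm-y} and \eqref{ugm-z} into \eqref{ugm-x}, gives
\begin{align*}
z^{k+1} &= q z^k + (1-q)x^k - \tfrac{1-q}{\mu} g_k,\\
x^{k+1} &= \tfrac{\theta + 1-q}{1+\theta}\,x^k + \tfrac{q}{1+\theta}\,z^k - \Big(\tfrac{\theta}{(1+\theta)L} + \tfrac{1-q}{(1+\theta)\mu}\Big) g_k .
\end{align*}
This expresses $(x^{k+1},z^{k+1})$ in terms of $(x^k,z^k,g_k)$ with explicit coefficients.

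\textbf{TM side.} By \eqref{tm-x} and \eqref{tm-z}, the TM pair $(x^k,z^k)$ is an invertible linear image of $(\xi^k,\xi^{k-1})$, provided $\gamma\neq\delta$; this holds because $\gamma<\delta$ for $\rho\in(0,1)$. Explicitly,
\[
\xi^k = \frac{\delta x^k - \gamma z^k}{\delta-\gamma}, \qquad \xi^k - \xi^{k-1} = \frac{z^k - x^k}{\delta-\gamma}.
\]
Substituting these into \eqref{tm-xi},
\[
\xi^{k+1} = \xi^k + \lambda(\xi^k-\xi^{k-1}) - \nu g_k ,
\]
and then into $x^{k+1}=\xi^{k+1}+\gamma(\xi^{k+1}-\xi^k)$ and $z^{k+1}=\xi^{k+1}+\delta(\xi^{k+1}-\xi^k)$, gives $(x^{k+1},z^{k+1})$ as a linear combination of $x^k$, $z^k$, $g_k$ whose coefficients are rational in $\rho$ and $1/L$.

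\textbf{Initialization.} TM starts with $\xi^{-1}=\xi^0=x^0$ (the usual convention). Then \eqref{tm-x}--\eqref{tm-z} give $x^0=z^0=\xi^0$, which matches the UGM choice $z^0=x^0$. Conversely, under UGM's $z^0=x^0$ the formula above yields $\xi^0=x^0$ and $\xi^0-\xi^{-1}=0$, so the two initializations agree.

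\textbf{Coefficient matching.} It remains to check that the six coefficients agree: three in the $z$-update (on $x^k$, $z^k$, $g_k$) and three in the $x$-update. We substitute $q=\rho$, $\theta=\rho/(2(1-\rho))$, $\mu=(1-\rho)^2L$ (since $\rho=1-1/\sqrt{\kappa}$), $\nu=(1+\rho)/L$, and the given $\lambda,\gamma,\delta$. For example, on the UGM side $1+\theta=(2-\rho)/(2(1-\rho))$, so the $z^k$-coefficient in the $x$-update is $2\rho(1-\rho)/(2-\rho)$. This must equal the TM expression $\big[(1+\gamma)(1+\lambda)-(1+\gamma)-\ldots\big]/(\delta-\gamma)$ after simplification.

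The main obstacle is this rational-function bookkeeping. To keep it tractable, I would first record the simple forms
\[
\delta-\gamma = \frac{\rho^2}{(1-\rho)(1+\rho)(2-\rho)}, \qquad 1+\lambda = \frac{2-\rho+\rho^2}{2-\rho},
\]
and clear the common denominator $(1-\rho)(1+\rho)(2-\rho)$ before comparing. The $z$-update should be the cleaner of the two and can serve as a sanity check: its gradient coefficient is $(1-\rho)/\mu=1/((1-\rho)L)$, and this must come out of the TM side as $(1+\delta)\nu$, which indeed holds since $(1+\delta)(1+\rho)=1/(1-\rho)$. Once all six coefficients agree, induction on $k$ completes the proof. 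As with the other identities in the paper, these coefficient checks could additionally be confirmed symbolically.
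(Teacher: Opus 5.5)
Your proof is correct, and it takes a different route from the paper's.

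\textbf{The paper's route.} The paper stays in the $x$--$y$ picture. It uses the TM identity $y^{k+1}=(\xi^{k+1}+\rho\xi^k)/(1+\rho)$ to show that both methods satisfy the same three-term recurrence $x^{k+1}-x^k=\frac{\rho^2}{2-\rho}(y^{k+1}-y^k)+(1+\rho)(y^{k+1}-x^k)$. On the UGM side this is read off from the compact form \eqref{ugm-2b}, which the paper also uses for FGM, SC-OGM and AOR-HB. It then checks separately that both methods satisfy $z^{k+1}-x^{k+1}=\theta(x^{k+1}-y^{k+1})$.

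\textbf{Your route.} You change coordinates $(\xi^k,\xi^{k-1})\leftrightarrow(x^k,z^k)$, which is invertible since $\delta-\gamma=\rho^2/((1-\rho)(1+\rho)(2-\rho))>0$. TM then becomes a one-step map on exactly the same state as UGM, and you compare coefficients.

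\textbf{The bookkeeping you left open does close.} On both sides, the coefficients on $(x^k,z^k,\nabla f(x^k))$ are:
\begin{itemize}
\item in the $z$-update: $\bigl(1-\rho,\ \rho,\ -\tfrac{1}{(1-\rho)L}\bigr)$;
\item in the $x$-update: $\bigl(\tfrac{2-3\rho+2\rho^2}{2-\rho},\ \tfrac{2\rho(1-\rho)}{2-\rho},\ -\tfrac{2+\rho}{(2-\rho)L}\bigr)$.
\end{itemize}
You can also halve the state-coefficient checks. On each side the $x^k$- and $z^k$-coefficients sum to $1$; for TM this sum is $(\delta-\gamma)/(\delta-\gamma)$. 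So only four of the six comparisons are independent.

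\textbf{What each approach buys.} Yours treats initialization cleanly: $z^0=x^0$ is equivalent to $\xi^{-1}=\xi^0$, and a single induction on the state $(x^k,z^k)$ gives coincidence of both the $x$- and $z$-sequences at once. The paper leaves this implicit, since its second-order recurrence involves $y^k$ and at $k=0$ tacitly needs the convention $y^0=x^0$. The paper's route, in exchange, reuses the template \eqref{ugm-2b} shared by all its reformulation lemmas. It also connects UGM directly to the $x$--$y$ form in which these methods are usually stated.
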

\begin{proof}
To begin with, we note that all cumbersome identities used in this proof have been checked by manual derivation, while symbolic‑computation verification is carried out in \texttt{check\_proof.ipynb}.

First, we establish the equivalence between \eqref{tm-x} and \eqref{ugm-x}. 
To prove the equivalence,  it suffices to show that $x^{k+1}$ from both \eqref{tm-x} and \eqref{ugm-x} satisfies
\begin{equation} \label{eq-tm}
x^{k+1}-x^{k}=\frac{\rho^2}{2-\rho}(y^{k+1}-y^{k})+(1+\rho)(y^{k+1}-x^{k}),
\end{equation}
where $y^{k+1} = x^k - \frac{1}{L} \nabla f(x^k)$.
\begin{enumerate}
    \item[(i)] Consider $\{x^k: k\geq 1\}$ generated by the TM method \eqref{tm-method}. 
    From \eqref{tm-x}, the left-hand side of \eqref{eq-tm} is equal to
\begin{align}
    x^{k+1}-x^{k} \stackrel{\eqref{tm-x}}{=} (1+\gamma)\xi^{k+1}-\gamma \xi^{k} - \big( (1+\gamma)\xi^{k} - \gamma \xi^{k-1}\big) = (1+\gamma)\xi^{k+1}-(2\gamma+1)\xi^{k}+\gamma \xi^{k-1}. \label{x-x}
\end{align}
On the other hand, from the parameter settings of \eqref{tm-method}, we obtain $\lambda/\gamma=1+\rho$
and \eqref{tm-x} yields $\xi^{k}-\xi^{k-1}=(x^{k}-\xi^{k})/\gamma$.
We then derive 
\begin{align*}
\xi^{k+1} & \stackrel{\eqref{tm-xi}}{=} \xi^{k}+\lambda(\xi^{k}-\xi^{k-1})-\nu \nabla f(x^k) \stackrel{\eqref{tm-x}}{=} \xi^{k} + (1+\rho)(x^{k}-\xi^{k}) -\nu \nabla f(x^k)\\
&= -\rho \xi^{k} + (1+\rho)(x^{k}-\tfrac{1}{L}\nabla f(x^{k})) \stackrel{\eqref{ugm-y}}{=} -\rho \xi^{k} + (1+\rho) y^{k+1},
\end{align*}
where the third ``$=$'' follows from $\nu={(1+\rho)}/{L}$.
Thus, we obtain $y^{k+1}=(\xi^{k+1}+  \rho \xi^{k}) / (1+\rho)$. 
Using this relation together with $\gamma=\frac{\rho^2}{(1+\rho)(2-\rho)}$, we compute the two terms on the right-hand side of \eqref{eq-tm}
as follows:
\begin{subequations}\label{jy-0809a}
\begin{align}
y^{k+1}-y^{k} &=  \frac{1}{1+\rho}\big(\xi^{k+1}+  \rho \xi^{k}\big) - \frac{1}{1+\rho} \big(\xi^{k} + \rho \xi^{k-1} \big) =   \frac{1}{1+\rho} \big( \xi^{k+1} + (\rho-1)\xi^{k}- \rho \xi^{k-1}\big), \label{jy-y-y}\\
y^{k+1}-x^{k} &\stackrel{\eqref{tm-x}}{=} \frac{1}{1+\rho}\big(\xi^{k+1}+  \rho \xi^{k}\big) -(1\!+\!\gamma)\xi^{k}+\gamma \xi^{k-1}
=\frac{1}{1+\rho}\Big[ \xi^{k+1}\!-\!\frac{\rho^2\!-\!\rho\!+\!2}{2\!-\!\rho}\xi^{k}+\frac{\rho^2}{2\!-\!\rho}\xi^{k-1}\Big]. \label{y-x}
\end{align}
\end{subequations}
Utilizing $\gamma=\frac{\rho^2}{(1+\rho)(2-\rho)}$ again, the following identities can be verified:
\begin{equation} \label{jy-0809b}
\frac{\rho^2(\rho-1)}{(1+\rho)(2-\rho)}-\frac{\rho^2-\rho+2}{2-\rho}=-(2\gamma+1) \text{~~and~~} \frac{-\rho^3}{(1+\rho)(2-\rho)}+\frac{\rho^2}{2-\rho}=\gamma.
\end{equation}
Then, substituting \eqref{jy-0809a} into the right-hand side of \eqref{eq-tm} and using the identities in \eqref{jy-0809b}, we derive
\begin{align}\label{jy-0809c}
\frac{\rho^2}{2-\rho}(y^{k+1}-y^{k})+(1+\rho)(y^{k+1}-x^{k})= (1+\gamma)\xi^{k+1}-(2\gamma+1)\xi^{k}+\gamma \xi^{k-1}.
\end{align}
By comparing \eqref{x-x} and \eqref{jy-0809c}, we confirm that \eqref{eq-tm} is satisfied by \eqref{tm-method}.

\item[(ii)] Consider $\{x^k: k\geq 1\}$ generated by the UGM framework \eqref{ugm}. Utilizing the given parameters $q = \rho=1-\frac{1}{\sqrt{\kappa}}$ and $\theta = \frac{\rho}{2(1-\rho)}$, it can be verified that
\[
\frac{\theta + (1-q)\kappa}{1+\theta} = \frac{2+\rho}{2-\rho},\quad
\frac{1 + q\theta - (1-q)\kappa}{1+\theta} = -\rho \text{~~and~~}
\frac{q\theta}{1+\theta} = \frac{\rho^2}{2-\rho}.
\]
It then follows from the above identities and \eqref{ugm-2b} that 
$x^{k+1} = \frac{2+\rho}{2-\rho} y^{k+1} -\rho x^{k} - \frac{\rho^2}{2-\rho} y^{k}$, from which it is trivial to derive 
\eqref{eq-tm}.
\end{enumerate}
This completes the proof of the equivalence between \eqref{tm-x} and \eqref{ugm-x}. 

We next establish the equivalence between \eqref{tm-z} and \eqref{ugm-z} by verifying that $z^{k+1}$  from both \eqref{tm-z} and \eqref{ugm-z} satisfies
\begin{equation} \label{eq-tm-z}
z^{k+1}-x^{k+1}=\frac{\rho}{2(1-\rho)}(x^{k+1}-y^{k+1}).
\end{equation}
\begin{enumerate}
\item[(i)] Consider $\{z^k: k\geq 1\}$ generated by the TM method \eqref{tm-method}. 
Subtracting each‑hand‑side of \eqref{y-x} from the corresponding side of \eqref{x-x} and utilizing $\gamma=\frac{\rho^2}{(1+\rho)(2-\rho)}$ alongside  elementary algebraic manipulations, we arrive at
$x^{k+1}-y^{k+1}=\frac{2\rho}{(1+\rho)(2-\rho)}(\xi^{k+1}-\xi^{k})$.
Using \eqref{tm-x} and \eqref{tm-z}, together with this relation, we further derive
\begin{equation*}
z^{k+1}-x^{k+1} \xlongequal{\eqref{tm-x},\eqref{tm-z}} (\delta-\gamma)(\xi^{k+1}-\xi^{k}) = (\delta-\gamma)\frac{(1+\rho)(2-\rho)}{2\rho}(x^{k+1}-y^{k+1})
= \frac{\rho (x^{k+1}-y^{k+1})}{2(1-\rho)},
\end{equation*}
where the last equality follows from the parameter settings of $\delta$ and $\gamma$ in \eqref{tm-method} and elementary computations. This confirms \eqref{eq-tm-z}. 

\item[(ii)] Consider $\{z^k: k\geq 1\}$ generated by the UGM framework \eqref{ugm}. It follows straightforwardly from \eqref{ugm-x} and $\theta = \frac{\rho}{2(1-\rho)}$ that
$z^{k+1} - x^{k+1} \stackrel{\eqref{ugm-x}}{=} \theta(x^{k+1}-y^{k+1})=\frac{\rho}{2(1-\rho)}(x^{k+1}-y^{k+1})$, i.e., \eqref{eq-tm-z} is confirmed.
\end{enumerate}
This completes the proof of the equivalence between \eqref{tm-z} and \eqref{ugm-z}.
\end{proof}

\subsection{SC-OGM}
\begin{lemma} \label{sec-b4}
With the initial condition $z^{0}=x^{0}$ and parameters $q=\frac{1}{\gamma+1}$ and $\theta=\frac{\gamma+1}{\gamma}$, 
the UGM framework \eqref{ugm} is equivalent to the SC-OGM scheme \eqref{sc-ogm}.
\end{lemma}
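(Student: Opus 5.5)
The plan follows the pattern of Lemma \ref{sec-b2}. I will use the compact form \eqref{ugm-2} from Lemma \ref{lemma:ugm-2}, so it suffices to show that, with $q=\frac{1}{\gamma+1}$ and $\theta=\frac{\gamma+1}{\gamma}$, the recursion \eqref{ugm-2b} coincides with the $x$-update of \eqref{sc-ogm}. The $y$-updates are identical, and both schemes start from $y^0=x^0$, so matching the coefficients of $y^{k+1}$, $x^k$ and $y^k$ gives equality of the iterates by induction.

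First I rewrite \eqref{sc-ogm} as a linear combination:
\[
x^{k+1}=\Big(1+\tfrac{2}{2\gamma+1}\Big)y^{k+1}-\tfrac{1}{2\gamma+1}x^{k}-\tfrac{1}{2\gamma+1}y^{k}.
\]
Next I compute the three UGM coefficients. The easy one is
\[
\frac{q\theta}{1+\theta}=\frac{1/\gamma}{(2\gamma+1)/\gamma}=\frac{1}{2\gamma+1},
\]
which matches the $y^k$ coefficient. The remaining two involve $\kappa$ through $(1-q)\kappa=\frac{\gamma\kappa}{\gamma+1}$. The coefficients in \eqref{ugm-2b} sum to one, since $\theta+(1-q)\kappa+1+q\theta-(1-q)\kappa-q\theta=1+\theta$, and so do those of SC-OGM. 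Hence it is enough to match a single further coefficient, namely that of $x^k$:
\[
\frac{1+q\theta-(1-q)\kappa}{1+\theta}=-\frac{1}{2\gamma+1}.
\]
Substituting $1+\theta=\frac{2\gamma+1}{\gamma}$ and $q\theta=\frac{1}{\gamma}$, this becomes
\[
1+\frac{1}{\gamma}-\frac{\gamma\kappa}{\gamma+1}=-\frac{1}{\gamma},
\]
that is, $\frac{\gamma+2}{\gamma}=\frac{\gamma\kappa}{\gamma+1}$, or equivalently $(\gamma+1)(\gamma+2)=\kappa\gamma^2$. This is the same identity $\frac{\gamma+2}{L}=\frac{\gamma^2}{(\gamma+1)\mu}$ quoted in item (ii) of Section \ref{sec:uni-ana}.

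The main obstacle is verifying that identity from the definition $\gamma=\frac{\sqrt{8\kappa+1}+3}{2\kappa-2}$. I would check that $\gamma$ is the positive root of the quadratic $(\kappa-1)\gamma^2-3\gamma-2=0$. The quadratic formula gives
\[
\gamma=\frac{3\pm\sqrt{9+8(\kappa-1)}}{2(\kappa-1)}=\frac{3\pm\sqrt{8\kappa+1}}{2(\kappa-1)},
\]
and taking the $+$ sign recovers the stated $\gamma$ (here $\kappa>1$). The quadratic rearranges exactly to $\kappa\gamma^2=\gamma^2+3\gamma+2=(\gamma+1)(\gamma+2)$, which closes the argument. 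Finally, $\theta\neq-1$ holds because $\gamma>0$, so the framework is well defined and Lemma \ref{lemma:ugm-2} applies.
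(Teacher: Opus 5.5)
Your proposal is correct and follows essentially the same route as the paper: both pass to the compact form \eqref{ugm-2b} and match its coefficients to the SC-OGM update using $\kappa\gamma^2=(\gamma+1)(\gamma+2)$, i.e.\ $\kappa=1+3/\gamma+2/\gamma^2$. Your sum-to-one shortcut for the $y^{k+1}$ coefficient, and your derivation of the identity from the quadratic $(\kappa-1)\gamma^2-3\gamma-2=0$, are valid streamlinings of the paper's direct check of all three coefficients.
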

\begin{proof}
To begin with, we note that all cumbersome identities used in this proof have been checked by manual derivation, while symbolic‑computation verification is carried out in \texttt{check\_proof.ipynb}.

To establish this equivalence, it suffices to verify that the update for $x^{k+1}$ given in \eqref{ugm-x} satisfies
\begin{equation}\label{jy-0809d}
x^{k+1} = y^{k+1} +\frac{1}{2\gamma+1}(y^{k+1}-y^{k})+\frac{1}{2\gamma+1}(y^{k+1}-x^k),
\end{equation}
which exactly matches the iteration rule for $x^{k+1}$ in \eqref{sc-ogm}.
With the given parameters $q=\frac{1}{\gamma+1}$, $\theta=\frac{\gamma+1}{\gamma}$, $\gamma=\frac{\sqrt{8\kappa+1}+3}{2\kappa-2}$, and
noting that $\kappa = 1 + 3/\gamma + 2/\gamma^2$, we can verify the following identities:
\[
\frac{\theta + (1 - q)\kappa}{1 + \theta} = \frac{2\gamma + 3}{2\gamma + 1}, \quad \frac{1 + q\theta - (1 - q)\kappa}{1 + \theta} = -\frac{1}{2\gamma + 1} 
\text{~~and~~} \frac{q\theta}{1 + \theta} = \frac{1}{2\gamma + 1}.
\]
It then follows from \eqref{ugm-2b} that 
$x^{k+1} = \frac{2\gamma + 3}{2\gamma + 1} y^{k+1} - \frac{1}{2\gamma + 1} x^{k} - \frac{1}{2\gamma + 1} y^k$,
which implies \eqref{jy-0809d} immediately. 
\end{proof}

\subsection{AOR-HB}
\begin{lemma} \label{sec-b5}
With the initial condition 
$z^{0}\!=\!x^{0}$, and parameters 
$q \!=\! \frac{2\kappa + 2\sqrt{\kappa} \!-\! 1 + \sqrt{4\sqrt{\kappa} + 5}}{2(\sqrt{\kappa} + 1)^2}$ and $\theta = \frac{\kappa \left( 2\sqrt{\kappa} + 3 \!-\! \sqrt{4\sqrt{\kappa} + 5} \right)}{2 \left( \kappa + \sqrt{4\sqrt{\kappa} + 5} - 2 \right)}$, 
the UGM framework \eqref{ugm} is equivalent to the AOR-HB scheme \eqref{aor-hb}. 
Moreover, we have $q \in [\frac{3}{4},1)$ and $\theta>0$. 
\end{lemma}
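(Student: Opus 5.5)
The plan is to reduce everything to the compact two-step form of Lemma~\ref{lemma:ugm-2}, rewrite it as a three-term recursion in $x$ alone, and match coefficients with \eqref{aor-hb}. Write $g_k:=\nabla f(x^k)$ and set
\[
c_1=\tfrac{\theta+(1-q)\kappa}{1+\theta},\quad c_2=\tfrac{1+q\theta-(1-q)\kappa}{1+\theta},\quad c_3=\tfrac{q\theta}{1+\theta}.
\]
Substituting $y^{k+1}=x^k-g_k/L$ and $y^k=x^{k-1}-g_{k-1}/L$ into \eqref{ugm-2b} gives, for $k\ge1$,
\[
x^{k+1}=(c_1+c_2)x^k-c_3x^{k-1}-\tfrac{c_1}{L}g_k+\tfrac{c_3}{L}g_{k-1}.
\]
A direct computation shows $c_1+c_2=1+c_3$. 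The recursion therefore reads
\[
x^{k+1}=x^k+c_3(x^k-x^{k-1})-\tfrac{c_1}{L}g_k+\tfrac{c_3}{L}g_{k-1}.
\]

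Next I compare this with \eqref{aor-hb}. Put $s:=\kappa/(\sqrt\kappa+1)^2$, and note that $L/(\sqrt L+\sqrt\mu)^2=s$ and $1/(\sqrt L+\sqrt\mu)^2=s/L$. Matching coefficients then requires exactly two conditions: $c_3=s$ and $c_1=2s$. The coefficient of $g_{k-1}$ is automatically consistent once $c_3=s$.

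From $c_3=s$ I get $\theta=s/(q-s)$, valid provided $q>s$. Substituting this into $\theta+(1-q)\kappa=2s(1+\theta)$ and clearing the denominator $q-s$ yields a quadratic equation in $q$. I will solve it with $t=\sqrt\kappa$. I expect its discriminant to reduce to a multiple of $4t+5$, which would explain the $\sqrt{4\sqrt\kappa+5}$ in the statement. I then select the root with the $+$ sign and check that it equals the stated $q$. Back-substituting into $\theta=s/(q-s)$ and rationalizing should give the stated $\theta$; this identity can also be confirmed symbolically, in the style of \texttt{check\_proof.ipynb}.

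For initialization, the case $k=0$ of UGM with $z^0=x^0$ (equivalently $y^0=x^0$) produces $x^1$. I will check that this $x^1$ agrees with the starting convention of AOR-HB, namely $x^{-1}=x^0$ with the corresponding first gradient step. From $k\ge1$ onward both schemes obey the identical recursion, so the sequences coincide by induction.

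For the bounds, write $r=\sqrt{4t+5}$ with $t=\sqrt\kappa>1$.
\begin{itemize}
\item The claim $q<1$ is equivalent to $r<2t+3$, which holds because $(2t+3)^2-(4t+5)=4t^2+8t+4>0$.
\item The claim $q\ge\tfrac34$ is equivalent to $4t^2+4t-2+2r\ge 3(t+1)^2$, i.e.\ $t^2-2t-5+2r\ge0$. Since $t\ge1$ gives $r\ge3$, we have $t^2-2t-5+2r\ge t^2-2t+1=(t-1)^2\ge0$.
\item For $\theta>0$, the numerator factor $2t+3-r$ is positive by the first bullet, and the denominator $\kappa+r-2$ is positive since $r\ge3$.
\end{itemize}

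The main obstacle is the root selection in the quadratic. I must confirm that the chosen root satisfies $q>s$, so that $\theta$ is finite and positive. I must also carry the simplification of the radical through to match the paper's exact closed forms.
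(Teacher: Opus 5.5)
Your main argument is correct, but it runs in the opposite direction from the paper's. The paper takes the stated $(q,\theta)$ as given and verifies, by hand and in \texttt{check\_proof.ipynb}, that the three coefficients of \eqref{ugm-2b} equal $2s$, $1-s$ and $s$. It then rewrites \eqref{aor-hb} in the variables $y^{k+1},x^k,y^k$ using $\nabla f(x^k)=L(x^k-y^{k+1})$.

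You instead eliminate $y$. You observe that $c_1+c_2=1+c_3$ holds for every $(q,\theta)$, so only $c_3=s$ and $c_1=2s$ need checking, and you then derive the parameters. Everything you anticipate works out:
\begin{itemize}
\item The quadratic is $(t+1)^2q^2-(2t^2+2t-1)q+(t^2-1)=0$, with discriminant exactly $4t+5$.
\item Its $+$ root is the stated $q$.
\item $q-s=\frac{2t-1+r}{2(t+1)^2}>0$, so your \emph{main obstacle} is immediate.
\item $\theta=s/(q-s)=\frac{2t^2}{2t-1+r}$, which equals the stated $\theta$ after using $r^2=4t+5$.
\end{itemize}
Your route explains where $\sqrt{4\sqrt\kappa+5}$ comes from and replaces the symbolic check with a few lines of hand algebra.

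For the bounds, the paper shows $q'(t)>0$ on $[1,\infty)$ and reads off $q(1)=3/4$ and $q\to1$; this also shows that $q$ increases with $\kappa$. Your direct inequalities, $r<2t+3$ and $t^2-2t-5+2r\ge(t-1)^2$, are more elementary and suffice.

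One step fails if, as you write, you take $x^{-1}=x^0$ and apply \eqref{aor-hb} at $k=0$: the initialization check. UGM with $z^0=x^0$ gives
\[
x^1=x^0-\tfrac{c_1}{L}\nabla f(x^0)=x^0-\tfrac{2}{(\sqrt L+\sqrt\mu)^2}\nabla f(x^0).
\]
Applying \eqref{aor-hb} at $k=0$ with $x^{-1}=x^0$ gives instead
\[
x^1=x^0-\tfrac{1}{(\sqrt L+\sqrt\mu)^2}\nabla f(x^0),
\]
which is half the step. The convention $y^0=x^0$ corresponds to $x^{-1}=x^0$ with $\nabla f(x^{-1})$ replaced by $0$, not by $\nabla f(x^0)$.

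Since \eqref{aor-hb} is stated only for $k\ge1$ and does not fix $x^1$, the paper proves equivalence at the level of the recursion: the UGM iterates satisfy \eqref{aor-hb} for all $k\ge1$. You should claim only that, or state explicitly that AOR-HB is started from the UGM value of $x^1$. Asserting that the sequences coincide under an $x^{-1}=x^0$ start is false.
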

\begin{proof}
To begin with, we note that all cumbersome identities used in this proof have been checked by manual derivation, while symbolic‑computation verification is carried out in \texttt{check\_proof.ipynb}.
To prove the equivalence, we only need to verify that the update for $x^{k+1}$ from both \eqref{ugm-x} and \eqref{aor-hb} satisfies 
\begin{equation} \label{aor-hb-ugm}
x^{k+1} = \frac{2\kappa}{(\sqrt{\kappa}+1)^2} y^{k+1}
+ \frac{2\sqrt{\kappa}+1}{(\sqrt{\kappa}+1)^2} x^{k}
- \frac{\kappa}{(\sqrt{\kappa}+1)^2} y^{k},
\end{equation}
where $y^{k+1} = x^k - \frac{1}{L}\nabla f(x^{k})$. 
Using the given values for $q$ and $\theta$, it can be verified that
\[
\frac{\theta + (1-q)\kappa}{1+\theta} =  \frac{2\kappa}{(\sqrt{\kappa}+1)^2},\quad
\frac{1 + q\theta - (1-q)\kappa}{1+\theta} = \frac{2\sqrt{\kappa}+1}{(\sqrt{\kappa}+1)^2} \text{~~and~~}
\frac{q\theta}{1+\theta} = \frac{\kappa}{(\sqrt{\kappa}+1)^2}.
\]
Hence, for the UGM framework \eqref{ugm}, \eqref{aor-hb-ugm} is implied by \eqref{ugm-2b}, together with the above identities. 
On the other hand, for the AOR-HB scheme \eqref{aor-hb}, noting that $\nabla f(x^{k}) = L(x^{k} - y^{k+1})$, we obtain
\begin{equation*}
\begin{aligned}
x^{k+1} & \stackrel{\eqref{aor-hb}}{=}x^{k}-\frac{1}{(\sqrt{L}+\sqrt{\mu})^2}\big(2\nabla f(x^{k})-\nabla f(x^{k-1})\big)+\frac{L}{(\sqrt{L}+\sqrt{\mu})^2}(x^{k}-x^{k-1})\\
& = x^{k}-\frac{L}{(\sqrt{L}+\sqrt{\mu})^2}\big(2(x^{k}-y^{k+1})-(x^{k-1}-y^k)\big)+\frac{L}{(\sqrt{L}+\sqrt{\mu})^2}(x^{k}-x^{k-1})\\
& = \frac{2\kappa}{(\sqrt{\kappa}+1)^2} y^{k+1}
+ \frac{2\sqrt{\kappa}+1}{(\sqrt{\kappa}+1)^2} x^{k}
- \frac{\kappa}{(\sqrt{\kappa}+1)^2} y^{k}.
\end{aligned}
\end{equation*}
Here, elementary computations are involved in deriving the last equality. 
This confirms \eqref{aor-hb-ugm} and completes the proof of the equivalence.

To complete the proof for the remaining claims of this lemma, we set $t = \sqrt{\kappa} \geq 1$, which yields $q = q(t):=\frac{2t^2 + 2t - 1 + \sqrt{4t + 5}}{2(t+1)^2}$. 
Direct computation yields $q'(t) = \frac{(t + 2)\sqrt{4t + 5} - (3t + 4)}{\sqrt{4t + 5} (t+1)^3}$.
Given $t\geq 1$, we obtain $(t+2)\sqrt{4t+5} \geq 3(t+2) > 3t+4$. Hence, $q'(t)>0$ holds for every $t\geq 1$. 
Accordingly, $q(t)$ is strictly-increasing over $[1,+\infty)$, and $\frac{3}{4}=q(1)\leq q(t) < \lim_{t\rightarrow +\infty} q(t)=1$. 
Finally, $\theta = \frac{t^2 \left( 2t + 3 - \sqrt{4t + 5} \right)}{2 \left( t^2 + \sqrt{4t + 5} - 2 \right)} > 0$ holds trivially since $t\geq 1$.
\end{proof}

\section{Supplementary proofs for two inequalities}\label{appendix-two-ineqs}
In the analysis for the TM-method in Section \ref{sec:uni-ana}, the following inequality \eqref{ht} is used, and we now provide its proof. 
\begin{lemma}\label{appendix-ineq-a}
Let $P(t)=\frac{1}{2} + \frac{1}{1 + t^2} - \frac{2}{\sqrt{1 + t^2}(\sqrt{1 + t} + \sqrt{1 + t^2})}$ as introduced right below \eqref{Aarho3}.
Then, for any $t \in (0,1)$, we have
\begin{equation} \label{ht}
\frac{1}{2}\leq P(t) \leq \frac{1+t}{2(1+t^2)}.
\end{equation}
\end{lemma}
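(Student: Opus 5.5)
We need to prove $\tfrac12 \le P(t) \le \tfrac{1+t}{2(1+t^2)}$ for $t\in(0,1)$. Write $s:=\sqrt{1+t^2}$ and $r:=\sqrt{1+t}$. For $t\in(0,1)$ we have $t^2<t$, hence $1<s<r$. In this notation
\[
P(t)=\tfrac12+\tfrac{1}{s^2}-\tfrac{2}{s(r+s)} .
\]
Subtracting $\tfrac12$ from $P(t)$ and from the upper bound turns both inequalities into statements about the single correction term $\tfrac{1}{s^2}-\tfrac{2}{s(r+s)}$. The plan is to multiply through by $s^2(r+s)>0$ and reduce each side to an elementary comparison between $r$ and $s$.

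\textbf{Lower bound.} The claim $P(t)\ge \tfrac12$ is equivalent to
\[
\tfrac{1}{s^2}\ge \tfrac{2}{s(r+s)} \iff r+s\ge 2s \iff r\ge s .
\]
The last inequality holds because $1+t\ge 1+t^2$ on $(0,1)$. (This also gives strictness.)

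\textbf{Upper bound.} Note that $\tfrac{1+t}{2(1+t^2)}-\tfrac12=\tfrac{t-t^2}{2s^2}=\tfrac{r^2-s^2}{2s^2}$. The claim is therefore
\[
\tfrac{1}{s^2}-\tfrac{2}{s(r+s)}\le \tfrac{r^2-s^2}{2s^2}.
\]
Multiplying by $2s^2(r+s)$ gives
\[
2(r+s)-4s\le (r^2-s^2)(r+s),
\]
that is,
\[
2(r-s)\le (r-s)(r+s)^2 .
\]
Since $r-s>0$, we may divide by it, and the claim reduces to $(r+s)^2\ge 2$. This is immediate because $r,s>1$ gives $(r+s)^2>4$.

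The only step that needs care is spotting the factorization $r^2-s^2=(r-s)(r+s)$ in the upper bound; once that is seen, the rest is routine. I would double-check the sign of each multiplier, and, as the paper does elsewhere, the identity $\tfrac{1+t}{2(1+t^2)}-\tfrac12=\tfrac{t-t^2}{2(1+t^2)}$ can also be confirmed symbolically.
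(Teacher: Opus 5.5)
Your proof is correct. The lower bound is the same as the paper's: both rest on $\sqrt{1+t}>\sqrt{1+t^2}$. For the upper bound you take a different and shorter route.

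The paper first proves the intermediate estimate $\frac{\sqrt{1+t}}{\sqrt{1+t^2}}<\frac{2+t-t^2}{2-t+t^2}$. It sets $m=t(1-t)$, squares, and reduces the claim to $4-m^2+4(t^2+t)>0$. It then substitutes the estimate into $P(t)$, implicitly using that $P$ is increasing in $\sqrt{1+t}$. The rational bound $\frac{2+m}{2-m}$ is exactly the value of the ratio at which $P(t)$ equals the target, so it has to be reverse-engineered from the target.

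You work with the target directly. Writing $P(t)-\frac12=\frac{r-s}{s^2(r+s)}$ and $\frac{1+t}{2(1+t^2)}-\frac12=\frac{(r-s)(r+s)}{2s^2}$, the common factor $r-s$ cancels and the claim collapses to $(r+s)^2\ge 2$. This avoids both the guessed intermediate bound and the squaring step.

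Your route also makes the slack visible. The ratio of the two excesses over $\frac12$ is $2/(r+s)^2<\frac12$, so the stated upper bound is loose by more than a factor of two; the paper's argument does not show this. The two reductions are in fact equivalent: the paper's condition $(2-m)^2<8(1+t^2)$ rearranges, using $m=r^2-s^2$ and $s<\sqrt2$, to $r+s>\sqrt2$.
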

\begin{proof}
First, we show that for any $t\in(0,1)$, there holds
\begin{equation} \label{ineq:1}
1<\frac{\sqrt{1+t}}{\sqrt{1+t^2}}<\frac{2+t-t^2}{2-t+t^2}.
\end{equation}
The left-hand-side inequality in \eqref{ineq:1} holds trivially since $t>t^2$ for $t\in(0,1)$.
To verify the right-hand-side inequality, it is sufficient to show that 
$(2+m)\sqrt{1+t^2}>(2-m)\sqrt{1+t}$ for $t\in(0,1)$, where $m := t(1-t)$.
By noting that $m = t(1-t) \in (0, 1/4]$ for $t\in (0,1)$, one can readily show that
$(2+m)\sqrt{1+t^2}> (2-m)\sqrt{1+t}$ is equivalent to $4 - m^2 +4(t^2+t)>0$, and the latter inequality obviously holds since $m^2 \leq 1/16$.
This confirms that \eqref{ineq:1} holds for any $t\in (0,1)$.

Now, we prove \eqref{ht}. On the one hand, it follows from the right-hand-side inequality in \eqref{ineq:1} that
\[
P(t)  \leq  \frac{1}{2} + \frac{1}{1 + t^2} - \frac{2}{\sqrt{1 + t^2}(\sqrt{1 + t^2}\frac{2+t-t^2}{2-t+t^2} + \sqrt{1 + t^2})}  =\frac{1+t}{2(1+t^2)}.
\]
On the other hand, the left-hand-side inequality in \eqref{ineq:1} implies
\[
P(t) \geq  \frac{1}{2} + \frac{1}{1 + t^2} - \frac{2}{\sqrt{1 + t^2}(\sqrt{1 + t^2} + \sqrt{1 + t^2})} =\frac{1}{2}.
\]
The two inequalities right above confirm \eqref{ht}.
\end{proof}











The following inequality is employed in the proof of Lemma \ref{lem:ak_qk}, and we now present its proof.

\begin{lemma} \label{sec3-detail-1}
If $\varrho$ satisfies $\frac{\sqrt{1+8\kappa}-3}{\sqrt{1+8\kappa}+1} \leq \varrho <1$, then 
$\frac{(1 - \varrho)[\kappa(1 - \varrho) - \varrho - 1]}{(\varrho + 1)\varrho} \leq 1$.
\end{lemma}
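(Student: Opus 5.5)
Since $\varrho\in(0,1)$, the denominator $(\varrho+1)\varrho$ is positive, so the target inequality $\frac{(1-\varrho)[\kappa(1-\varrho)-\varrho-1]}{(\varrho+1)\varrho}\le 1$ is equivalent to the polynomial inequality
\[
(1-\varrho)\bigl[\kappa(1-\varrho)-\varrho-1\bigr] \le \varrho(\varrho+1).
\]
Expanding, the left side is $\kappa(1-\varrho)^2-(1-\varrho^2)$, so the condition becomes $\kappa(1-\varrho)^2 \le 1-\varrho^2+\varrho^2+\varrho = 1+\varrho$. I will therefore work with the quadratic
\[
g(\varrho):=\kappa(1-\varrho)^2-(1+\varrho)=\kappa\varrho^2-(2\kappa+1)\varrho+(\kappa-1),
\]
and show $g(\varrho)\le 0$ for all $\varrho$ in the stated range.

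Since $\kappa>1>0$, $g$ is an upward parabola. Its roots are
\[
\varrho_\pm=\frac{(2\kappa+1)\pm\sqrt{(2\kappa+1)^2-4\kappa(\kappa-1)}}{2\kappa}=\frac{2\kappa+1\pm\sqrt{8\kappa+1}}{2\kappa}.
\]
Hence $g\le 0$ exactly on $[\varrho_-,\varrho_+]$. We have $\varrho_+>1$, so the interval $[\varrho_-,1)$ lies in the nonpositive region. It therefore remains to identify $\varrho_-$ with the lower bound $\frac{\sqrt{1+8\kappa}-3}{\sqrt{1+8\kappa}+1}$ appearing in the hypothesis.

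For this identification, set $s=\sqrt{8\kappa+1}$, so that $\kappa=(s^2-1)/8$. Then
\[
\varrho_-=\frac{2\kappa+1-s}{2\kappa}=\frac{(s^2+3)/4-s}{(s^2-1)/4}=\frac{s^2-4s+3}{s^2-1}=\frac{(s-1)(s-3)}{(s-1)(s+1)}=\frac{s-3}{s+1},
\]
where cancelling $s-1$ is legitimate because $s>3$ when $\kappa>1$. This matches the hypothesis exactly. Consequently, for $\varrho_-\le\varrho<1$ we get $g(\varrho)\le 0$, which is the claim.

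The only point needing care is the root simplification just performed; the remainder of the argument is sign bookkeeping. In that bookkeeping, dividing by $(\varrho+1)\varrho>0$ preserves the direction of the inequality.
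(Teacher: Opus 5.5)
Your proof is correct and follows essentially the paper's route: both reduce the claim to the polynomial inequality $\kappa(1-\varrho)^2\le 1+\varrho$. The only difference is that the paper gets there directly, by rewriting the lower bound as $\varrho+3\ge\sqrt{1+8\kappa}\,(1-\varrho)>0$ and squaring, whereas you find the roots of $g$ and identify $\varrho_-$ with $\frac{\sqrt{1+8\kappa}-3}{\sqrt{1+8\kappa}+1}$ (and $\varrho>0$, which you assert, follows because this lower bound is positive when $\kappa>1$).
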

\begin{proof}
Since $\frac{\sqrt{1+8\kappa}-3}{\sqrt{1+8\kappa}+1} \leq \varrho <1$, we have $\varrho + 3 \geq \sqrt{1+8\kappa}(1-\varrho) > 0$. 
By squaring both sides, it is straightforward to show that the latter inequality is equivalent to $\varrho+1\geq \kappa (1-\varrho)^2$.
Subtracting $(1-\varrho^2)$ from both-hand sides yields 
$\varrho(\varrho + 1) \ge (1 - \varrho) \bigl[ \kappa(1 - \varrho) - \varrho - 1 \bigr] $, which establishes the desired inequality.
\end{proof}

\end{document}